\theoremstyle{plain}
\newtheorem{theorem}{Theorem}
\newtheorem{proposition}[theorem]{Proposition}
\newtheorem{corollary}[theorem]{Corollary}
\newtheorem{lemma}[theorem]{Lemma}
\theoremstyle{definition}
\theoremstyle{remark}
\numberwithin{equation}{section}
\numberwithin{theorem}{section}
\theoremstyle{plain}
\newtheorem*{assumptionS*}{Assumption S}
\newtheorem*{assumptionT*}{Assumption T}
\newtheorem*{assumptionU*}{Assumption U}
\newtheorem*{assumptionR*}{Assumption R}
\newtheorem*{assumptionP*}{Assumption P}
\newtheorem*{assumptionC1*}{Assumption C1}
\newtheorem*{assumptionC2*}{Assumption C2}
\newtheorem*{modelA*}{Model A}
\newtheorem*{modelB*}{Model B}
\def\bs{\boldsymbol}
\def\mcl{\mathcal}
\def\mbb{\mathbb}
\def\E{\mathop{}\!\mbb{E}}
\def\P{\mathop{}\!\mbb{P}}
\def\f{\frac}
\def\e{\mathrm{e}}
\def\es{\emptyset}
\def\su{\subseteq}
\def\co{\colon}
\def\N{\mathbb N}
\def\ti{\times}
\def\PP{\mcl P}
\renewcommand\le{\leqslant}
\renewcommand\ge{\geqslant}
\def\M{\mbb M}
\def\Q{\mbb Q}
\def\R{\mbb R}
\def\S{\mbb S}
\def\V{\mbb V}
\def\W{\mbb W}
\def\00{\bs 0}
\def\eps{\varepsilon}
\def\r{\rho}
\def\k{\kappa}
\def\bet{\begin{theorem}}
\def\ent{\end{theorem}}
\def\bepr{\begin{proposition}}
\def\enpr{\end{proposition}}
\def\bel{\begin{lemma}}
\def\enl{\end{lemma}}
\def\bec{\begin{corollary}}
\def\enc{\end{corollary}}
\def\bep{\begin{proof}}
\def\enp{\end{proof}}
\def\been{\begin{enumerate}}
\def\enen{\end{enumerate}}
\def\({\left(}
\def\){\right)}
\def\mc{\mathcal}
\def\one{\mathbbmss{1}}
\definecolor{myblue}{RGB}{65, 105, 225}    
\definecolor{mygreen}{RGB}{34, 139, 34}
\newlist{assumptionAenum}{enumerate}{1}
\setlist[assumptionAenum,1]{
  label=\normalfont(\roman*),
  ref=A\arabic*,
  leftmargin=3.5em,
  align=left
}
\newcounter{assA}
\newenvironment{assumptionA}[1][]{
  \refstepcounter{assA}
  \ifx#1\empty\else\label{#1}\fi
  \par\noindent\textbf{Assumption~A.}\,
  \emph{Let $( W_n)_{n\ge1}$ be of the form in (\ref{eq:rectangle_form}) and
  let $\PP_n$ be a Poisson point process on $ \W_n$
  with intensity measure $|\cdot|\otimes\mu$. Assume the score $f$ is symmetric,
  non-negative, measurable, vanishes on the diagonal, and assume further that}
  \begin{assumptionAenum}
    \itshape
}{
  \end{assumptionAenum}
}
\newlist{assumptionBenum}{enumerate}{1}
\setlist[assumptionBenum,1]{
  label=\normalfont(\roman*),
  ref=B\arabic*,
  leftmargin=3.5em,
  align=left
}
\newcounter{assB}
\newenvironment{assumptionB}[1][]{
  \refstepcounter{assB}
  \ifx#1\empty\else\label{#1}\fi
  \par\noindent\textbf{Assumption~B.}\,
  \emph{Let $\PP_n$ be as in Assumption \ref{ass:A} and let $k < d$. Assume $g$ is symmetric, non-negative, measurable, vanishes on the diagonal, $\mathbb{N}_0$-valued and satisfies Assumption \ref{ass:A}(i) - \ref{ass:A}(iii). Assume further that}
  \begin{assumptionBenum}
    \itshape
}{
  \end{assumptionBenum}
}
\newcommand{\sectioncolor}{black}
\begin{document}

\title{Normal approximation of stabilizing Poisson pair functionals
with column-type dependence}

\author{Hanna Döring, Ad\'elie Garin, Christian Hirsch, and Nikolaj N. Lundbye}

\maketitle
\thispagestyle{empty}
\begin{spacing}{1.05}
\begin{abstract}
In this paper, we study two specific types of $d$-dimensional Poisson functionals:
 a \emph{double-sum} type and a \emph{sum-log-sum} type, both over pairs of Poisson points.
On these functionals, we impose column-type dependence, i.e., local behavior in the first $k$ directions and allow non-local, yet stabilizing behavior in the remaining $d-k$ directions.

The main contribution of the paper is to establish sufficient conditions for Normal approximation for sequences of such functionals over growing regions. 
Specifically, for any fixed region, we provide an upper bound on the Wasserstein distance between each functional and the standard Normal distribution.

We then apply these results to several examples.
 Inspired by problems in computer science, we prove a Normal approximation for the \emph{rectilinear crossing number}, 
 arising from projections of certain random graphs onto a 2-dimensional plane. From the field of topological data analysis, 
 we examine two types of barcode summaries, the \emph{inversion count} and the \emph{tree realization number}, and establish
  Normal approximations for both summaries under suitable models of the topological lifetimes. \newline \newline
\textbf{Keywords: } Rectilinear crossing number, inversion count, tree realization number, Poisson functional, Normal approximation, stabilization, column-type dependence. \newline \newline
\textbf{MSC Classification:} 60D05, 60G55, 60F05.

\end{abstract}
\clearpage

\thispagestyle{empty}
\tableofcontents
\clearpage

\pagebreak 
	\setcounter{page}{1}
\section{Introduction}
\label{sec:intro}


\subsection{Background}
Central limit theorems are a cornerstone of probability theory, providing a rigorous explanation as to why Gaussian fluctuations appear in many large random systems.
Broadly speaking, they state that centered and normalized statistics of complex random structures converge in distribution to a Normal law as the system size grows.
While these results describe the limiting behavior, they do not say how quickly the distribution approaches its limit. 
This motivates the study of \emph{Normal approximation}, which aims to give explicit, finite-sample bounds on the distance between a statistic and a Normal law.

A first step in the direction of Normal approximation and a refinement of central limit theorems are given by Berry--Esseen inequalities. 
These provide finite-sample error bounds in metrics such as the Wasserstein or Kolmogorov distance. 
In its most basic form \cite{Esseen1942_LiapunoffError}, the Berry--Esseen theorem states that for independent and identically distributed real random variables $X_1, X_2, \ldots$ with finite third moment, 
and $S_n$ the $n$'th partial sum of $X_1, X_2, \ldots$, there exists a constant $C > 0$ depending on the metric $\rho$ such that 
\begin{equation}
    \label{eq:berry_esseen}
    \rho \big(\tfrac{S_n - \E[S_n]}{\sqrt{\V[S_n]}}, \mathcal{N}(0,1) \big) \le \tfrac{C \E[\vert X_1 \vert^3]}{\V[X_1]^{3/2} \sqrt{n}}.
\end{equation}
In many cases, this distance actually decays as $n^{-1/2}$, and hence in general, one cannot hope to improve the
bound in \eqref{eq:berry_esseen}. This bound of $n^{-1/2}$ is therefore sometimes called \emph{the optimal rate}. In spatial models or systems of dimension $d$, this optimal rate can appear as $n^{-d/2}$,
 where $n^d$ represents the effective volume or number of degrees of freedom driving the fluctuations. Beyond sums of independent variables, Berry--Esseen type results also hold for strongly dependent statistics, including functionals of Poisson point processes. 
Normal approximation of functionals of a Poisson point process has become a highly active research area in theoretical probability theory, especially stochastic geometry (e.g.~\cite{mehler}). 

%
%
The success of Normal approximation for functionals of Poisson point processes is largely due to a two-step approach. The first step typically consists of applying Malliavin-Stein 
theory to derive general upper bounds on Wasserstein- and Kolmogorov distances of the considered test statistics and a standard Normal random variable \cite{mehler,y3,trauth,trauth2}. 
These upper bounds are typically expressed in terms of iterated integrals involving mixed moments of both first- and second-order difference operators describing the effect on 
the functional of adding one or two points. After that substantial effort needs to be invested into deriving useful bounds on these iterated integrals. This has successfully been
carried out in several cases of interest in probability theory, including sums of region-stabilizing scores, certain hyperbolic functionals and random connection models \cite{mal_stab,chinmoy,trauth3,hug,nestmann}.

Loosely speaking, the stabilizing functionals from \cite{mal_stab, yukCLT} are based on score functions exhibiting local dependence.
 This means that when changing the Poisson point process at a specific location, the scores outside a constant-order neighborhood are unaffected.
However in some settings, only a subset of coordinates drives the local dependence, 
 while the remaining directions contribute in a non-local manner. This phenomenon leads to \emph{column-type interactions}, 
 where locality is restricted to the first $k$ coordinates.  In contrast, for functionals of column-type interactions, changing the Poisson point process at a specific location can induce changes in the score function far away from that 
 point. Such column-type interactions have been studied intensively,
  especially in the context of percolation theory, where they are notoriously difficult to treat \cite{hilario,AD,hoffmann,brochette,vares}.

\subsection{Motivational examples}
\label{sec:motivational_examples}
 To illustrate column-type dependence of order $k \leq d$, we highlight three examples.
\begin{enumerate}[label=(\Alph*)]

\item Consider snowflakes falling through the air and land on a two-dimensional surface, and consider as a statistic the number of times that two hexagonal arms of the snowflakes land on top of each other (Figure \ref{fig:introduction}).
If two snowflakes are far apart along the two directions of the Earth's surface, they do not contribute to the statistic. However, two snowflakes on the same location on Earth but starting at different
altitudes could hit one-another on the ground. Hence, this statistic exhibits column-type interactions of order $k=2$. This dependence can mathematically be generalized to \emph{the crossing number} of edges in a random connection model in $\R^d$
projected onto a two-dimensional plane, which in the fixed radius setting is studied in \cite{doring}.

\item In topological data analysis, a barcode summarizes the persistence, or lifetime, of topological features across different scales. 
A barcode is a set of pairs $\{(b_i,d_i)\}_{i \in I}$ commonly represented by bars, each bar representing the lifetime of a topological feature (Figure \ref{fig:introduction}).
A basic statistic is the inversion count \cite{adelie}, which records how often one bar is nested within another,
 while the tree realization number \cite{stanley1997enumerative} encodes how many ways such barcodes can be realized as tree structures.
  The tree realization number is hence the product of all inversion counts. Both of these statistics are only concerned with the time-overlap of bars,
   and hence bars far apart in the time direction do not contribute to the statistic, 
   whereas bars close in the time direction but far apart in other directions may contribute to the statistic.
    Hence, the inversion count and tree realization number exhibit column-type dependence of order $k = 1$. 


\item In telecommunication and telemarketing networks, one may ask whether there are clients or devices entirely disconnected from the system. 
Detecting such \emph{isolated vertices} is critical for ensuring coverage or designing robust marketing strategies. 
Modelling these vertices as Poisson points and connections as vertices within a fixed distance (Figure \ref{fig:introduction}),
the event of an isolated vertex can be computed from a Poisson functional, which exhibits full locality, i.e., column-type interactions of order $k=d$. \vspace{1ex}
\end{enumerate}

\begin{figure}[ht]
\centering

\begin{subfigure}[t]{0.3\textwidth}
\centering
\begin{tikzpicture}[scale=0.7,line cap=round]
  \draw[line width=1pt] (0,0) rectangle (6,6);

  \newcommand{\snowflake}[3]{
    \begin{scope}[shift={(#1,#2)}, line width=1.1pt]
      \fill (0,0) circle (2.5pt);
      \foreach \ang in {0,60,...,300}{
        \draw (0,0) -- +(\ang:#3);
        \fill (\ang:#3) circle (1.5pt);
      }
    \end{scope}
  }

   \newcommand{\snowflakeR}[3]{
    \begin{scope}[shift={(#1,#2)}, line width=1.1pt]
      \fill (0,0) circle (2.5pt);
      \foreach \ang in {10,70,...,310}{
        \draw (0,0) -- +(\ang:#3);
        \fill (\ang:#3) circle (1.5pt);
      }
    \end{scope}
  }

    \newcommand{\snowflakeRR}[3]{
    \begin{scope}[shift={(#1,#2)}, line width=1.1pt]
      \fill (0,0) circle (2.5pt);
      \foreach \ang in {25,85,...,325}{
        \draw (0,0) -- +(\ang:#3);
        \fill (\ang:#3) circle (1.5pt);
      }
    \end{scope}
  }

  \snowflake{1.4}{4.9}{1.0}
  \snowflakeR{1.3}{3.7}{1.0}

  \snowflakeRR{1.3}{1.7}{1.0}
  \snowflakeR{4.0}{1.2}{1.0}

  \snowflakeRR{3.5}{2.84}{1.0}
  \snowflakeR{4.8}{4.8}{1.0}
\end{tikzpicture}
\caption{A two-dimensional surface viewed from above. In total 3 edges of the fallen snowflakes are overlapping, so the crossing number here is 3.}
\end{subfigure}
\hfill
\begin{subfigure}[t]{0.3\textwidth}
\centering
\begin{tikzpicture}[scale=0.7,line cap=round]
  \draw[line width=1pt] (0,0) rectangle (6,6);

  \draw[line width=1.2pt] (0.7,5.0) -- (5.2,5.0);
  \draw[line width=1.2pt] (1.6,4.1) -- (4.2,4.1);
  \draw[line width=1.2pt] (1.0,3.2) -- (3.6,3.2);
  \draw[line width=1.2pt, dashed] (3.0,2.4) -- (4.5,2.4);
  \draw[line width=1.2pt, dashed] (1.3,1.5) -- (4.9,1.5);

  \fill (0.5,0.25) circle (2.2pt);
  \draw[line width=1.2pt,->] (0.5,0.25) -- (5.7,0.25);
\end{tikzpicture}
\caption{A barcode plot and an arrow that represents time. The top dashed bar is nested in the bottom dashed bar.
The inversion count is 6.}
\end{subfigure}
\hfill
\begin{subfigure}[t]{0.3\textwidth}
\centering
\begin{tikzpicture}[scale=0.7,line cap=round]
  \draw[line width=1pt] (0,0) rectangle (6,6);

  \def\r{1.0}

  \foreach \P in {(1.4,3.8),(3.0,4.6),(4.5,4.3),(1.5,3.2),(1.1,2.4),(1.4,1.1)}{
    \fill \P circle (2pt);
    \draw[dashed,line width=1pt] \P circle (\r);
  }

  \fill (4.1,1.3) circle (2pt);
  \draw[dashed,line width=1pt] (4.1,1.3) circle (\r);
\end{tikzpicture}
\caption{A network of 7 devices with a fixed range of connection (dashed circles). The device to the bottom-right is an isolated vertex.}
\end{subfigure}
\caption{Illustrations of the three examples highlighted in Section \ref{sec:motivational_examples}.}
\label{fig:introduction}
\end{figure}

\subsection{Main contributions}

The main contributions of this paper can now be presented in light of the first two examples above. The third example was only included to illustrate the full spectrum of locality, 
and will not be treated in detail in this paper.

\begin{enumerate}[label=(\Roman*)]
  \item \textbf{Crossing number:} In \cite{doring}, a first Normal approximation result for the crossing number of projected random geometric graphs was established in a fixed window with increasing intensity of points. However, we can extend the Normal approximation
  to also include connections models where the connection radius is random and even spatially may depend on the other Poisson points in a localizing manner.

  \item \textbf{Barcode statistics:} We establish Normal approximation of the inversion count for different models for the barcode lengths. In particular, we consider barcodes generated by Poisson trees,
  which is a geometrically dependent model, and where Normal approximation previously had been inaccessible with the current theory.
    Additionally, we consider a log-transformed version of the tree realization number, and establish Normal approximation using the same barcode models as before. 
    Applying the Delta-Method to this statistic also allows us to obtain asymptotic normality of the tree realization number itself. 

  \item \textbf{Unified framework:} By embedding these examples into a general class of \emph{pairwise score functions} of
   double-sum or sum-log-sum form, we provide a systematic approach to Normal approximation under column-type dependence. 
   This situates our results at the interface between stochastic geometry and topological data analysis, offering new tools for problems where partial locality and stabilization coexist.
\end{enumerate}

The main results of the present work are Normal approximation in the Wasserstein distance for what we call the \emph{double-sum}  and \emph{sum-log-sum} Poisson functionals, respectively.
These functionals capture the crossing number, inversion count, and tree realization number as special cases.
Additionally, in view of the spatially optimal rate of $n^{-d/2}$,
as discussed above, we obtain a \emph{near-optimal rate} for our bounds in the sense that the bound is of order $n^{-d/2 + \eps}$ for any $\eps > 0$. \vspace{0.5ex}

The rest of the paper is organized as follows.

\emph{Section \ref{sec:main_results}:} We introduce the double-sum and sum-log-sum functionals in detail and state the Normal approximation results as Theorems \ref{thm:sum_clt} and \ref{thm:log_clt}. 
We prove Theorems \ref{thm:sum_clt} and \ref{thm:log_clt} using the Malliavin--Stein Normal approximation in \cite{mehler}, which involves controlling three error terms, and also sketch the overall strategy for bounding these terms.
We end this section by discussing extensions and limitations of our results.

\emph{Section \ref{sec:3}:} We check that we can apply the Normal approximation results to the crossing number, inversion count, and tree realization number. To ensure that the bounds vanish, we particularly need to verify that the variance grows sufficiently fast.

\emph{Section \ref{sec:4}:} We prove the error bounds related to the double-sum Poisson functionals.

\emph{Section \ref{sec:5}:} We prove the error bounds related to the sum-log-sum Poisson functionals.

\emph{Appendix \ref{sec:A}:} We include proofs of the technical tools that are used in Section \ref{sec:3} to verify the assumptions in the Normal approximation theorems for the examples we consider. In particular, this includes a lower bound on the variance for both the double-sum functional and the sum-log-sum functional.

\pagebreak
\section{Main results}
\label{sec:main_results}
This section is dedicated to stating and proving the main results of the paper. 
Specifically, we consider a marked stationary Poisson point process observed on expanding regions of space and investigate the asymptotic behavior of two types of Poisson functionals,
\begin{equation}
    \label{eq:double_sum_form}
\sum_{Z}\sum_{V} f(Z,V),
\end{equation}
\begin{equation}
    \label{eq:sum_log_sum_form}
\sum_{Z} \log \sum_{V} f(Z,V),
\end{equation}
where $(Z,V)$ denotes a distinct pair of marked Poisson points. We
call \eqref{eq:double_sum_form} the \emph{double-sum} functional
and \eqref{eq:sum_log_sum_form} the \emph{sum-log-sum} functional. 
Additionally, when transforming \eqref{eq:sum_log_sum_form} by the exponential function, we obtain the \emph{product-sum} functional.
Key examples of the double-sum functional include the \emph{crossing number} and the \emph{inversion count}.
On the other hand, the product-sum functional (or sum-log-sum functional) is exemplified by the \emph{tree realization number} (or the \emph{log-transformed tree realization number}).

The rest of the section is organized as follows:
In Section \ref{sec:setup}, we introduce the concepts needed to rigorously state the main results.
In Section \ref{sec:double_sum}, we state the Normal approximation for the double-sum functional (Theorem \ref{thm:sum_clt}).
In Section \ref{sec:sum_log_sum}, we state the Normal approximation for the sum-log-sum functional (Theorem \ref{thm:log_clt}).
Moreover, as a consequence of Theorem \ref{thm:log_clt}, we obtain asymptotic normality of the product-sum functional.
In Section \ref{sec:main_proofs}, we prove Theorems \ref{thm:sum_clt} and \ref{thm:log_clt}.
Both proofs rely on the Malliavin--Stein Normal approximation from \cite{mehler}, which hinges on controlling three error terms.
In Section \ref{sec:proof_strategy}, we sketch the main ideas for bounding these error terms.
Finally, in Section \ref{sec:limitations}, we discuss possible extensions and limitations of our results.

\subsection{Setup, terminology and notation}
\label{sec:setup}
Let $(\Omega, \mathcal{F}, P)$ denote a probability space which is large enough to contain all random objects in the present work. 
Let $\mathbb M$ denote a Polish metric space and let $\mathbb X = \R^d \times \mathbb M$. Let $\mathfrak N(\mathbb{X})$ denote the
space of locally finite counting measures on the Borel $\sigma$-algebra of $\mathbb{X}$. Let $\PP$ denote a Poisson point process in $\mathbb{X}$
with intensity measure $\lambda = \vert \cdot \vert \otimes \mu$, where $\vert \cdot \vert$ is the Lebesgue measure on $\R^d$ and $\mu$ is 
an atom-free probability measure on $\mathbb M$. In other words, we think of $\PP$ as a unit-intensity Poisson point process in the \emph{spatial component} $\R^d$,
where each point is equipped with an \textit{independent mark} from $\mathbb M$ according to the measure $\mu$. We write $\dot x$
for the spatial component in $\R^d$ of a point $x \in \mathbb{X}$.
As an example, we could take $\mathbb M = [0,\infty)$ as the mark space and equip this space with the Exponential distribution $\mu$.

Next, for every $n \ge 1$, let $W_n \subseteq \R^d$ denote a $d$-dimensional rectangle of the form
\begin{equation}
    \label{eq:rectangle_form}
 W_n = [0,n] \times [0,a_2 n^{\alpha_2}] \times \dots \times [0,a_d n^{\alpha_d}],
\end{equation}
for some $a_j, \alpha_j > 0$ and $j=2,\ldots,d$. In particular, as $n$ increases, the volume of $W_n$
grows at most polynomially with $n$. Furthermore, let $\W_n = W_n \times \mathbb M$ and let
$
\PP_n = \PP \cap \W_n
$
denote the spatial restriction of $\PP$ to the rectangle $W_n$, which
leads to a sequence of Poisson point processes with intensity increasing in $n \ge 1$. Let
$
f \co \mathbb{X} \ti \mathbb{X} \ti \mathfrak N(\mathbb{X}) \to [0, \infty)
$
denote a Borel-measurable map 
that is symmetric in the first two entries and where $f(x,x,\cdot)=0$
for any $x \in \R^d$, i.e., $f$ vanishes on the diagonal. Henceforth, we refer to $f$ as the \emph{score function},
and refer to $f(Z,V,\PP_n)$ as the \textit{score} between the points $Z,V \in \PP_n$.

Finally, in view of the main results in the upcoming sections, recall that the Wasserstein distance $d_W$ between the random variables $X$ and $Y$ is defined as
\begin{equation}
d_W(X,Y) = \sup_{h \in \text{Lip}(1)} \vert \E[h(X)] - \E[h(Y)] \vert,
\end{equation}
where $\text{Lip}(1)$ denotes the set of all Lipschitz functions $h : \R \to \R$
with Lipschitz constant at most 1. Note that if the Wasserstein distance vanishes, i.e., $d_W(X_n,Y) \to 0$, then \cite[Theorem 7.12]{Villani2003_TopicsInOptimalTransportation} implies weak convergence of $X_n$ to $Y$.

\subsection{Normal approximation of the double-sum functional}
\label{sec:double_sum}

We now formalize the functional in (\ref{eq:double_sum_form}) and define the double-sum functional $\Sigma(\PP_n)$ as
\begin{equation}
    \label{eq:double_sum_formal}
\Sigma(\PP_n) = \sum_{Z \in \PP_n}\sum_{V \in \PP_n} f(Z,V,\PP_n).
\end{equation}
As a simple example, if $f(Z,V,\PP_n)$ is the indicator of the event that
the spatial distance between $Z$ and $V$ is less than 1,
then (\ref{eq:double_sum_formal}) is the total number of distinct Poisson pairs within distance 1 of each other.

Naturally, in order to establish Normal approximation, we need to impose some conditions on the score function $f$.
We now introduce these conditions one by one.

First, for any $1 \le j \le d$ and $x \in \mathbb X$, let $\dot x_j$ denote the $j$th coordinate of the spatial component $\dot x$.
Then, for $1 \le k \le d$, we say that $f$ is \emph{$k$-local} if
\begin{equation}
    \label{eq:k_local}
f(x, y, \PP_n) = 0, \quad \text{whenever } |\dot x_j - \dot y_j| > 1 \text{ for some } 1 \le j \le k,
\end{equation}
i.e., the score function $f$ vanishes whenever any of the first $k$ coordinates of the spatial components of $x$ and $y$ are at a distance larger than 1.
Note that if $k_1 \le k_2$ and $f$ is $k_2$-local, then $f$ is also $k_1$-local.
Hence the weakest such assumption is that $f$ is 1-local.
In conjunction with $k$-locality, let
\begin{equation}
S_n^k(x,s) = \{ y \in W_n \co |\dot x_j - y_j| \le s \text{ for all } 1 \le j \le k \}
\end{equation}
denote the vertical \emph{column} or \emph{slab} consisting of all points in $W_n$
whose first $k$ coordinates are at distance at most $s$ from the first $k$ coordinates of the spatial component of $x$.
Additionally, we let $\mathbb{S}_n^k(x,s) = S_n^k(x,s) \times \mathbb M$ and note that $\lambda(\mathbb{S}_n^k(x,s)) = \vert S_n^k(x,s) \vert$.
The consequence of $k$-locality is that $Z$ can only have non-zero scores with Poisson points $V$ that lie in the slab $\mathbb{S}_n^k(Z,1)$,
see Figure \ref{fig:partition}.

Additionally, we will be inspired by the concept of stabilization radii from \cite{yukCLT}.
To be precise, let $x \in \mathbb{X}$ and $\PP_n^x = \PP_n \cup \{x\}$. Furthermore, for $m \ge 1$, let
\begin{equation}
Q(x,m) = x + [-m,m]^d
\end{equation}
denote the $d$-dimensional cube centered at $x$ with side length $2m$. As before, we let $\mathbb{Q}(x,m) = Q(x,m) \times \mathbb M$.
With this, we say that $f$ \emph{stabilizes at $x$ at radius $m$} if
\begin{equation}
    \label{eq:stabilization}
f(Z,V,\PP_n^x) = f(Z,V,\PP_n) \quad \text{whenever } Z,V \notin \PP_n \cap \mathbb{Q}(x,m).
\end{equation}
Then, we introduce the \emph{radius of stabilization} as 
\begin{equation}
    \label{eq:stabilization_radius_original}
R_{n}(x) = R_{n}(x,f,\PP_n) = \min \{ m \ge 1 \co f \text{ and } m \text{ satisfy } \eqref{eq:stabilization} \},
\end{equation}
i.e., the smallest integer radius at which $f$ stabilizes at $x$.
We refer to both $\mathbb{Q}(x,R_n(x))$ and $Q(x,R_n(x))$ as \emph{the non-stable cubes} around the point $x$, see Figure \ref{fig:partition}.

\vspace{1.5ex}
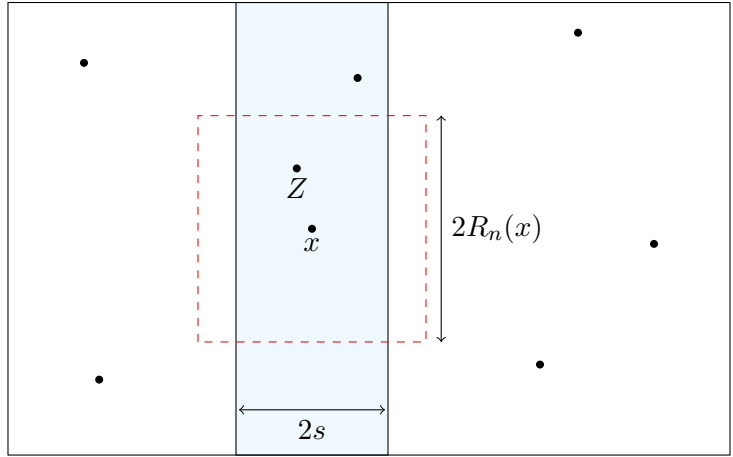
\begin{figure}[h!]
    \centering

\begin{tikzpicture}[scale=2]
  
  \draw (-2,-1.5) rectangle (2.75,1.5);

  \fill[fill=cyan!20, opacity=0.3] (-0.5,-1.5) rectangle (0.5,1.5);
  \draw (-0.5,-1.5) rectangle (0.5,1.5);
   Optional label for strip width
  \draw[<->] (-0.48,-1.2) -- (0.48,-1.2) node[midway,below] {$2s$};

  \fill (0,0) circle (0.75pt) node[below] {$x$};
  \fill (-0.1,0.4) circle (0.75pt) node[below] {$Z$};
  \fill (0.3,1) circle (0.75pt);
  \fill (-1.4,-1) circle (0.75pt);
    \fill (-1.5,1.1) circle (0.75pt);
  \fill (1.5,-0.9) circle (0.75pt);
  \fill (1.75,1.3) circle (0.75pt);
    \fill (2.25,-0.1) circle (0.75pt);

  \draw[dashed,red] (-0.75,-0.75) rectangle (0.75,0.75);
  
  \draw[<->] (0.85,-0.75) -- (0.85,0.75) node[midway,right] {$2R_n(x)$};
\end{tikzpicture}

    \caption{The black square is the rectangle $ W_n$, the light blue vertical strip is the slab $S_n^k(x,s)$,
     and the red dashed square is the non-stable cube $Q(x,R_n(x))$. Since $Z$ is the only Poisson point in this cube,
    the insertion of $x$ can only affect scores between $Z$ and another Poisson point, and not between any two Poisson points
    both outside this cube.}
    \label{fig:partition}
\end{figure}
\vspace{1.5ex}

What's more, we say that $f$ \emph{stabilizes exponentially} 
(inside $\W_n$) if there exists $\beta_1 > 0$ such that for any $\eps \in (0,1)$ and all sufficiently 
large $n \ge 1$,
\begin{equation}
    \label{eq:stabilization_exponential}
\sup_{x \in W_n} P(R_{n}(x) > n^{\eps}) \le \e^{-\beta_1 n^{\eps}},
\end{equation}
i.e., the random side length of the non-stable cube has exponentially decaying tails around every
point. As this will not be the last assumption of this type,
we henceforth write $n \gg 1$ to denote that an expression is true for
``sufficiently large $n \ge 1$''. Note that the size of $n$ may depend on the chosen $\eps > 0$.

Next, we introduce $\mc R$ as the set of all pairs of points in $\mathbb X$ alongside the empty set, i.e.,
\begin{equation}
    \label{eq:all_pairs}
\mc R = \{\emptyset\} \cup \big\{\{x,y\} \colon x,y \in \mathbb X\big\},
\end{equation}
as well as the quantity
\begin{equation}
\label{eq:maximal_score}
 f_{\sup}(\PP_n) = \sup_{\mathrm x, \mathrm y \in \mc R} \sup_{Z,V \in \PP_n^\mathrm{x}} f(Z,V,\PP_n^\mathrm{y}).
\end{equation}
With this, we say that $f$ has \emph{sub-polynomial moments} if there is a random variable $\overline f_{\sup}(\PP_n) \ge f_{\sup}(\PP_n)$
such that for all $m \in \N$, 
$\eps > 0$, and $n \gg 1$,
\begin{equation}
    \label{eq:exp_decay}
\E[\overline f_{\sup}(\PP_n)^m] \le n^{m \eps},
\end{equation}
i.e., the score function $f$ itself has sub-polynomial moments. Note that this condition is automatically satisfied if $f$ is bounded, e.g., by 1.

For convenience, we now compactly recap the above considerations as Assumption \ref{ass:A}.

\vspace{1ex}
\begin{assumptionA}[ass:A]
  \item \label{ass:A1} $f$ is $k$-local for some $k \in \N$, cf.\ \eqref{eq:k_local}.
  \item \label{ass:A2} $f$ stabilizes exponentially, cf.\ \eqref{eq:stabilization_radius_original} and \eqref{eq:stabilization_exponential}.
  \item \label{ass:A3} $f$ has sub-polynomial moments, cf.\ \eqref{eq:maximal_score} and \eqref{eq:exp_decay}.
\end{assumptionA}

We are now ready to state the main result of this section, which 
provides an upper bound on the Wasserstein distance between a centered and scaled version of $\Sigma(\PP_n)$ and a standard Normal random variable $\mc N(0,1)$. Moreover, we 
also obtain a sufficient lower bound on the variance of $\Sigma(\PP_n)$ which ensures weak convergence of $\Sigma(\PP_n)$.
For simplicity, we write $\vert S_n^k\vert$ instead of $\vert S_n^k(0,1)\vert$, and we use $\V[\Sigma(\PP_n)]$ to denote the variance of $\Sigma(\PP_n)$.

\bet[Normal approximation of $\Sigma$]
\label{thm:sum_clt}
Suppose $\PP_n$ and $f$ are as in Assumption \ref{ass:A}. Then, for every $\delta > 0$ and $n \gg 1$,
\begin{equation}
    \label{eq:main_sum_bound}
d_W\Big(
\frac{\Sigma(\PP_n) - \E[\Sigma(\PP_n)]}{\sqrt{\V[\Sigma(\PP_n)]}}, 
\mc N(0,1) \Big)
\le
 \frac{n^{\delta } \sqrt{\vert W_n \vert} \vert S_n^k\vert^2}{\V[\Sigma(\PP_n)]} 
 + \frac{n^{\delta } \vert W_n \vert \vert S_n^k\vert^3}{\V[\Sigma(\PP_n)]^{3/2}}.
\end{equation}
In particular, if 
$\V[\Sigma(\PP_n)] \ge C \vert W_n \vert \vert S_n^k\vert^2$ for some $C>0$, then for all $\delta > 0$ and $n \gg 1$,
\begin{equation}
    \label{eq:main_sum_bound_optimal}
d_W\Big(
\frac{\Sigma(\PP_n)- \E[\Sigma(\PP_n)]}{\sqrt{\V[\Sigma(\PP_n)]}}, 
\mc N(0,1) \Big)
\le
 \frac{n^{\delta }}{\sqrt{\vert W_n \vert}}.
\end{equation}
\end{theorem}

Let us list a few observations from Theorem \ref{thm:sum_clt}. Note that a larger value of $k$ in $k$-locality implies $\vert S_n^k \vert$ is smaller, and hence the upper bound in Theorem \ref{thm:sum_clt} is smaller as well.
Also, note that under the convention that $d_W(\infty,N(0,1))=\infty$, the bound in (\ref{eq:main_sum_bound}) still holds even if $\V[\Sigma(\PP_n)] = 0$.
Finally, we record how \eqref{eq:main_sum_bound_optimal} takes form when the sides of $W_n$ are equal
in length, i.e., $W_n = [0,n]^d$:
If $\V[\Sigma(\PP_n)] \ge Cn^{3d-2k}$ for some $C>0$, then
\begin{equation}
    \label{eq:main_sum_bound_equal_sides}
d_W\Big(
\frac{\Sigma(\PP_n)- \E[\Sigma(\PP_n)]}{\sqrt{\V[\Sigma(\PP_n)]}}, 
\mc N(0,1) \Big)
\le n^{- d/2 + \delta}.
\end{equation}
Thus, \eqref{eq:main_sum_bound_equal_sides} shows
that we obtain a near-optimal bound as discussed in Section \ref{sec:intro}.

\subsection{Normal approximation of the sum-log-sum functional}
\label{sec:sum_log_sum}

Before we can rigorously define the sum-log-sum functional, we need to restrict our setting further. First, we impose that the score function is of the following form
\begin{equation}
    \label{eq:f_form}
\one\{Z \in \mc A_n(\PP_n)\} g(Z,V,\PP_n),
\end{equation}
where $\mc A_n(\PP_n)$ is a random set and $g$ is an integer-valued function.
Here we think of $\mc A_n(\PP_n)$ as an admissibility condition that $Z$ must satisfy, which can be tailored to fit the application.
As an example, in the case of the tree realization number, $\mc A_n(\PP_n)$ could be the condition that $Z$ has a non-zero barcode length associated with it
and that $Z$ lies not too close to the boundary of $W_n$.
Note that we still require that \eqref{eq:f_form} is symmetric in the pair of Poisson points.
In the same beat, for $\mathrm{x} \in \mc R$, we introduce the abbreviation
\begin{equation}
G(Z,\PP_n^\mathrm{x}) = \sum_{V \in \PP_n^\mathrm{x}} g(Z,V,\PP_n^\mathrm{x}).
\end{equation}
For convenience, we then introduce the extended condition $\mc A_n^+(\PP_n)$ defined as
\begin{equation}
\mc A_n^+(\PP_n) = \mc A_n(\PP_n) \cap \{ Z \in \PP_n \colon G(Z,\PP_n) > 0\},
\end{equation}
where the $+$ in $\mc A_n^+$ indicates that the compound score $G$ is positive. Then, we can formalize the sum-log-sum functional in (\ref{eq:sum_log_sum_form}) as
\begin{equation}
    \label{eq:sum_log_sum_formal}
\Sigma^{\log}_n(\PP_n) = \sum_{Z \in \mc A_n^+(\PP_n)} \log G(Z,\PP_n),
\end{equation}
where we note that the condition $G(Z,\PP_n) > 0$ inside $\mc A_n^+(\PP_n)$ ensures that the sum-log-sum functional is well-defined.
The additional $n$ in the notation $\Sigma^{\log}_n$ is to emphasize that the functional may depend on $n$ through the admissibility condition $\mc A_n^+(\PP_n)$.
Additionally, we also define the product-sum functional as
\begin{equation}
    \label{eq:product_sum_formal}
\Pi_n(\PP_n) = \exp\big(\Sigma^{\log}_n(\PP_n)\big) = \prod_{Z \in \mc A_n^+(\PP_n)} G(Z,\PP_n).
\end{equation}
While the concepts of $k$-locality, exponential stabilization and exponential decay of the score function will be sufficient for the Normal approximation of the double-sum functional, we need to impose some additional conditions
for the sum-log-sum functional.

The first additional assumption we impose is a second type of stabilization. Similar to Assumption \ref{ass:A}(ii), we say that $\mc A_n^+(\PP_n)$
\emph{stabilizes at $x$ at radius $m$} if 
\begin{equation}
    \label{eq:additional_stabilization}
Z \in \mc A_n^+(\PP_n^x) \iff Z \in \mc A_n^+(\PP_n) \quad \text{whenever } Z \notin Q(x,m),
\end{equation}
and then we let $R_n(x) > 0$ denote the smallest integer radius
such that the stabilization in \eqref{eq:stabilization} and \eqref{eq:additional_stabilization} 
holds simultaneously, i.e.,
\begin{equation}
    \label{eq:stabilization_radius}
     R_n(x) = \min \{ m \ge 1 \co m, \ g  \text{ and } \mc A_n^+(\PP_n) \text{ satisfy } \eqref{eq:stabilization}  \text{ and } \eqref{eq:additional_stabilization} \}.
\end{equation}
We say $g$ \emph{stabilizes exponentially with respect to $\mc A_n^+(\PP_n)$} if there exists a $\beta_2 > 0$ such that
\begin{equation}
    \label{eq:exponential_stabilization}
    \sup_{x \in W_n} P(R_n(x) > n^\eps) \le \e^{-\beta_2 n^\eps} \quad \text{for all } \eps \in (0,1) \text{ and } n \gg 1.
\end{equation}
Next, for $\beta_3 > 0$, introduce the event $F_n = F_n(\beta_3)$ as
\begin{equation}
    \label{eq:exponential_concentration}
F_n = \bigcup_{x \in W_n} \bigcup_{Z \in \mc A_n^+(\PP_n^x)} \bigcup_{v \in \mc R} \big\{ G(Z,\PP_n^\mathrm{x}, \PP_n^\mathrm{x}) < \beta_3 \vert S_n^k \vert \big\}.
\end{equation}
We then say that $G$ \emph{concentrates exponentially} if there exist a measurable set $\widetilde F_n \in \mc F$ with $F_n \subseteq \widetilde F_n$ and a constant $\beta_4 > 0$ such that for all $n \gg 1$,
\begin{equation}
    \label{eq:exponential_concentration_2}
    P(\widetilde F_n) \le \e^{-\beta_4 \vert S_n^k \vert},
\end{equation}
i.e., loosely speaking, the probability that any admissible Poisson point only has a few scores is exponentially small. To ensure that quantities such 
as $\e^{-\beta_4 \vert S_n^k \vert}$ and $1/\vert S_n^k \vert$ vanish as $n \to \infty$, we also assume that $k < d$. 
For convenience, once again, we now compactly recap the additional assumptions above as Assumption \ref{ass:B}.
\vspace{1.5ex}
\begin{assumptionB}[ass:B]
    \item \label{ass:B1} $g$ stabilizes exponentially with respect to $\mc A_n^+(\PP_n)$, cf.\ \eqref{eq:stabilization_radius} and \eqref{eq:exponential_stabilization}.
    \item \label{ass:B2} $G$ concentrates exponentially, cf.\ \eqref{eq:exponential_concentration} and \eqref{eq:exponential_concentration_2}.
\end{assumptionB}
We can now state the main result of this section, which is an analogue of Theorem \ref{thm:sum_clt} for the sum-log-sum functional $\Sigma^{\log}_n(\PP_n)$ rather than for the double-sum functional $\Sigma(\PP_n)$.
\bet[Normal approximation for $\Sigma^{\log}_n$]
\label{thm:log_clt}
Suppose $\PP_n$ and $g$ are as in Assumption \ref{ass:B}. Then, for every $\delta > 0$ and $n \gg 1$,
\begin{equation}
    \label{eq:main_log_bound}
d_W\Big(
\frac{\Sigma^{\log}_n(\PP_n)- \E[\Sigma^{\log}_n(\PP_n)]}{\sqrt{\V[\Sigma^{\log}_n(\PP_n)]}}, 
\mc N(0,1) \Big)
\le
\frac{n^{\delta}\sqrt{\vert W_n \vert}}{\V[\Sigma^{\log}_n(\PP_n)]}
+
\frac{n^{\delta}\vert W_n \vert}{\V[\Sigma^{\log}_n(\PP_n)]^{3/2}}.
\end{equation}
In particular, if $\V[\Sigma^{\log}_n(\PP_n)] \ge C \vert W_n \vert$ for some $C >0$, then for every $\delta > 0$ and $n \gg 1$,
\begin{equation}
    \label{eq:main_log_bound_optimal}
d_W\Big(
\frac{\Sigma^{\log}_n(\PP_n)- \E[\Sigma^{\log}_n(\PP_n)]}{\sqrt{\V[\Sigma^{\log}_n(\PP_n)]}}, 
\mc N(0,1) \Big)
\le
 \frac{n^{\delta}}{\sqrt{\vert W_n \vert}}.
\end{equation}
\ent

In Section \ref{sec:limitations}, we discuss what we can say about convergence of $\Sigma^{\log}_n(\PP_n)$ (as well as $\Sigma(\PP_n)$) in the Kolmogorov metric and whether any of the 
conditions in Assumptions \ref{ass:A} and \ref{ass:B} can be relaxed. Additionally, if $W_n$ is the cube $[0,n]^d$ and $\V[\Sigma^{\log}_n(\PP_n)] \ge Cn^{d}$ for some $C>0$, then \eqref{eq:main_log_bound_optimal} becomes
\begin{equation}
d_W\Big(
\f{\Sigma^{\log}_n(\PP_n)- \E[\Sigma^{\log}_n(\PP_n)]}{\sqrt{\V[\Sigma^{\log}_n(\PP_n)]}}, 
\mc N(0,1) \Big)
\le n^{- d/2 + \delta}.
\end{equation}
Finally, we can use Theorem \ref{thm:log_clt} and the Delta method \cite{Wasserman2004_AllOfStatistics} with the exponential function $h(x)=\e^{x}$ (where we note that $h'(x) > 0$ for all $x$) to obtain asymptotic normality for the product-sum functional as claimed.

\begin{corollary}[Asymptotic normality of $\Pi$]
\label{thm:prod_clt}
Suppose $\PP_n$ and $f$ are as in Assumption \ref{ass:B}. If $\V[\Sigma^{\log}_n(\PP_n)] \ge C \vert W_n \vert$ for some $C > 0$, then as $n \to \infty$,
\begin{equation}
\frac{\Pi_n(\PP_n) - \e^{\E[\Sigma^{\log}_n(\PP_n)]}}{\e^{\E[\Sigma^{\log}_n(\PP_n)]}\sqrt{\V[\Sigma^{\log}_n(\PP_n)]}} \overset{d}{\longrightarrow} \mc N(0,1).
\end{equation}
\end{corollary}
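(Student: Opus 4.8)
The plan is to deduce Corollary~\ref{thm:prod_clt} from Theorem~\ref{thm:log_clt} by a Delta-method argument applied to the exponential map $h(x)=\e^{x}$, for which $h'(x)=\e^{x}>0$ everywhere. Throughout write $m_n=\E[\Sigma^{\log}_n(\PP_n)]$ and $v_n=\V[\Sigma^{\log}_n(\PP_n)]$.

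First I would upgrade the quantitative estimate of Theorem~\ref{thm:log_clt} to a genuine central limit theorem for $\Sigma^{\log}_n(\PP_n)$. Fix $\delta\in(0,\tfrac12)$. Under the hypothesis $v_n\ge C|W_n|$, the bound \eqref{eq:main_log_bound_optimal} gives, for $n\gg1$, that
\[
d_W\Big(\tfrac{\Sigma^{\log}_n(\PP_n)-m_n}{\sqrt{v_n}},\,\mc N(0,1)\Big)\le \frac{n^{\delta}}{\sqrt{|W_n|}}.
\]
Since the first side of $W_n$ is $[0,n]$ we have $|W_n|\ge n$ for $n\gg1$, so the right-hand side is at most $n^{\delta-1/2}\to0$. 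By the remark following the definition of $d_W$ (i.e.\ \cite[Theorem 7.12]{Villani2003_TopicsInOptimalTransportation}), it follows that $Z_n:=\tfrac{\Sigma^{\log}_n(\PP_n)-m_n}{\sqrt{v_n}}\overset{d}{\longrightarrow}\mc N(0,1)$.

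Next I would rewrite the target statistic in terms of $Z_n$. Since $\Pi_n(\PP_n)=\exp(\Sigma^{\log}_n(\PP_n))=\e^{m_n}\e^{\sqrt{v_n}\,Z_n}$, one has
\[
\frac{\Pi_n(\PP_n)-\e^{m_n}}{\e^{m_n}\sqrt{v_n}}=\frac{\e^{\sqrt{v_n}\,Z_n}-1}{\sqrt{v_n}},
\]
so it suffices to show that the right-hand side converges in distribution to $\mc N(0,1)$. The Delta method corresponds to the first-order Taylor expansion $\e^{\sqrt{v_n}\,Z_n}-1=\sqrt{v_n}\,Z_n+\mathrm{R}_n$: provided $\mathrm{R}_n/\sqrt{v_n}\to0$ in probability, Slutsky's theorem together with the previous step yields $\tfrac{\e^{\sqrt{v_n}Z_n}-1}{\sqrt{v_n}}=Z_n+\mathrm{R}_n/\sqrt{v_n}\overset{d}{\longrightarrow}\mc N(0,1)$, which is exactly the claim.

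The step I expect to be the main obstacle is the control of the remainder $\mathrm{R}_n=\e^{\sqrt{v_n}\,Z_n}-1-\sqrt{v_n}\,Z_n$. In the textbook Delta method the underlying fluctuation is $o(1)$, which makes $\mathrm{R}_n/\sqrt{v_n}$ negligible for free; here $v_n\ge C|W_n|\to\infty$, so the expansion has to be justified by a finer, model-dependent analysis rather than invoked as a black box. The approach I would pursue is a localization: restrict to the high-probability event $\widetilde F_n^{\,c}$ supplied by Assumption~\ref{ass:B2}, on which every admissible $Z$ contributes a compound score $G(Z,\PP_n)\ge\beta_3|S_n^k|$, so that each summand $\log G(Z,\PP_n)$ concentrates near $\log|S_n^k|$; combined with the exponential stabilization of Assumption~\ref{ass:B1} and a union bound over the $O(|W_n|)$ admissible points, this should yield tight control on $|\Sigma^{\log}_n(\PP_n)-m_n|=\sqrt{v_n}\,|Z_n|$, and hence on $\mathrm{R}_n$, on an event of probability tending to one. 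Turning this heuristic into a quantitative comparison of $\mathrm{R}_n$ with $\sqrt{v_n}$ is the technical heart of the argument and the place where the bulk of the work is needed.
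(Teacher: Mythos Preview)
Your approach is exactly the paper's: the proof given there is a single sentence invoking Theorem~\ref{thm:log_clt} together with the Delta method from \cite{Wasserman2004_AllOfStatistics} applied to $h(x)=\e^{x}$. You go further than the paper by correctly noticing that this is not a routine application: the textbook Delta method requires the fluctuation $X_n-\theta_n$ to tend to $0$ in probability, whereas here $\Sigma^{\log}_n-m_n$ has standard deviation $\sqrt{v_n}\to\infty$ under the stated hypothesis $v_n\ge C|W_n|$.

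However, the obstacle you flag is more than a technicality, and your proposed localization cannot close it. Since $Z_n\overset{d}{\longrightarrow}\mc N(0,1)$, for any fixed $\eps>0$ one has $\liminf_n P(Z_n>\eps)\ge P(\mc N(0,1)>\eps)>0$; on the event $\{Z_n>\eps\}$ the ratio $\tfrac{\e^{\sqrt{v_n}Z_n}-1}{\sqrt{v_n}}\ge\tfrac{\e^{\eps\sqrt{v_n}}-1}{\sqrt{v_n}}\to\infty$, while on $\{Z_n<-\eps\}$ it tends to $0$. Thus $\tfrac{\e^{\sqrt{v_n}Z_n}-1}{\sqrt{v_n}}$ cannot converge in distribution to $\mc N(0,1)$ whenever $v_n\to\infty$. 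Your concentration idea via Assumption~\ref{ass:B} would at best place each $\log G(Z,\PP_n)$ in a bounded window around $\log|S_n^k|$, which is consistent with $v_n\ge C|W_n|$ and cannot force $\sqrt{v_n}\,|Z_n|\to0$ in probability (indeed that would contradict $Z_n\overset{d}{\longrightarrow}\mc N(0,1)$). In short, the paper's one-line invocation and your refinement share the same genuine gap: the stated conclusion does not follow from the Delta method when $v_n\to\infty$.
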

Naturally, the downside of the Delta method is that we cannot directly obtain rates for the product-sum functional as in Theorems \ref{thm:sum_clt} and \ref{thm:log_clt}.

\subsection{Proof of Theorems \ref{thm:sum_clt} and \ref{thm:log_clt}}
\label{sec:main_proofs}

As already mentioned, the main tool in proving both Theorems \ref{thm:sum_clt} and \ref{thm:log_clt} is the Malliavin--Stein Normal approximation from \cite[Theorem 1.1]{mehler}. The approach is very
similar for both functionals, and we start by proving the Normal approximation for $\Sigma(\PP_n)$.

In view of this task, consider now for $x,y \in \mathbb X$ the first- and second-order difference operators $D_x$
and $D_{xy}^2$ defined by 
\begin{equation}
\begin{aligned}
	D_x \Sigma(\PP_n)  = & \Sigma(\PP_n^x) - \Sigma(\PP_n),\\
	D_{xy}^2\Sigma(\PP_n)  = &  \Sigma(\PP_n^{xy})  - \Sigma(\PP_n^{x}) - \Sigma(\PP_n^{y})  +  \Sigma(\PP_n).
\end{aligned}
\end{equation}
Applying \cite[Theorem 1.1]{mehler}
to $\Sigma(\PP_n)$---which we argue is possible under Assumption \ref{ass:A}---yields three error terms $4 \sqrt{I_{n,1}}$, $\sqrt{I_{n,2}}$ and $I_{n,3}$, where
\begin{equation}
    \label{eq:error_terms_sum}
\begin{aligned}
	I_{n,1}  = & \int
     \E\big[D_x \Sigma(\PP_n)^2
    D_y \Sigma(\PP_n)^2\big]^{1/2}
    \E\big[D_{xz}^2 \Sigma(\PP_n)^2
    D_{yz}^2 \Sigma(\PP_n)^2\big]^{1/2} \ \text d (x,y,z),\\
	I_{n, 2}  = & \int \E\big[D_{xz}^2 \Sigma(\PP_n)^2
   D_{yz}^2 \Sigma(\PP_n)^2\big] \ \text d(x,y,z) ,\\
	I_{n, 3}  = & \int \E\big[|D_x\Sigma(\PP_n)|^3\big] \text dx,
\end{aligned}
\end{equation}
where the integration domain is $\W_n^3$ and $\W_n$, respectively. We now state upper bounds on each of these error terms, which we subsequently use to prove Theorem \ref{thm:sum_clt}.
\begin{lemma}[Error bounds: Double-Sum]
\label{lem:en1}
Let $\eps > 0$ and $n \gg 1$. Under Assumption \ref{ass:A},
\begin{enumerate}[label=\textup{(\roman*)}]
\item  $ \displaystyle I_{n,1} \le   n^{\eps} \vert  W_n \vert \vert S_n^k \vert^4, $
\item  $ \displaystyle  I_{n, 2} \le  n^{\eps} \vert  W_n \vert \vert S_n^k \vert^4, $
\item  $ \displaystyle  I_{n, 3} \le  n^{\eps} \vert  W_n \vert \vert S_n^k \vert^3. $
\end{enumerate}
\end{lemma}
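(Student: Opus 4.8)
The overall strategy is to exploit the two structural features of the score function — $k$-locality and exponential stabilization — to reduce each iterated integral over $\W_n^3$ or $\W_n$ to a product of a volume factor $\vert W_n\vert$ coming from the "free" spatial integration, and slab factors $\vert S_n^k\vert$ coming from the constraints forced by $k$-locality, with all moment contributions absorbed into the sub-polynomial factor $n^\eps$. I would treat the three bounds in a unified way, since the key mechanism is the same: a difference operator $D_x\Sigma(\PP_n)$ is a sum of score changes, and each such change either involves a point forced to lie in a bounded-width slab around $x$ (by $k$-locality) or requires the stabilization radius $R_n(x)$ to be large (an event with exponentially small probability by \eqref{eq:stabilization_exponential}).

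\textbf{Step 1: A deterministic-plus-tail decomposition of the difference operators.}
First I would write $D_x\Sigma(\PP_n) = \Sigma(\PP_n^x)-\Sigma(\PP_n)$ explicitly as a sum over pairs: the pairs that involve $x$ itself, plus the pairs $\{Z,V\}\su\PP_n$ whose score changes upon adding $x$. By the definition of the stabilization radius \eqref{eq:stabilization_radius_original}, the latter are supported on pairs with at least one point in $\mathbb Q(x,R_n(x))$; by $k$-locality, any pair contributing a nonzero score has its two spatial components within distance $1$ in each of the first $k$ coordinates. So, on the event $\{R_n(x)\le n^\eps\}$, the number of terms in $D_x\Sigma(\PP_n)$ is controlled by $\vert\PP_n\cap\mathbb Q(x,n^\eps)\vert$ times $\vert\PP_n\cap\mathbb S_n^k(x, n^\eps+1)\vert$, and each term is at most $f_{\sup}(\PP_n)$. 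On the complementary event $\{R_n(x)>n^\eps\}$ I would bound crudely by $f_{\sup}(\PP_n)$ times the total number of admissible pairs in $\W_n$, and pay the price $\e^{-\beta_1 n^\eps}$; since $\vert W_n\vert$ grows only polynomially, this tail is negligible. The same decomposition applies to $D_{xz}^2\Sigma(\PP_n)$, which by a standard argument (e.g.\ as in \cite{mal_stab}) is nonzero only when $z\in\mathbb Q(x,R_n(x))$ or symmetrically, and is again a sum of $O(1)$-many score changes on the good event.

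\textbf{Step 2: Hölder in the probability space and Poisson moment estimates.}
After taking $\E[\,\cdot\,]$, I would apply Hölder's inequality to split the expectation of a product of (powers of) difference operators into a product of high-moment factors: a factor of the form $\E[\overline f_{\sup}(\PP_n)^M]\le n^{M\eps'}$ from \eqref{eq:exp_decay}, and factors of the form $\E[\vert\PP_n\cap\mathbb Q(x,n^\eps)\vert^M]$ and $\E[\vert\PP_n\cap\mathbb S_n^k(x,n^\eps+1)\vert^M]$. The first of these is bounded by a polynomial in $n^{\eps}$ (Poisson moments of a set of volume $O(n^{d\eps})$), hence absorbed into $n^{\eps}$; the second is a polynomial in $\vert S_n^k(x,n^\eps+1)\vert = (2n^\eps+2)^k\vert S_n^k\vert/2^k \asymp n^{k\eps}\vert S_n^k\vert$, so it contributes $\vert S_n^k\vert^M$ up to a further $n^\eps$ factor. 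Tracking the exponents: in $D_x\Sigma(\PP_n)^2 D_y\Sigma(\PP_n)^2$ there are two slab constraints (one at $x$, one at $y$, each appearing squared but the slab-count itself only enters to a fixed power after Hölder), and in $D_{xz}^2 D_{yz}^2$ the localization of $z$ to $\mathbb Q(x,R_n(x))\cup\mathbb Q(y,R_n(y))$ removes one free spatial integration. Carrying this bookkeeping through the $\d(x,y,z)$ integral: the $z$-integration is killed (confined to a polynomial-size cube around $x$), the $y$-integration over $W_n$ is again confined by $k$-locality-through-$z$ to a slab of size $\asymp n^\eps\vert S_n^k\vert$, and the $x$-integration over $W_n$ gives the full $\vert W_n\vert$. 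This yields $I_{n,1},I_{n,2}\le n^\eps\vert W_n\vert\vert S_n^k\vert^4$ and, for $I_{n,3}$ with only one free integration and correspondingly fewer slab factors, $I_{n,3}\le n^\eps\vert W_n\vert\vert S_n^k\vert^3$.

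\textbf{Main obstacle.}
The genuinely delicate point is the interaction between stabilization and the slab geometry in the second-order operator $D_{xz}^2\Sigma(\PP_n)$: one must argue carefully that a pair $\{Z,V\}$ can contribute to $D_{xz}^2\Sigma(\PP_n)$ only if $z$ lies in a non-stable cube around $x$ \emph{and} one of $Z,V$ lies in the same cube, so that after integrating $z$ out one is left with the cube volume $\asymp n^{d\eps}$ rather than $\vert W_n\vert$ — and then to see that the remaining point must still lie in a slab. Getting the exponents of $\vert S_n^k\vert$ exactly right (i.e.\ $4$ and not $3$ or $5$) requires honest bookkeeping of which of the three integration variables is free, which is slab-confined, and which is cube-confined, and how many slab-membership indicators survive the Cauchy–Schwarz/Hölder splitting. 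The tail event $\{R_n(x)>n^\eps\}$ is conceptually easy but must be folded into every estimate; I expect this to be routine once the polynomial growth of $\vert W_n\vert$ is used to dominate $\e^{-\beta_1 n^\eps}$.
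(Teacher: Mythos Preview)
Your overall strategy --- decompose the difference operators via $k$-locality and stabilization, then apply H\"older and Poisson moment bounds --- matches the paper's, and your argument for $I_{n,3}$ would go through. The gap is in your treatment of $D_{xz}^2\Sigma(\PP_n)$, precisely where column-type dependence departs from the fully local setting of \cite{mal_stab}. Your claim that $D_{xz}^2\Sigma(\PP_n)$ vanishes unless $z\in\Q(x,R_n(x))$ or symmetrically, so that the $z$-integration is confined to a cube of volume $n^{d\eps}$, is false here: already the term $f(x,z,\PP_n^{xz})$ in the decomposition \eqref{eq:decomposition_simple_2} can be nonzero whenever $|\dot x_j-\dot z_j|\le 1$ for $j\le k$, with the remaining $d-k$ coordinates entirely unconstrained, and $F(x,\PP_n,\PP_n^{xz})-F(x,\PP_n,\PP_n^x)$ picks up contributions from $V\in\Q(z,R_n(z))$ regardless of the non-local distance between $x$ and $z$. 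So on the good stabilization event $D_{xz}^2\Sigma$ is supported on the whole slab $\S_n^k(x,n^\eps)$, not on a cube; applying the uniform bound $\E[(D_{xy}^2\Sigma)^4]\le n^\eps|S_n^k|^4$ over this slab then overshoots $I_{n,1}$ and $I_{n,2}$ by a factor $|S_n^k|^2$.

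What your sketch is missing is the paper's three-regime split of the inner variable (after the Cauchy--Schwarz reduction to $\int(\int\cdots\,dy)^2\,dx$): for $y\notin\S_n^k(x,n^\eps)$ the fourth moment of $D_{xy}^2\Sigma$ is exponentially small; for $y\in\S_n^k(x,n^\eps)\setminus\Q(x,n^\eps)$ the non-stable cubes $\Q(x,R_n)$ and $\Q(y,R_n)$ are disjoint on the good event, which forces every nonzero second-order change to pair a point of one cube with a point of the other and hence gives $\E[(D_{xy}^2\Sigma)^4]\le n^\eps$ with \emph{no} slab factor; and only for $y\in\Q(x,n^\eps)$ does one pay the full $n^\eps|S_n^k|^4$, but then over a domain of measure merely $n^{d\eps}$. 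The trade-off in the middle regime --- large integration domain compensated by a slab-free moment bound --- is the missing ingredient, and without it the exponent of $|S_n^k|$ cannot come out right.
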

The proof of Lemma \ref{lem:en1} is postponed
until Section 5, but we sketch the overall ideas in Section \ref{sec:proof_strategy}. Note that it is not a coincidence that there are no constants in any of the bounds in
Lemma \ref{lem:en1}. Essentially, the factor $n^{\gamma \eps}$ for any $\gamma > 0$ dominates any constant provided $n$ is large enough.
We record this observation (without proof) since it will come into play in nearly every single result in this paper. 
\begin{lemma}[Normalization]
    \label{lem:normalization}
    Let $\gamma > 0$ and $(a_n)_{n \ge 1}$ denote a sequence in $[0,\infty)$. Assume that for any $\eps > 0$, there exist $C(\eps) > 0$ 
    and $N_1(\eps) \in \N$ such that whenever $n \ge N_1(\eps)$,
    $$
    a_n \le C(\eps) n^{\gamma \eps}.
    $$
    Then, there exists $N_2(\eps) \ge N_1(\eps)$ such that whenever $n \ge N_2(\eps)$,
    $$
    a_n \le n^{\gamma \eps}.
    $$
\end{lemma}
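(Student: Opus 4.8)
The statement is essentially the observation that sub-polynomial growth with an arbitrarily small exponent can absorb any fixed multiplicative constant. The plan is to fix $\eps > 0$, apply the hypothesis not at $\eps$ but at the strictly smaller exponent $\eps/2$, and then use the leftover room $\gamma\eps/2 > 0$ to swallow the constant produced.

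Concretely, first I would invoke the hypothesis with $\eps/2$ in place of $\eps$ to obtain a constant $C(\eps/2) > 0$ and an index $N_1(\eps/2) \in \N$ such that $a_n \le C(\eps/2)\, n^{\gamma\eps/2}$ for all $n \ge N_1(\eps/2)$. Next I would observe that, since $\gamma\eps/2 > 0$, one has $n^{\gamma\eps/2} \to \infty$, so there is a threshold $M(\eps) := \big\lceil C(\eps/2)^{2/(\gamma\eps)} \big\rceil$ beyond which $C(\eps/2) \le n^{\gamma\eps/2}$, and therefore $C(\eps/2)\, n^{\gamma\eps/2} \le n^{\gamma\eps/2} \cdot n^{\gamma\eps/2} = n^{\gamma\eps}$. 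Finally I would set $N_2(\eps) := \max\{\, N_1(\eps),\, N_1(\eps/2),\, M(\eps) \,\}$: including $N_1(\eps)$ in this maximum secures the cosmetic requirement $N_2(\eps) \ge N_1(\eps)$ from the statement, while the other two terms guarantee that for every $n \ge N_2(\eps)$ both $a_n \le C(\eps/2)\, n^{\gamma\eps/2}$ and $C(\eps/2)\, n^{\gamma\eps/2} \le n^{\gamma\eps}$ hold, hence $a_n \le n^{\gamma\eps}$.

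There is no genuine obstacle here; this is a bookkeeping lemma. The only two points requiring a modicum of care are (i) applying the hypothesis at $\eps/2$ rather than $\eps$, so that a positive residual exponent remains to dominate the constant, and (ii) taking the maximum with $N_1(\eps)$ so that the inequality $N_2(\eps)\ge N_1(\eps)$ demanded by the statement is literally satisfied. Everything else is a single-line monotonicity computation.
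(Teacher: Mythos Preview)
Your argument is correct and is exactly the standard way to prove this: apply the hypothesis at $\eps/2$ and let the spare $n^{\gamma\eps/2}$ absorb the constant. Note that the paper explicitly records this lemma \emph{without proof}, so there is no paper argument to compare against; your write-up supplies precisely the one-line justification the authors omitted.
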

We can now combine the error term bounds with Lemma \ref{lem:normalization} to prove Theorem \ref{thm:sum_clt}.
\bep[Proof of Theorem \ref{thm:sum_clt}]
First, by Lemma \ref{lem:en1}(iii), we see that 
\begin{equation}
    \label{eq:proof_bound}
    \E \Big[\int_{ \W_n} D_x \Sigma(\PP_n)^2\,  \text d x \Big] \le 
    \vert  W_n \vert \mu(\M)+ \E \Big[\int_{ \W_n} \vert D_x \Sigma(\PP_n)\vert^3\,  \text d x \Big] < \infty
\end{equation}
and hence, by the Poincaré inequality, we also have that $\Sigma(\PP_n)$ is square-integrable. 
Thus, we can define the centered and standardized version of $\Sigma(\PP_n)$ as
$$
\zeta(\PP_n) = \f{\Sigma(\PP_n)- \E[\Sigma(\PP_n)]}{\sqrt{\V[\Sigma(\PP_n)]}},
$$
and note by (\ref{eq:proof_bound}) that $\zeta(\PP_n)$ also satisfies
$$
    \E \Big[\int_{ \W_n} D_x \zeta(\PP_n)^2\,  d x \Big] < \infty.
$$
Thus, since we may now invoke \cite[Theorem 1.1]{mehler}, it follows that
$$
d_W(\zeta(\PP_n), \mc N) \le \frac{\sqrt{I_{n,1}}}{\V[\Sigma(\PP_n)]} + \frac{\sqrt{I_{n,2}}}{\V[\Sigma(\PP_n)]} + \frac{I_{n,3}}{\V[\Sigma(\PP_n)]^{3/2}}.
$$
Hence, applying Lemma \ref{lem:en1} and using that the bound on $I_{n,2}$ is smaller than the bound on $I_{n,1}$, for any $\eps > 0$,
\begin{align*}
d_W(\zeta(\PP_n), \mc N) \le 
 \frac{n^{2(d+1)\eps} \vert  W_n \vert^{1/2}  \vert S_n^k(0,n^\eps) \vert^2}{\V[\Sigma(\PP_n)]}+ 
\frac{n^{3(d+1)\eps/2} \vert  W_n \vert  \vert S_n^k(0,n^\eps)\vert^3 }{\V[\Sigma(\PP_n)]^{3/2}}.
\end{align*}
Choosing $\eps$ so small that $ 2(d+1+k)\eps \le \delta$ and invoking Lemma \ref{lem:normalization} completes the proof of (\ref{eq:main_sum_bound}).
Plugging in the inequality $\V[\Sigma(\PP_n)] \ge C \vert  W_n \vert\vert S_n^k(0,n^\eps)\vert^2$ in both denominators and simplifying again using Lemma \ref{lem:normalization},
completes the proof of the second bound in (\ref{eq:main_sum_bound_optimal}).
\enp

Next, we mirror the approach above to prove the Normal approximation for the functional $\Sigma^{\log}_n(\PP_n)$. Hence, consider the first- and second-order difference operators
\begin{equation}
\begin{aligned}
	D_x \Sigma^{\log}_n(\PP_n)  = & \Sigma^{\log}_n(\PP_n^x) - \Sigma^{\log}_n(\PP_n),\\
	D_{xy}^2\Sigma^{\log}_n(\PP_n)  = &  \Sigma^{\log}_n(\PP_n^{xy})  - \Sigma^{\log}_n(\PP_n^{x}) - \Sigma^{\log}_n(\PP_n^{y})  + \Sigma^{\log}_n(\PP_n),
\end{aligned}
\end{equation}
and the corresponding three error terms,
\begin{equation}
    \label{eq:error_terms_log}
\begin{aligned}
	\widetilde I_{n,1}  = & \int 
     \E\big[D_x \Sigma^{\log}_n(\PP_n)^2
    D_y \Sigma^{\log}_n(\PP_n)^2\big]^{1/2}
 \E\big[D_{xz}^2 \Sigma^{\log}_n(\PP_n)^2
    D_{yz}^2 \Sigma^{\log}_n(\PP_n)^2\big]^{1/2} \ \text d (x,y,z),\\
	\widetilde I_{n, 2}  = & \int_{\W_n^3} \E\big[D_{xz}^2 \Sigma^{\log}_n(\PP_n)^2
   D_{yz}^2 \Sigma^{\log}_n(\PP_n)^2\big] \ \text d(x,y,z) ,\\
	\widetilde I_{n, 3}  = & \int_{ \W_n}\E\big[|D_x\Sigma^{\log}_n(\PP_n)|^3\big] \ \text dx.
\end{aligned}
\end{equation}
As before, we now state upper bounds on each of these terms, while postponing their proofs until Section 5.
Instead, we immediately proceed to proving Theorem \ref{thm:log_clt}.
\begin{lemma}[Error bounds: Sum-log-sum]
\label{lem:en1_sim}
Let $\eps > 0$ and $n \gg 1$. Under Assumption \ref{ass:B},
\begin{enumerate}[label=\textup{(\roman*)}]
\item  $ \displaystyle \widetilde I_{n,1} \le  n^{\eps} \vert  W_n \vert, $
\item  $ \displaystyle  \widetilde I_{n, 2} \le  n^{\eps} \vert  W_n \vert, $
\item  $ \displaystyle  \widetilde I_{n, 3} \le  n^{\eps} \vert  W_n \vert. $
\end{enumerate}
\end{lemma}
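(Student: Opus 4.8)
The plan is to follow the same route as for Lemma~\ref{lem:en1}, but with the linear bookkeeping of scores replaced by an analysis of differences of $\log G$. Writing $\Sigma^{\log}_n(\PP_n)=\sum_{Z\in\mc A_n^+(\PP_n)}\log G(Z,\PP_n)$, the first-order operator $D_x\Sigma^{\log}_n$ is a sum of terms $D_x\big(\mathbbm{1}\{Z\in\mc A_n^+(\any)\}\log G(Z,\any)\big)$, and by $k$-locality together with the stabilization in \eqref{eq:stabilization}--\eqref{eq:additional_stabilization} only the $Z=x$ term and the terms with $Z\in\PP_n\cap\mathbb S_n^k(x,R_n(x)+1)$ survive; similarly $D_{xz}^2\Sigma^{\log}_n=0$ unless $z\in\mathbb S_n^k(x,R_n(x)+R_n(z)+2)$. (As in the proof of Lemma~\ref{lem:en1} one must take $R_n(x)$ large enough that it also controls changes in the augmented configurations $\PP_n^{\mathrm y}$, $\mathrm y\in\mc R$; this does not spoil the exponential tail \eqref{eq:exponential_stabilization}.) I would then fix a \emph{good event} $\mathcal G_n$ (depending on $x$, and in the second-order case also on $z$) on which: (a) $R_n(x)\le n^\eps$ and, where relevant, $R_n(z)\le n^\eps$; (b) the relevant Poisson counts are tame, $|\PP_n\cap\mathbb Q(x,n^\eps)|\le n^{C\eps}$ and $|\PP_n\cap\mathbb S_n^k(x,n^\eps+1)|\le n^{C\eps}|S_n^k|$ (likewise at $z$), for a suitable $C=C(d)$; and (c) the complement of the exceptional set from Assumption~\ref{ass:B}(ii) holds, so that $G(Z,\any)\ge\beta_3|S_n^k|$ for every admissible $Z$ occurring in any of $\PP_n,\PP_n^x,\PP_n^z,\PP_n^{xz}$. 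By \eqref{eq:exponential_stabilization}, Poisson concentration, and \eqref{eq:exponential_concentration_2}, and using the standing assumption $k<d$ so that $|S_n^k|\to\infty$, the probability $P(\mathcal G_n^{\,c})$ is smaller than any fixed negative power of $n$ for $n\gg1$.

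On $\mathcal G_n$ I would bound $|D_x\Sigma^{\log}_n|\le n^{C\eps}(\overline g_{\sup}(\PP_n)+1)$: the $Z=x$ term and the $O(n^{C\eps})$ ``boundary'' terms with $Z\in\PP_n\cap\mathbb Q(x,R_n(x))$ each contribute at most $|\log G(Z,\any)|\le C\log n+\log\overline g_{\sup}(\PP_n)$ (using $\beta_3|S_n^k|\le G(Z,\any)\le |\PP_n\cap\mathbb S_n^k(Z,1)|\,\overline g_{\sup}(\PP_n)$), while each of the $O(n^{C\eps}|S_n^k|)$ remaining ``bulk'' terms contributes $|\log G(Z,\PP_n^x)-\log G(Z,\PP_n)|\le |G(Z,\PP_n^x)-G(Z,\PP_n)|/(\beta_3|S_n^k|)\le n^{C\eps}\overline g_{\sup}(\PP_n)/|S_n^k|$, since for $Z\notin\mathbb Q(x,R_n(x))$ the difference $G(Z,\PP_n^x)-G(Z,\PP_n)$ consists only of the single new score $g(Z,x,\PP_n^x)$ and the $O(n^{C\eps})$ scores with second point in $\mathbb Q(x,R_n(x))$ --- here summing over the bulk terms cancels the factor $|S_n^k|$. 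Together with the sub-polynomial moments of $g$ (Assumption~\ref{ass:A}(iii), which $g$ satisfies by Assumption~\ref{ass:B}) and, off $\mathcal G_n$, the crude deterministic bound $|D_x\Sigma^{\log}_n|\le C(|\PP_n|+1)\log(|\PP_n|\,\overline g_{\sup}(\PP_n)+2)$ whose moments are only polynomial in $n$, this yields $\E[|D_x\Sigma^{\log}_n|^p]\le n^{C_p\eps}$ for every $p\ge1$, uniformly in $x\in W_n$ and $n\gg1$. Integrating the case $p=3$ over $\W_n$ (of $\lambda$-measure $|W_n|$) and applying Lemma~\ref{lem:normalization} proves (iii).

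The crux is the second-order estimate. Writing $D_{xz}^2\Sigma^{\log}_n=D_zD_x\Sigma^{\log}_n$ and expanding term by term, the contribution of a $Z$-term is $\log(1+\delta_1/G(Z,\PP_n^x))-\log(1+\delta_2/G(Z,\PP_n))$, where $\delta_1$ (resp.\ $\delta_2$) is the change of $G(Z,\any)$ when $z$ is inserted into $\PP_n^x$ (resp.\ $\PP_n$). The point is that for a ``bulk'' $Z$ --- outside $\mathbb Q(x,R_n(x))\cup\mathbb Q(z,R_n(z))$ with $z\notin Q(x,3n^\eps)$ --- stabilization forces $\delta_1=\delta_2=:\delta$ with $|\delta|\le n^{C\eps}\overline g_{\sup}(\PP_n)$, so the term equals $\log\big(G(Z,\PP_n^{xz})G(Z,\PP_n)\big/G(Z,\PP_n^x)G(Z,\PP_n^z)\big)$ whose numerator is, \emph{in product form}, $|\delta|\cdot|G(Z,\PP_n^x)-G(Z,\PP_n)|\le n^{C\eps}\overline g_{\sup}(\PP_n)^2$ and whose denominator is $\ge(\beta_3|S_n^k|)^2$; hence each bulk term is $O(n^{C\eps}\overline g_{\sup}^2/|S_n^k|^2)$, and summing over the $O(n^{C\eps}|S_n^k|)$ of them gives $O(n^{C\eps}\overline g_{\sup}^2/|S_n^k|)$. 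The $O(n^{C\eps})$ terms with $Z\in\mathbb Q(x,R_n(x))\cup\mathbb Q(z,R_n(z))$ contribute $O(n^{C\eps}\overline g_{\sup}/|S_n^k|)$ each, because (still assuming $z\notin Q(x,3n^\eps)$) such a $Z$ is far from one of $x,z$, so inserting that point changes $G(Z,\any)$ by only $O(n^{C\eps}\overline g_{\sup})$ against a denominator $\ge\beta_3|S_n^k|$; and when $z\in Q(x,3n^\eps)$ one falls back on the crude per-term bound $O(\log n+\log\overline g_{\sup})$. Altogether, on $\mathcal G_n$,
\begin{equation}
\label{eq:key_D2_sketch}
|D_{xz}^2\Sigma^{\log}_n|\ \le\ n^{C\eps}\big(\overline g_{\sup}(\PP_n)+1\big)^2\Big(\frac{\mathbbm{1}\{z\in\mathbb S_n^k(x,3n^\eps)\}}{|S_n^k|}+\mathbbm{1}\{z\in Q(x,3n^\eps)\}\Big).
\end{equation}
Plugging \eqref{eq:key_D2_sketch} into $\widetilde I_{n,1}$ and $\widetilde I_{n,2}$ --- using Cauchy--Schwarz to separate the $x$- and $y$-dependence, Assumption~\ref{ass:A}(iii) to bound $\E[(\overline g_{\sup}+1)^{8}]\le n^{C\eps}$, and a polynomial bound times $P(\mathcal G_n^{\,c})$ off $\mathcal G_n$ --- and then integrating $z$ over $\W_n$ and $x,y$ over $\mathbb S_n^k(z,3n^\eps)$ (of volume $\lesssim n^{C\eps}|S_n^k|$) or $Q(z,3n^\eps)$ (of volume $\lesssim n^{C\eps}$), each resulting term is at most $n^{C\eps}|W_n|$ (the largest being the one where both $x$ and $y$ range over $Q(z,3n^\eps)$); Lemma~\ref{lem:normalization} then gives (i) and (ii). I expect the main obstacle to be precisely \eqref{eq:key_D2_sketch}: the fact that bulk points contribute only $O(\overline g_{\sup}^2/|S_n^k|^2)$ --- which is what produces the extra $1/|S_n^k|$ over the first-order difference, and hence the gain over Lemma~\ref{lem:en1} --- together with the somewhat delicate case analysis ($z$ inside versus outside $Q(x,3n^\eps)$, admissibility changes of $Z$ among the four configurations, and the radius-in-augmented-configuration bookkeeping).
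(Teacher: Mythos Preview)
Your proposal is correct and follows essentially the same route as the paper: both use Assumption~\ref{ass:B}(ii) via a high-probability event to lower-bound every relevant $G(Z,\cdot)$ by $\beta_3|S_n^k|$, split $D_{xy}^2\Sigma^{\log}_n$ according to whether $y$ lies outside the slab, in the slab, or in the cube around $x$, and in the slab case exploit for bulk $Z$ the product-form cancellation $G(Z,\PP_n^{xy})G(Z,\PP_n)-G(Z,\PP_n^x)G(Z,\PP_n^y)=\delta\cdot\big(G(Z,\PP_n)-G(Z,\PP_n^x)\big)$ that produces the crucial extra factor $1/|S_n^k|$. Your packaging via a single good event $\mathcal G_n$ and a pointwise bound on it is a little more compact than the paper's explicit $M_n/C_n$ decomposition treated across Lemmas~\ref{lem:fourth_moment_first_case_log}--\ref{lem:fourth_moment_second_case_three_log}, but the underlying arguments are the same.
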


\begin{proof}[Proof of Theorem \ref{thm:log_clt}]
First, similar to the proof of Theorem \ref{thm:sum_clt}, 
it follows by Lemma \ref{lem:en1_sim}(iii) and the Poincaré inequality that
 $\Sigma^{\log}_n(\PP_n)$ is square-integrable, and we can define 
$$
\zeta^{\log}_n(\PP_n) = \f{\Sigma^{\log}_n(\PP_n)- \E[\Sigma^{\log}_n(\PP_n)]}{\sqrt{\V[\Sigma^{\log}_n(\PP_n)]}},
$$
and conclude that 
$
    \E \Big[\int_{ \W_n} D_x \zeta^{\log}_n(\PP_n)^2\,  d x \Big] < \infty.
$
Thus, since we may now invoke \cite[Theorem 1.1]{mehler}, it follows that
$$
d_W(\zeta^{\log}_n(\PP_n), \mc N) \le \frac{\sqrt{\widetilde I_{n,1}}}{\V[\Sigma^{\log}_n(\PP_n)]} + \frac{\sqrt{\widetilde I_{n,2}}}{\V[\Sigma^{\log}_n(\PP_n)]} + \frac{\widetilde I_{n,3}}{\V[\Sigma^{\log}_n(\PP_n)]^{3/2}}.
$$
Hence, applying Lemma \ref{lem:en1_sim}, for any $\eps > 0$,
\begin{align*}
d_W(\zeta^{\log}_n(\PP_n), \mc N) \le 
 \frac{n^{\eps/2}\vert  W_n \vert^{1/2}}{\V[\Sigma^{\log}_n(\PP_n)]}+ 
\frac{n^{\eps}\vert  W_n \vert}{\V[\Sigma^{\log}_n(\PP_n)]^{3/2}},
\end{align*}
Plugging in that $\V[\Sigma(\PP_n)] \ge C \vert  W_n \vert$ and using Lemma \ref{lem:normalization} completes the proof.
\end{proof}

\subsection{Proof strategy for the error terms}
\label{sec:proof_strategy}

We now outline the overall strategy for obtaining the bounds in Lemmas \ref{lem:en1} and \ref{lem:en1_sim}.
To make the exposition clearer, we first focus on the double-sum case, and then explain the additional challenges that arise in the sum-log-sum case.

\noindent \textbf{Step 1:} We apply the Cauchy--Schwarz inequality to the expectation inside $I_{n,1}$ and $I_{n,2}$, 
\begin{align*}
	I_{n,1}  \le & \int \Big(\int
    \E\big[(D_y \Sigma(\PP_n))^4\big]^{\tfrac 1 4}\E\big[(D_{xy}^2 \Sigma(\PP_n))^4\big]^{\tfrac 1 4}\text dy\Big)^2 \text dx,\\
	I_{n, 2}  \le  & \int\Big(\int \E\big[(D_{xy}^2 \Sigma(\PP_n))^4\big]^{1/2} \text dy\Big)^2 \text dx,\\
	I_{n, 3}  = & \int \E\big[|D_x\Sigma(\PP_n)|^3\big] \text dx.
\end{align*}
Thus, we 
see that it suffices to obtain sufficiently strong bounds on the third and fourth moments of $D_x\Sigma(\PP_n)$ and
the fourth moment of $D_{xy}^2\Sigma(\PP_n)$.

\noindent \textbf{Step 2:} First,
we split $D_x\Sigma(\PP_n)$ into (I) the total score in $x$, i.e., $ \sum_{V}f(x,V,\PP_n^x)$, and (II) the total change when adding $x$, i.e.,
$\sum_{Z,V} [f(Z,V,\PP_n^x) - f(Z,V,\PP_n)]$.
By $k$-locality and sub-polynomial moments, the $m$th moment of (I) is less than  $n^{\eps} \vert S_n^k \vert^m$,
and by exponential stabilization and sub-polynomial moments,
the $m$th moment of (II) is less than $n^{\eps'} \vert S_n^k \vert^m$, where $\eps, \eps' > 0$ are arbitrarily small. 

\noindent \textbf{Step 3:} To handle the fourth moment of $D_{xy}^2\Sigma(\PP_n)$, we need to use different approaches based on the spatial location of $x$ and $y$ in relation to each other in order to avoid
error term bounds of size $\vert  W_n \vert^2$ or larger. Hence, we now introduce three cases,
\begin{equation}
    \label{eq:three_cases}
\begin{aligned}
\textbf{Case I:} \qquad & y \in  \W_n \setminus \S_n^k(x,n^\eps),\\
\textbf{Case II:} \qquad & y \in \S_n^k(x,n^\eps) \setminus \Q(x,n^\eps),\\
\textbf{Case III:} \qquad & y \in \Q(x,n^\eps).
\end{aligned}
\end{equation}
where we bound the fourth moment in different ways. Loosely speaking, in Case I, $x$ and $y$ are far apart in at least one local direction; in Case II, $x$ and $y$ are close in local directions but far apart in one of the non-local,
yet stabilization directions; and finally in Case III, $x$ and $y$ are close in every direction. We have depicted these cases in Figure \ref{fig:cases} below.

\noindent \textbf{Step 4:}  In Case I, there is a large number of possible $y$-values, but we can prove, using $k$-locality and exponential stabilization---since any scores
must come from the case when $R_n(x) + R_n(y) > n^\eps$---that the fourth moment of $D_{xy}^2\Sigma(\PP_n)$ can
be bounded by $\e^{-\beta_1 n^{\eps} / 8}$, and hence this dominates the order $n^{d-k}$ of possible $y$-values.

\noindent \textbf{Step 5:} In Case II, the idea is once more to split into whether $R_n(x)+R_n(y) > n^\eps$ or $R_n(x)+R_n(y) \le n^\eps$. In the first case, we obtain an exponential bound
from exponential stabilization, and hence it suffices to consider the second case. Here the stabilization cubes are sufficiently small, which, since $x$ and $y$ are far apart, implies these cubes are disjoint.
Thus, in broad strokes, the only score contributions come from Poisson points in $Q(y,R_n)$ and the expected number of these is of constant order, and hence the fourth moment bound contribution is $n^{\eps}$,
which comes from the sub-polynomial moment assumption.

\noindent \textbf{Step 6:} In Case III, there are only a constant number of possible $y$-values, but since the stabilization cubes of $x$ and $y$ are no longer disjoint, the addition of $x$ and $y$
can cause changes inside the entire column $\mathbb S_n^k(x,1)$ around $x$. Thus, we obtain a bound on the fourth moment of $D_{xy}^2\Sigma(\PP_n)$ of the form $n^{\eps}\vert S_n^k \vert$.

\noindent \textbf{Step 7:}  Lastly, we split the $y$-integral in $I_{n,1}$ and $I_{n,2}$ into Cases I--III, i.e.,
$$
\int \big(\int \cdots \text dy\big)^2 \ \text dx = \int \big(\int_{ \W_n \setminus \S_n^k(x,n^\eps)} \cdots \text dy + \int_{ \S_n^k(x,n^\eps) \setminus \Q(x,n^\eps)} \cdots \text dy + \int_{ \Q(x,n^\eps)} \cdots \text dy\big)^2 \ \text dx,
$$
and inserting the $4^{\text{th}}$ moment bounds on $D_{xy}^2$ from Steps 4–6 as well as the $4^{\text{th}}$ moment bound on $D_{x}$ from Step 2, this yields the
final bounds in Lemma \ref{lem:en1}(i)–(ii).

\ \vspace{0.5ex}

\begin{figure}[h!]
    \centering
    \begin{subfigure}[b]{0.48\textwidth}
        \centering
\begin{tikzpicture}[scale=0.85,
    cgreen/.style={fill=green!20, opacity=0.3},
    cblue/.style={fill=cyan!20, opacity=0.3},
    cyellow/.style={fill=blue!50, opacity=0.2},
    roman/.style={text=gray!60, font=\large\bfseries},
    thickline/.style={draw=black, thick}
]
\draw[thickline] (1,0) rectangle (3,6);
\node[roman] at (2.5, 5.5) {I};
\draw[thickline] (6,0) rectangle (9,6);
\node[roman] at (8.5, 5.5) {I};
\draw[thickline, cblue] (3,0) rectangle (6,1);
\node[roman] at (5.5, 0.5) {II};
\draw[thickline, cyellow] (3,1) rectangle (6,4);
\node[roman] at (5.5, 3.5) {III};
\node at (4, 3.5) {$Q(x,n^\eps)$};
\fill[black] (4.5, 2.5) circle (2pt); 
\node at (4.5, 2.2) {$x$}; 
\draw[thickline, cblue] (3,4) rectangle (6,6);
\node[roman] at (5.5, 5.5) {II};
\node at (4, 5.5) {$S(x,n^\eps)$};
\draw[thickline] (1,0) rectangle (9,6);
\draw[<->, thick] (3.05, 1.6) -- (5.95, 1.6) node[midway, below] {\footnotesize $2n^\varepsilon$};
\end{tikzpicture}
\end{subfigure}  
    \begin{subfigure}[b]{0.48\textwidth}
        \centering
\begin{tikzpicture}[scale=0.85,
    cgreen/.style={fill=green!20, opacity=0.3},
    cblue/.style={fill=cyan!20, opacity=0.3},
    cyellow/.style={fill=blue!50, opacity=0.2},
    roman/.style={text=gray!60, font=\large\bfseries},
    thickline/.style={draw=black, thick}
]
\draw[thickline] (1,0) rectangle (3,6);
\draw[thickline] (6,0) rectangle (9,6);
\draw[thickline, cblue] (3,0) rectangle (6,1);
\draw[thickline, cyellow] (3,1) rectangle (6,4);
\fill[black] (4.5, 2.5) circle (2pt); 
  \draw[dashed,red] (3.75,1.75) rectangle (5.25,3.25);
\node at (4.5, 2.2) {$x$}; 
\draw[thickline, cblue] (3,4) rectangle (6,6);
\draw[thickline] (1,0) rectangle (9,6);
\fill[black] (5.75, 4.25) circle (2pt); 
  \draw[dashed,red] (5,3.5) rectangle (6.5,5);
\node at (5.75, 4.55) {$y$}; 
\fill[black] (6.5, 2.25) circle (2pt); 
  \draw[dashed,red] (5.75,1.5) rectangle (7.25,3);
\node at (6.5, 2.55) {$y$}; 
\end{tikzpicture}
\end{subfigure}

\caption{Left: The white region is $ W_n \setminus S(x,n^\eps)$,
the light blue $S(x,n^\eps) \setminus Q(x,n^\eps)$, and the darker blue $Q(x,n^\eps)$, i.e., Cases I-III as defined in \eqref{eq:three_cases}. 
Right: Illustration of the event that $R_n(x)+R_n(y) \le n^\eps$. When $y$ lies in either Case I or II, this implies that the two non-stable cubes around $x$ and $y$ (red dashed squares verbatim to Figure \ref{fig:partition}) are disjoint.}
\label{fig:cases}
\end{figure}
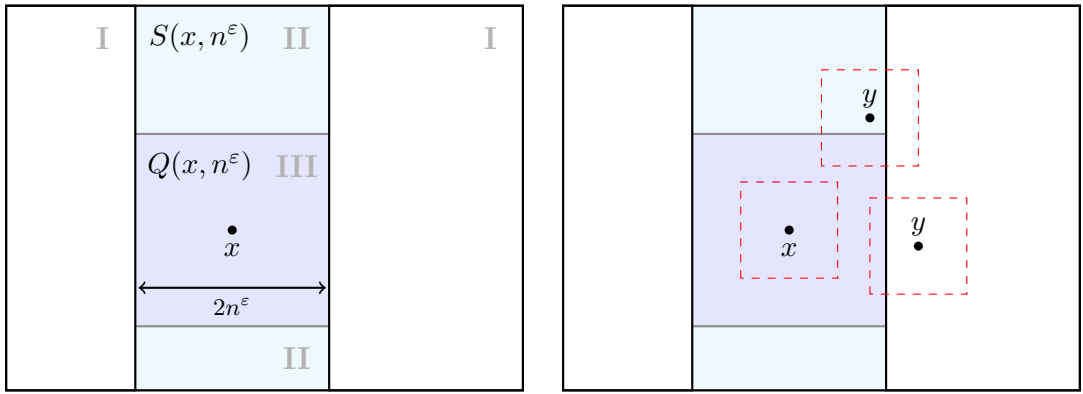

Finally, we outline how the approach differs in the sum-log-sum case. Steps 1, 3, and 7 are the same (with Lemma \ref{lem:en1_sim} instead). However, 
when computing differences in Steps 2, 4, 5, and 6, we run into several new issues. First, since $\mc A_n^+(\PP_n)$ and $\mc A_n^+(\PP_n^x)$ might not contain the same Poisson points, we cannot compare the compound scores (i.e., total scores) 
Poisson point for Poisson point, as before. This is where we need Assumption \ref{ass:B}(i), i.e., stabilization, which ensures that the points where these sets differ lie only far away from $x$. Once we identify the Poisson points in common,
we have a main sum in $D_x$ and $D_{xy}$ with terms, respectively, of the form
$$
\log\Big(\frac{G(Z,\PP_n^x)}{G(Z,\PP_n)}\Big) 
\qquad \text{and} \qquad 
\log\Big(\frac{G(Z,\PP_n^{xy})G(Z,\PP_n)}{G(Z,\PP_n^x)G(Z,\PP_n^y)}\Big).
$$
The main obstacle is that the variance of the sum-log-sum functional is typically much smaller than the variance of the double-sum functional, and thus we need to obtain
tighter bounds on the error terms. Hence, we want to use the inequality $\log(t) \le t^\eps$ rather than $\log(t) \le t$, 
and this is where we need Assumption \ref{ass:B}(ii), i.e., concentration of the compound scores. Additionally, Assumption \ref{ass:B}(ii)
also ensures that, on an event of high probability, the two denominators above are of order $\vert S_n^k \vert$ and $\vert S_n^k \vert^2$, respectively.
In broad strokes, this unlocks the ability to use the inequality $\vert \log(1+t) \vert \le 2 \vert t \vert$, where $\vert t \vert$ not being too large, allows us to follow the same overall approach as in the double-sum case.

\subsection{Extensions and limitations}
\label{sec:limitations}
In this section, we discuss some relevant remarks regarding the results in Theorems \ref{thm:sum_clt} and \ref{thm:log_clt}, including whether
some assumptions can be relaxed, and whether the Normal approximation can be extended to the Kolmogorov distance and/or similar functionals.

\subsubsection{On Assumptions \ref{ass:A} and \ref{ass:B}}

We first discuss the conditions in Assumption \ref{ass:A} one by one.

\noindent \emph{\ref{ass:A}(i); $k$-locality:} One could relax the condition of cut-off locality at distance 1 to any finite distance without changing anything in the overall approach. 
It would also be possible to allow these ‘local’ directions to have non-zero scores among any points in the window, as long as the probability of a non-zero score for two points far apart has exponentially
decreasing tails in the distance between the points.

\noindent \emph{\ref{ass:A}(ii); Exponential stabilization of $f$:} This is a key feature of our approach, but one could possibly relax the assumption that the stabilization radius has
exponential tails to sub-polynomial tails, at the cost of more delicate arguments in bounding the error terms.

\noindent \emph{\ref{ass:A}(iii); Sub-polynomial moments:} It suffices to require that $\overline f_{\sup}$ has exponential tails, which implies sub-polynomial
moments, as also shown inside the proof of Lemma \ref{lem:poisson_ball_bound}. We choose the sub-polynomial moment condition to make the exposition clearer.

We now move on to the conditions in Assumption \ref{ass:B}.

\noindent \emph{\ref{ass:B}(i); Additional stabilization with respect to $\mc A_n^+(\PP_n)$:} Similar to \ref{ass:A}(ii), this could be relaxed to polynomial tails of sufficiently high negative order for the stabilization radius.

\noindent \emph{\ref{ass:B}(ii); Concentration of $G$:} Similarly to \ref{ass:A}(ii) and \ref{ass:B}(i), this could possibly be relaxed to requiring polynomial concentration bounds rather than exponential bounds.

Finally, we consider to what extent the underlying point process can be generalized.

\noindent \emph{Unit Poisson input $\PP_n$:} It would be straightforward to extend the results to stationary Poisson processes with intensity $\lambda > 0$ different from 1.
We further claim that it would also be possible to extend the results to inhomogeneous Poisson processes where the intensity function is bounded away from 0 and from infinity.
However, when the intensity function is allowed to approach zero, it affects the likelihood of the shield configurations that are utilized in Section \ref{sec:3} to control the variance.
Similarly, if the intensity function approaches infinity, the void regions in the stabilization arguments in Section \ref{sec:3} become too unlikely.

\noindent \emph{Rectangular window $W_n$:} Changing the shape of the window in
the local directions would not have a major effect on the approach, but the current arguments in Section \ref{sec:3} that verify both types of stabilization
as well as concentration of $G$ rely heavily on the box structure of the window in the non-local directions for discretization, construction of shields,
and the subsequent Bernoulli trials of equal probability. If the window $W_n$ was, e.g., a ball instead, the point process would behave differently near the boundary. 
However, if one were to replace the slabs $S_n^k$ by annulus regions of fixed width in the local directions, we conjecture the 
arguments in Section \ref{sec:3} would still hold with $W_n$ as a ball.

\subsubsection{Alternative approaches using existing literature}

In Section \ref{sec:intro}, we mentioned several Normal approximation results for stabilizing functionals of Poisson processes \cite{mal_stab,chinmoy,trauth3,hug,nestmann}.
We cannot apply these results directly in our setting, since our functionals have a lower-dimensional variance contribution. 
One could try to use the Normal approximation for region-stabilizing scores in \cite{chinmoy}. 
However, this would require at least as much effort as our approach using the Normal approximation in \cite{mehler}. 

\subsubsection{Convergence in the Kolmogorov distance}

Recall the \emph{Kolmogorov distance} $d_\text{K}$ between two real random variables $X$ and $Y$,
$$
d_\text{K}(X,Y) = \sup_{t \in \R} \big| P(X \le t) - P(Y \le t) \big|.
$$
From a general bound in \cite{Ross2011_FundamentalsOfSteinsMethod}, for some $C > 0$,
$$
d_K(\Sigma(\PP_n), \mc N(0,1)) \le C \sqrt{d_W(\Sigma(\PP_n), \mc N(0,1))}.
$$
Hence, by Theorem \ref{thm:sum_clt}, $\Sigma(\PP_n)$ also converges
to a normal distribution in Kolmogorov distance with a rate of at least $n^{\delta/2}/\sqrt[4]{\vert W_n \vert}$, provided the variance is large enough.
Similarly, we may use the same bound and Theorem \ref{thm:log_clt} to obtain the same rate for $\Sigma^{\log}_n(\PP_n)$.

But can we obtain near-optimal rates in the Kolmogorov distance as well? As a partial answer, we explore the additional error terms
$I_{n,4}$, $\sqrt{I_{n,5}}$, and $\sqrt{I_{n,6}}$ in \cite{mehler}, defined as:
\begin{align*}
	I_{n,4}  = & \frac{1}{2} \sqrt[4]{\E[\Sigma(\PP_n)^4]} \int
     \sqrt[4]{\E\big[D_x \Sigma(\PP_n)^4\big]^{3}} \text dx,\\
     I_{n, 5}  = & \int\E\big[D_x \Sigma(\PP_n)^4\big] \text dx,\\
	I_{n, 6}  = & \int 6 \sqrt{\E\big[D_x \Sigma(\PP_n)^4\big]} \sqrt{\E\big[D_{xy}^2 \Sigma(\PP_n)^4\big]}
    + 3\E\big[D_{xy}^2 \Sigma(\PP_n)^4\big] \ \text d(x,y) .
\end{align*}
Here, when applying \cite[Theorem 1.1]{mehler}, the slowest of these terms determines the rate of convergence in the Kolmogorov distance. 
With our current bounds on moments of $D_x$ and $D_{xy}^2$ in Section \ref{sec:4}, and the imposed lower bound $\vert W_n \vert \vert S_n^k \vert^2$ on $\V[\Sigma(\PP_n)]$,
a straightforward calculation shows that $\sqrt{I_{n,5}}$ always gives the near-optimal contribution $n^{\delta}/\sqrt{\vert W_n \vert}$, and
likewise for $\sqrt{I_{n,6}}$. For $I_{n,4}$, we believe that an application of the Poincare inequality
yields that $\sqrt[4]{\E[\Sigma(\PP_n)^4]}$ will be of order at most $n^\eps$, and if this assertion is true, then $I_{n,4}$
also yields a near-optimal contribution. We omit a formal treatment of $\sqrt[4]{\E[\Sigma(\PP_n)^4]}$, but we speculate that it is possible to obtain
near-optimal rates in the Kolmogorov distance as well for our examples in Section \ref{sec:3}.

\subsubsection{The sum-product, double-product and triple-sum functionals}

One can also ask whether the approach in this paper can be extended to similar functionals. For instance, consider the \emph{sum-product functional}
and \emph{double-product functional},
$$
\sum_{Z \in \PP_n} \prod_{V \in \PP_n} f(Z,V,\PP_n),
$$
$$
\prod_{Z \in \PP_n} \prod_{V \in \PP_n} f(Z,V,\PP_n).
$$
If we log-transform the double-product functional, we end up with a double-sum functional
with score function $\widetilde f(Z,V,\PP_n) = \log f(Z,V,\PP_n)$, and, if
$\widetilde f$ satisfies Assumption \ref{ass:A}, we can apply Theorem \ref{thm:sum_clt} directly.
To that end, we see that $\widetilde f$ satisfies Assumption \ref{ass:A} if the original score function $f$ takes values in $[1,\infty)$, takes the value $1$ on the diagonal, and satisfies $k$-locality and stabilization,
where the score is $1$ rather than $0$ outside the vertical slab and non-stable cube, respectively, as well as
sub-polynomial moments, just as before. 
Subsequently, we would also obtain asymptotics for the double-product functional itself via the Delta Method. As for the sum-product functional, it is
not clear how one could apply Theorems \ref{thm:sum_clt} or \ref{thm:log_clt} directly, and hence this would require further ideas. Also, since the authors are 
not currently aware of any particular application of these functionals in random geometric structures, we leave this for future work. Another possible
extension is to higher-order functionals involving sums and products over triples or more Poisson points. Consider the \emph{triple-sum functional} with
scores $f(Z,V,W,\PP_n)$. Then, e.g., the first-order difference operator is of the form
$$
\sum_{V \in \PP_n} \sum_{W \in \PP_n} f(x,V,W,\PP_n^x)
\;+\;
\sum_{Z \in \PP_n} \sum_{V \in \PP_n} \sum_{W \in \PP_n}
\big( f(Z,V,W,\PP_n^x) - f(Z,V,W,\PP_n) \big),
$$
provided that $f$ is also symmetric and vanishes when any two of the three points are the same.
Thus, with modified definitions of $k$-locality and exponential stabilization for $f$ in three points, the authors conjecture that it should be possible to control
the difference operators with the same overall approach and use \cite{mehler} to obtain a Normal approximation result. Again, without specific applications in mind, we
leave the details for future work.

\section{Examples}
\label{sec:3}

In this section, we apply Theorems \ref{thm:sum_clt} and \ref{thm:log_clt} to several examples involving random geometric structures found in the literature, all of which were already briefly introduced in Section \ref{sec:intro}.
First, in Section \ref{sec:3.1}, we record two useful tools that helps us establish sub-polynomial moments and control the variance.
In Section \ref{sec:3.2}, we establish Normal approximation of the \emph{crossing number} which arises from projecting random graphs in $\R^d$ to a 2-dimensional plane.
In Section \ref{sec:3.3}, we study the \emph{inversion count} when each Poisson point is assigned a barcode length, i.e., a lifetime. We study two types of lifetime models, namely
that they are independent uniform random variables as well as stemming from the Poisson tree model. For both models, we verify Assumption \ref{ass:A}, and hence obtain Normal approximation of the inversion count.
In Section \ref{sec:3.4}, we study the \emph{tree realization number}, which involves a product that we log-transform,
where we then have to verify Assumption \ref{ass:B} to obtain Normal approximation according to Theorem \ref{thm:log_clt}.
We also discuss if we can also extend the methodology to directed- and radial spanning forests as lifetime models.

\subsection{Preliminary tools}
\label{sec:3.1}

In this section, we introduce some preliminary tools that will be useful for our Normal-approximation verification.
First, in the case of crossing numbers, where the score function is \textit{not} bounded, the following
lemma is useful for a common approach to determining whether Assumption \ref{ass:A}(iii), i.e., sub-polynomial moments, is satisfied. Recall that $\mc R$ is the family of all one-point and two-point sets in $\R^d$ along with the empty set, see \eqref{eq:all_pairs}.
\begin{lemma}[Sub-polynomial moments criteria]
    \label{lem:poisson_ball_bound}
    Assume there exists $\ell \in \mathbb{N}$ such that
      \begin{equation}
        \label{eq:f_bound}
    \overline f_{\sup}(\PP_n) \le \sup_{\mathrm{x}\in \mc R } 
    \sup_{Z,V \in \PP_n^\mathrm{x}} 
    \PP^{\mathrm x}\big(B(\dot Z,\ell R_Z)\big)\,
    \PP^{\mathrm x}\big(B(\dot V,\ell R_V)\big),
    \end{equation}
    where $R_Z, R_V$ denote random variables satisfying that for any $\eps \in (0,1)$,
    $$
    P(R_Z > n^\eps) \vee P(R_V > n^\eps) \le \e^{- \gamma(\eps)\, n^\eps},
    $$
    for some $\gamma(\eps) > 0$ and $n \gg 1$.
    Then, Assumption \ref{ass:A}(iii) is fulfilled.
\end{lemma}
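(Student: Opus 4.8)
The goal is to show that the right-hand side of \eqref{eq:f_bound} has sub-polynomial moments, i.e.\ that for every $m \in \N$, every $\eps > 0$, and $n \gg 1$ we have $\E[\overline f_{\sup}(\PP_n)^m] \le n^{m\eps}$, so that Assumption \ref{ass:A}(iii) holds. The plan is to first raise \eqref{eq:f_bound} to the $m$th power and expand the double product; since each factor is a $\PP^\mathrm{x}$-measure of a ball of radius $\ell R$ around a point of $\PP_n^\mathrm{x}$, we are reduced to bounding, uniformly over $\mathrm x \in \mc R$ and over the choice of the base point $Z \in \PP_n^\mathrm x$, the $2m$th moment of a quantity of the form $\PP^\mathrm{x}(B(\dot Z, \ell R_Z))$. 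The first step is therefore to handle the supremum over $\mc R$ and over $Z, V \in \PP_n^\mathrm{x}$: one conditions on the (at most two) extra points making up $\mathrm x$, and uses the Mecke equation / a union bound over the Poisson points to turn the supremum over $Z \in \PP_n^\mathrm x$ into a deterministic base point at the cost of an extra polynomial-in-$n$ factor (the expected number of Poisson points in $W_n$ is $|W_n|$, which grows at most polynomially and is thus absorbed into $n^{m\eps}$ by Lemma \ref{lem:normalization}).

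The second step is the core estimate: for a fixed point $x_0 \in \R^d$, bound $\E\big[\PP^\mathrm{x}(B(x_0,\ell R))^{2m}\big]$ where $R$ has the stated exponentially-decaying tails, $P(R > n^\eps) \le \e^{-\gamma(\eps) n^\eps}$. Split according to whether $R \le n^\eps$ or $R > n^\eps$. On the event $\{R \le n^\eps\}$, the ball $B(x_0,\ell R)$ is contained in $B(x_0, \ell n^\eps)$, so $\PP^\mathrm{x}(B(x_0,\ell R)) \le \PP^\mathrm{x}(B(x_0,\ell n^\eps))$, which is (essentially) a Poisson random variable with mean of order $n^{d\eps}$ plus the at most two deterministic extra points; its $2m$th moment is of order $n^{2md\eps}$, again sub-polynomial and absorbed by choosing $\eps$ small. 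On the complementary event $\{R > n^\eps\}$, one uses Cauchy--Schwarz: $\E[\PP^\mathrm{x}(B(x_0,\ell R))^{2m}] \le \E[\PP^\mathrm{x}(W_n)^{4m}]^{1/2} P(R > n^\eps)^{1/2}$, and since $\PP^\mathrm{x}(W_n)$ is a Poisson variable with polynomially-bounded mean, its $4m$th moment grows only polynomially in $n$, while $P(R>n^\eps)^{1/2} \le \e^{-\gamma(\eps) n^\eps/2}$ decays faster than any polynomial; hence this contribution is $o(1)$, in particular $\le n^{m\eps}$ for $n \gg 1$.

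Combining the two steps, $\E[\overline f_{\sup}(\PP_n)^m]$ is bounded by a product of polynomial-in-$n$ factors (from the union bound over Poisson points and the Poisson moment bounds) times $n^{Cm\eps}$ for some absolute constant $C$ depending on $d$ and $\ell$ but not on $m$ or $\eps$; choosing $\eps$ in the hypothesis small enough relative to the target $\eps$ in \eqref{eq:exp_decay}, and invoking Lemma \ref{lem:normalization} to clean up the multiplicative constants and the polynomial prefactors, yields $\E[\overline f_{\sup}(\PP_n)^m] \le n^{m\eps}$ for $n \gg 1$, which is exactly Assumption \ref{ass:A}(iii). I expect the main obstacle to be bookkeeping rather than conceptual: one must carefully handle the supremum over $\mathrm x \in \mc R$ and over $Z,V \in \PP_n^\mathrm{x}$ simultaneously (in particular the fact that $R_Z$ may itself depend on $\PP_n^\mathrm{x}$ and on the inserted points), and ensure that the union bound over Poisson points interacts correctly with the Mecke formula and with the conditioning on the extra points of $\mathrm x$; the exponential tail of $R$ is what makes every ``bad'' term negligible, but one has to set up the Cauchy--Schwarz split so that the surviving Poisson moment is of a fixed (finite) order rather than growing with the number of summands.
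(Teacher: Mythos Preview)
Your approach is sound and reaches the same conclusion, but the route differs from the paper's in two places.

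First, to handle the supremum over $Z,V \in \PP_n^{\mathrm x}$ (including the up to two inserted points of $\mathrm x$, which range over a continuum), the paper does \emph{not} use a Mecke/union-bound argument. Instead it conditions on the values $R_Z=z$, $R_V=w$ and uses a deterministic covering of $W_n$ by $N_{n,z}\lesssim |W_n|/z^d$ balls of radius $z$: by the triangle inequality, $\PP^{\mathrm x}(B(\dot Z,\ell z))\le \PP^{\mathrm x}(B(x_{z,i},2\ell z))$ for some fixed center $x_{z,i}$, so the sup over \emph{all} $Z$ (Poisson or inserted) becomes a finite maximum over fixed centers, to which a union bound applies. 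Your Mecke route is fine for Poisson points but is less direct for the continuum supremum over the inserted points of $\mathrm x$; the covering argument handles both at once. This is the ``bookkeeping'' you anticipated, and it is the one place where your plan would need a small additional idea (e.g.\ the same covering trick, or an a priori bound $\PP^{\mathrm x}(\cdot)\le \PP(\cdot)+2$ combined with a sup over a net).

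Second, rather than splitting on $\{R\le n^\eps\}$ versus $\{R>n^\eps\}$ at the level of the $m$th moment, the paper first controls the tail $P(\overline f_{\sup}(\PP_n)>s)$ via Markov's inequality and a second moment $M_n$, shows $M_n\le C|W_n|^2 e^{-\gamma(\eps)n^\eps}$ using the covering argument above, and only then recovers the $m$th moment by writing $\E[\overline f_{\sup}^m]=\int_0^{n^{m\eps}}+\int_{n^{m\eps}}^\infty P(\overline f_{\sup}^m>r)\,\mathrm dr$. Your direct-moment split is arguably more transparent and avoids the detour through a tail bound; the paper's route has the advantage that the second-moment computation is independent of $m$, so the $m$-dependence enters only in the final trivial integral split.
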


The proof of Lemma \ref{lem:poisson_ball_bound} is found in Appendix A. Henceforth, consider the case where 
\begin{equation}
    \label{eq:W_n_simple}
 W_n = [0,n] \times [0,a_2 n] \times \cdots \times [0,a_d n],
\end{equation}
i.e., all side lengths are proportional to $n$. Then, for $r > 0$ and $a_1 = 1$, let
$$
\alpha_r = \prod_{i=1}^d \Big\lceil \frac{a_i}{r} \Big\rceil,
$$
and let $\{Q_{n,j,r} \colon 1 \le j \le \alpha_r n^d\}$ denote a lexicographic ordering of the smallest partition covering $W_n$
with $\alpha_r n^d$ equal boxes of constant volume (Figure \ref{fig:cubes}). To be precise, let
\begin{equation}
    \label{eq:box_Q}
Q_{n,j,r} = \bigtimes_{i=1}^d 
\Big[
\Big\lceil \frac{j - 1}{n^{i-1}} \Big\rceil_n r a_i,\,
\Big(\Big\lceil \frac{j - 1}{n^{i-1}} \Big\rceil_n + 1 \Big) r a_i
\Big],
\end{equation}
where $\lceil \cdot \rceil_n$ denotes rounding down to the nearest integer and then taking the remainder modulo $n$.  
For example, when $d=2$, $n=3$, and $j=6$, then
$
Q_{3,6,r} = [2r,3r] \times [r a_2, 2 r a_2].
$
\begin{figure}[h!]
    \centering
\begin{tikzpicture}[
    scale=0.75,
    >=stealth, 
    thickline/.style={draw=black, thick},
    dashedline/.style={draw=black, dashed},
    dimarrow/.style={<->, thin}
]

\foreach \y [count=\row] in {0, 2, 4} {
    \foreach \x [count=\col] in {0, 4, 8} {
        
        \pgfmathtruncatemacro{\j}{(\row-1)*3 + \col}

        \node[font=\large] at (\x + 2, \y + 1) {$Q_{n,\j,r}$};
    }
}

\draw[dashedline] (4,0) -- (4,6);
\draw[dashedline] (8,0) -- (8,6);

\draw[dashedline] (0,2) -- (12,2);
\draw[dashedline] (0,4) -- (12,4);

\draw[dashedline] (0,0) rectangle (12,6);
\draw[thickline] (0,0) rectangle (11.5,5.5);

\draw[dimarrow] (0, -0.25) -- (11.5, -0.25) node[midway, below] {$n$};

\draw[dimarrow] (-0.25, 0) -- (-0.25, 5.5) node[midway, left] {$a_2 n$};

\draw[dimarrow] (4.15, 2.1) -- (7.9, 2.1) node[midway, above] {\small $r$};

\draw[dimarrow] (4.1, 2.15) -- (4.1, 3.9) node[midway, right] {\small $r a_2$};

\end{tikzpicture}
    \caption{Illustration of the ordering of the cubes $Q_{n,j,r}$ covering $ W_n$ in the case $d=2$. All the cubes have the same volume, which doesn't depend on $n$.}
    \label{fig:cubes}
\end{figure}
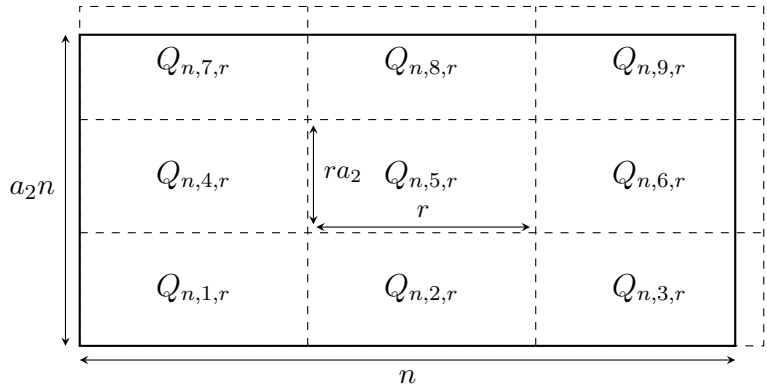

For bounding the variance of $\Sigma(\PP_n)$ and $\Sigma^{\log}_n(\PP_n)$, it will be useful to consider a martingale decomposition of these functionals in terms of the information contained in each of these boxes.
To that end, for a Borel set $A \subseteq \R^d$, let $\mathfrak{N}(A)$ denote the locally finite counting measures on $A \times \M$, and define for $1 \le j \le \alpha_r n^d$ the quantities
\begin{align*}
\mathfrak N_{n,j}^- & = \mathfrak{N}\Big(\bigcup_{i=1}^{j-1} Q_{n,i,r}\Big), \\
\mathfrak N_{n,j}^+ & = \mathfrak{N}\Big(\bigcup_{i=j+1}^{\alpha_r n^d} Q_{n,i,r}\Big),
\end{align*}
where $\mathfrak N_{n,j}^-$ will serve as the possible configurations in the \emph{explored space}
and $\mathfrak N_{n,j}^+$ as the configurations in the \emph{unexplored space}. We can then prove the following variance bound.

\begin{lemma}[Variance lower bound]
\label{lem:var_bound}
For $j \le \alpha_r n^d$, let $\mc B_{n,j}^{(1)}, \mc B_{n,j}^{(2)}, \mc B_{n,j}^{(3)}$ denote measurable subsets of $\mathfrak{N}(Q_{n,j,r})$
and let
\begin{equation}
    \label{eq:index_set}
I_{n,j}^k = \{  j + 1 \le i \le \alpha_r n^d \colon \pi_{1:k}(Q_{n,i,r}) = \pi_{1:k}(Q_{n,j,r}), \ Q_{n,i,r} \subseteq W_n \}.
\end{equation}
Assume that
\begin{enumerate}[label=(V\arabic*)]
    \item 
    $
\min_{i \in \{1, 2, 3\}}    \inf_{n \ge 1} \inf_{1 \le j \le \alpha_r n^d} 
    P\big(\PP \cap (Q_{n,j,r} \times \M) \in \mc B_{n,j}^{(i)}\big) > 0,
    $
    \item for any $j < \alpha_r n^d/2$ and any $(\omega_0,\omega_1,\omega_2,\omega_3) \in \mathfrak N_{n,j}^- \times \mc B_{n,j}^{(1)} \times \mc B_{n,j}^{(2)} \times \mathfrak N_{n,j}^+$,
    $$
    \big\vert \Sigma(\omega_0 \cup \omega_2 \cup \omega_3) - \Sigma(\omega_0 \cup \omega_1 \cup \omega_3) \big\vert
    \ge 
    \# \big\{ i \in I_{n,j}^k \colon \omega_3 \cap (Q_{n,i,r} \times \M) \in \mc B_{n,i}^{(3)} \big\}.
    $$
\end{enumerate}
Then, $\V[\Sigma(\PP_n)] \ge C n^{3d-2k}$ and $\V[\Sigma^{\log}_n(\PP_n)] \ge C n^{d}$ for some $C > 0$ and $n \gg 1$.
\end{lemma}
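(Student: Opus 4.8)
The plan is to use the standard martingale (Doob) decomposition of the functional along the lexicographic filtration generated by the boxes $Q_{n,j,r}$, and to lower bound the variance by a single well-chosen martingale difference square. Concretely, for $\Sigma(\PP_n)$ let $\mc F_j = \sigma(\PP \cap (\bigcup_{i=1}^j Q_{n,i,r}) \times \M)$ and $\Delta_j = \E[\Sigma(\PP_n)\mid \mc F_j] - \E[\Sigma(\PP_n)\mid \mc F_{j-1}]$, so that $\V[\Sigma(\PP_n)] = \sum_j \E[\Delta_j^2]$. It therefore suffices to produce, for a positive fraction of the indices $j$ (namely those with $j < \alpha_r n^d/2$ and $Q_{n,j,r} \subseteq W_n$, of which there are order $n^d$), a lower bound $\E[\Delta_j^2] \ge c\, n^{2(d-k)}$ in the double-sum case and $\E[\Delta_j^2] \ge c$ in the sum-log-sum case.

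First I would fix such a $j$ and bound $\E[\Delta_j^2]$ from below by a conditional variance: using the tower property and the fact that conditioning reduces variance inside a single block, $\E[\Delta_j^2] \ge c_0\,\E\big[\Var(\,\E[\Sigma(\PP_n)\mid \mc F_j]\mid \mc F_{j-1})\big]$; more usefully, I would use the elementary inequality that for any random variable and any two events $B^{(1)}, B^{(2)}$ measurable w.r.t.\ the $j$th block with probabilities bounded below (condition (V1)), the conditional variance given everything outside $Q_{n,j,r}$ is at least
\[
\tfrac14 P(\mc B^{(1)}_{n,j})\,P(\mc B^{(2)}_{n,j})\;\big(\text{min gap of }\Sigma\text{ between the two block-configurations}\big)^2,
\]
by the usual two-point lower bound on variance (resample the $j$th block). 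Here the "gap" is exactly the quantity controlled by condition (V2): on the event that the configuration outside the explored space agrees with some $\omega_3$, replacing the $j$th block by a configuration in $\mc B^{(1)}_{n,j}$ versus $\mc B^{(2)}_{n,j}$ changes $\Sigma$ by at least $\#\{i \in I_{n,j}^k : \omega_3 \cap (Q_{n,i,r}\times\M)\in \mc B^{(3)}_{n,i}\}$. Taking conditional expectation over $\omega_3$ and using independence of the Poisson process across disjoint boxes, the conditional second moment of this count is at least $(\sum_{i\in I_{n,j}^k} P(\mc B^{(3)}_{n,i}))^2$ minus lower-order terms, hence of order $(\#I_{n,j}^k)^2$ by (V1). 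Since $I_{n,j}^k$ indexes the boxes sharing the first $k$ coordinates of $Q_{n,j,r}$ and lying in $W_n$, one has $\#I_{n,j}^k \asymp n^{d-k}$ for the chosen $j$, giving $\E[\Delta_j^2]\gtrsim n^{2(d-k)}$. Summing over the order-$n^d$ admissible indices $j$ yields $\V[\Sigma(\PP_n)] \ge C n^{3d-2k}$.

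For the sum-log-sum case the argument is identical in structure, but condition (V2) is replaced by its analogue for $\Sigma^{\log}_n$ (implicitly, since the lemma only spells out the double-sum inequality, I would invoke the parallel hypothesis stated in the appendix/Section \ref{sec:3} for $\Sigma^{\log}_n$, where the gap is at least $1$ rather than $\#\{\dots\}$ because the log collapses the count — each admissible point in the slab contributes $\log(G)$ which changes by a bounded-below amount); the two-point variance bound then gives $\E[\Delta_j^2]\ge c$ for each of the order-$n^d$ indices, hence $\V[\Sigma^{\log}_n(\PP_n)]\ge C n^d$. The main obstacle I anticipate is the second moment computation of the random count $\#\{i\in I_{n,j}^k : \omega_3\cap(Q_{n,i,r}\times\M)\in\mc B^{(3)}_{n,i}\}$: one must check that its square expectation is genuinely of order $(\#I_{n,j}^k)^2$ and not killed by negative correlations or by the coupling between "$\omega_3$ lies in the unexplored space" and "the two block-resamplings in $\mc B^{(1)}, \mc B^{(2)}$". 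This is where the independence of $\PP$ across the disjoint boxes $Q_{n,i,r}$ and the product structure in (V1) are essential — the indicators $\{\omega_3\cap(Q_{n,i,r}\times\M)\in\mc B^{(3)}_{n,i}\}$ are independent across $i$, so the count is a sum of independent Bernoulli's with means bounded below, and its second moment is $\ge (\sum_i P(\mc B^{(3)}_{n,i}))^2 \gtrsim (\#I_{n,j}^k)^2$; making this rigorous while correctly handling the conditioning on $\mc F_{j-1}$ and the resampling of the $j$th block is the delicate bookkeeping step.
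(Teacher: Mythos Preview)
Your approach for the double-sum case is essentially identical to the paper's: the same martingale decomposition along the box filtration, the same two-point conditional-variance lower bound (the paper states this as a separate Lemma~\ref{lem:var_lower_bound}), and the same observation that the count $\#\{i\in I_{n,j}^k:\omega_3\cap(Q_{n,i,r}\times\M)\in\mc B_{n,i}^{(3)}\}$ is a sum of independent Bernoullis with success probabilities bounded below, so its first moment is of order $|I_{n,j}^k|\asymp n^{d-k}$. The paper uses Jensen on the first moment rather than your second-moment computation, but the effect is the same.

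The one place you diverge from the paper is the sum-log-sum case. You propose to invoke a ``parallel hypothesis'' giving a gap of at least $1$, but the lemma provides no such hypothesis: the paper instead \emph{derives} the log bound from the same (V2). Concretely, it argues that switching the $j$th block from $\mc B_{n,j}^{(1)}$ to $\mc B_{n,j}^{(2)}$ changes $\Sigma^{\log}_n$ by at least $\log\bigl(1+\#\{i\in I_{n,j}^k:\omega_3\cap(Q_{n,i,r}\times\M)\in\mc B_{n,i}^{(3)}\}\bigr)$, and then uses the binomial concentration inequality (Lemma~\ref{lem:binom_conc}) to show that this count exceeds $\tfrac{q}{2}|I_{n,j}^k|$ with probability bounded away from zero, yielding $\E[(\Delta_n^{\log})^2]\ge c\,\log^2(1+\tfrac{q}{2}|I_{n,j}^k|)\ge c'$ for $n\gg 1$. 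Your heuristic (``the log collapses the count to a bounded-below contribution'') is correct in spirit, but the actual mechanism --- routing through $\log(1+\text{count})$ and then concentration --- is what makes the step go through without an extra assumption.
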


The proof of Lemma \ref{lem:var_bound} is postponed until Appendix A. Loosely speaking, the way to apply Lemma \ref{lem:var_bound} 
is to consider two types of configurations of the $j$th box that both occur with positive probability (e.g., the box being empty).
Then, consider a third type of configuration in all the boxes “above’’ the $j$th box (e.g., the presence of a certain edge), where each time this third configuration occurs, there
will be at least one score with respect to the $j$th box when configured the second way, but no scores when configured the first way. The discretization into disjoint boxes
and the corresponding Bernoulli trial in each box thus ensures that the variance will be of the desired order.

Finally, we can use the following Binomial concentration inequality from \cite[Lemma 1.1]{Penrose2003_RandomGeometricGraphs}
when we want to verify the concentration property in Assumption \ref{ass:B}(ii).
\begin{lemma}[Binomial concentration inequality]
    \label{lem:binom_conc}
For any Binomial random variable $X$ with parameters $m \in \N$ and $p \in (0,1)$,
$$
P(X < \tfrac{mp}{2}) \le \exp\!\Big( - mp\Big(\tfrac{1}{2}+\tfrac{1}{2}\log(\tfrac{1}{2})\Big)\Big).
$$
\end{lemma}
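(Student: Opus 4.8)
The plan is to invoke the classical Chernoff bounding technique for the lower tail of a Binomial random variable, optimizing the exponential parameter so as to recover precisely the constant in the statement.

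First I would fix $t > 0$ and apply the exponential Markov inequality to $-X$: since $\{X < \tfrac{mp}{2}\} \subseteq \{X \le \tfrac{mp}{2}\} = \{\e^{-tX} \ge \e^{-tmp/2}\}$, we obtain
\begin{equation*}
P\big(X < \tfrac{mp}{2}\big) \;\le\; \e^{tmp/2}\,\E\big[\e^{-tX}\big].
\end{equation*}
Writing $X = \sum_{i=1}^m X_i$ as a sum of $m$ i.i.d.\ Bernoulli$(p)$ variables, the moment generating function factorizes, $\E[\e^{-tX}] = (1 - p + p\e^{-t})^m = (1 + p(\e^{-t}-1))^m$, and the elementary bound $1 + u \le \e^u$ gives $\E[\e^{-tX}] \le \exp\!\big(mp(\e^{-t}-1)\big)$. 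Combining these yields
\begin{equation*}
P\big(X < \tfrac{mp}{2}\big) \;\le\; \exp\!\Big(\tfrac{tmp}{2} + mp(\e^{-t}-1)\Big) \qquad \text{for every } t > 0.
\end{equation*}

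It then remains to choose $t$. Minimizing the exponent $\varphi(t) = \tfrac{tmp}{2} + mp(\e^{-t}-1)$ over $t>0$ leads to $\varphi'(t) = mp\big(\tfrac12 - \e^{-t}\big) = 0$, i.e.\ $t = \log 2$, at which $\e^{-t} = \tfrac12$. Substituting, the exponent becomes
\begin{equation*}
\tfrac{mp}{2}\log 2 + mp\big(\tfrac12 - 1\big) = \tfrac{mp}{2}\big(\log 2 - 1\big) = -\,mp\Big(\tfrac12 + \tfrac12\log\tfrac12\Big),
\end{equation*}
which is exactly the claimed bound. (Optimality of $t = \log 2$ is not strictly needed — any fixed $t>0$ already gives an exponential estimate — but this choice produces precisely the stated constant.)

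There is no genuine obstacle here: this is a textbook Chernoff estimate, and the only points that warrant a little care are using the inequality in the correct direction for a lower tail (working with $\e^{-tX}$ rather than $\e^{tX}$) and the bookkeeping that rewrites $\tfrac{mp}{2}(\log 2 - 1)$ in the form $-mp(\tfrac12 + \tfrac12\log\tfrac12)$ appearing in the statement. Alternatively, one may simply cite \cite[Lemma~1.1]{Penrose2003_RandomGeometricGraphs} verbatim, as indicated.
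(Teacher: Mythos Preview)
Your proof is correct. The paper does not give an independent argument but simply cites \cite[Lemma~1.1]{Penrose2003_RandomGeometricGraphs} with $k = \lceil mp/2 \rceil$; that lemma is exactly the Chernoff lower-tail bound you have written out, so your derivation is the standard argument underlying the citation and matches the paper's approach.
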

The concentration inequality in Lemma \ref{lem:binom_conc} comes from applying \cite[Lemma 1.1]{Penrose2003_RandomGeometricGraphs} with $k = \lceil mp/2 \rceil$.
As an example, when studying the tree realization number for independent lifetimes, we can discretize the window $\S_n^k$ into boxes, lower bound $G(Z,\PP_n)$ 
by the number of boxes satisfying a certain property (e.g., containing a bar with a long lifetime), and then use Lemma \ref{lem:binom_conc} 
to obtain the exponential concentration in \ref{ass:B}(ii). 
We now proceed to applying Lemmas \ref{lem:poisson_ball_bound}--\ref{lem:binom_conc} to the examples mentioned at the beginning of the section.

\subsection{The crossing number from planar projections}
\label{sec:3.2}
To motivate the study of crossing numbers, while also making the exposition more accessible, 
we rely on a similar introduction as in \cite{doring}.

The crossing number of a graph $G$ is the minimal number of intersecting edges among all drawings of the graph $G$ in a plane. 
The question is based on Tur\'an's brick problem asking for the least number of crossing tracks between kilns and storage sites,
 in other words the crossing number of a bipartite graph. The problem generalizes to arbitrary graphs (e.g., see Figure \ref{fig:projections}).
  Restricting to drawings with straight edges only is another interesting and complicated optimization problem.
   To distinguish the cases one often calls this solution the rectilinear crossing number. 
   Crossing numbers are relevant in computer science for chip design and graph drawing as well as in mathematics, too.
As shown in \cite{GareyJohnson1983_CrossingNumberNPComplete},
the problem to determine the crossing number of any given graph is NP-hard. 
Even in the particular case of a complete graphs $K_n$ on $n$ vertices,
 for high $n$, there exists only a conjecture on the crossing number and for the rectilinear crossing number even that is not known and there are only bounds available.
  This makes approximation algorithms all the more important.

  In  \cite{chimani2018crossing} it could be shown that for the random geometric graph,
   the projection to a fixed plane yields a constant factor 
  approximation for the \emph{rectilinear crossing number}. 
  We will from now on focus on the number of crossings in the projection 
  and abbreviate this by the name \emph{crossing number}. 
  Normal approximation of the crossing number in this setting is 
  shown in \cite{doring}.  
  
In this section, we study three types of random graphs: First, we consider a fixed cut-off radius of $1$ in all directions,
\textit{the random geometric graph}, which is already been studied
in \cite{doring} in the unit cube with intensity $t > 0$, and where
a Normal approximation for the crossing number is also found in \cite[Proposition 3.5]{doring}. Next, we will let the radius be random in non-projection directions
such that in expectation it is still $1$,
and let this radius have exponentially decreasing tails, where the crossing number is yet to be studied. If we want to maintain a undirected graph,
this essentially boils down to considering a \textit{max-kernel} \cite{GracarEtAl2021_percolation}, where we require both points to be within each other's radius to form an edge.
Finally, we move to a more general case and allow the radius to depend on other Poisson points as well as the additional randomness from before.

To make the above ideas concise, let $\PP_n$ denote
a Poisson point process in $[0,n]^d \times \M$ with intensity measure 
$
\vert \cdot \vert \otimes \mu,
$

where $\M$ and $\mu$ can be tailored to the random graph. As usual, for any $Z \in \PP_n$, we think of $\dot Z$ as the spatial location of the point in $\R^d$.

For points $p,p' \in \R^d$, let $[p,p']$ denote the line segment between these point, and recall that
$\S_n^2(x,1)$ denotes the vertical slab around $x$ of width 2 in the first 2 coordinates and unrestricted width
in the $d-2$ remaining coordinates and arbitrary marks. Define the score function,
$$
f(Z,V,\PP_n) = \frac 1 8 \one\{Z \neq V\} \sum_{ Z' \in \PP_n \cap \S_n^{2}(Z,1)} \sum_{V' \in \PP_n \cap \S_n^{2}(V,1)} h(Z,Z',V,V',\PP_n),
$$
where for some Borel set $\mc C_n \subseteq (\R^d \times \M)^4$ representing a connectivity condition,
$$
h(Z,Z',V,V',\PP_n) = \one\{ (Z,Z',V,V') \in \mc C_n \} \one\{ \pi_{1:2}([\dot Z,\dot Z']) \cap \pi_{1:2}([\dot V,\dot V']) \neq \emptyset \}.
$$
In other words, provided the connectivity condition $\mc C_n$ is met, the binary value of $h$ tells us whether
the edges $[\dot Z,\dot Z']$ and $[\dot V,\dot V']$ cross when projected down to the 2-dimensional plane.
As an example $\mc C_n$ could be the condition that the points $Z,Z'$ and $V,V'$ are, respectively, within distance 1 of one another. Note that $f$ exhibits 2-locality by construction. Subsequently,
$$
\Sigma (\PP_n) = \sum_{Z \in\PP_n}\sum_{V \in\PP_n}f(Z,V,\PP_n),
$$
counts the total number of crossings of edges formed by the
connection condition $ \mc C_n$ when projected down to the 2-dimensional plane. We now proceed to first consider the already studied case of the number of crossings of a fixed connectivity radius.

\begin{figure}[h!]
    \centering

\begin{tikzpicture}[
  x  = {(-1.0cm,-1.0cm)},
  y  = {(2cm,-0.3cm)},
  z  = {(0cm,2cm)},
  line cap=round, line join=round, scale = 0.88
]

\def\xmin{0}
\def\xmax{2}
\def\ymin{0}
\def\ymax{2}
\def\zmin{0}
\def\zmax{2}
\def\xL{2.85}

\begin{scope}[canvas is yx plane at z=\zmin]
  \path[fill=black!15, opacity=0.45] (\ymin,\xmin) rectangle (\ymax,\xmax);
\end{scope}

\begin{scope}[canvas is zy plane at x=\xmin]
  \path[fill=white, opacity=0.25] (\zmin,\ymin) rectangle (\zmax,\ymax);
  \draw[blue!40] (\zmin,\ymin) rectangle (\zmax,\ymax);
\end{scope}

\begin{scope}[canvas is zy plane at x=\xmax]
  \draw[blue!40] (\zmin,\ymin) rectangle (\zmax,\ymax);
\end{scope}

\begin{scope}[canvas is zx plane at y=\ymin]
  \draw[blue!40] (\zmin,\xmin) rectangle (\zmax,\xmax);
\end{scope}

\begin{scope}[canvas is zx plane at y=\ymax]
  \draw[blue!40] (\zmin,\xmin) rectangle (\zmax,\xmax);
\end{scope}

\begin{scope}[canvas is yx plane at z=-1.5]
  \path[fill=green!30, opacity=0.55]
    (\ymin-0.15,\xmin-0.15) rectangle (\ymax+0.25,\xmax+0.25);
\end{scope}

\coordinate (a) at (1.10, 0.2, 1.90); 
\coordinate (b) at (0.46, 0.60, 1.70); 
\coordinate (c) at (0.46, 1.72, 0.82); 
\coordinate (f) at (0.88, 1.38, 1.77); 
\coordinate (h) at (0.98, 1.17, 0.75); 
\coordinate (i) at (1.39, 1.8, 0.84); 
\coordinate (e) at (0.71, 0.53, 0.01); 
\coordinate (j) at (0.48, 0.11, 0.24); 
\coordinate (d) at (1.75, 0.4, 0.10);

\coordinate (aL) at (1.10,0.2 ,-1.5);
\coordinate (bL) at (0.46,0.6,-1.5);
\coordinate (cL) at (0.46, 1.72,-1.5);
\coordinate (dL) at (1.75,0.4,-1.5);
\coordinate (fL) at (0.88, 1.38,-1.5);
\coordinate (hL) at (0.98, 1.17,-1.5);

\coordinate (eL) at (0.71, 0.53,-1.5);
\coordinate (iL) at (1.39, 1.8,-1.5);
\coordinate (jL) at (0.48, 0.11,-1.5);

\draw[blue!70!black, line width=0.9pt] (a)--(b);

\draw[blue!70!black, line width=0.9pt] (b)--(f);
\draw[blue!70!black, line width=0.9pt] (c)--(i);
\draw[blue!70!black, line width=0.9pt] (c)--(h);

\draw[blue!70!black, line width=0.9pt] (e)--(j);

\draw[blue!70!black, line width=0.9pt] (h)--(i);

\foreach \p in {a,b,c,d,e,f,h,i,j}{
  \fill[blue!70!black] (\p) circle (1.6pt);
}

\foreach \P/\PL in {a/aL,b/bL,c/cL,d/dL,e/eL,f/fL,h/hL,i/iL,j/jL}{
  \draw[red!70, line width=0.6pt] (\P) -- (\PL);
  \fill[red!70] (\PL) circle (1.4pt);
}

\draw[green!60!black, line width=0.9pt] (aL)--(bL);

\draw[green!60!black, line width=0.9pt] (bL)--(fL);
\draw[green!60!black, line width=0.9pt] (cL)--(iL);
\draw[green!60!black, line width=0.9pt] (cL)--(hL);

\draw[green!60!black, line width=0.9pt] (eL)--(jL);

\draw[green!60!black, line width=0.9pt] (hL)--(iL);

\node[blue!70!black]  at (0.05,-1.25,1.35) {$W_n$};

\node[green!60!black] at (0.5,-0.8,-1.5) {$\pi_{1:2}(W_n)$};

\end{tikzpicture}
    \caption{Illustration of projections of a random 3-dimensional geometric graph onto a green 2-dimensional plane. 
    The crossing number counts the number of intersections between edges in the green projection plane, which is 2 for this particular graph.}
    \label{fig:projections}
\end{figure}
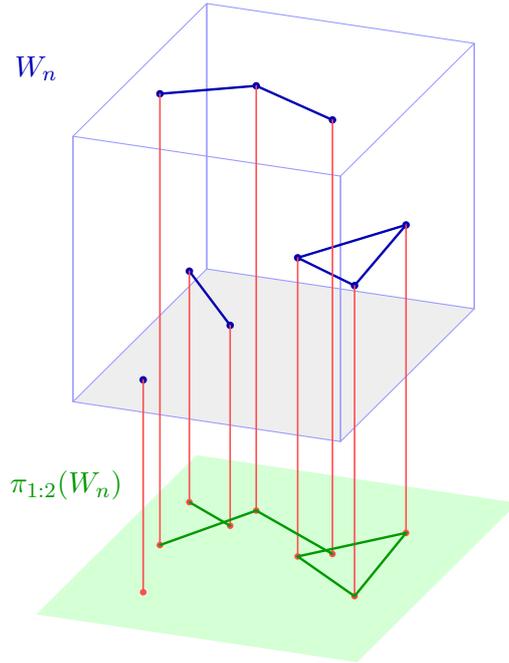
\subsubsection{Fixed radius}
In this section, we consider the random geometric graph in the \emph{thermodynamic regime}, i.e. where the 
typical degree of a vertex remains (or converges to a) constant when the number of points increases. More precisely,
let $\M = \emptyset$ and consider the condition $\mc C_n^{(1)}$ defined as
$$
\mc C_n^{(1)} = \big\{ (Z,Z',V,V') \colon \Vert Z - Z' \Vert \le 1,\ \Vert V - V' \Vert \le 1 \big\},
$$
where we use that notationally $Z$ coincides with $\dot Z$. Note that since
$$
\sup_{\mathrm{x}\in \mc R} f(Z,V,\PP_n^\mathrm{x}) \le \PP\big(B(Z, 1)) \PP\big(B( V, 1))
$$
for any $Z,V \in \PP_n$ and large $n \ge 1$, it follows by Lemma \ref{lem:poisson_ball_bound} that
$f$ satisfies Assumption \ref{ass:A}(iii), i.e. has sub-polynomial moments. Moreover,
note that by adding a point $x \in [0,n]^d$, it can only
add more edges, i.e., the edges $[x,V]$ for all 
$V \in \PP_n \cap B(x,1)$. However, for 
$Z,V \in \PP_n \cap B(x,1)^c$, the addition of $x$ adds no
new edges and hence the number of crossings remain the same,
i.e.,
$$
f(Z,V,\PP_n^x)=f(Z,V,\PP_n).
$$
Thus, for every $n \ge 1$, $R_n(x) \le 1$ 
and hence $f$ exhibits exponential stabilization, i.e. satisfies Assumption \ref{ass:A}(ii). Finally with $t=n^d$, it
follows by \cite[Lemma 3.6]{doring} that
\begin{equation}
    \label{eq:fixed_radius_variance}
\V[\Sigma(\PP_n)]  \ge  C n^{3d - 4}
\end{equation}
for some $C>0$ and all sufficiently large $n \ge 1$.
Thus, invoking Theorem \ref{thm:sum_clt},
$$
d_W\Big(\f{\Sigma(\PP_n)- \E[\Sigma(\PP_n)]}{\sqrt{\V[\Sigma(\PP_n)]}}, \mc N(0,1)\Big)
\le n^{-d/2 + \delta},
$$
for any $\delta > 0$, i.e. the crossing number asymptotically follows a Normal distribution, and we have a near-optimal bound on the rate of convergence.

\subsubsection{Random radius with exponential tails}
Consider now the case of $\M = [0,\infty)$ and let $\mu$ denote a probability measure hereon with exponential tails, i.e.,
$$
\mu((s,\infty)) \le \e^{-\gamma s}
$$
for some $\gamma >0$ and all $s \gg 1$. Assume also that $\mu([1,\infty)) > 0$. Examples of such distributions $\mu$ could be the uniform and exponential distribution. Then, we let the connectivity condition $\mc C_n$ be that that edges are formed between points $Z'$ and the 'core' $Z$
as long as the distance is less than mark $R_Z$ from $\mu$ associated to $\dot Z$, i.e., let
$$
\mc C_n^{(2)} = \big\{ (Z,Z',V,V') \colon \Vert \dot Z -  \dot Z' \Vert \le R_{Z},\ \Vert \dot V - \dot V' \Vert \le R_{V} \big\}.
$$
Note that in general, the graph generated by this condition is a \emph{directed} graph as
$R_Z$ and $R_{Z'}$ may be different. If for modelling reasons, we instead wanted an \emph{undirected} graph as in the fixed radius case,
we could just replace $\mc C_n$ by the connectivity condition,
 $$
\widetilde{ \mc{C}}_n^{(2)} = \big\{ (Z,Z',V,V') \colon \Vert \dot Z - \dot Z' \Vert \le R_{Z} \wedge R_{Z'},\ \Vert \dot V - \dot V' \Vert \le R_{V} \wedge R_{V'} \big\}.
$$
The condition $\widetilde{\mc{C}}^{(2)}_n$ is sometimes called the \emph{max-kernel} in random connection models \cite{GracarEtAl2021_percolation}. 
The reason it is called the max-kernel rather than the min-kernel is that usually the connection threshold is on the form $R = s^{-\gamma}$ for some $\gamma > 0$
where $s \in (0,1)$ and hence 
$$
R_1 \wedge R_2 = (s_1 \vee s_2)^{-\gamma}.
$$
Since we are still only considering
$Z' \in \S_n^2(Z,1)$, we are only considering the crossings of a subset of all edges in the connection model. However, if we instead were to change $\mu$ to a distribution supported on $[0,\ell]$
for some $\ell \ge 1$, then modifying $k$-locality slightly to consider slabs of width $2 \ell$ instead of width $2$, we would indeed be looking at all edges.  

We proceed to verify the remaining conditions in Assumption $A$ for $\mc C_n^{(2)}$. Since
$$
\sup_{\mathrm{x}\in \mc R} f(Z,V,\PP_n^\mathrm{x}) \le \PP\big(B(\dot Z,R_Z)\big) \PP\big(B(\dot V,R_V)\big)
$$
for any $Z,V \in \PP_n$, it follows by Lemma \ref{lem:poisson_ball_bound} that
$f$ satisfies Assumption \ref{ass:A}(iii), i.e., sub-polynomial moments. Next, we let
$$
T_n = \sup_{Z \in \PP_n} R_Z,
$$
and note as before that the insertion of $x$ can only add more edges
and that by construction no edges are formed with $x$ from points outside $Q(x,T_n)$.
Hence $f$ stabilizes inside $Q(x,T_n)$. Moreover,
by the union bound and Mecke's formula, it follows that
$$
P(T_n > n^\eps) \le \int_{[0,n]^d\times [0,\infty)} P(R_x > n^\eps) \ \text d x \le n^d \e^{-\gamma n^\eps},
$$
for any $\eps \in (0,1)$ and $n \gg 1$. Thus, 
as $Q(x,T_n)$ has exponentially decaying tails, it follows that this is also the
case for the non-stable cube.
Thus, $f$ satisfies Assumption \ref{ass:A}(ii).

Finally, we use the bound in Lemma \ref{lem:var_bound} to bound the variance from below. To that end, choose $r$ small enough such that each box $Q_{n,j,r}$
has side length atmost $1/\sqrt{d}$. For each $1 \le j \le \alpha_rn^d$, consider now a further subdivision of $Q_{n,j,r}$ into $4$ subboxes: When shifted
such that the center point of $Q_{n,j,r}$ is the origin, let $Q_{n,j,r}^{++}$ denote the part of the cube with positive first and second coordinates,
$Q_{n,j,r}^{+-}$ positive first and negative second coordinates,
 $Q_{n,j,r}^{-+}$ negative first and positive second coordinates, and $Q_{n,j,r}^{--}$ negative first and second coordinates. With these subboxes,
 we now define the sets of point configurations,
        \begin{equation}
    \label{eq:crossing_configurations}
        \begin{aligned}
        \mc B_{n,j}^{(1)}  = & \{ D \in \mathfrak{N}(Q_{n,j,r}) :   \vert D \vert = 0 \},\\[0.5em]
        \mc B_{n,j}^{(2)}  = & \{ D \in \mathfrak{N}(Q_{n,j,r}) :   \vert D \cap (Q_{n,j,r}^{++} \times [1,\infty)) \vert = \vert D \cap (Q_{n,j,r}^{--} \times [1,\infty)) \vert = 1 \}, \\
         \mc B_{n,j}^{(3)}  = & \{ D \in \mathfrak{N}(Q_{n,j,r}) :   \vert D \cap (Q_{n,j,r}^{+-} \times [1,\infty)) \vert = \vert D \cap (Q_{n,j,r}^{-+} \times [1,\infty)) \vert = 1 \},
        \end{aligned}
        \end{equation}
i.e., $ \mc B_{n,j}^{(2)} $ are the configurations where there is exactly one point with mark at least $1$ in the subbox $Q_{n,j,r}^{++}$ and exactly one point with mark at least $1$ in the subbox $Q_{n,j,r}^{--}$,
while $ \mc B_{n,j}^{(3)} $ are the configurations where the two points instead are in $Q_{n,j,r}^{+-}$ and $Q_{n,j,r}^{-+}$.
Since the boxes $Q_{n,j,r}$ have side length at most $1/\sqrt{d}$, it follows in both cases that these two points must be connected by an edge. Note that by the assumption that
$\mu([1,\infty)) > 0$ and that the volume of box $Q_{n,j,r}$ is constant, then by the Poisson void probabilities assumption (V1) in Lemma \ref{lem:var_bound} is satisfied.
Next, consider the set $I_{n,j}^2$ as defined in \eqref{eq:index_set}.

Note that for any $i \in I_{n,j}^2$, any configuration in $\mc B_{n,i}^{(3)}$ will add one crossing with the edge formed in any configuration of $Q_{n,j,r}$ in $\mc B_{n,j}^{(2)} $. 
Thus, it follows that assumption (V2) in Lemma \ref{lem:var_bound} is satisfied. Hence by Lemma \ref{lem:var_bound}, $\V[\Sigma(\PP_n)] \ge C n^{3d - 4}$,
which by Theorem \ref{thm:sum_clt} implies that for any $\delta > 0$,
$$
d_W\Big(\f{\Sigma(\PP_n)- \E[\Sigma(\PP_n)]}{\sqrt{\V[\Sigma(\PP_n)]}}, \mc N(0,1)\Big)
\le n^{-d/2 + \delta},
$$
i.e., we once more have asymptotic normality of the crossing number. Note that we could also have verified the conditions in Assumption \ref{ass:A} for $\widetilde{\mc{C}}_n^{(2)}$ and obtained the same results.

\subsubsection{Random radius with exponential tails and spatial dependence}
Finally, as a proof of concept, we also briefly study a connection model, where the radius may depend on all other Poisson points. Let
$\mu$ be as  in the previous section, and let $(R_x)_{x \in \R^d}$ denote independent random variables with distribution $\mu$. Then, let 
$\mc S(\PP_n)$ denote a measurable set, which satisfies the following 'localization' property,
$$
x \in \mc S(\PP_n) \iff x \in \mc S(\PP_n \cap B( \dot x,R_x)),
$$
and define the new radius $R_{x,\PP_n}$ as 
$$
R_{x,\PP_n} = R_x \one\{x \in \mc S(\PP_n)\}.
$$
Moreover, let $\mc H_{n,j}$ denote the event that $Z \in \mc S(\PP_n)$ for every $Z \in \PP_n \cap Q_{n,j,r}$, and assume that $\mc H_{n,j}$ is measurable and that

\begin{equation}
    \label{eq:spatial_dependence_assumption}
\inf_{n \ge 1} \inf_{1\le j \le \alpha_rn^d} P(\mc H_{n,j}) > 0,
\end{equation}
As an example, we could for
 $\ell \in \mathbb{N}$, consider 
$$
R_{x,\PP_n} = R_x \one\{\vert \PP_n \cap B(\dot x,R_x) \vert \le \ell \},
$$
which would represent a model, where too many possible connections results in no connections at all.
In this section, we only consider undirected graphs for simplicity,
and thus consider the connectivity condition $\mc C_n^{(3)}$ defined as
$$
\mc C_n^{(3)} = \big\{ (Z,Z',W,W') \colon \Vert \dot Z - \dot Z' \Vert \le R_{Z,\PP_n}  \wedge R_{Z',\PP_n},\ \Vert \dot W - \dot W' \Vert \le R_{W,\PP_n} \wedge R_{W',\PP_n} \big\}.
$$

We now proceed to verify the rest of Assumption \ref{ass:A} for $\mc C_n^{(3)}$.
First, since $R_{x,\PP_n} \le R_x$,
$$
\sup_{\mathrm{x}\in \mc R} f(Z,V,\PP_n^\mathrm{x}) 
\le \PP\big(B(\dot Z, R_Z)\big) \PP\big(B(\dot W,R_W)\big)
$$
for any $Z,V \in \PP_n$, so by Lemma \ref{lem:poisson_ball_bound},
$f$ satisfies Assumption \ref{ass:A}(iii), i.e., has sub-polynomial moments. Next, due to min-kernel structure of $\mc C_n$ and the localization property $\mc S(\PP_n)$, 
it follows that no edges are formed with points outside $Q(x,R_x)$ and hence we have stabilization.
By the exponential tails assumption on $\mu$, it thus follows that Assumption \ref{ass:A}(ii), i.e., exponential stabilization, is satisfied as well.

Finally, let $ \mc B_{n,j}^{(1)}, \mc B_{n,j}^{(2)}, \mc B_{n,j}^{(3)} $
be the same sets of configurations as in \eqref{eq:crossing_configurations} and let $I_{n,j}^2$ be the index set defined in \eqref{eq:index_set}.
Then, by the additional assumption in \eqref{eq:spatial_dependence_assumption},
it follows by the same arguments that both conditions (V1) and (V2) in Lemma \ref{lem:var_bound} are satisfied.
Thus, by Theorem \ref{thm:sum_clt}, we have Normal approximation of the crossing number $\Sigma(\PP_n)$ even when the connection radius exhibits this type of spatial dependence.

\subsection{Barcodes I: The inversion count}
\label{sec:3.3}
The inversion number counts the number of inversions occurring in a permutation and is thereby a classical permutation statistic that plays a central role at the intersection
 of combinatorics and probability. From a statistical perspective, the inversion number and its distributional properties has been investigated most extensively 
 for uniformly drawn permutations.  Stanley (e.g.\ \cite{stanley1997enumerative}) develops the classical combinatorial theory of inversion statistics,
  including generating functions and q‐analogs, while Fulman (e.g.\ \cite{fulman2004stein}) applies Stein’s method to establish a Normal approximation result
  for the inversion number under the uniform permutation model.
Recent research has expanded beyond the classical regime. A central limit theorem for more general permutation statistics, 
including those exhibiting local dependency structures, was established by \cite{chatterjee2017cltnew}. Their framework extends the reach of probabilistic limit theorems to nontraditional or composite statistics on permutations, providing a general method for handling dependencies. 

Beyond classical and asymptotic analysis, the inversion number has appeared in new mathematical contexts.
 In \cite{bouquet2024barcode, jaramillo2023combinatorial}, combinatorial methods are developed for the analysis of persistence barcodes,
  which are one of the core tools of TDA. Here, loosely speaking, a collection of intervals is used to represent the birth and death times of topological features 
  over multiple scales (Figure \ref{fig:bars}). A precise description in the present setting is found below. 
  Permutation-type statistics are of high interest in TDA since the intervals in a barcode can be thought of inducing a permutation on the set of birth/death times.
Additionally, sometimes only parts of the barcode plot are considered. For instance \cite{BruckGarin2023_StratifyingSpaceOfBarcodes} removes the infinite bar. We go a step further and not only drop 
the infinite bar but also those that are longer than some threshold, which we simply set to 1 for presentation reasons.

In this section, we now examine several choices for the
lifetimes when modelling the underlying point cloud as a Poisson point process and prove that for each of these choices, we can prove
Normal approximation of the inversion count (Theorem \ref{thm:sum_clt}). To make the above considerations precise, we define the score function $f$ as 
\begin{equation}
    \label{eq:score_function_inversion}
    f(x, y, \PP_n) = \one\big\{\big(x_1^{(d)} - y_1^{(d)}\big)\big(x_1^{(d)} - y_1^{(d)} + \ell_{x,\PP_n}  - \ell_{y,\PP_n}  \big) <0, \ \ell_{x,\PP_n} , \ell_{y,\PP_n} \in (0,1) \big\},
\end{equation}
where $\ell_{x,\PP_n}, \ell_{y,\PP_n}$ are lifetimes---i.e., barcode lengths---which will either be independent uniform random variables or stem from Poisson trees.
In other words, $(\ref{eq:score_function_inversion})$ is equal to 1 if there is an inversion among $x$ and $y$ and 0 otherwise. 
Note that this score function is constructed to exhibit 1-locality, that is, to not have long-range dependence in the first coordinate as described in \ref{ass:A}(i), which in this section can be thought of as a time component. Since $f$ is bounded by 1,
the score function always automatically have sub-polynomial moments, i.e. satisfy Assumption \ref{ass:A}(iii). We now proceed to study the remaining conditions in Assumption \ref{ass:A} for independent lifetimes.

\begin{figure}[h!]
    \centering

\begin{tikzpicture}[x=1.25cm,y=1cm,>=latex,line cap=round,scale=1.25]

\def\xmin{0.0}
\def\xmax{8.6}

\def\yHa{2.35}
\def\yHb{1.15}
\def\yHc{-0.05}

\definecolor{barH0}{RGB}{130,25,25}   
\definecolor{barH1}{RGB}{18,92,55}    
\definecolor{barH2}{RGB}{120,60,140}  
\definecolor{axisblue}{RGB}{40,90,140}

\foreach \x in {0.5,1.75,3.0,4.25,5.5,6.75,8.0}{
  \draw[axisblue!45,dashed,line width=0.6pt] (\x,-0.55) -- (\x,2.75);
}

\draw[axisblue!60,line width=0.6pt] (\xmin,1.75) -- (\xmax,1.75);
\draw[axisblue!60,line width=0.6pt] (\xmin,0.55) -- (\xmax,0.55);

\foreach \y in {\yHa,\yHb,\yHc}{
  \draw[axisblue!70,->,line width=0.7pt] (\xmin,\y) -- (\xmax,\y);
}

\draw[barH0,line width=1.4pt] (0.15,2.55) -- (0.28,2.55);
\draw[barH0,line width=1.4pt] (1.10,2.45) -- (5.55,2.45);
\draw[barH0,line width=1.4pt] (2.30,2.35) -- (3.10,2.35);
\draw[barH0,line width=1.4pt] (1.80,2.25) -- (1.90,2.25);
\draw[barH0,line width=1.4pt] (3.15,2.15) -- (3.85,2.15);
\draw[barH0,line width=1.4pt] (2.10,2.05) -- (5.90,2.05);
\draw[barH0,line width=1.4pt] (1.10,1.95) -- (1.55,1.95);
\draw[barH0,line width=1.4pt] (0.10,1.85) -- (8.30,1.85);

\draw[barH1,line width=1.6pt] (1.05,1.45) -- (1.55,1.45);
\draw[barH1,line width=1.6pt] (1.25,1.32) -- (2.40,1.32);
\draw[barH1,line width=1.6pt] (1.55,1.20) -- (3.35,1.20);
\draw[barH1,line width=1.6pt] (1.75,1.08) -- (4.10,1.08);
\draw[barH1,line width=1.6pt] (2.15,1.52) -- (3.05,1.52);
\draw[barH1,line width=1.6pt] (2.45,0.96) -- (2.95,0.96);
\draw[barH1,line width=1.6pt] (2.80,1.28) -- (4.60,1.28);
\draw[barH1,line width=1.6pt] (3.10,1.58) -- (3.90,1.58);
\draw[barH1,line width=1.6pt] (3.55,1.36) -- (7,1.36);

\draw[barH1,line width=1.6pt] (4.35,0.92) -- (4.85,0.92);

\draw[barH2,line width=1.8pt] (5.05,0.10) -- (5.55,0.10);
\draw[barH2,line width=1.8pt] (5.35,-0.05) -- (5.90,-0.05);

\end{tikzpicture}

    \caption{The barcode plot in topological data analysis, where each bar represents the lifetime of a topological feature (\cite{BoissonnatChazalYvinec2018_GeometricTopologicalInference}).
    As an example, the green bars might represent the lifetime of loops and the purple bars the lifetime of voids, or the colors could be irrelevant and all bars represent the lifetimes of connected components.
    In this particular barcode plot, e.g., there are at least two inversions between 
    each of the two purple bars and the longest green bar if all of these have length at most 1 as required in \eqref{eq:score_function_inversion}.}
    \label{fig:bars}
\end{figure}
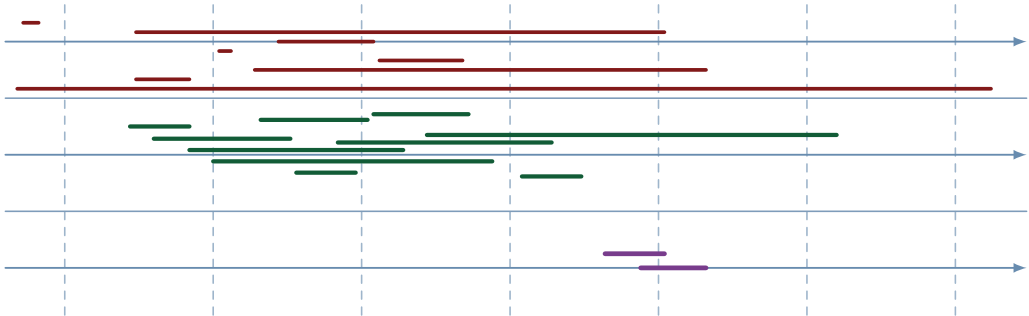

\subsubsection{Lifetimes from independent and uniform variables}
\label{sec:independent_uniform_lifetimes}

We first consider a model where the lifetimes are uniform and independent of eachother as well as of the spatial location of the corresponding point. 

To be precise, let $\PP_n$ denote
a Poisson point process in $ W_n \times [0,1]$ with intensity as the Lebesgue measure on $\W_n \times [0,1]$.
Hence, for each  $\dot x \in  W_n$, we assign a lifetime $\ell_{\dot x}$ as a uniform random variable on $[0,1]$.
Since the bars have length atmost
1 in this model and the uniform distribtion is atom-free, the bound on the lifetimes in the score function in (\ref{eq:score_function_inversion}) is automatically satisfied almost surely.
Moreover as all bars are independent, then 
$
f(Z,V,\PP_n^{x})= f(Z,V,\PP_n)
$
for any $Z,V \in \PP_n$ and $x \in  W_n \times [0,1]$, which
in particular also ensures exponential stabilization. Hence, the only significant piece of work is showing that the variance is sufficiently large.

To that end, we will once more use the bound in Lemma \ref{lem:var_bound}. For $s \in (0,1)$, let $Q_{n,j,r}(s)$ denote the box centered at the same location of $Q_{n,j,r}$ but with side length $2 s r$,
and let $S_{n,j,r}(s)$ denote the part $Q_{n,j,r}$ whose first coordinate lies in the bottom $s$-quantile of first coordinates for points in $Q_{n,j,r}$. That is,
\begin{align*}
Q_{n,j,r}(s) & = \big\{ x \in Q_{n,j,r} \colon Q(x,r(1-s)) \subseteq Q_{n,j,r} \big\}, \\
S_{n,j,r}(s) & = \Big\{ x \in Q_{n,j,r} \colon x_1 \in \Big[\Big(\Big\lceil \frac{j}{n^{i-1}} \Big\rceil_n - 1 \Big)r, \Big(\Big\lceil \frac{j}{n^{i-1}} \Big\rceil_n - 1 + s\Big) r \Big] \Big\}.
\end{align*}
We now choose $r=1/2$, so that $Q_{n,j,1/2}$ has width 1 in the time-direction. Next, define
\begin{equation}
    \label{eq:uniform_configurations}
        \begin{aligned}
        \mc B_{n,j}^{(1)}  = & \{ D \in \mathfrak{N}(Q_{n,j,1/2}) :   \vert D \vert = 0 \},\\[0.5em]
        \mc B_{n,j}^{(2)}  = & \{ D \in \mathfrak{N}(Q_{n,j,1/2}) :   \emptyset \not \subseteq D \subseteq Q_{n,j,1/2}(1/8) \times [0,1/4] \}, \\
         \mc B_{n,j}^{(3)}  = & \{ D \in \mathfrak{N}(Q_{n,j,1/2}) :   \emptyset \not \subseteq D \subseteq S_{n,j,1/2}(1/4) \times [7/8,1] \},
        \end{aligned}
        \end{equation}
i.e., $ \mc B_{n,j}^{(2)} $ are the configurations where there are only short bars in the middle of the cube, while 
$ \mc B_{n,j}^{(3)} $ where there are long bars close to the edge of the cube.
By the Poisson void probabilities, then assumption (V1) in Lemma \ref{lem:var_bound} is satisfied.
Next, we consider the index set $I_{n,j}^1$ as defined in \eqref{eq:index_set}, and note that for any $i \in I_{n,j}^1$,
any configuration in $\mc B_{n,i}^{(3)}$ will add at least one inversion with a short bar in any configuration of $Q_{n,j,r}$ in $\mc B_{n,j}^{(2)} $.
Thus, it follows that assumption (V2) in Lemma \ref{lem:var_bound} is satisfied. Hence by Lemma \ref{lem:var_bound}, $\V[\Sigma(\PP_n)] \ge C n^{3d - 2}$,
which by Theorem \ref{thm:sum_clt} implies that for any $\delta > 0$,
$$
d_W\Big(\f{\Sigma(\PP_n)- \E[\Sigma(\PP_n)]}{\sqrt{\V[\Sigma(\PP_n)]}}, \mc N(0,1)\Big)
\le n^{-d/2 + \delta},
$$
i.e., we have the desired Normal approximation of the inversion count.

\subsubsection{The Poisson tree model}

The second model we consider is where we dynamically construct a tree structure from the points in $\PP_n$ and assign lifetimes to the points based on the length
of these branches as done in \cite{FerrariLandimThorisson2004}. The construction of the tree structure, loosely speaking, consists of sweeping through
the point cloud in the time direction and connecting points to the nearest neighbor to the right (in time),
which simultaneously satisfies a certain threshold in the spatial component. When compared to the previous model, the lifetimes will now depend on other points in the window $ W_n$, yet only the other points, which means we do not need to introduce additionally randomness through a mark space.

We now make the above ideas concise; First, 
let $\M = \emptyset$ and note that $x$ notationally coincides with $\dot x$.
Then, for any $Z \in \PP_n$, we define the \emph{cylinder} $C_n(Z)$ as the set
$$
C_n(Z) = [0,n] \times \pi_{2:d}(B(Z,1)),
$$
where recall $\pi_{2:d}$ is the projection from $\R^d$ onto the $(d-1)$-dimensional subspace of the second coordinate to the $d$'th coordinate.
With this, we now define for $Z \in \PP_n$ the \emph{right ancestor} $Z^+$ as the (a.s.) unique element of the set
    \begin{align*}
    \{V \in C_n(Z) : V_1 \ge Z_1, \ V_1 \le \widetilde V_1 \text{ for every } \widetilde V \in \PP_n \text{ where } \widetilde V_1 \ge Z_1 \}
    \end{align*}
if the set is non-empty, and set $Z^{+}=Z$ otherwise.
Additionally, we define the \emph{left-most successor} $Z^-$ of $Z$ as the (a.s.) unique element of the set
    \begin{align*}
    \{V \in C_n(Z) : V^+ = Z, \ V_1 \le \widetilde V_1 \text{ for every } \widetilde V \in \PP_n \text{ where } \widetilde V^+ = Z \}.
    \end{align*}
if the set is non-empty, and set $Z^{-}=Z$ otherwise.
Now, we can construct the \emph{Poisson tree graph} as the (directed) graph with vertex set $\PP_n$ and edge set $\mc E_n$ 
    defined as
    $$
    \mc E_n = \{(Z, Z^+) : Z \in \PP_n\},
    $$
    where we also note that this graph contains no loops by construction. For vertex $Z$ to the left of $V$ (i.e., $Z_1 \le V_1$),
     we use the notation $ Z \to V$ to denote that there is a \emph{path} - that is a sequence of edges - from $Z$ to $V$.
     Note that the leaves - that is vertices of degree one - in this graph are exactly the points $Z \in \PP_n$ such that $Z^- = Z$, 
    i.e. the points with no succesors to the left, and we use the notation $\mc L(\PP_n)$ to denote the set of such leaves.
    Moreover, we denote the set of all vertices with degree three or larger $\mc M (\PP_n)$---which we refer to as \emph{merge points} since at least two distinct paths must 'merge' at such a vertex.
    The next step is to implement what is commonly known as
     \emph{the Elder rule} \cite{BoissonnatChazalYvinec2018_GeometricTopologicalInference} for the merging of certain paths.
     More precisely, we define among paths starting at leaves, the \emph{survivor} $Z^\to$ 
    at a merge point $Z \in \mc M(\PP_n)$ as the (a.s.) unique leaf in the set
    $$
    \{V \in \mc L(\PP_n) \colon V \to Z, 
    \ V_1 \le \widetilde V_1 \text{ for every } \widetilde V \in \mc L(\PP_n) 
    \text{ where } \widetilde V \to Z\}.
    $$
    In other words, the survivor is the leaf that is oldest, i.e., has the smallest time coordinate, 
    among the leaves whose paths meet at this merge point. Furthermore,
    we define the \emph{death point} $Z^\dagger$ of $Z \in \mc L(\PP_n)$
    as the (a.s.) unique merge point in the set
     $$
    \{V \colon Z \to V, \ V^{\to} \neq Z,
     \ V_1 \le \widetilde V_1 \text{ for every } \widetilde V \in \mc M(\PP_n) 
     \text{ with } Z \to \widetilde V,\ \widetilde V^{\to} \neq Z \}.
     $$
    In other words the death point of a leaf is the first merge point (in time) where the leaf is not the survivor.
    We refer to the path $Z \to Z^{\dagger}$ as the \emph{branch} of $Z$.
    Finally, we can now define for any leaf $Z \in \mc L(\PP_n)$ the \emph{lifetime of its branch} $\ell_{Z,\PP_n}$ as
    \begin{equation}
        \label{eq:lifetime_poisson_tree}
    \ell_{Z,\PP_n} = Z^\dagger_1 - Z_1,
    \end{equation}
    i.e., the difference in time coordinates between a leaf and the death point of this leaf, and set $\ell_{Z,\PP_n} = 0$ if $Z \in \mc L(\PP_n)^c$.

\begin{figure}[h!]
\centering
\begin{tikzpicture}[x=1cm,y=1cm,>=stealth,line cap=round,line join=round, scale=1]

\definecolor{ZoneBlue}{rgb}{0.25,0.45,0.80}
\definecolor{ZtwoGreen}{rgb}{0.20,0.65,0.45}
\definecolor{ZthreePurple}{rgb}{0.60,0.30,0.70}

\draw[black,thick] (0.33,0) rectangle (10,6.25);

\draw[black,thick,->] (0.75,-0.5) -- (9.5,-0.5);
\foreach \k in {0,...,8}{
  \draw[black,thick] ({1+\k},-0.58) -- ({1+\k},-0.42);
  \node[black,below] at ({1+\k},-0.58) {\small \k};
}

\draw[ZoneBlue,thick,dash pattern=on 2.2pt off 2.2pt] (1,6) -- (5,6);
\draw[ZoneBlue,thick,dash pattern=on 2.2pt off 2.2pt] (1,4) -- (5,4);

\draw[ZtwoGreen,thick,dash pattern=on 2.2pt off 2.2pt] (3,4.75) -- (5,4.75);
\draw[ZtwoGreen,thick,dash pattern=on 2.2pt off 2.2pt] (3,2.75) -- (5,2.75);

\draw[ZthreePurple,thick,dash pattern=on 2.2pt off 2.2pt] (2,2.5) -- (6,2.5);
\draw[ZthreePurple,thick,dash pattern=on 2.2pt off 2.2pt] (2,0.5) -- (6,0.5);

\draw[ZoneBlue,thick,dash pattern=on 2.2pt off 2.2pt] (5,5.25) -- (7,5.25);
\draw[ZoneBlue,thick,dash pattern=on 2.2pt off 2.2pt] (5,3.25) -- (7,3.25);

\draw[ZoneBlue,thick,dash pattern=on 2.2pt off 2.2pt] (7,4.75) -- (9,4.75);
\draw[ZoneBlue,thick,dash pattern=on 2.2pt off 2.2pt] (7,2.75) -- (9,2.75);

\draw[ZthreePurple,thick,dash pattern=on 2.2pt off 2.2pt] (6,2.2) -- (8,2.2);
\draw[ZthreePurple,thick,dash pattern=on 2.2pt off 2.2pt] (6,0.2) -- (8,0.2);

\draw[ZthreePurple,thick,dash pattern=on 2.2pt off 2.2pt] (8,3.1) -- (9,3.1);
\draw[ZthreePurple,thick,dash pattern=on 2.2pt off 2.2pt] (8,1.1) -- (9,1.1);

\draw[ZoneBlue,thick] (1,-1.5) -- (9.3,-1.5);
\node[right, ZoneBlue] at (9.0,-1.5) {$\ \ \ldots$};
\draw[ZtwoGreen,thick] (3,-2) -- (5,-2);
\draw[ZthreePurple,thick] (2,-2.5) -- (9,-2.5);

\coordinate (A) at (1.0,5);   
\coordinate (E) at (3,3.75);   
\coordinate (F) at (2,1.5);   

\coordinate (B) at (5,4.25);   
\coordinate (C) at (7, 3.75);   
\coordinate (D) at (9,2.95);   

\coordinate (G) at (6 , 1.2);   
\coordinate (H) at (8, 2.1);

\draw[black,thick] (A) -- (B) -- (C) -- (D);
\draw[black,thick] (E) -- (B);
\draw[black,thick] (F) -- (G) -- (H) -- (D);
\draw[black,thick] (D) -- (9.25,2.95);

\foreach \p in {A,E,F,B,C,D,G,H}{
  \filldraw[black,fill=white,thick] (\p) circle (2.2pt);
}

\node[ZoneBlue,left]   at (A) {$Z_1$};
\node[ZtwoGreen,left]  at (E) {$Z_2$};
\node[ZthreePurple,left] at (F) {$Z_3$};
\node[right] at (D) {$\ \ \ldots$};

\end{tikzpicture}
\label{fig:poisson_tree_model}
\caption{Illustration of the Poisson tree model. The points $Z_1$, $Z_2$, and $Z_3$ are leaves that appear at times $0$, $2$, and $1$, respectively. 
The dashed bars indicate a spatial constraint of radius $1$.
 When the first point within this radius appears, an edge is formed to that point, and the dashed bars shift to be centered at the new point. 
 This process continues until the branch merges with another branch. Below the horizontal arrow, the barcodes represent the lifetimes of each of the three branches
 starting at $Z_1$, $Z_2$, and $Z_3$. The branches from $Z_1$ and $Z_2$ merge at time $4$, and by the \emph{Elder rule} the first-born leaf $Z_1$ survives;
  hence the branch from $Z_2$ has a lifetime of $2$. The branches from $Z_1$ and $Z_3$ merge at time $8$, where again $Z_1$ survives, giving the branch from $Z_3$ a lifetime of $7$. 
  The branch from $Z_1$ and its barcode continues beyond the illustration.}
\end{figure}
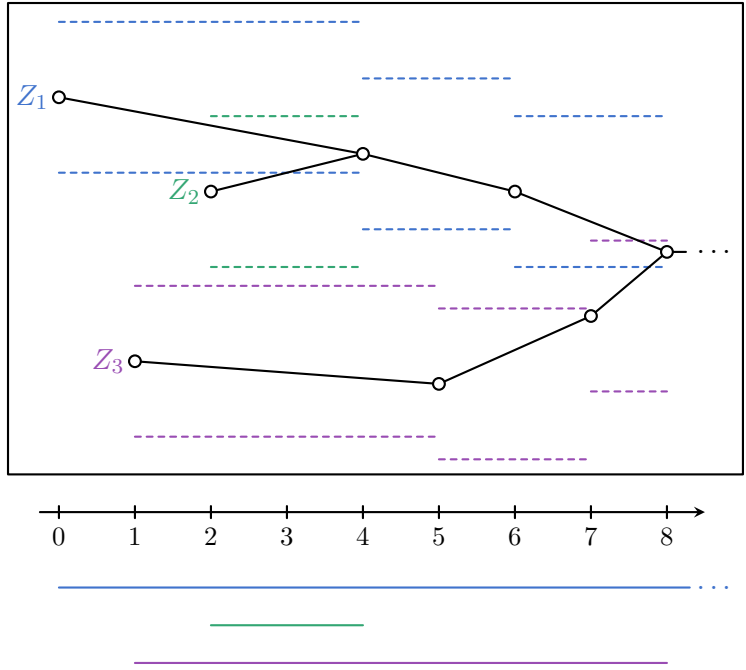

\subsubsection{Lifetimes from Poisson trees}
\label{sec:poisson_tree_lifetimes}

    We now proceed to verify exponential stabilization, i.e., Assumption \ref{ass:A}(ii), and bound the variance of the inversion count using Lemma \ref{lem:var_bound} for when the lifetimes are as in \eqref{eq:lifetime_poisson_tree}.

    First, to verify Assumption \ref{ass:A}(ii), i.e., exponential stabilization; Let $x \in  W_n$ and define for $s > 0$ the cubic annulus $A(x,s)$ as 
    $$
    A(x,s) = [x_1^- -1, x_1^+ + 1] \times \big(\pi_{2:d}(Q(x,s+1)) \setminus \pi_{2:d}(Q(x,s))\big),
    $$
    and define the random critical value $\rho_x > 0$ as
    $$
    \rho_x = \min \{s \in \N_0 : \PP_n( A(x,s) ) = 0\}.
    $$
    Note that by construction, the box
    \begin{equation}
        \label{eq:stab_box}
    D(x,\rho_x) = [x_1^- -1, x_1^+ + 1] \times \pi_{2:d}(Q(x,\rho_x))
    \end{equation}
    has no Poisson points in an 1-thick band above and below itself, see Figure \ref{fig:stab_cases}.

    We now proceed to show that the score function $f$ stabilizes inside $D(x, \rho_x)$. Let $Z, V \in D(x, \rho_x)^c$ and consider the 
    following three regions,
    \begin{equation}
    \label{eq:stab_three_regions}
    \begin{aligned}
     W_n^{(1)} & = [0,x_1^- - 1] \times \pi_{2:d}( W_n), \\
     W_n^{(2)} & = \big([x_1^- - 1, x_1^+ + 1] \times \pi_{2:d}( W_n)\big) \cap D(x,\rho_x)^c, \\
     W_n^{(3)} & = [x_1^+ + 1,n] \times \pi_{2:d}( W_n),
    \end{aligned}
    \end{equation}
    as depicted in Figure \ref{fig:stab_cases}. Note first if both $Z$ and $V$ lie in either $ W_n^{(1)}$ or $ W_n^{(3)}$, it follows that the branches containing $Z$ and $V$ remains unchanged
    and hence $f(Z,V,\PP_n^x) = f(Z,V,\PP_n)$, i.e., the score function is unchanged. Suppose that $Z$ lies in $ W_n^{(2)}$ and $\ell_{Z,\PP_n} = 0$. Then, the insertion of $x$ does not
    make $Z$ a leaf and hence $f(Z,V,\PP_n^x) = f(Z,V,\PP_n) = 0$ for any $V$. If $0<\ell_{Z,\PP_n} \le  1$, then $Z^\dagger$ is not affected by $x$.
    Since the branch belonging to $V$ remains unchanged if $V$ lies in $ W_n^{(1)}$  or $ W_n^{(3)}$, it suffices to look at the situation where $V$ is in $ W_n^{(1)}$. If $\ell_{V,\PP_n} \le 1$, then
    by the same arguments, $V^{\dagger}$ is not affected by $x$ and hence $f(Z,V,\PP_n^x) = f(Z,V,\PP_n)$. If $\ell_{V,\PP_n} > 1$, then $f(Z,V,\PP_n) = 0$. However, by definition of $D(x,\rho_x)$,
    the insertion of $x$ cannot decrease the lifetime of $V$, and hence $f(Z,V,\PP_n^x) = 0$. Finally, if $Z$ lies in   $ W_n^{(1)}$  and $\ell_{Z,\PP_n} > 1$,
    we use the symmetric nature of the above argument to conclude that $f(Z,V,\PP_n^x) = f(Z,V,\PP_n)$ as well. 
 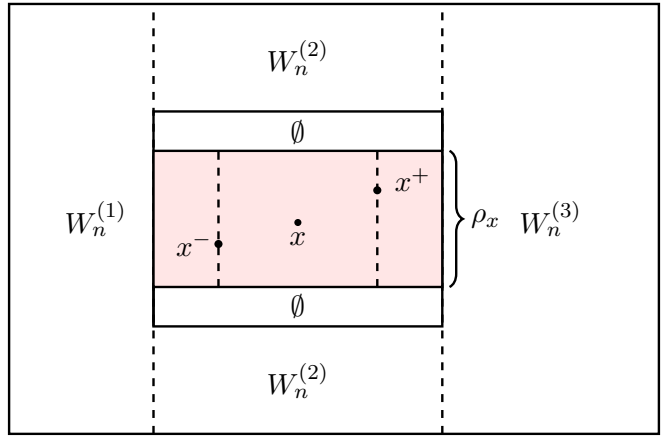
\begin{figure}[h!]
    \centering
    
    \begin{tikzpicture}[scale=0.95, every node/.style={font=\normalsize}]
  
  \draw[line width=0.9pt] (0,0) rectangle (9,6);

  \draw[dashed, line width=0.9pt] (2,0) -- (2,6);
  \draw[dashed, line width=0.9pt] (6,0) -- (6,6);

  \node at (1.2,3.0) {$W_n^{(1)}$};
  \node at (4.0,0.75) {$W_n^{(2)}$};
  \node at (4.0,5.25) {$W_n^{(2)}$};
  \node at (7.5,3.0) {$W_n^{(3)}$};

  \coordinate (A) at (2,1.5);
  \coordinate (B) at (6,4.5);

  \def\bandh{0.55}
  \coordinate (PinkBL) at ($(A)+(0,\bandh)$);
  \coordinate (PinkTR) at ($(B)+(0,-\bandh)$);

  \draw[line width=0.9pt] (A) rectangle (B);

  \draw[line width=0.9pt] ($(B)+(0,-\bandh)$) rectangle (B);
  \node at (4.0,4.22) {$\emptyset$};

  \draw[line width=0.9pt] (A) rectangle ($(A)+(0,\bandh)$);
  \node at (4.0,1.78)  {$\emptyset$};

  \path[fill=pink!40] (PinkBL) rectangle (PinkTR);
  \draw[line width=0.9pt] (PinkBL) rectangle (PinkTR);

  \coordinate (xL) at ($(PinkBL)+(0.9,0)$);
  \coordinate (xR) at ($(PinkTR)+(-0.9,0)$);
  \draw[dashed, line width=0.9pt] (xL |- PinkBL) -- (xL |- PinkTR);
  \draw[dashed, line width=0.9pt] (xR |- PinkBL) -- (xR |- PinkTR);

  \fill ($(xL)+(0.00,0.60)$) circle (1.6pt);
  \node[left] at ($(xL)+(0.05,0.60)$) {$x^-$};

  \fill ($0.5*(xL)+0.5*(xR) + (0,-0.05)$) circle (1.4pt);
  \node[below] at ($0.5*(xL)+0.5*(xR) + (0,-0.05)$) {$x$};

  \fill ($(xR)+(0.00,-0.55)$) circle (1.6pt);
  \node[right] at ($(xR)+(0.10,-0.4)$) {$x^+$};

  \draw[decorate, decoration={brace, amplitude=5pt}, line width=0.9pt]
    (6.1,3.95) -- (6.1,2.05)
    node[midway, right=4pt] {$\rho_x$};
\end{tikzpicture}
    \caption{The partition of the region $W_n$ into the three sub regions 
    $W_n^{(1)}$, $W_n^{(2)}$, and $W_n^{(3)}$ as in \eqref{eq:stab_three_regions},
    as well as the stabilization box $D(x,\rho_x)$ defined in \eqref{eq:stab_box}
     which is depicted in pink.
    The pads above and below are by construction of $D(x,\rho_x)$ void of Poisson points.}
    \label{fig:stab_cases}
\end{figure} 

 Next, we let
    $$
    \sigma_x= \lceil \max\{\rho_x, \vert x_1 - (x^-_1 - 1) \vert, \vert x_1 - (x^+ + 1) \vert \} \rceil,
    $$
    and note that since $D(x,\rho_x) \subseteq Q(x,\sigma_x)$,
    it also follows that $f$ stabilizes inside $Q(x,\sigma_x)$. Hence to conclude that $f$ stabilizes exponentially, it suffices to show that $\sigma_x$ has
    exponential tails. To that end, let $\eps > 0$, and note that
    \begin{equation}
        \label{eq:stab_max_sum}
    P(\sigma_x > n^\eps) \le P(\rho_x > n^\eps) + P(\vert x_1 - (x^-_1 - 1) \vert > n^\eps) + P(\vert x_1 - (x_1^+ + 1) \vert > n^\eps).
    \end{equation}
    First, it follows by the Poisson independence property that for all $n \gg 1$,
    \begin{equation}
        \label{eq:stab_1}
    P(\rho_x > n^\eps) \le P(\PP \cap ([-1,1]\times \pi_{2:d}(Q(x,1))) \neq \emptyset)^{\lfloor n^{\eps} \rfloor} \le (1-e^{-2^{d-1}})^{n^{\eps}}.
    \end{equation}
    Moreover, it follows that again by the void probability of the Poisson point process as well as the definition of the successor $x^-$ and ancestor $x^+$ that for all $n \gg 1$,
    \begin{equation}
        \label{eq:stab_2}
    P(\vert x_1 - (x_1^{\pm} + 1) \vert > n^\eps) \le e^{-n^\eps \vert \pi_{2:d}(Q(x,1)) \vert} \le (1-e^{-2^{d-1}})^{n^\eps}.
    \end{equation}

    Combining (\ref{eq:stab_max_sum}), (\ref{eq:stab_1}) and (\ref{eq:stab_2}) yields $f$ satisfies Assumption \ref{ass:A}(ii) as desired.

    Finally, we need to bound the variance of the inversion count $\Sigma(\PP_n)$, where we will rely on Lemma \ref{lem:var_bound} similarly as in Section \ref{sec:independent_uniform_lifetimes}.
    However, we now need to consider even more delicate configurations of points to ensure that we obtain inversions. 
Indeed, simply adding further Poisson points in a box could cause major changes in the tree structure, and therefore also modify the branch lengths of other points in the neighborhood. 
To avoid this issues, we will introduce a shield configuration which prevents that the addition of points influences other parts of the tree.
Let $r=4$, i.e., all boxes $Q_{n,j,r}$ are of width 8 (in the time direction) 
and all scaled boxes $Q_{n,j,4}(1/2)$ are of width 4. 
We now claim there with positive probability exists a set of configurations $\mc S_{n,j}$ such that
if the points in $Q_{n,j,4} \setminus Q_{n,j,4}(1/2)$ are in such a configuration, we have a \emph{shield}
in the sense that adding points inside $Q_{n,j,4}(1/2)$ does not change the tree structure outside of $Q_{n,j,4}$.
Let
$$
\mc B_{n,j}^{(0)} = \{ D \in \mathfrak{N}(Q_{n,j,4}) : D \cap(Q_{n,j,4} \setminus Q_{n,j,4}(1/2))\in \mc S_{n,j}\},
$$
denote all configuration of the cube $Q_{n,j,4}$, where a shield is present.
\bel[Existence of shield configurations]
\label{lem:shield}
For any $1 \le j \le \alpha_r n^d$, there is a set $\mc S_{n,j} \subseteq \mathfrak{N}(Q_{n,j,4} \setminus Q_{n,j,4}(1/2))$ such that 
if $\PP \cap(Q_{n,j,4} \setminus Q_{n,j,4}(1/2))\in \mc S_{n,j}$, then
$$
f(Z,V,\PP_n^x) = f(Z,V,\PP_n)
$$
for any $Z,V \in \PP \cap Q_{n,j,4}^c$ and $x \in Q_{n,j,4}(1/2)$. Moreover,
$$
\inf_{n \ge 1} P(\PP \cap Q_{n,j,4} \in \mc B_{n,j}^{(0)}) > 0.
$$
\enl
The idea is essentially to pad the boundary of $Q_{n,j,4}$
with points sufficiently dense such that no branches can enter the cube from the outside. However, the proof of Lemma \ref{lem:shield} containing the precise construction and verification
of the shield property can be found in Appendix \ref{sec:A}. Next, we further discretize the cube $Q_{n,j,4}(1/2)$ to ensure that we can create inversions inside the shield.
To that end, we introduce six shifted versions of $Q_{n,j,4}(1/32)$,
\begin{align*}
Q^\rightarrow_{n,j,4}(\tfrac 1 {32})  = & \{ x + (1/4,0,\ldots,0) \colon x \in Q_{n,j,4}(\tfrac 1 {32})\},\\
Q^{\twoheadrightarrow}_{n,j,4}(\tfrac 1 {32})  = & \{ x + (1/2,0,\ldots,0) \colon x \in Q_{n,j,4}(\tfrac 1 {32})\},\\
Q^\uparrow_{n,j,4}(\tfrac 1 {32})  = & \{ x + (0,\ldots,0,  3/4) \colon x \in Q_{n,j,4}(\tfrac 1 {32})\},\\
 Q^\downarrow_{n,j,4}(\tfrac 1 {32})  = & \{ x - (0,\ldots,0, 3/4) \colon x \in Q_{n,j,4}(\tfrac 1 {32})\},\\
 Q^\nwarrow_{n,j,4}(\tfrac 1 {32})  = & \{ x - ( 1/4,0,\ldots,0, -3/4) \colon x \in Q_{n,j,4}(\tfrac 1 {32})\},\\
 Q^\swarrow_{n,j,4}(\tfrac 1 {32})  = & \{ x - (1/4,0,\ldots,0,  3/4) \colon x \in Q_{n,j,4}(\tfrac 1 {32})\},
\end{align*}
which we note all are of width $1/4$.
Then, in lieu of Lemma \ref{lem:shield}, define the collections of shielded configurations $D \in \mc B_{n,j}^{(0)}$,
\begin{equation}
    \label{eq:lifetime_configurations}
        \begin{aligned}
        \mc B_{n,j}^{(1)}  = & \{   \vert D \cap Q_{n,j,4}(1/2) \vert = 0 \},\\[0.5em]
        \mc B_{n,j}^{(2)}  = & \big\{  \vert D \cap Q_{n,j,4}(1/2)  \vert = 3, \ \vert D \cap Q^\uparrow_{n,j,4}(\tfrac 1 {32}) \vert = \vert D \cap Q^\downarrow_{n,j,4}(\tfrac 1 {32}) \vert = \vert D \cap Q^\rightarrow_{n,j,4}(\tfrac 1 {32}) \vert = 1 \big\}, \\
         \mc B_{n,j}^{(3)}  = & \big\{  \vert D \cap Q_{n,j,4}(1/2)  \vert = 3, \ \vert D \cap Q^\nwarrow_{n,j,4}(\tfrac 1 {32}) \vert = \vert D \cap Q^\swarrow_{n,j,4}(\tfrac 1 {32}) \vert = \vert D \cap Q^{\twoheadrightarrow}_{n,j,4}(\tfrac 1 {32}) \vert = 1 \big\},
        \end{aligned}
        \end{equation}
i.e., $ \mc B_{n,j}^{(2)} $ are the shielded configurations where there are exactly one point above, below and to the right of $Q_{n,j,4}(1/32)$,
while $ \mc B_{n,j}^{(3)} $ are the shielded configurations, where there are exactly north-west, south-west and further right of $Q_{n,j,4}(1/32)$ in the time-directional sense.
By the Poisson void probabilities and Lemma \ref{lem:shield}, then assumption (V1) in Lemma \ref{lem:var_bound} is satisfied.
Next, we again consider the index set $I_{n,j}^1$ as defined in \eqref{eq:index_set}, and note that for any $i \in I_{n,j}^1$,
any configuration in $\mc B_{n,i}^{(3)}$ will add at least one inversion with the shorter bar in any configuration of $Q_{n,j,4}$ in $\mc B_{n,j}^{(2)} $ due to the shield property.
Thus, it follows that assumption (V2) in Lemma \ref{lem:var_bound} is satisfied. Hence by Lemma \ref{lem:var_bound},
$$
\V[\Sigma(\PP_n)] \ge C n^{3d - 2},
$$
for some $C >0$, which by Theorem \ref{thm:sum_clt} implies that for any $\delta > 0$,
$$
d_W\Big(\f{\Sigma(\PP_n)- \E[\Sigma(\PP_n)]}{\sqrt{\V[\Sigma(\PP_n)]}}, \mc N(0,1)\Big)
\le n^{-d/2 + \delta},
$$
i.e., we have the desired Normal approximation of the inversion count.

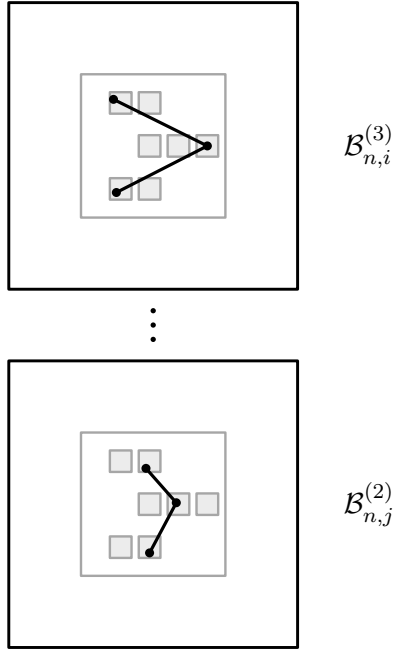
\begin{figure}[h!]
\centering
\begin{tikzpicture}[x=1cm,y=1cm,line cap=round,line join=round, scale=0.95]
\tikzset{
    outer/.style={draw=black,line width=1.1pt},
    inner/.style={draw=gray!70,line width=0.9pt},
    cell/.style={draw=gray!70,fill=gray!15,line width=0.9pt},
    wire/.style={draw=black,line width=1.2pt},
    dot/.style={circle,fill=black,inner sep=1.6pt}
}

\def\DrawCommonPanel{
    
    \draw[outer] (0,0) rectangle (4,4);
    
    \draw[inner] (1,1) rectangle (3,3);

    \foreach \x/\y in {
        1.4/2.45, 
        1.4/1.25, 
        1.8/2.45, 
        1.8/1.85, 
        1.8/1.25, 
        2.2/1.85,  
        2.6/1.85  
    }{
        \draw[cell] (\x,\y) rectangle ++(0.3,0.3);
    }
}

\begin{scope}[shift={(0,5)}]
    \DrawCommonPanel

    \coordinate (T1) at (1.45,2.65);
    \coordinate (T2) at (1.49,1.35);
    \coordinate (T3) at (2.75,2);

    \draw[wire] (T1)--(T3);
    \draw[wire] (T2)--(T3);

    \node[circle,fill=black,inner sep=1.2pt]at (T1) {};
    \node[circle,fill=black,inner sep=1.2pt] at (T2) {};
    \node[circle,fill=black,inner sep=1.2pt] at (T3) {};

    \node[right] at (4.5,2) {$\mc B_{n,i}^{(3)}$};
\end{scope}

\node[circle,fill=black,inner sep=0.7pt] at (2,4.7) {};
\node[circle,fill=black,inner sep=0.7pt] at (2,4.5) {};
\node[circle,fill=black,inner sep=0.7pt] at (2,4.3) {};

\begin{scope}[shift={(0,0)}]
    \DrawCommonPanel

    \coordinate (B1) at (1.9,2.50);
    \coordinate (B2) at (1.95,1.32);
    \coordinate (B3) at (2.32,2.02);

    \draw[wire] (B1)--(B3);
    \draw[wire] (B2)--(B3);

    \node[circle,fill=black,inner sep=1.2pt]at (B1) {};
    \node[circle,fill=black,inner sep=1.2pt] at (B2) {};
    \node[circle,fill=black,inner sep=1.2pt] at (B3) {};

    \node[right] at (4.5,2) {$\mc B_{n,j}^{(2)}$};
    \node[right] at (-0.5,2) {$\ $};
\end{scope}

\end{tikzpicture}

\caption{Illustration of the configurations in \eqref{eq:lifetime_configurations}, where the bottom is cube $j$ and displays a point configuration inside $\mc B_{n,j}^{(2)}$
and the top is a cube $i$ in $I_{n,j}^{1}$ that displays a point configuration inside $\mc B_{n,i}^{(3)}$. 
The last-born edge in the bottom cube creates an inversion with the last-born edge in the top cube due to the Elder rule in the Poisson tree model.}
\label{fig:no_shields}
\end{figure}

\subsection{Barcodes II: The tree realization number}
\label{sec:3.4}
In TDA, barcodes serve as concise summaries of the multiscale structure of data. 
Each barcode represents a family of intervals encoding the lifespans of topological features along a filtration,
 and is often viewed as a simplification of richer geometric or hierarchical information.
 Here, we focus on $0$-th dimensional information, i.e. connected components. As in \cite{adelie}, we focus on the map from a merge tree to a barcode which summarizes how the connected components intertwine. 
  Yet the process of mapping from an underlying merge tree to a barcode is inherently many-to-one.
   The \emph{tree realization number} quantifies this non-uniqueness: it measures how many combinatorially distinct merge trees can realize a given barcode.
    This number provides a discrete gauge of the information lost when passing from trees to barcodes and thus offers a combinatorial lens 
    on the geometry of persistent homology.

The combinatorial and probabilistic theory of the tree realization number has been developed in recent years \cite{adelie}. Their work builds an explicit connection between barcodes and permutations, showing that each barcode corresponds to a specific ordering of birth and death times, and that the realization number depends on the structure of that permutation. This correspondence allows classical tools from combinatorics and probability to be brought to bear on the analysis of barcodes. In particular, \cite{curry2024trees2} also study the tree realization number in  random barcode models.

We also note that tree realization numbers are of high interest for applications in neuroscience where the tree realization number can be used to study the shape of neuronal trees. Together, these works embed the tree realization number in a richer combinatorial and topological context, linking symmetric group structure, lattice theory, and inverse problems in TDA.

Having motivated the tree realization number,
 we now embed in the framework introduced in Section \ref{sec:main_results}. 
 More precisely, to deal with boundary effects, we also, for $0 < \alpha_1 < 1$, 
 introduce the shrunk version of \eqref{eq:W_n_simple} as
\begin{equation}
\widetilde{W}_n = [n^{\alpha_1},n - n^{\alpha_1}] \times [n^{\alpha_1},a_2 n-n^{\alpha_1}] \times \dots \times  [n^{\alpha_1},a_d n-n^{\alpha_1}].
\end{equation} 
Then, we consider the admissible condition, 
$$
\mc A_n(\PP_n) = \{Z \in \PP_n \colon Z \in \widetilde  W_n, \ \ell_{Z,\PP_n} \in (0,1)\}
$$
such that for any $Z,V \in \PP_n$, the score between $Z$ and $V$ is defined as
$$
\one\{Z \in \mc A_n^+(\PP_n) \}\one\big\{ \ell_{W,\PP_n} \in (0,1), \ \big(Z_1^{(d)} - W_1^{(d)}\big)\big(Z_1^{(d)} - W_1^{(d)} + \ell_{Z,\PP}  - \ell_{W,\PP}  \big) <0 \big\}.
$$
Similar to the previous section, we now consider the tree realization number when the lifetimes are independent and uniform as well as when they stem from Poisson trees.
\subsubsection{Lifetimes from independent and uniform variables}
\label{sec:3.4.1}
First, we consider the model in which the points exhibits complete spatial independence and
with lifetimes that are uniform and independent of each other as well of the spatial location of the corresponding point, i.e., the intensity measure $\lambda$ is the Lebesgue measure on $ W_n \times [0,1]$ 
and the lifetimes $\ell_{\dot x}$ are of the form
$$
\ell_{\dot x}\overset{\text{i.i.d.}}{\sim} \mathrm{Unif}([0, 1]),
$$
which is the same model as in Section \ref{sec:independent_uniform_lifetimes}. As a consequence,
we have already verified Assumptions \ref{ass:A}(i) to \ref{ass:A}(iii) in Assumption \ref{ass:A}, and hence to apply Theorem \ref{thm:prod_clt},
it suffices to verify Assumptions \ref{ass:B}(i) and \ref{ass:B}(ii) in Assumption \ref{ass:B}. Since the lifetimes are independent, it follows that
the stabilization Assumption \ref{ass:B}(i) is automatically satisfied in the same manner as Assumption \ref{ass:A}(ii) before. 
Next, we turn to the concentration property in \ref{ass:B}(ii);
Let $x \in  W_n$, $Z \in \mc A_n^+(\PP_n^x)$ and $\mathrm{x}\in \mc R$. If $\ell_{\dot Z} \ge 1/2$, let
$$
S_n^1(Z, 1/4)^+ = [Z_1,Z_1 + 1/4] \times \pi_{2:d}(S_n^k), 
$$
denote the column to the right of $Z$ of width $1/4$. Let  $ \{S_n^1(Z, 1/4)^+_j \colon 1 \le j \le n^{d-1} \}$ denote the lexiographic ordering of $n^{d-1}$ boxes of $S_n^1(Z, 1/4)^+$
of side lengths $1/4,a_2,\ldots, a_d$ 
in a similar fashion as in (\ref{eq:box_Q}). Note that any $V$ in $S_n^1(Z, 1/4)^+_j $ with lifetime shorter than 1/4 gives rise to an inversion, i.e. $f(Z,V,\PP_n)=1$.
Hence, it follows that
$$
P\Big(G(Z,\PP_n^\mathrm{x}) < \frac{\vert S_n^1 \vert}{4}  \Big)  
\le P\Big(\sum_{j=1}^{n^{d-1}} \one\{ \PP_n \cap (S_n^1(Z, 1/4)^+_j \times [0,1/4]) \neq \emptyset \} < \frac{\vert S_n^1 \vert}{4} \Big).
$$
From spatial independence and stationarity of the Poisson point process, it follows that
$$
\sum_{j=1}^{n^{d-1}} \one\{ \PP_n \cap (S_n^1(Z, 1/4)^+_j \times [0,1/4]) \neq \emptyset \}
$$
is a Binomial variable with mean $n^{d-1} q$, where $q = P(\PP_n \cap (S_n^1(Z, 1/4)^+_1 \times [0,1/4]) \neq \emptyset) \in (0,1)$. Thus, by Lemma \ref{lem:binom_conc},
$$
P\Big(G(Z,\PP_n^\mathrm{x}) < \frac{\vert S_n^1 \vert}{4} \Big)  
\le \exp\Big( - n^{d-1}q\big(\tfrac{1}{2}+\tfrac{1}{2}\log(\tfrac{1}{2})\big)\Big),
$$
Hence we may choose the exponent $\beta_5 = q\big(\tfrac{1}{2}+\tfrac{1}{2}\log(\tfrac{1}{2})\big)/2 > 0$ which yields the claim.
If $\ell_{\dot Z} < 1/2$, we instead let
$
S_n^1(Z, 1/4)^- = [Z_1-1/4,Z_1] \times \pi_{2:d}(S_n^k), 
$
which is inside $ W_n$ since $Z \in \widetilde  W_n$ for all $n \gg 1$.  Let  $ \{S_n^1(Z, 1/4)^-_j \colon 1 \le j \le n^{d-1} \}$ denote the lexicographic ordering of $n^{d-1}$ boxes of $S_n^1(Z, 1/4)^-$
and note this time that any $V$ in $S_n^1(Z, 1/4)^-_j $ with lifetime larger than 3/4 yields an inversion. Thus, using Lemma \ref{lem:binom_conc},
$$
P\Big(G(Z,\PP_n^\mathrm{x}) < \frac{\vert S_n^1 \vert}{4} \Big)  
\le \exp\Big( - n^{d-1}q\big(\tfrac{1}{2}+\tfrac{1}{2}\log(\tfrac{1}{2})\big)\Big),
$$
where $q = P(\PP_n \cap (S_n^1(Z, 1/4)^+_1 \times [3/4,1]) \neq \emptyset) \in (0,1)$, which completes the verification of Assumption \ref{ass:B}(ii) with exponent $\beta_5 = q\big(\tfrac{1}{2}+\tfrac{1}{2}\log(\tfrac{1}{2})\big)/2 > 0$ once more.

Finally, since we already have verified condition (V1) and (V2), it follows by Lemma \ref{lem:var_bound} that $\V[\Sigma^{\log}_n(\PP_n)] \ge C n^d$ for some $C > 0$.
Hence, by Theorem \ref{thm:log_clt}, for any $\delta > 0$,
\begin{equation}
d_W\Big(\f{\Sigma^{\log}_n(\PP_n)- \E[\Sigma^{\log}_n(\PP_n)]}{\sqrt{\V[\Sigma^{\log}_n(\PP_n)]}}, \mc N(0,1) \Big) \le n^{- d/2 + \delta}.
\end{equation}
By Corollary \ref{thm:prod_clt}, we also have asymptotic normality of the tree realization number itself.

\subsubsection{Lifetimes from Poisson trees}
\label{sec:3.4.2}
\label{sssec:lpt}
Next, we consider the Poisson tree model as in Section \ref{sec:poisson_tree_lifetimes}, where recall
for any leaf $Z \in \mc L(\PP_n)$ the lifetime $\ell_{Z,\PP_n}$ is defined as
$
    \ell_{Z,\PP_n} = Z^\dagger_1 - Z_1,
    $
where $Z^\dagger$ is the deathpoint and $\ell_{Z,\PP_n}=0$ otherwise. Again,
we have already verified Assumptions \ref{ass:A}(i) to \ref{ass:A}(iii) in Assumption \ref{ass:A}, and hence to apply Theorem \ref{thm:prod_clt},
it suffices to verify Assumptions \ref{ass:B}(i) and \ref{ass:B}(ii) in Assumption \ref{ass:B}.

First, we prove the additional stabilization condition in \ref{ass:B}(i) for $\mc A_n^+(\PP_n)$
as specified in the beginning of the section; Note that
the insertion of $x$ does not change if $Z$ lies in $\widetilde  W_n$ or not.
Consider the box $D(x,\rho_x)$ as defined in 
\eqref{eq:stab_box} and let $Z \notin D(x,\rho_x)$. If
$Z$ has lifetime 0, then the insertion of $x$ cannot alter the ancestor of $Z$
by construction of $D(x,\rho_x)$. If $Z$ has lifetime strictly between 0 and 1,
then the branch belonging to $Z$ is unaffected by the insertion of $x$
again by construction of the $\pm 1$ time buffer of $D(x,\rho_x)$.
Finally, if the lifetime of $Z$ is larger than 1, then Elder rule ensures that
the insertion of $x$ cannot make the lifetime less than 1. Hence, we take
$R_n(x)$ as the same radius as considered in Section \ref{sec:poisson_tree_lifetimes}, which we have already proven has exponential tails.

Thus, we turn to the concentration property in \ref{ass:B}(ii);

Let $x \in  W_n$, $Z \in \mc A_n^+(\PP_n^x)$ and $\mathrm{x}\in \mc R$. If $\ell_{\dot Z} \ge 1/2$, let
$
S_n^1(Z, 1/2)^+ 
$
denote the column to the right of $Z$ of width $1/4$. Let  $ \{S_n^1(Z, 1/4)^+_j \colon 1 \le j \le \alpha_r n^{d-1} \}$ denote the lexiographic ordering of $\alpha_r  n^{d-1}$ boxes of $S_n^1(Z, 1/4)^+$
in a similar fashion as in (\ref{eq:box_Q}). Note that any shielded configuration as described in $\mc B_{n,j}^{(3)}$ with $Q_{n,r}$ replaced by $S_n^1(Z, 1/4)^+_j $ in Section \ref{sec:poisson_tree_lifetimes} gives rise to an inversion with the short bar of $S_n^1(Z, 1/4)^+_j$.
Hence, it follows that
$$
P\Big(G(Z,\PP_n^\mathrm{x}) < \frac{\vert S_n^1 \vert}{4} \Big)  
\le P\Big(\sum_{j=1}^{n^{d-1}} \one\{ \PP \cap S_n^1(Z, 1/4)^+_j \in \mc B_{n,j}^{(3)} \} < \frac{\vert S_n^1 \vert}{4} \Big).
$$
From spatial independence and stationarity of Poisson point processes, and Lemma \ref{lem:binom_conc},
$$
P\Big(G(Z,\PP_n^\mathrm{x}) < \frac{\vert S_n^1 \vert}{4} \Big)  
\le \exp\Big( - n^{d-1}q\big(\tfrac{1}{2}+\tfrac{1}{2}\log(\tfrac{1}{2})\big)\Big),
$$
where $q = P(\PP \cap S_n^1(Z, 1/4)^+_j \in \mc B_{n,j}^{(3)}) \in (0,1)$. Hence we may choose the exponent $\beta_5 = q\big(\tfrac{1}{2}+\tfrac{1}{2}\log(\tfrac{1}{2})\big)/2 > 0$ which yields the claim.

Analagously if $\ell_{\dot Z} < 1/2$, we may define $S_n^1(Z, 1/4)^-$ to the left of $Z$ and use a similar construction and Lemma \ref{lem:binom_conc} to obtain the same bound.
Thus, Assumption \ref{ass:B}(ii) is satisfied with exponent $\beta_5 > 0$ once more.

Once more, since we already have verified condition (V1) and (V2), it follows by Lemma \ref{lem:var_bound} that $\V[\Sigma^{\log}_n(\PP_n)] \ge C n^d$ for some $C > 0$.
Hence, by Theorem \ref{thm:log_clt}, for any $\delta > 0$,
\begin{equation}
d_W\Big(\f{\Sigma^{\log}_n(\PP_n)- \E[\Sigma^{\log}_n(\PP_n)]}{\sqrt{\V[\Sigma^{\log}_n(\PP_n)]}}, \mc N(0,1) \Big) \le n^{- d/2 + \delta}.
\end{equation}
Additionally, by Corollary \ref{thm:prod_clt}, we also have asymptotic normality of the tree realization number itself when the lifetimes stem from Poisson trees.
\subsubsection{Lifetimes from other tree models}
\label{sec:3.4.3}

Now, we briefly discuss additional tree models
and if the Normal approximation in Theorems \ref{thm:sum_clt} and \ref{thm:log_clt} 
can be applied to the inversion count and tree realization number.

First, if one were to generalize the Poisson tree model to consider the case where
the cylinder $C_n(Z)$ has radius $\ell > 0$ instead of 1, then
by modifying the size of the cubes $Q_{n,j,r}$, we will still
have Assumptions \ref{ass:A}(i) to \ref{ass:A}(iii) in Assumption \ref{ass:A} as well as \ref{ass:B}(i) and \ref{ass:B}(ii) in Assumption \ref{ass:B} satisfied
with same type of arguments as before. Even going one step further and allowing 
the spatial constraint to be an ellipsoid, cube or any bounded convex body rather than a ball
would not alter the arguments in any significant way.

The next natural step would be to consider the case where we lift
the spatial constraint altogether. By doing this, and letting each
point connect to its nearest neighbor in Euclidean distance to
the right in the time direction, we obtain the directed spanning tree model as in \cite{MR2035772}.
This model is more challenging, and would require some modification to the
construction of the shielded configuration as well as the
box of stabilization. In particular for stabilization,
it will not be possible to create regions of no-points where connections
cannot be made across. Instead this regions would be replaced 
by shields that instead absorb any edge connections appearing.
We leave this extension as future work.

Finally, we can also consider the radial spanning tree model as in \cite{MR2292589},
where each point connects to its nearest neighbor in Euclidean distance that
is closer to the origin than itself. This model adds another layer of complexity,
since now we either keep the rectangular slabs $S_n^k$, but then consider a non-homogeneous Poisson point process
as input so we can go back and forth between polar and cartesian coordinates, or we need
to change the rectangular slabs to be annuli instead. 
Both approaches would require extensions of the general framework as discussed in Section \ref{sec:limitations}.

\section{Bounding error terms: Double-sum case}
\label{sec:4}
This section is dedicated to proving the error terms bounds which comes out of applying the Malliavin-Stein Normal approximation to the double-sum functional. To that end, we first record some important consequences of Assumption \ref{ass:A} in Section 5.1,
then proceed to use the consequences to bound moments of the difference operators in Section 5.2. Finally, we prove all three bounds found in Lemmas \ref{lem:en1} in Section 5.3.

\subsection{Consequences of Assumption \ref{ass:A}}
\label{sec:4.1}
We now record a series of preliminary results which will aid us in controlling the integrals $I_{n,1}$, $I_{n,2}$, and $I_{n,3}$ in \eqref{eq:error_terms_sum}.
The first lemma is
a simple, yet immensely useful moment bound on Poisson random variables, which follows directly from the Touchard polynomial representation for moments of Poisson random variables (see \cite{Pinsky2017_ConnectionsBetweenPermutationCyclesAndTouchardPolynomials}).
\begin{lemma}[Poisson moment bound]
    \label{lem:poisson_moment_bound}
    Let $m \ge 1$ and $X \sim \text{Poisson}(\ell)$. If $\ell \gg 1$, then
    $$
    \E[X^m]\le 2 \ell^m.
    $$
\end{lemma}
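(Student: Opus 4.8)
The plan is to use the classical identity expressing the $m$-th raw moment of a Poisson random variable via Bell (Touchard) polynomials. Recall that for $X \sim \text{Poisson}(\ell)$ one has
\[
\E[X^m] = \sum_{i=0}^{m} S(m,i)\, \ell^i,
\]
where $S(m,i)$ denotes the Stirling numbers of the second kind; equivalently $\E[X^m] = T_m(\ell)$, the $m$-th Touchard polynomial evaluated at $\ell$. This is precisely the representation cited from \cite{Pinsky2017_ConnectionsBetweenPermutationCyclesAndTouchardPolynomials}, so I may invoke it directly. The goal is then simply to bound this polynomial in $\ell$ by $2\ell^m$ once $\ell$ is large enough.

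First I would fix $m$ and set $B_m = \sum_{i=0}^{m} S(m,i)$ (the $m$-th Bell number), which is a finite constant depending only on $m$. Since every term $S(m,i)\ell^i$ with $i \le m$ is nonnegative, and since $\ell^i \le \ell^m$ whenever $\ell \ge 1$, we get
\[
\E[X^m] = \sum_{i=0}^{m} S(m,i)\,\ell^i \le \Big(\sum_{i=0}^{m} S(m,i)\Big)\ell^m = B_m\,\ell^m
\]
for all $\ell \ge 1$. This already gives the bound up to the constant $B_m$. To upgrade the constant to $2$, I would isolate the leading term: $S(m,m) = 1$, so
\[
\E[X^m] = \ell^m + \sum_{i=0}^{m-1} S(m,i)\,\ell^i \le \ell^m + (B_m - 1)\,\ell^{m-1},
\]
using $\ell^i \le \ell^{m-1}$ for $i \le m-1$ and $\ell \ge 1$. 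Hence $\E[X^m] \le \ell^m\big(1 + (B_m-1)/\ell\big) \le 2\ell^m$ as soon as $\ell \ge B_m - 1$, which is exactly the meaning of ``$\ell \gg 1$'' (the threshold depends only on $m$).

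This argument is essentially routine; there is no real obstacle, only the bookkeeping of which constants depend on $m$. The one point worth stating carefully is that the implied threshold for $\ell$ depends on $m$, which is harmless in all later applications since $m$ is always a fixed integer there. If one preferred to avoid quoting the Touchard representation explicitly, an alternative is a direct induction on $m$ using $\E[X^{m+1}] = \ell\,\E[(X+1)^m]$ (a consequence of the Poisson size-biasing / Mecke-type identity) together with the binomial expansion of $(X+1)^m$, but the Touchard route is the cleanest and matches the citation already in place.
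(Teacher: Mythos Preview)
Your proof is correct and takes essentially the same approach as the paper, which simply cites the Touchard polynomial representation \cite{Pinsky2017_ConnectionsBetweenPermutationCyclesAndTouchardPolynomials} and leaves the rest implicit. Your added step of isolating the leading term $S(m,m)\ell^m = \ell^m$ and bounding the remainder by $(B_m-1)\ell^{m-1}$ makes explicit exactly why the constant $2$ is attainable for $\ell \ge B_m - 1$, which is precisely the content of ``$\ell \gg 1$''.
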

Recall that we write $\Q(x,R_n)$ instead of $\Q(x,R_n(x))$ for brevity for the non-stable cube. Next, we prove that Assumption \ref{ass:A}(ii), i.e., exponential stabilization, yields that all moments of the size of the non-stable cube are of constant order.
\begin{lemma}[Moments of $\mathbb{Q}(x,R_n)$]
    \label{lem:moments_of_Q}
    Let $m \ge 1$. Under Assumption \ref{ass:A}, there is a constant $C_0(m)>0$ depending on $m$ such that

       $$
   \sup_{n \gg 1} \sup_{x \in  \W_n} \E\big[\PP(\mathbb{Q}(x,R_n))^{m}\big] \le C_0(m).
   $$
    \end{lemma}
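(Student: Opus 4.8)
The plan is to pit the polynomial growth of the number of Poisson points in a \emph{deterministic} cube against the exponential decay of the stabilization radius $R_n(x)$ provided by Assumption \ref{ass:A}(ii). Write $N := \PP(\mathbb{Q}(x,R_n))$. Since $R_n(x)$ is $\N$-valued, $N^m = \sum_{j\ge1}\1{R_n(x)=j}\,\PP(\mathbb{Q}(x,j))^m$, so by nonnegativity and Tonelli's theorem $\E[N^m] = \sum_{j\ge1}\E\big[\1{R_n(x)=j}\,\PP(\mathbb{Q}(x,j))^m\big]$; the gain is that the count inside each summand is now over a fixed cube. For each $j$ we would then decouple via Cauchy--Schwarz,
\[
\E\big[\1{R_n(x)=j}\,\PP(\mathbb{Q}(x,j))^m\big] \le P\big(R_n(x)\ge j\big)^{1/2}\,\E\big[\PP(\mathbb{Q}(x,j))^{2m}\big]^{1/2}.
\]

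The second factor is controlled by standard Poisson moment bounds: $\PP(\mathbb{Q}(x,j))$ is Poisson with mean $|Q(x,j)| = (2j)^d$, so Lemma \ref{lem:poisson_moment_bound} (together with the trivial estimate for the finitely many small $j$ for which $(2j)^d$ is not yet large) yields a constant $C_1(m)$, independent of $n$ and $x$, with $\E[\PP(\mathbb{Q}(x,j))^{2m}]\le C_1(m)\,j^{2md}$ for all $j\ge1$. For the first factor, exponential stabilization \eqref{eq:stabilization_exponential}, applied at the scale $n^\eps = j-1$ (admissible for $n\gg1$), gives $P(R_n(x)\ge j)=P(R_n(x)>j-1)\le \e^{-\beta_1(j-1)}$ for $j\ge3$, while $P(R_n(x)\ge j)\le1$ for $j\in\{1,2\}$. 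Summing,
\[
\E[N^m]\le C_1(m)^{1/2}\Big(1+2^{md}+\sum_{j\ge3}j^{md}\,\e^{-\beta_1(j-1)/2}\Big)=:C_0(m),
\]
a finite constant depending only on $m$ (and on $d,\beta_1$), uniformly over $x\in\W_n$ and $n\gg1$; as $\PP_n(\mathbb{Q}(x,R_n))\le\PP(\mathbb{Q}(x,R_n))$, the same bound holds for the $\PP_n$-count.

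The main obstacle is the bookkeeping hidden in the step that converts the exponential tail of $R_n(x)$ \emph{stated at the scale $n^\eps$} into an exponential tail at the fixed scale $j$ uniformly in $n\gg1$; this is exactly the regime occurring in all of the examples of Section \ref{sec:3}, where one in fact establishes $\sup_{x}P(R_n(x)>s)\le C\,\e^{-\gamma s}$ for every $s\ge1$ with $n$-independent $C,\gamma$, so the step is routine there. A secondary, already-accounted-for point is that $R_n(x)$ is random and a priori correlated with $\PP(\mathbb{Q}(x,R_n))$, which is why the decoupling via Cauchy--Schwarz on deterministic-radius counts is used rather than a direct Poisson moment estimate.
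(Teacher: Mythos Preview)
Your proposal is correct and follows essentially the same approach as the paper: decompose over the integer values of $R_n(x)$, apply Cauchy--Schwarz to decouple the Poisson count on a deterministic cube from the tail of $R_n(x)$, use Lemma~\ref{lem:poisson_moment_bound} for the former and exponential stabilization for the latter, and then sum the resulting series $\sum_j j^{md}\e^{-\beta_1 j/2}$. The only cosmetic difference is that the paper bounds by $P(R_n(x)=j)^{1/2}$ rather than $P(R_n(x)\ge j)^{1/2}$, and you are more explicit than the paper about the bookkeeping needed to pass from the $n^\eps$-scale tail assumption \eqref{eq:stabilization_exponential} to a bound at integer levels $j$; the paper glosses over this same point.
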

\begin{proof}
By the union bound,
$$
\E\big[\PP(\mathbb{Q}(x,R_n))^{m}\big] \le \sum_{j=1}^{d_n} \E\Big[\PP(\mathbb{Q}(x,j))^{m} \one\{R_n(x) = j\} \Big],
$$
where $ d_n \in \N $ is larger than all side lengths of $ W_n$. By the Cauchy--Schwarz inequality, 
$$
\E\big[\PP(\mathbb{Q}(x,R_n))^{m}\big] \le \sum_{j=1}^{d_n} \E\big[\PP(\mathbb{Q}(x,j))^{2m}\big]^{1/2} P(R_n(x) = j)^{1/2},
$$
and by Assumption \ref{ass:A}(ii) and Lemma \ref{lem:poisson_moment_bound},
$
\E\big[\PP(\mathbb{Q}(x,R_n))^{m}\big] \le \sqrt{2}  \sum_{j=1}^{d_n} j^{d m} \e^{-\beta_1(\eps) j /2},
$
for some $\eps > 0$. Finally, since there is a natural number $j_0 = j_0(m,\eps)$ depending on $m$ and $\eps$ such that $j_0^{d m} \e^{-\beta_1(\eps) j_0 /2} \le 1$
and $j^{d m} \e^{-\beta_1(\eps) j /2} \to 0$ monotonically when $j \ge j_0$, then for all $n \gg 1$, it follows by the integral test that
\begin{equation}
    \label{eq:integral_test}
    \begin{aligned}
\sum_{j=1}^{d_n} j^{d m} \e^{-\beta_1(\eps) j /2} & \le \sum_{j=1}^{j_0 - 1} j^{d m} \e^{-\beta_1(\eps) j /2} + 1 + \int_{j_0}^\infty j^{dm}\e^{-\beta_1(\eps) j /2} \ \text dj \\
& \le \sum_{j=1}^{j_0 - 1} j^{d m} \e^{-\beta_1(\eps) j /2} + 1+\frac{2^{m+1}m!}{\beta_1(\eps)^{m+1}}.
\end{aligned}
\end{equation}
Hence, as the last bound in \eqref{eq:integral_test} does not depend on $n$ and choosing $\eps = 1/2$, this completes the proof.
\end{proof}
For $v, v' \in \mc R$, introduce the \textit{compound score} in $Z$ as
$
F(Z,\PP_n^\mathrm{x}, \PP_n^{\mathrm{x}'}) = \sum_{W \in\PP_n^\mathrm{x}}f(Z,V, \PP_n^{\mathrm{x}'}),
$
i.e., the total scores between the point $Z$ and all other points in the Poisson process $\PP_n$ as well as the added points in $v$.

Then, we can record an immediate decomposition of the first and second-order difference operators $D_x$ and $D_{xy}^2$ in terms of the compound scores.
\begin{lemma}[Decomposition]
    \label{lem:decomposition_simple}
    For every $x,y \in  \W_n$, it holds that
\begin{equation}
    \label{eq:decomposition_simple_1}
    D_x\Sigma(\PP_n) =  F(x,\PP_n,\PP_n^x) + \sum_{Z \in \PP_n}\big( F(Z,\PP_n,\PP_n^x)- F(Z,\PP_n,\PP_n)\big),
\end{equation}
\begin{equation}
    \label{eq:decomposition_simple_2}
    \begin{aligned}
D_{xy}^2\Sigma(\PP_n)  = & f(x,y,\PP_n^{xy})+  F(x, \PP_n, \PP_n^{xy}) - F(x,\PP_n, \PP_n^x) + F(y,\PP_n, \PP_n^{xy}) - F(y,\PP_n, \PP_n^y)\\
&   + \sum_{Z \in \PP_n} \big( F(Z,\PP_n,\PP_n^{xy}) - F(Z,\PP_n,\PP_n^x) - F(Z,\PP_n,\PP_n^y) + F(Z,\PP_n,\PP_n) \big).
    \end{aligned}
\end{equation}
\end{lemma}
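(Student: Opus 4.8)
The plan is to prove both identities by elementary bookkeeping, with no probabilistic or analytic input: expand the defining double sum of $\Sigma$ on the enlarged point configuration, split the ranges of the two summation indices according to whether each index equals one of the freshly added points $x$ (and $y$) or lies in $\PP_n$, and then reassemble the pieces into compound scores $F$ using the symmetry of $f$ in its first two arguments and the fact that $f$ vanishes on the diagonal. It suffices to argue pointwise for a fixed realization of $\PP_n$, and, as is standard, one may assume $x\neq y$ and $x,y\notin\PP_n$, which holds almost surely for the fixed points appearing in the error-term integrals in \eqref{eq:error_terms_sum}.

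\textbf{First-order identity.} Writing $\PP_n^x=\PP_n\cup\{x\}$, expand $\Sigma(\PP_n^x)=\sum_{Z\in\PP_n^x}\sum_{V\in\PP_n^x}f(Z,V,\PP_n^x)$ and split each of the two sums into the index value $x$ and the indices in $\PP_n$. The term with $Z=V=x$ vanishes since $f(x,x,\cdot)=0$. The terms with an index equal to $x$ and the other index in $\PP_n$ collapse, by symmetry of $f$, to the compound score $F(x,\PP_n,\PP_n^x)$. The terms with both indices in $\PP_n$ give $\sum_{Z\in\PP_n}F(Z,\PP_n,\PP_n^x)$. Subtracting $\Sigma(\PP_n)=\sum_{Z\in\PP_n}F(Z,\PP_n,\PP_n)$ and combining the two sums over $Z\in\PP_n$ term by term yields \eqref{eq:decomposition_simple_1}.

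\textbf{Second-order identity.} Two equivalent routes are available. The direct one runs the same expansion on each of $\Sigma(\PP_n^{xy})$, $\Sigma(\PP_n^x)$, $\Sigma(\PP_n^y)$, $\Sigma(\PP_n)$: in the expansion of $\Sigma(\PP_n^{xy})$ one splits the two indices among $\{x\}$, $\{y\}$ and $\PP_n$; the diagonal $(x,x)$ and $(y,y)$ contributions vanish, the mixed index pair $\{x,y\}$ contributes $f(x,y,\PP_n^{xy})$, the pairs with one index in $\{x,y\}$ and one in $\PP_n$ contribute (via symmetry) $F(x,\PP_n,\PP_n^{xy})$ and $F(y,\PP_n,\PP_n^{xy})$, and the pairs with both indices in $\PP_n$ contribute $\sum_{Z\in\PP_n}F(Z,\PP_n,\PP_n^{xy})$. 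Doing likewise for the other three functionals and forming the alternating sum $\Sigma(\PP_n^{xy})-\Sigma(\PP_n^x)-\Sigma(\PP_n^y)+\Sigma(\PP_n)$, the configuration argument of $f$ telescopes across the four evaluations while the index blocks match, and one collects exactly the right-hand side of \eqref{eq:decomposition_simple_2}. Alternatively, and perhaps more transparently, write $D_{xy}^2\Sigma(\PP_n)=D_x\Sigma(\PP_n^y)-D_x\Sigma(\PP_n)$, apply the already-proven first-order identity \eqref{eq:decomposition_simple_1} with base process $\PP_n^y$ (using $(\PP_n^y)^x=\PP_n^{xy}$) and with base process $\PP_n$, then peel the index value $Z=y$ out of the sum $\sum_{Z\in\PP_n^y}(\cdots)$ and simplify the compound scores $F(\cdot,\PP_n^y,\cdot)$ into $F(\cdot,\PP_n,\cdot)$ via $f(y,y,\cdot)=0$ and the symmetry of $f$; both routes produce \eqref{eq:decomposition_simple_2}.

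\textbf{Main obstacle.} There is no single hard step; the only thing requiring care is organizing the case split cleanly and keeping straight the two distinct roles played by each added point — as a summation index and as an element of the configuration fed into $f$, which is precisely what the two superscript slots of $F$ are designed to record. Consistent use of the symmetry of $f$, to identify the contributions of the added points with the compound-score terms, together with its vanishing on the diagonal, to discard the degenerate $(x,x)$ and $(y,y)$ terms, is what makes the regrouping close up to the stated form.
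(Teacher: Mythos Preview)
Your proposal is correct and mirrors the paper's own one-line proof: insert the definition of $\Sigma$ into the difference operators and regroup using the symmetry of $f$ and its vanishing on the diagonal. One small slip, shared with the paper's displayed identities: the two cross blocks with exactly one index equal to $x$ actually contribute $2F(x,\PP_n,\PP_n^x)$, not $F(x,\PP_n,\PP_n^x)$ (and likewise factors of $2$ are missing on the non-sum terms in \eqref{eq:decomposition_simple_2}); this is immaterial for the subsequent moment bounds, where all constants are absorbed into $n^\eps$ via Lemma~\ref{lem:normalization}.
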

\begin{proof}
Follows directly from using that 
$$
 D_x\Sigma(\PP_n) = \Sigma(\PP_n^x) - \Sigma(\PP_n) \quad \text{ and } \quad D_{xy}^2\Sigma(\PP_n) = \Sigma(\PP_n^{xy}) - \Sigma(\PP_n^x) - \Sigma(\PP_n^y) + \Sigma(\PP_n),
$$
and subsequently inserting the definition of the double-sum functional $\Sigma(\cdot)$ in \eqref{eq:double_sum_formal}.
\end{proof}
We now use Assumption \ref{ass:A} to bound compound score moments.
\begin{lemma}[Compound score $F$ bounds]
    \label{lem:compound_score_bounds}
Let $m \ge 1$ and $\eps > 0$. Under Assumption \ref{ass:A}, it holds for all $n \gg 1$,
    \begin{equation}
        \label{eq:compound_score_bound_1}
    \sup_{x \in  \W_n} \sup_{\mathrm{x}\in \mc R}\E\big[F(x,\PP_n, \PP_n^{v})^m\big] \le n^{m \eps} \vert S_n^k \vert^m,
    \end{equation}
     \begin{equation}
        \label{eq:compound_score_bound_3}
  \sup_{\mathrm{x}\in \mc R}\E\Big[ \Big(\sum_{Z \in \PP_n}F(Z,\PP_n, \PP_n^{v})\Big)^m \Big] \le  n^{m \eps} \vert  W_n \vert^{2m},
    \end{equation}
    \begin{equation}
        \label{eq:compound_score_bound_2}
    \sup_{x \in  \W_n} \E\Big[ \Big( \sum_{Z \in \PP_n}
    \big\vert F(Z,\PP_n^x,\PP_n^x) - F(Z,\PP_n, \PP_n) \big\vert\Big)^{m}\Big] \le n^{m \eps} \vert Q(0,n^\eps) \vert^m  \vert S_n^k(0,n^\eps) \vert^m.
    \end{equation}
\end{lemma}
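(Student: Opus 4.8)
\emph{Overall strategy.} All three bounds follow one scheme: estimate each score pointwise by $\overline f_{\sup}(\PP_n)$, invoke $k$-locality — and, for (iii), exponential stabilization — to control the number of nonzero or altered scores, and then use Cauchy--Schwarz or H\"older to decouple the factor $\overline f_{\sup}(\PP_n)$, whose integer moments are sub-polynomial by Assumption \ref{ass:A}(iii), from the remaining counting factors, which are moments of Poisson random variables estimated via Lemma \ref{lem:poisson_moment_bound}. Every constant $C(m)$ and every power $n^{C\eps'}$ with $\eps'$ chosen strictly smaller than the target exponent is absorbed at the end by Lemma \ref{lem:normalization}, which is why the stated bounds carry no constant. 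Uniformity in $x$ and $v$ is automatic: each quantity appearing below has an $x$- and $v$-independent bound — the volumes $|S_n^k(x,\cdot)|$ and $|Q(x,\cdot)\cap W_n|$, the moments of $\PP_n(\W_n)$ and $\overline f_{\sup}(\PP_n)$, and the tail $\sup_{x\in W_n}P(R_n(x)>n^\eps)\le\e^{-\beta_1 n^\eps}$ from Assumption \ref{ass:A}(ii).

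\emph{The bounds (i) and (ii).} For (i), $k$-locality forces $f(x,W,\PP_n^v)=0$ unless $W\in S_n^k(x,1)$, so $F(x,\PP_n,\PP_n^v)\le\overline f_{\sup}(\PP_n)\,\PP_n(\S_n^k(x,1))$, with $|S_n^k(x,1)|\le 2^k|S_n^k|$; one step of Cauchy--Schwarz, Assumption \ref{ass:A}(iii) applied to $\overline f_{\sup}(\PP_n)^{2m}$, Lemma \ref{lem:poisson_moment_bound} applied to $\PP_n(\S_n^k(x,1))^{2m}$, and Lemma \ref{lem:normalization} give the claim. For (ii) it suffices to count ordered pairs of Poisson points: $\sum_{Z\in\PP_n}F(Z,\PP_n,\PP_n^v)\le\overline f_{\sup}(\PP_n)\,\PP_n(\W_n)^2$, and since $\PP_n(\W_n)$ is Poisson with mean $|W_n|\gg1$, the same three tools — with the crude exponent $2m$ on $|W_n|$ — complete the bound.

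\emph{Bound (iii): decomposition.} Put $\Delta_Z=F(Z,\PP_n^x,\PP_n^x)-F(Z,\PP_n,\PP_n)$, so that
$$\Delta_Z=f(Z,x,\PP_n^x)+\sum_{W\in\PP_n}\big(f(Z,W,\PP_n^x)-f(Z,W,\PP_n)\big).$$
By $k$-locality the first term vanishes unless $Z\in S_n^k(x,1)$. By the definition of $R_n(x)$ in \eqref{eq:stabilization_radius_original}, a summand of the second term vanishes unless $Z\in Q(x,R_n)$ or $W\in Q(x,R_n)$; using the symmetry of $f$ in its first two arguments we may reduce to $Z\in Q(x,R_n)$, and then, again by $k$-locality, the inner sum only sees $W\in S_n^k(Z,1)\subseteq S_n^k(x,R_n+1)$. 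Assembling these facts gives
$$\sum_{Z\in\PP_n}|\Delta_Z|\le\overline f_{\sup}(\PP_n)\,\PP_n(\S_n^k(x,1))+4\,\overline f_{\sup}(\PP_n)\,\PP_n(\Q(x,R_n))\,\PP_n(\S_n^k(x,R_n+1)).$$

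\emph{Bound (iii): conclusion.} The first summand is handled exactly as in (i). The main obstacle is the random radius $R_n=R_n(x)$ in the product $\PP_n(\Q(x,R_n))\,\PP_n(\S_n^k(x,R_n+1))$, and I would deal with it by splitting the $m$-th moment over $\{R_n\le n^\eps\}$ and $\{R_n>n^\eps\}$. On $\{R_n\le n^\eps\}$ the two counts are dominated by $\PP_n(\Q(x,n^\eps))$ and $\PP_n(\S_n^k(x,n^\eps+1))$, whose means are $\le|Q(0,n^\eps)|$ and, up to a constant, $\le|S_n^k(0,n^\eps)|$; H\"older's inequality applied to the three factors $\overline f_{\sup}(\PP_n)$, $\PP_n(\Q(x,n^\eps))$, $\PP_n(\S_n^k(x,n^\eps+1))$, followed by Assumption \ref{ass:A}(iii) and Lemma \ref{lem:poisson_moment_bound}, yields $n^{m\eps'}|Q(0,n^\eps)|^m|S_n^k(0,n^\eps)|^m$ up to a constant. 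On $\{R_n>n^\eps\}$ I would bound the product crudely by $\overline f_{\sup}(\PP_n)\,\PP_n(\W_n)^2$ and apply Cauchy--Schwarz together with $P(R_n(x)>n^\eps)\le\e^{-\beta_1 n^\eps}$ from Assumption \ref{ass:A}(ii); since $|W_n|$ grows only polynomially in $n$, this contribution is $o(1)$ and in particular $\le|Q(0,n^\eps)|^m|S_n^k(0,n^\eps)|^m$ for $n\gg1$. Adding the two cases and invoking Lemma \ref{lem:normalization} to absorb the constants and the factor $n^{m\eps'}$ (with $\eps'<\eps$) completes (iii).
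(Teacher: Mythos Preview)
Your proposal is correct and follows essentially the same approach as the paper: $k$-locality to localize the sums, Cauchy--Schwarz/H\"older to decouple $\overline f_{\sup}$ from Poisson counts, Lemma \ref{lem:poisson_moment_bound} for the latter, and for (iii) the split on $\{R_n\le n^\eps\}$ versus $\{R_n>n^\eps\}$ combined with Assumption \ref{ass:A}(ii). Your treatment of (iii) is in fact slightly more explicit than the paper's, in that you separate the term $f(Z,x,\PP_n^x)$ from the differences $f(Z,W,\PP_n^x)-f(Z,W,\PP_n)$ before bounding; the paper handles both at once via pairs $(Z,V)\in\PP_n^{(2)}\cap(\Q(x,n^\eps)\times\S_n^k(x,n^\eps))$, which is morally the same step.
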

\begin{proof}
First, we prove the bound in \eqref{eq:compound_score_bound_1}.  By Assumption \ref{ass:A}(i), i.e., $k$-locality, it follows that $f(x, V, \PP_n^\mathrm{x}) = 0$ for
    any $V \in \PP_n \cap \mathbb{S}_n^k(x,1)^c$. Thus,
    $$
    \E\big[F(x,\PP_n,\PP_n^\mathrm{x})^m\big] = \E\Big[\Big(\sum_{V \in \PP_n \cap \mathbb{S}_n^k(x,1)} f(x, V, \PP_n^\mathrm{x})\Big)^m\Big].
    $$
By the Cauchy--Schwarz inequality and stationarity of $\PP_n$, it follows that
$$
    \E\big[F(x,\PP_n,\PP_n^\mathrm{x})^m\big]  \le \E\big[\overline f_{\sup}(\PP_n)^{2m}\big]^{1/2} \E\Big[\Big(\sum_{V \in \PP_n\cap \mathbb{S}_n^k(0,1) } 1 \Big)^{2m}\Big]^{1/2}
$$
Applying Lemma \ref{lem:poisson_moment_bound} and Assumption \ref{ass:A}(iii), i.e., sub-polynomial moments, yields the claim. 

Next, we consider the bound in \eqref{eq:compound_score_bound_3}. Using the Cauchy--Schwarz inequality, then
$$
\E\Big[ \Big(\sum_{Z \in \PP_n}F(Z,\PP_n, \PP_n^{v})\Big)^m \Big] \le \E\big[\overline f_{\sup}(\PP_n)^{2m}\big]^{1/2} \E\Big[\Big(\sum_{Z \in \PP_n} \sum_{V \in \PP_n \setminus \{Z\}} 1 \Big)^{2m}\Big]^{1/2}.
$$
Invoking Assumption \ref{ass:A}(iii) and \ref{lem:poisson_moment_bound} yields the claim.

Finally, we turn to the bound in \eqref{eq:compound_score_bound_2}. For convenience, let
$$
D_n = \E\Big[ \Big( \sum_{Z \in \PP_n}
    \big\vert F(Z,\PP_n^x,\PP_n^x) - F(Z, \PP_n,\PP_n) \big\vert\Big)^{m}\Big].
$$
If $f(Z,V, \PP_n^x) \neq f(Z,V,\PP_n)$, then
    by definition of the stabilization radius $R_n = R_n(x)$, it holds that $Z \in \mathbb{Q}(x,R_n)$ or $V \in \mathbb{Q}(x,R_n)$. 
    Assuming without loss of generality that $Z \in \mathbb{Q}(x,R_n)$,
    then if $1+R_n \le n^\eps$ it follows that $V \in \mathbb{S}_n^k(x,1+R_n) \subseteq \mathbb{S}_n^k(x,n^\eps)$.
    Thus,
    \begin{align*}
D_n &\le 
       \E\Big[\Big( \sum_{(Z,V) \in 
        \PP_n^{(2)}}
        \vert f(Z,V, \PP_n^x) - f(Z,V,\PP_n) \vert \Big)^m 
        \one\{1+R_n > n^\eps\}\Big]\\
        &  +
      \E\Big[\Big( \sum_{(Z,V) \in 
        \PP_n^{(2)}\cap (\Q(x,n^\eps) \times \S_n^k(x,n^\eps))}
        \vert f(Z,V, \PP_n^x) - f(Z,V,\PP_n) \vert \Big)^m 
       \Big].
     \end{align*}
By the Cauchy--Schwarz inequality (twice) and stationarity of $\PP_n$, 
    \begin{align*}
D_n &\le 
      \E\big[\overline f_{\sup}(\PP_n)^{2m}\big]^{1/2}   \E\Big[\Big( \sum_{(Z,V) \in 
        \PP_n^{(2)}}
       1 \Big)^{4m}\Big]^{1/4} 
        P(1+R_n > n^\eps)^{1/4}\\
        &   +
       \E\big[\overline f_{\sup}(\PP_n)^{2m}\big]^{1/2}  \E\Big[\Big( \sum_{(Z,V) \in 
        \PP_n^{(2)}\cap (\Q(0,n^\eps) \times \S_n^k(0,n^\eps))}
        1 \Big)^{2m} 
       \Big]^{1/2}.
     \end{align*}
By applying Assumption \ref{ass:A}(iii) and Lemma \ref{lem:poisson_moment_bound},
  $$
  D_n \le n^{m \eps} \vert  W_n \vert^{2m}  P(1+R_n > n^\eps)^{1/4} +  n^{m \eps} \vert Q(0,n^\eps)\vert^m \vert S_n^k(0,n^\eps) \vert^m.
  $$
Invoking Assumption \ref{ass:A}(ii), i.e., exponential stabilization, and Lemma \ref{lem:normalization}, i.e., that we may omit the constants, completes the proof.
\end{proof}

\subsection{Moment bounds on $D_x \Sigma$ and proof of Lemma \ref{lem:en1}(iii)}
\label{sec:4.2}
In this section, we prove the bound on the third error terms, which only involves the first-order difference operator,
and hence we want control over the moments of $D_x\Sigma(\PP_n)$.

To that end, the following basic inequality---which is a direct consequence of the definition of convexity---will be useful in bounding moments of sums, when
the size of the constants are of less importance.
\begin{lemma}["Freshman's reality"]
    \label{lem:reality}
   For any  $m,M \in \N$ and $a_1, \ldots,a_M \ge 0$,
   $$
   \Big(\sum_{i=1}^M a_i \Big)^m \le M^{m-1} \sum_{i=1}^M a_i^m.
   $$
\end{lemma}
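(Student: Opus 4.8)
The plan is to invoke convexity of the map $t \mapsto t^m$ on $[0,\infty)$, which is exactly the ``definition of convexity'' alluded to in the statement. First I would dispose of the trivial cases: if $m = 1$ the inequality holds with equality, and if all $a_i = 0$ both sides vanish, so I may assume $m \ge 2$ and $\sum_{i=1}^M a_i > 0$.

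Next I would apply Jensen's inequality (equivalently, the discrete power-mean inequality) to the convex function $\varphi(t) = t^m$ and the uniform weights $1/M$ on the points $a_1,\dots,a_M$:
$$
\Big(\frac{1}{M}\sum_{i=1}^M a_i\Big)^m = \varphi\Big(\frac{1}{M}\sum_{i=1}^M a_i\Big) \le \frac{1}{M}\sum_{i=1}^M \varphi(a_i) = \frac{1}{M}\sum_{i=1}^M a_i^m.
$$
Multiplying both sides by $M^m$ and simplifying the left-hand side gives
$$
\Big(\sum_{i=1}^M a_i\Big)^m \le M^{m} \cdot \frac{1}{M}\sum_{i=1}^M a_i^m = M^{m-1}\sum_{i=1}^M a_i^m,
$$
which is the claim. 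Alternatively, if one prefers to avoid citing Jensen, the same bound follows by a one-line induction on $M$, using $(a+b)^m \le 2^{m-1}(a^m+b^m)$ as the base step (itself a special case of convexity), though the Jensen route is cleaner and handles general $M$ at once.

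There is essentially no obstacle here; the only point requiring a word of care is that the inequality is stated for natural numbers $m$, so $\varphi(t) = t^m$ is genuinely convex on $[0,\infty)$ (for $m \ge 1$), and no integrability or positivity subtleties arise since the $a_i$ are finitely many non-negative reals. I would keep the write-up to two or three sentences.
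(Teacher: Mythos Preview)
Your proof is correct and matches the paper's approach: the paper states the lemma as ``a direct consequence of the definition of convexity'' without giving a proof, and your Jensen/convexity argument is exactly that consequence spelled out.
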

We are now ready to establish that the third and fourth moment of the first-order difference operator can essentially---up to a small error of order $n^{\eps}$---be bounded
by the volume of the slab $S_n^k$ to the power of 3 and 4, respectively.
\begin{lemma}[$3^{\text{rd}}$/$4^{\text{th}}$ moment of $D_{x}$]
    \label{lem:fourth_moment}
   Let $\eps > 0$ and $n \gg 1$. Under Assumption \ref{ass:A},
    \begin{align}
    \sup_{x \in  \W_n}\E[\vert D_{x}\Sigma(\PP_n)\vert ^3] \le & n^{\eps} \vert S_n^k \vert^3,\\
    \sup_{x \in  \W_n} \E[\vert D_{x}\Sigma(\PP_n)\vert ^4] \le & n^{\eps} \vert S_n^k \vert^4.
    \end{align}
    \end{lemma}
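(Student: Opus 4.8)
The plan is to reduce the claim to the compound--score moment estimates of Lemma~\ref{lem:compound_score_bounds}, via the decomposition of the first-order difference operator, and then to remove the resulting constants and surplus $n^{\eps}$-powers with Lemma~\ref{lem:normalization}. Fix $m\in\{3,4\}$ throughout.

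First I would rewrite $D_x\Sigma(\PP_n)$. Starting from Lemma~\ref{lem:decomposition_simple} and using the identity $\sum_{Z\in\PP_n}F(Z,\PP_n,\PP_n^x)=\sum_{Z\in\PP_n}F(Z,\PP_n^x,\PP_n^x)-F(x,\PP_n,\PP_n^x)$ --- which holds because $f$ is symmetric and vanishes on the diagonal --- the triangle inequality together with $f\ge 0$ yields
\[
|D_x\Sigma(\PP_n)|\ \le\ 2\,F(x,\PP_n,\PP_n^x)\ +\ \sum_{Z\in\PP_n}\bigl|F(Z,\PP_n^x,\PP_n^x)-F(Z,\PP_n,\PP_n)\bigr|.
\]
Raising this to the $m$-th power and applying Lemma~\ref{lem:reality} with two summands bounds $|D_x\Sigma(\PP_n)|^m$ by $2^{m-1}\bigl(2^m\,F(x,\PP_n,\PP_n^x)^m+\bigl(\sum_{Z\in\PP_n}|F(Z,\PP_n^x,\PP_n^x)-F(Z,\PP_n,\PP_n)|\bigr)^m\bigr)$.

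Now I would take expectations and insert Lemma~\ref{lem:compound_score_bounds}. For the first term, \eqref{eq:compound_score_bound_1} applied with the added configuration $\{x\}\in\mc R$ gives $\E[F(x,\PP_n,\PP_n^x)^m]\le n^{m\eps}|S_n^k|^m$. For the second term, \eqref{eq:compound_score_bound_2} gives the bound $n^{m\eps}|Q(0,n^{\eps})|^m\,|S_n^k(0,n^{\eps})|^m$; since $|Q(0,n^{\eps})|=(2n^{\eps})^d$ and $|S_n^k(0,n^{\eps})|\le n^{k\eps}|S_n^k|$ for all $n\gg1$, this is at most $2^{dm}\,n^{(1+d+k)m\eps}\,|S_n^k|^m$. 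Combining, for every $\eps>0$ there is a constant $C=C(d,m)$ with $\sup_{x\in\W_n}\E[|D_x\Sigma(\PP_n)|^m]\le C\,n^{(1+d+k)m\eps}\,|S_n^k|^m$ for all $n\gg1$. Given a target $\eps>0$, running this bound with $\eps$ replaced by $\eps/\bigl((1+d+k)m\bigr)$ and then applying Lemma~\ref{lem:normalization} to $a_n:=|S_n^k|^{-m}\sup_{x\in\W_n}\E[|D_x\Sigma(\PP_n)|^m]$ removes the constant and the surplus exponent, so that $\sup_{x\in\W_n}\E[|D_x\Sigma(\PP_n)|^m]\le n^{\eps}|S_n^k|^m$ for $n\gg1$, separately for $m=3$ and $m=4$, as claimed.

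The only step needing real care is the ``total change'' contribution $\sum_{Z\in\PP_n}|F(Z,\PP_n^x,\PP_n^x)-F(Z,\PP_n,\PP_n)|$: a crude estimate would range over all pairs of Poisson points and produce a factor of order $|W_n|^{2m}$ (as in \eqref{eq:compound_score_bound_3}), which is far too large. Its reduction to $|S_n^k|^m$, up to harmless $n^{\eps}$-powers, rests entirely on exponential stabilization (Assumption~\ref{ass:A}(ii)), which confines the affected pairs to a cube of side $\approx n^{\eps}$ around $x$ and, via $k$-locality, to a slab of width $\approx n^{\eps}$. Since this is precisely what \eqref{eq:compound_score_bound_2} already encodes, the present lemma requires nothing beyond the decomposition above, the bookkeeping of $n^{\eps}$-powers, and Lemmas~\ref{lem:compound_score_bounds}, \ref{lem:reality} and \ref{lem:normalization}.
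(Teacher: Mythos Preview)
Your proof is correct and follows essentially the same route as the paper: decompose $D_x\Sigma(\PP_n)$ via Lemma~\ref{lem:decomposition_simple}, bound the two pieces with Lemma~\ref{lem:compound_score_bounds}, and absorb constants and surplus $n^{\eps}$-powers with Lemma~\ref{lem:normalization}. In fact your treatment is slightly more careful than the paper's, since you make explicit the conversion from $F(Z,\PP_n,\PP_n^x)$ (as it appears in the decomposition) to $F(Z,\PP_n^x,\PP_n^x)$ (as required by \eqref{eq:compound_score_bound_2}) via the symmetry identity, whereas the paper silently conflates the two notations.
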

\begin{proof}
    Let $m \in \{3,4\}$ and $\eps' = \eps/(m(d+k+1))$. First, by Lemmas \ref{lem:decomposition_simple}
    and \ref{lem:reality},
    \begin{align*}
    \E[\vert D_{x}\Sigma(\PP_n) \vert^m] \le & 
    2^{m-1} \E\big[F(x,\PP_n^x)^m\big] + 2^{m-1} \E\Big[ \Big(\sum_{Z \in \PP_n}
    \vert F(Z,\PP_n^x) - F(Z, \PP_n) \vert \Big)^m \Big].
    \end{align*}
Combining Lemmas \ref{lem:normalization} and \ref{lem:compound_score_bounds} (using $\eps'$) with the observation that $\vert Q(x,n^{\eps'})\vert = 2^d n^{d\eps'}$
and $\vert S_n^k(0,n^{\eps'}) \vert = 2^k n^{k\eps'} \vert S_n^k \vert$ completes the proof. 
\end{proof}

We can now prove the error bound on the third error term $I_{n,3}$. 

\begin{proof}[Proof of Lemma \ref{lem:en1}(iii)]
    Recall we want to prove that for $\eps > 0$ and $n \gg 1$,
    $$
    I_{n,3} = \int_{ \W_n}\E\big[|D_x\Sigma(\PP_n)|^3\big] \ \text dx  \le n^{\eps} \vert  W_n \vert  \vert S_n^k \vert^3.
    $$
    First by Lemma \ref{lem:fourth_moment},
    $
    I_{n,3} \le \int_{ \W_n}n^{\eps} \vert S_n^k \vert^3  \ \text dx.
    $
    As the integrand is constant in $x$ and as $\lambda(\W_n)= \vert  W_n \vert $,
    this completes the proof.
\end{proof}

\subsection{Moment bounds on $D_{xy}^2\Sigma$ and proof of Lemma \ref{lem:en1}(i)-(ii)}
\label{sec:4.3}
Next, in this section, we prove the bounds on the first two error terms, and hence we need to additionally control the fourth
moment of the second-order difference operator $D_{xy}\Sigma(\PP_n)$. Let $x,y \in  \W_n$ and $\eps > 0$. We
recall the three cases of the spatial position of $y$ in relation to $x$ as outlined in Section \ref{sec:proof_strategy};
\begin{align*}
\textbf{Case I:} \qquad & y \in  \W_n \setminus \S_n^k(x,n^\eps),\\
\textbf{Case II:} \qquad & y \in \S_n^k(x,n^\eps) \setminus \Q(x,n^\eps),\\
\textbf{Case III:} \qquad & y \in \Q(x,n^\eps).
\end{align*}
We now establish fourth moment bounds on $D_{xy}^2\Sigma(\PP_n)$ based on each of three cases above. Recall that we write $\Q(x,R_n)$ instead of $\Q(x,R_n(x))$ for brevity for the non-stable cube,
and that $\Q(x,R_n)$ and $\Q(y,R_n)$ in general are of different sizes.

\begin{lemma}[$4^{\text{th}}$ moment of $D_{xy}^2$; Case I]
    \label{lem:fourth_moment_first_case}
    Let $\eps > 0$ and $n \gg 1$. Under Assumption \ref{ass:A},
   $$
        \sup_{x \in  \W_n} \sup_{y \in \W_n \setminus \S_n^k(x,n^\eps) }\E[(D_{xy}^2\Sigma(\PP_n))^4] \le \e^{-\beta_1(\eps) n^{\eps} / 16}.
    $$
    \end{lemma}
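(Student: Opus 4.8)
The plan is to show that $D_{xy}^2\Sigma(\PP_n)$ vanishes unless one of the stabilization radii $R_n(x),R_n(y)$ is of order $n^\eps$, and then to trade a crude polynomial moment bound against the exponential tail in Assumption \ref{ass:A}(ii). Fix $x\in\W_n$ and $y\in\W_n\setminus\S_n^k(x,n^\eps)$, so $|\dot x_j-\dot y_j|>n^\eps$ for some $1\le j\le k$. First I would expand $D_{xy}^2\Sigma(\PP_n)$ using the decomposition in Lemma \ref{lem:decomposition_simple}, eq.\ \eqref{eq:decomposition_simple_2}: the score $f(x,y,\PP_n^{xy})$, the two compound-score differences $F(x,\PP_n,\PP_n^{xy})-F(x,\PP_n,\PP_n^{x})$ and $F(y,\PP_n,\PP_n^{xy})-F(y,\PP_n,\PP_n^{y})$, and the sum $\sum_{Z\in\PP_n}\big(F(Z,\PP_n,\PP_n^{xy})-F(Z,\PP_n,\PP_n^{x})-F(Z,\PP_n,\PP_n^{y})+F(Z,\PP_n,\PP_n)\big)$.

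The core step is the deterministic claim that, in Case I, on the event $\{R_n(x)+R_n(y)+1\le n^\eps\}$ all of these terms vanish. The term $f(x,y,\PP_n^{xy})$ is $0$ by $k$-locality \eqref{eq:k_local}, since $|\dot x_j-\dot y_j|>n^\eps>1$. On this event the non-stable cubes $\Q(x,R_n(x))$ and $\Q(y,R_n(y))$ are disjoint — they are separated by more than $R_n(x)+R_n(y)$ in coordinate $j$ — so, via \eqref{eq:stabilization} and $k$-locality, a pair $(Z,V)$ (or a newly inserted point together with a partner) can contribute a nonzero summand only if one of its two points lies in $\Q(x,R_n(x))$ and the other in $\Q(y,R_n(y))$; but then $k$-locality gives $|\dot Z_i-\dot V_i|\le1$ for all $i\le k$, and the triangle inequality yields $|\dot x_j-\dot y_j|\le R_n(x)+R_n(y)+1$, contradicting Case I on the stated event. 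Hence $D_{xy}^2\Sigma(\PP_n)=D_{xy}^2\Sigma(\PP_n)\,\one\{R_n(x)+R_n(y)+1>n^\eps\}$.

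The remaining estimate is routine. By Cauchy--Schwarz,
\[
\E\big[(D_{xy}^2\Sigma(\PP_n))^4\big]\le\E\big[(D_{xy}^2\Sigma(\PP_n))^8\big]^{1/2}\,P\big(R_n(x)+R_n(y)+1>n^\eps\big)^{1/2}.
\]
For the first factor I would feed \eqref{eq:decomposition_simple_2} through Lemma \ref{lem:reality} and the compound-score moment bounds of Lemma \ref{lem:compound_score_bounds}; since $|W_n|$, $|Q(0,n^\eps)|$ and $|S_n^k(0,n^\eps)|$ all grow at most polynomially in $n$, this gives $\E[(D_{xy}^2\Sigma(\PP_n))^8]^{1/2}\le n^{C}$ for some $C>0$ and all $n\gg1$, uniformly in $x,y$, after absorbing constants via Lemma \ref{lem:normalization}. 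For the second factor, $P(R_n(x)+R_n(y)+1>n^\eps)\le P(R_n(x)>(n^\eps-1)/2)+P(R_n(y)>(n^\eps-1)/2)$, and since $(n^\eps-1)/2\ge n^{\eps'}$ for $n\gg1$ for any $\eps'<\eps$ with $n^{\eps'}$ still at least $n^\eps/4$, Assumption \ref{ass:A}(ii) bounds each probability by $\e^{-\beta_1(\eps)n^\eps/4}$ for $n\gg1$. Combining, $\E[(D_{xy}^2\Sigma(\PP_n))^4]\le\sqrt2\,n^{C}\e^{-\beta_1(\eps)n^\eps/8}$, and since $n^{C}\e^{-\beta_1(\eps)n^\eps/8}\le\e^{-\beta_1(\eps)n^\eps/16}$ for $n\gg1$, taking the supremum over $x$ and over $y\notin\S_n^k(x,n^\eps)$ completes the proof.

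The step I expect to be the main obstacle is the deterministic cancellation. The stabilization property \eqref{eq:stabilization} is stated only for the insertion of a single point into $\PP_n$, whereas the differences above compare $f(\cdot,\cdot,\PP_n^{xy})$ with $f(\cdot,\cdot,\PP_n^{x})$ and $f(\cdot,\cdot,\PP_n^{y})$, i.e.\ the insertion of a point into the already-perturbed configurations $\PP_n^{x}$ and $\PP_n^{y}$. Carrying the argument through therefore requires keeping careful track of which non-stable cube, and relative to which base configuration, governs each term, and checking that the radii in play are still controlled by $R_n(x)$ and $R_n(y)$ up to the width-$1$ buffer coming from $k$-locality; this bookkeeping is the delicate part, though it does not affect the final exponential order.
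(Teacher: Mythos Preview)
Your proof is correct and follows essentially the same route as the paper: establish the deterministic vanishing of every term in the decomposition \eqref{eq:decomposition_simple_2} on the event $\{R_n(x)+R_n(y)+1\le n^\eps\}$ using $k$-locality and disjointness of the non-stable cubes, then on the complement trade a crude polynomial moment bound (via Lemma \ref{lem:compound_score_bounds} and Lemma \ref{lem:reality}) against the exponential tail from Assumption \ref{ass:A}(ii) through Cauchy--Schwarz. As for your closing concern, the paper argues the cancellation using only the single-insertion radii $R_n(x),R_n(y)$ taken relative to $\PP_n$ and does not separately track stabilization radii relative to $\PP_n^x$ or $\PP_n^y$, so the bookkeeping subtlety you flag is one the paper leaves implicit as well.
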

    \begin{proof}
    First, by Lemmas \ref{lem:decomposition_simple}
    and \ref{lem:reality}, we have that
    \begin{equation}
        \label{eq:fourth_moment_expansion}
        \begin{aligned}
\E[(D_{xy}^2  & \Sigma(\PP_n))^4]  = 4^4 \E[f(x,y,\PP_n^{xy})^4]+ 4^4  \E\Big[\big(F(x, \PP_n, \PP_n^{xy}) - F(x,\PP_n, \PP_n^x)\big)^4 \Big]\\
&  + 4^4  \E\Big[\big( F(y,\PP_n, \PP_n^{xy}) - F(y,\PP_n, \PP_n^y)\big)^4 \Big]\\
&   +  4^4  \E\Big[\Big(\sum_{Z \in \PP_n} \big( F(Z,\PP_n,\PP_n^{xy}) - F(Z,\PP_n,\PP_n^x) - F(Z,\PP_n,\PP_n^y) + F(Z,\PP_n,\PP_n) \big) \Big)^4 \Big].
    \end{aligned}
\end{equation}

    Let $\gamma > 0$. By Assumption \ref{ass:A}(iii), i.e., sub-polynomial moments, as well as Lemmas \ref{lem:reality} and \ref{lem:compound_score_bounds} (specifically the bounds (\ref{eq:compound_score_bound_1}) and (\ref{eq:compound_score_bound_3})), and finally the Cauchy--Schwarz inequality,
    \begin{equation}
        \label{eq:crude_bound}
        \E\Big[(D_{xy}^2\Sigma(\PP_n))^4 \one\{R_n(x)+ R_n(y) > \gamma \}\Big] \le 1024 n^{4\eps} \vert  W_n\vert^8 P(R_n(x)+ R_n(y) > \gamma)^{1/2}.
    \end{equation}
    By Assumption \ref{ass:A}(ii), i.e., exponential stabilization, there is a $\beta_1(\eps) > 0$ such that
    \begin{equation}
        \label{eq:general_fourth_bound}
        \begin{aligned}
       \E\Big[(D_{xy}^2\Sigma(\PP_n))^4 \one\{R_n(x)+ R_n(y) > \gamma \}\Big] &  \le  2048 n^{4\eps}\vert  W_n\vert^8 \e^{-\beta_1(\eps)\gamma / 4}.
    \end{aligned}
\end{equation}
   Now, put $\gamma = |\pi_1(x - y)|-1$. 
    Then, on the event that $ R_n(x) + R_n(y) < |\pi_1(x - y)| - 1$, it follows that $\Q(x, R_n) \cap \Q(y, R_n)= \es$
    and by $k$-locality that $f(x,y,\PP_n^{xy})=0$.
    Now, suppose that $f(x,V,\PP_n^{xy}) \neq f(x, V, \PP_n^x)$ for some $V \in \PP_n$. Since $x \in \Q(y, R_n)^c$, it follows by the definition
    of the stabilization radius that $V \in \Q(y, R_n)$. Moreover, it follows by the assumption of $k$-locality that $|\pi_1(x - V)| \le 1$.
    Thus,
    \begin{equation}
        \label{eq:diff_0}
   F(x, \PP_n, \PP_n^{xy}) - F(x,\PP_n, \PP_n^x) = F(y, \PP_n, \PP_n^{xy}) - F(y,\PP_n, \PP_n^y) = 0.
    \end{equation}
    Next suppose that
    $f(Z,V,\PP_n^{xy}) - f(Z,V, \PP_n^x) \neq f(Z,V,\PP_n^{y}) - f(Z,V,\PP_n)$ for some points $Z,V \in \PP_n$ and assume without loss of generality that
    $f(Z,V,\PP_n^{y}) - f(Z,V,\PP_n) \neq 0$. Hence as before, we conclude that either $Z$ or $V$ lies in $\Q(y, R_n)$. 
    Moreover, it must hold that $f(Z,V,\PP_n^{xy}) \neq f(Z,V,\PP_n^{y})$ or $f(Z,V,\PP_n^{x}) \neq f(Z,V,\PP_n)$,
    which implies either $Z$ or $V$ lies in $\Q(x, R_n)$. Since $\Q(x, R_n) \cap \Q(y, R_n)= \es$, then by symmetry, we may conclude that 
    $(Z,V) \in \Q(x, R_n) \times \Q(y, R_n)$.
    Finally, the assumption of $k$-locality again ensures that $|\pi_1(Z - V)| \le 1$. Thus,

   \begin{equation}
    \label{eq:diff_0_more}
    \sum_{Z \in \PP_n} \big( F(Z,\PP_n,\PP_n^{xy}) - F(Z,\PP_n,\PP_n^x) - F(Z,\PP_n,\PP_n^y) + F(Z,\PP_n,\PP_n) \big) = 0
    \end{equation}
    since $|\pi_1(x - y)| > 1 + R_n(x) + R_n(y)$ implies that for any $(Z,V) \in \Q(x, R_n) \times \Q(y, R_n)$,
    then $\pi_1(Z-W) > 1$. Hence, combining equations \eqref{eq:fourth_moment_expansion}, \eqref{eq:general_fourth_bound},\eqref{eq:diff_0} and \eqref{eq:diff_0_more},
    $$
    \E[(D_{xy}^2\Sigma(\PP_n))^4] \le 2048 n^\eps \vert  W_n\vert^8 \e^{-\beta_1(\eps)( \vert \pi_1(x-y) \vert - 1) / 4}.
    $$
    Using that $y \in \W_n \setminus \S_n^k(x,n^\eps)$, that is $\vert \pi_1(x-y) \vert  > n^\eps $, as well as Lemma \ref{lem:normalization} completes the proof.
\end{proof}

\begin{lemma}[$4^{\text{th}}$ moment of $D_{xy}^2$; Case II]
    \label{lem:fourth_moment_second_case_two}
    Let $\eps > 0$ and $n \gg 1$. Under Assumption \ref{ass:A},
   $$
        \sup_{x \in  \W_n} \sup_{y \in \S_n^k(x,n^\eps) \setminus \Q(x,n^\eps)}\E[(D_{xy}^2\Sigma(\PP_n))^4] \le n^{4\eps}.
    $$
\end{lemma}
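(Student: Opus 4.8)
The plan is to mimic the structure of the proof of Lemma \ref{lem:fourth_moment_first_case} (Case I), with the essential difference that here the separation between $x$ and $y$ occurs in one of the \emph{non-local} coordinates $j>k$, so $k$-locality will no longer collapse the compound-score differences to zero; instead they will be controlled through Lemma \ref{lem:moments_of_Q}. First I would apply the decomposition of Lemma \ref{lem:decomposition_simple} together with Lemma \ref{lem:reality}, exactly as in \eqref{eq:fourth_moment_expansion}, to reduce the claim to fourth-moment bounds on the single score $f(x,y,\PP_n^{xy})$, on the two ``center'' differences $F(x,\PP_n,\PP_n^{xy})-F(x,\PP_n,\PP_n^x)$ and $F(y,\PP_n,\PP_n^{xy})-F(y,\PP_n,\PP_n^y)$, and on the double-sum correction $\sum_{Z\in\PP_n}\bigl(F(Z,\PP_n,\PP_n^{xy})-F(Z,\PP_n,\PP_n^x)-F(Z,\PP_n,\PP_n^y)+F(Z,\PP_n,\PP_n)\bigr)$. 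The first term is immediately $\le n^{4\eps'}$ by Assumption \ref{ass:A}(iii) for a small $\eps'>0$ to be fixed at the end. For the remaining three I would split on the event $E=\{R_n(x)+R_n(y)\le n^\eps\}$: on $E^c$ the crude bound from \eqref{eq:crude_bound}--\eqref{eq:general_fourth_bound} together with exponential stabilization gives a contribution $\le 2048\,n^{4\eps}\vert W_n\vert^8\e^{-\beta_1(\eps)n^\eps/4}\le 1$ for $n\gg1$, since $\vert W_n\vert$ grows only polynomially.

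The heart of the argument is the analysis on $E$. Since $y\in\S_n^k(x,n^\eps)\setminus\Q(x,n^\eps)$, the first $k$ coordinates of $\dot x$ and $\dot y$ differ by at most $n^\eps$ while $\Vert\dot x-\dot y\Vert_\infty>n^\eps$, so there is a coordinate $j>k$ with $\vert\dot x_j-\dot y_j\vert>n^\eps\ge R_n(x)+R_n(y)$ on $E$; hence the non-stable cubes $\Q(x,R_n)$ and $\Q(y,R_n)$ are disjoint, and neither center lies in the other cube. Then for the center difference $F(x,\PP_n,\PP_n^{xy})-F(x,\PP_n,\PP_n^x)$ I would observe that a summand $f(x,V,\PP_n^{xy})-f(x,V,\PP_n^x)$ can be nonzero only if $V\in\PP_n\cap\Q(y,R_n)$ (adding $y$ changes the score of the pair $(x,V)$ only if $x$ or $V$ lies in $\Q(y,R_n)$, and $x\notin\Q(y,R_n)$), so on $E$ the difference is bounded by $2\,\overline f_{\sup}(\PP_n)\,\PP(\Q(y,R_n))$; Cauchy--Schwarz, Assumption \ref{ass:A}(iii) and Lemma \ref{lem:moments_of_Q} with $m=8$ then bound its fourth moment by $16\,C_0(8)^{1/2}\,n^{4\eps'}$, and the $y$-centered difference is handled symmetrically with $\Q(x,R_n)$. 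For the double-sum correction I would reuse the argument of Lemma \ref{lem:fourth_moment_first_case}: writing $g_{ZV}=f(Z,V,\PP_n^{xy})-f(Z,V,\PP_n^x)-f(Z,V,\PP_n^y)+f(Z,V,\PP_n)$, if $g_{ZV}\ne0$ then, applying stabilization at $x$ and at $y$ separately, the pair $\{Z,V\}$ meets both $\Q(x,R_n)$ and $\Q(y,R_n)$, which on $E$ (disjoint cubes, $f$ symmetric) forces $Z\in\Q(x,R_n)$ and $V\in\Q(y,R_n)$ up to relabeling; unlike Case I, $k$-locality does \emph{not} kill this, so on $E$ the whole correction is bounded in absolute value by $8\,\overline f_{\sup}(\PP_n)\,\PP(\Q(x,R_n))\,\PP(\Q(y,R_n))$, whose fourth moment is at most $C(d)\,n^{4\eps'}$ after iterating Cauchy--Schwarz and invoking Assumption \ref{ass:A}(iii) and Lemma \ref{lem:moments_of_Q} with $m=16$.

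Collecting these estimates gives $\E[(D_{xy}^2\Sigma(\PP_n))^4]\le C'(d)\,n^{4\eps'}+1$ for $n\gg1$, uniformly over $x\in\W_n$ and over $y\in\S_n^k(x,n^\eps)\setminus\Q(x,n^\eps)$; choosing $\eps'\le\eps$ and applying the normalization Lemma \ref{lem:normalization} then yields the claimed bound $n^{4\eps}$. I expect the main obstacle to be the geometric step: recognizing that in Case II the separation lives in a non-local direction, so that the disjointness $\Q(x,R_n)\cap\Q(y,R_n)=\es$ still holds on $E$ but $k$-locality no longer forces the cross terms to vanish --- meaning that, unlike in Case I, the double-sum correction must actually be estimated rather than shown to be zero, which is exactly why the final bound degrades from exponentially small to $n^{4\eps}$. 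Everything downstream is routine Cauchy--Schwarz bookkeeping with Lemmas \ref{lem:poisson_moment_bound}, \ref{lem:moments_of_Q} and \ref{lem:normalization}.
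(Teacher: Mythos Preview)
Your proposal is correct and follows essentially the same approach as the paper's proof: split on the event $\{R_n(x)+R_n(y)\le n^\eps\}$, use exponential stabilization on the complement, and on the good event exploit the disjointness $\Q(x,R_n)\cap\Q(y,R_n)=\es$ (from the non-local separation) to localize each compound-score difference to Poisson counts in the non-stable cubes, then finish with Cauchy--Schwarz, Assumption~\ref{ass:A}(iii), Lemma~\ref{lem:moments_of_Q} and Lemma~\ref{lem:normalization}. The only cosmetic difference is that the paper packages the three pointwise bounds into a single product $(\PP(\Q(x,R_n))\vee1)^4(\PP(\Q(y,R_n))\vee1)^4$ before applying Cauchy--Schwarz, whereas you treat each term separately; both routes work.
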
   
\begin{proof} 
  The idea is to follow the same strategy as in the proof of Lemma \ref{lem:fourth_moment_first_case}, where this time we choose $\gamma = n^\eps$. Note that on the event that 
    $R_n(x)+R_n(y)\le n^\eps$, it follows that $\Q(x,R_n)\cap \Q(y,R_n) = \emptyset$, and hence
    $$
    \vert F(x, \PP_n, \PP_n^{xy}) - F(x,\PP_n, \PP_n^x) \vert \le \overline f_{\sup}(\PP_n)  \PP(\Q(y,R_n)),
    $$
    $$
    \vert F(y, \PP_n, \PP_n^{xy}) - F(y,\PP_n, \PP_n^y) \vert \le \overline f_{\sup}(\PP_n) \PP(\Q(x,R_n)),
    $$
    as well as
    \begin{align*}
    \sum_{Z \in \PP_n}& \big\vert  F(Z,\PP_n,\PP_n^{xy}) - F(Z,\PP_n,\PP_n^x) - F(Z,\PP_n,\PP_n^y) + F(Z,\PP_n,\PP_n) \big \vert \\
    & \qquad \qquad \qquad \qquad \qquad \qquad \qquad  \le 2 \overline f_{\sup}(\PP_n) \PP(\Q(x,R_n))\PP(\Q(y,R_n)).
    \end{align*}
    Now, since
\begin{align*}
 \PP(\Q(y,R_n))^4 + \PP(\Q(x,R_n))^4
    & + 2 \PP(\Q(x,R_n))^4\PP(\Q(y,R_n))^4 \\
    & \qquad \le 4 \big(\PP(\Q(x,R_n))\vee 1\big)^4 \times
     \big( \PP(\Q(y,R_n))\vee 1\big)^4,
\end{align*}
then using (\ref{eq:fourth_moment_expansion}) and (\ref{eq:general_fourth_bound}) as well as the Cauchy--Schwarz inequality (twice),
$$
    \begin{aligned}
        \E[(D_{xy}^2\Sigma(\PP_n))^4]&  \le 
        4^8
        \E\big[\overline f_{\sup}(\PP_n)^8\big]^{1/2}
        \E\big[\PP(Q(x,R_n))^{16} \big]^{1/4} \E\big[\PP(Q(y,R_n))^{16} \big]^{1/4}\\
        &     
       \qquad \qquad \qquad \qquad \qquad + 512 \E\big[\overline f_{\sup}(\PP_n)^4\big]  +   2048 n^{4\eps} \vert  W_n\vert^8 \e^{-\beta_1(\eps) n^\eps / 4}.
    \end{aligned}
    $$
    Invoking Assumption \ref{ass:A}(iii), i.e., sub-polynomial moments, as well as Lemmas \ref{lem:moments_of_Q} and \ref{lem:normalization} completes the proof.
    \end{proof}
\begin{lemma}[$4^{\text{th}}$ moment of $D_{xy}^2$; Case III]
    \label{lem:fourth_moment_second_case_three}
    Let $\eps > 0$, $n \gg 1$. Under Assumption \ref{ass:A},
   $$
        \sup_{x \in  \W_n} \sup_{y \in \Q(x,n^\eps)}\E[(D_{xy}^2\Sigma(\PP_n))^4]
         \le n^{4k\eps} \vert S_n^k \vert^4.
    $$
\end{lemma}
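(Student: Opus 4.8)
The plan is to follow the route of the proofs of Lemmas~\ref{lem:fourth_moment}, \ref{lem:fourth_moment_first_case} and~\ref{lem:fourth_moment_second_case_two}: expand $D_{xy}^2\Sigma(\PP_n)$ through the decomposition of Lemma~\ref{lem:decomposition_simple}, use the convexity bound of Lemma~\ref{lem:reality} to handle the resulting terms one at a time, bound the ``boundary'' terms with Lemma~\ref{lem:compound_score_bounds}, and reduce the remaining double sum to Poisson-count moments via Lemmas~\ref{lem:moments_of_Q} and~\ref{lem:poisson_moment_bound}. I would first fix a small positive multiple of $\eps$, say $\eps'=\eps/(12(d+k+1))$, apply each auxiliary estimate with exponent $\eps'$, and clear all accumulated constants at the end with Lemma~\ref{lem:normalization}. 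By Lemma~\ref{lem:decomposition_simple}, $D_{xy}^2\Sigma(\PP_n)$ is a sum of six terms: the single score $f(x,y,\PP_n^{xy})$, the four compound scores $F(x,\PP_n,\PP_n^{xy})$, $F(x,\PP_n,\PP_n^x)$, $F(y,\PP_n,\PP_n^{xy})$, $F(y,\PP_n,\PP_n^y)$, and the double sum
\[
T_{xy}=\sum_{Z\in\PP_n}\big(F(Z,\PP_n,\PP_n^{xy})-F(Z,\PP_n,\PP_n^x)-F(Z,\PP_n,\PP_n^y)+F(Z,\PP_n,\PP_n)\big).
\]
By Lemma~\ref{lem:reality} with $M=6$ it suffices to bound the fourth moment of each term by a constant multiple of $n^{4k\eps}\vert S_n^k\vert^4$. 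The score term has fourth moment at most $\E[\overline f_{\sup}(\PP_n)^4]\le n^{4\eps'}$ by Assumption~\ref{ass:A}(iii), and each of the four compound scores has fourth moment at most $n^{4\eps'}\vert S_n^k\vert^4$ by~\eqref{eq:compound_score_bound_1} (applied with the added set $\{x,y\}$, $\{x\}$ or $\{y\}$ in $\mc R$). Since $\vert S_n^k\vert\ge1$ and $\eps'\le k\eps$ for $n\gg1$, each of these five contributions is of the required order.

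The main work is the bound on $\E[T_{xy}^4]$; this is the coarsest of the three cases, and in fact the estimate below holds for arbitrary $y\in\W_n$, the restriction $y\in\Q(x,n^\eps)$ only serving to keep the number of contributing $y$ at $O(n^{d\eps})$ in the downstream integral. Arguing exactly as in the proof of Lemma~\ref{lem:fourth_moment_first_case} and in the derivation of~\eqref{eq:compound_score_bound_2}, a pair $(Z,V)$ can give a nonzero contribution to $T_{xy}$ only if $Z$ or $V$ lies in $\PP_n\cap(\Q(x,R_n(x))\cup\Q(y,R_n(y)))$, and $k$-locality then forces the other point of the pair into $\S_n^k(x,1+R_n(x))$ or $\S_n^k(y,1+R_n(y))$. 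Writing $U_z=\PP(\Q(z,R_n(z)))$ and $V_z=\PP(\S_n^k(z,1+R_n(z)))$, this yields the pointwise bound $|T_{xy}|\le C\,\overline f_{\sup}(\PP_n)\,(U_xV_x+U_yV_y)$. Raising to the fourth power, applying Lemma~\ref{lem:reality} and then H\"older's inequality with three exponents $\tfrac13$, I reduce matters to $\E[\overline f_{\sup}(\PP_n)^{12}]\le n^{12\eps'}$ (Assumption~\ref{ass:A}(iii)), $\E[U_z^{12}]\le C_0(12)$ (Lemma~\ref{lem:moments_of_Q}), and $\E[V_z^{12}]$. For the latter I would establish a slab analogue of Lemma~\ref{lem:moments_of_Q}, namely $\sup_{n\gg1}\sup_{z\in\W_n}\E[V_z^{12}]\le C\vert S_n^k\vert^{12}$, by repeating that lemma's proof verbatim: condition on $R_n(z)=j$, apply Cauchy--Schwarz, bound $\PP(\S_n^k(z,1+j))$ in distribution by a Poisson variable of mean $\le(2(1+j))^k\vert S_n^k\vert$ via Lemma~\ref{lem:poisson_moment_bound}, and use the exponential tail of $R_n(z)$ from Assumption~\ref{ass:A}(ii) together with the integral test, so that the resulting series converges to a constant multiple of $\vert S_n^k\vert^{12}$. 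Combining the three bounds gives $\E[T_{xy}^4]\le C\,n^{4\eps'}\vert S_n^k\vert^4$; summing the six contributions and invoking Lemma~\ref{lem:normalization}, which absorbs the constants and, since $\eps'\le k\eps$, replaces $n^{4\eps'}$ by $n^{4k\eps}$, completes the proof. In fact the same argument yields the stronger estimate $n^{\delta}\vert S_n^k\vert^4$ for every $\delta>0$; the exponent $4k\eps$ is merely the crude form that is convenient when bounding $I_{n,1}$ and $I_{n,2}$.

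I expect the main obstacle to be the assertion that a nonzero entry of $T_{xy}$ pins the pair $\{Z,V\}$ to $\Q(x,R_n(x))\cup\Q(y,R_n(y))$. The stabilization radius in~\eqref{eq:stabilization_radius_original} controls only the effect of a single insertion into $\PP_n$, whereas $T_{xy}$ also compares $\PP_n^x$ with $\PP_n^{xy}$, i.e.\ an insertion into $\PP_n^x$; and because in Case~III the points $x$ and $y$ lie within $\ell^\infty$-distance $n^\eps$ of one another, one cannot fall back on the separation argument that makes this step transparent in Case~I. Making it precise therefore requires the same bookkeeping with $R_n(x)$, $R_n(y)$ and the decomposition of Lemma~\ref{lem:decomposition_simple} as in the proofs of Lemmas~\ref{lem:fourth_moment_first_case} and~\ref{lem:fourth_moment_second_case_two}; once this is in place, everything that follows is routine moment estimation, and one may if desired first split off the low-probability event $\{R_n(x)+R_n(y)>n^\eps\}$ and dispatch it by~\eqref{eq:general_fourth_bound} exactly as in Cases~I and~II.
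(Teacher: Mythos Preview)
Your proposal is correct and follows essentially the same route as the paper. Both arguments expand $D_{xy}^2\Sigma(\PP_n)$ via Lemma~\ref{lem:decomposition_simple}, bound the single score and the four compound-score terms by $n^{O(\eps')}\vert S_n^k\vert^4$ using Assumption~\ref{ass:A}(iii) and~\eqref{eq:compound_score_bound_1}, and then localize the double sum $T_{xy}$ to pairs touching $\Q(x,R_n(x))\cup\Q(y,R_n(y))$. The one substantive technical difference is in how the random slab is handled: the paper first splits off $\{R_n(x)+R_n(y)>n^\eps\}$ (dispatched by~\eqref{eq:general_fourth_bound}), so that on the complement the slab $\S_n^k(z,1+R_n(z))$ is contained in the deterministic $\S_n^k(x,2n^\eps)$, whose Poisson moments follow directly from Lemma~\ref{lem:poisson_moment_bound}; you instead keep the random width and prove a slab analogue of Lemma~\ref{lem:moments_of_Q} for $\PP(\S_n^k(z,1+R_n(z)))$. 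Your sketch of that analogue (condition on $R_n=j$, use $\vert S_n^k(z,1+j)\vert=(1+j)^k\vert S_n^k\vert$, sum against the exponential tail of $R_n$) is correct and yields the required $C\vert S_n^k\vert^{12}$. The paper's route is marginally shorter since it recycles~\eqref{eq:general_fourth_bound} rather than proving a new moment estimate, but your approach is equally valid and arguably cleaner in that it does not introduce an auxiliary event. Your closing remark about the localization step---that pinning nonzero summands of $T_{xy}$ to $\Q(x,R_n(x))\cup\Q(y,R_n(y))$ requires the same bookkeeping as in the proof of Lemma~\ref{lem:fourth_moment_first_case}---is exactly right; the paper invokes that argument verbatim here.
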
 
\begin{proof}
    The idea is to follow the same strategy as in the proof of Lemma \ref{lem:fourth_moment_first_case}, where again we choose $\gamma = n^\eps$. By $k$-locality, it follows that
    $$
    \vert F(x, \PP_n, \PP_n^{xy}) - F(x,\PP_n, \PP_n^x) \vert \le \overline f_{\sup}(\PP_n) \PP(S_n^k(x,1)) ,
    $$
    $$
    \vert F(y, \PP_n, \PP_n^{xy}) - F(y,\PP_n, \PP_n^y) \vert \le \overline f_{\sup}(\PP_n) \PP(S_n^k(y,1)),
    $$
    and since $y \in \Q(x,n^\eps)$, then further that
    \begin{align*}
    \sum_{Z \in \PP_n}  \big( F(Z,\PP_n,\PP_n^{xy}) - F(Z,\PP_n,\PP_n^x)
     & - F(Z,\PP_n,\PP_n^y) + F(Z,\PP_n,\PP_n) \big) \\
     & \qquad \le 2 \overline f_{\sup}(\PP_n)\PP(Q(x,R_n))\PP(Q(x,2n^\eps)).
    \end{align*}
Since
$$
1+ 2\PP(S_n^k(0,1))^4
    + 2 \PP(Q(x,R_n))^4\PP(Q(0,2n^\eps))^4 \le 4 \big( \PP(Q(x,R_n)) \vee 1 \big)^4 \big( \PP(S_n^k(0,2n^\eps))\vee 1 \big)^4,
$$
then using (\ref{eq:fourth_moment_expansion}) and (\ref{eq:general_fourth_bound}) as well as the Cauchy--Schwarz inequality (twice),
$$
    \begin{aligned}
        \E[(D_{xy}^2\Sigma(\PP_n))^4]&  \le 
        256
        \E\big[\overline f_{\sup}(\PP_n)^8\big]^{1/2}
        \E\big[\PP(Q(x,R_n))^{16} \big]^{1/4} \E\big[\PP(S_n^k(0,2n^\eps))^{16} \big]^{1/4} \\
        &       
    \qquad +   2048 n^{4\eps} \vert  W_n\vert^8 \e^{-\beta_1(\eps) n^\eps / 4}.
    \end{aligned}
    $$
    Since $\vert S_n^k(0,2n^{\eps}) \vert = 4^k n^{k\eps} \vert S_n^k \vert$, then invoking Assumption \ref{ass:A}(iii), i.e., sub-polynomial moments, as well as Lemmas \ref{lem:poisson_moment_bound}, \ref{lem:moments_of_Q} and \ref{lem:normalization} completes the proof.
\end{proof}

\begin{proof}[Proof of Lemma \ref{lem:en1}(i)]
Recall we want to prove that
\begin{align*}
    I_{n,1}  = &\int_{\W_n^3}
     (\E\big[D_x \Sigma(\PP_n)^2
    D_y \Sigma(\PP_n)^2\big]\E\big[D_{x,z}^2 \Sigma(\PP_n)^2
    D_{y,z}^2 \Sigma(\PP_n)^2\big])^{\tfrac 1 2} \ \text d(x,y,z) \le n^{\eps} \vert  W_n \vert \vert S_n^k \vert^4.
\end{align*}
First, by applying the Cauchy--Schwarz inequality to both expectations,
\begin{align*}
    I_{n,1} \le &  \int_{ \W_n}\Big(\int_{ \W_n }
    \E\big[D_y \Sigma(\PP_n)^4\big]^{\tfrac 1 4}\E\big[D_{xy}^2 \Sigma(\PP_n)^4\big]^{\tfrac 1 4} \ \text dy\Big)^2 \ \text dx.
\end{align*}
Next, by decomposing the inner integral using $\eps' = \eps$,
\begin{align*}
	I_{n,1}  = &   \int_{ \W_n}\Big(\int_{ \W_n \setminus \S_n^k(x,n^\eps)  }
    \E\big[D_y \Sigma(\PP_n)^4\big]^{\tfrac 1 4}\E\big[D_{xy}^2 \Sigma(\PP_n)^4\big]^{\tfrac 1 4} \ \text dy\Big)^2 \ \text dx\\
    & \qquad +  \int_{ \W_n}\Big(\int_{ \S_n^k(x,n^\eps) \setminus \Q(x,n^\eps) }
    \E\big[D_y \Sigma(\PP_n)^4\big]^{\tfrac 1 4}\E\big[D_{xy}^2 \Sigma(\PP_n)^4\big]^{\tfrac 1 4} \ \text dy\Big)^2 \ \text dx\\
    & \qquad \qquad  +  \int_{ \W_n}\Big(\int_{ \Q(x,n^\eps) }
    \E\big[D_y \Sigma(\PP_n)^4\big]^{\tfrac 1 4}\E\big[D_{xy}^2 \Sigma(\PP_n)^4\big]^{\tfrac 1 4} \ \text dy\Big)^2 \ \text dx,
\end{align*}
then by Lemmas \ref{lem:fourth_moment} - \ref{lem:fourth_moment_second_case_three},
\begin{align*}
	I_{n,1}  \le&\int_{ \W_n}\Big(\int_{\W_n \setminus \S_n^k(x,n^\eps) } n^{\eps'/4} \vert S_n^k \vert \e^{-\beta_1(\eps' )n^{\eps'} / 64}\ \text dy\Big)^2 \ \text dx\\
    & \qquad  +  \int_{ \W_n}\Big(\int_{\S_n^k(x,n^\eps) \setminus \Q(x,n^\eps)}  n^{(d+1)\eps} \vert S_n^k(0,n^\eps)\vert  n^\eps \text dy\Big)^2 \text dx\\
    &  \qquad \qquad  + \int_{ \W_n}\Big(\int_{\Q(x,n^\eps)}   n^{(d+1)\eps} \vert S_n^k(0,n^\eps) \vert  n^{\eps} \big\vert S_n^k(0,2n^\eps) \big\vert \text dy\Big)^2 \text dx.
\end{align*}
As the first integral goes to zero at exponential speed, it follows that
\begin{align*}
	I_{n,1}  \le&  32\vert  W_n \vert \big(\vert S_n^k(0,n^\eps) \vert - \vert Q(0,n^\eps)\vert \big)^2 n^{2(d+2)\eps} \vert S_n^k(0,n^\eps)\vert^2\\
    &    + 8  \vert  W_n \vert \vert Q(x,n^\eps) \vert^2 n^{2(d+2)\eps} \vert S_n^k(0,n^\eps) \vert^2 \vert S_n^k(0,2n^\eps) \vert^2.
\end{align*}
Using the definition of the non-stable cube $Q(\cdot,\cdot)$ and the $k$-fold vertical slab $S_n^k(\cdot,\cdot)$, it follows
for all $n \gg 1$ that
$
I_{n,1} \le 2^{2(d-k+4)} n^{4(d+1)\eps} \vert  W_n \vert \vert S_n^k(0,n^\eps) \vert^4,
$
which completes the proof.
\end{proof}

\begin{proof}[Proof of Lemma \ref{lem:en1}(ii)]
Recall we want to prove that
    $$
    I_{n,2} = \int_{( \W_n)^3} \E\big[(D_{x,z}^2 \Sigma(\PP_n))^2
    (D_{y,z}^2 \Sigma(\PP_n))^2\big] \ \text d(x,y,z)
     \le n^{2(d+2) \eps } \vert  W_n \vert \vert S_n^k(0,n^\eps)\vert^4.
    $$
Following the steps in Lemma \ref{lem:en1}: First, by the Cauchy--Schwarz inequality,
$$
I_{n,2} \le \int_{ \W_n}\Big(\int_{ \W_n}\sqrt{\E\big[(D_{xy}^2 \Sigma(\PP_n))^4\big]} \text dy\Big)^2 \text dx.
$$
Using the same decomposition as previously, 
it follows from Lemma \ref{lem:fourth_moment_first_case}---\ref{lem:fourth_moment_second_case_three} that
\begin{align*}
	I_{n,2}  \le& \int_{ \W_n}\Big(\int_{\W_n \setminus \S_n^k(x,n^\eps) } n^{2\eps} \vert  W_n\vert^4 \e^{-\beta_1(\eps) n^\eps / 8}\text dy\Big)^2 \text dx\\
    &  +  \int_{ \W_n}\Big(\int_{\S_n^k(x,n^\eps) \setminus \Q(x,n^\eps)}   n^{2\eps} \text dy\Big)^2 \text dx + \int_{ \W_n}\Big(\int_{\Q(x,n^\eps)}  n^{2\eps} \big\vert S_n^k(0,2n^\eps) \big\vert^2 \text dy\Big)^2 \text dx.
\end{align*}
Again the first integral goes to zero at exponential speed, and for all  $n \gg 1$,
$$
I_{n,2} \le 2^{4(d-k)+2}  n^{2(d+2) \eps }\vert  W_n \vert  \vert S_n^k(0,n^\eps)\vert^4,
$$
which completes the proof.
\end{proof}

\section{Bounding error terms: Sum-log-sum case}
\label{sec:5}

This section is dedicated to proving the error terms bounds which comes out of applying the Malliavin-Stein Normal approximation to the sum-log-sum functional. 
To that end, we first record some initial consequences of Assumption \ref{ass:B} in Section \ref{sec:5.1},
then proceed to use these consequences to bound moments of the first-order difference operators in Section \ref{sec:5.2}
and prove Lemma \ref{lem:en1_sim}(iii). Next, in Section \ref{sec:5.3}, we record some further properties guaranteed by Assumption \ref{ass:B},
and then finally in Section \ref{sec:5.4}, we can bound the fourth moment of the second-order difference operators
as well as prove Lemma \ref{lem:en1_sim}(i)-(ii).

\subsection{First consequences of Assumption \ref{ass:B}}
\label{sec:5.1}

First of all, as Assumption \ref{ass:B}(i) is exactly the same type as Assumption \ref{ass:A}(ii), we can use the same approach in the proof of Lemma \ref{lem:moments_of_Q} to obtain
an analogue result that moments of the size of the cube with side length $R_n(x)$ are all of constant order, even when $R_n(x)$ is defined
 as in \eqref{eq:stabilization_radius} compared to as in \eqref{eq:stabilization_radius_original}.
\begin{lemma}[Moments of $\mathbb{Q}(x,R_n)$]
    \label{lem:moments_of_Q_tilde}
    Let $m \ge 1$. Under Assumption \ref{ass:B}, there is a constant $C_1(m)>0$ depending on $m$ such that 
$$
   \sup_{n \gg 1} \sup_{x \in  \W_n} \E\big[\PP(\mathbb{Q}(x, R_n))^{m}\big] \le C_1(m).
$$
    \end{lemma}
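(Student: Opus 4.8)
The plan is to reuse, essentially verbatim, the proof of Lemma~\ref{lem:moments_of_Q}. The only property of the stabilization radius invoked there is its exponential tail, which in that setting was supplied by Assumption~\ref{ass:A}(ii). Here, Assumption~\ref{ass:B}(i), i.e.\ \eqref{eq:exponential_stabilization}, provides exactly the same kind of bound, namely $\sup_{x \in W_n} P(R_n(x) > n^\eps) \le \e^{-\beta_2 n^\eps}$ for every $\eps \in (0,1)$ and $n \gg 1$, now for the enlarged radius $R_n(x)$ of \eqref{eq:stabilization_radius}, with $\beta_2$ playing the role of $\beta_1$. Since this radius is larger than the one governed by Assumption~\ref{ass:A}(ii), the only thing to be careful about is to state every estimate for this larger radius; no new idea is required.

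Concretely, I would fix $m \ge 1$ and $x \in \W_n$, take $d_n \in \N$ larger than all side lengths of $W_n$ so that $R_n(x) \le d_n$ deterministically, and then split over the integer value of the radius, followed by the Cauchy--Schwarz inequality:
$$
\E\big[\PP(\mathbb{Q}(x,R_n))^{m}\big] \le \sum_{j=1}^{d_n} \E\big[\PP(\mathbb{Q}(x,j))^{m}\,\one\{R_n(x) = j\}\big] \le \sum_{j=1}^{d_n} \E\big[\PP(\mathbb{Q}(x,j))^{2m}\big]^{1/2}\, P\big(R_n(x) = j\big)^{1/2}.
$$
Lemma~\ref{lem:poisson_moment_bound} bounds the first factor by a polynomial in $j$ of degree $dm$ (up to a constant), while the exponential tail of Assumption~\ref{ass:B}(i)---used exactly as Assumption~\ref{ass:A}(ii) was used in the proof of Lemma~\ref{lem:moments_of_Q}---controls the second factor. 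The resulting series is dominated, up to constants, by one of the form $\sum_{j\ge 1} j^{dm}\e^{-cj}$ with $c>0$, which converges with a bound independent of $n$ by the integral test, just as in \eqref{eq:integral_test}. This produces the constant $C_1(m)$ and completes the proof.

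I do not anticipate any genuine obstacle: the statement is a routine transcription of Lemma~\ref{lem:moments_of_Q}. The only substantive point to verify is that $R_n(x)$ as defined in \eqref{eq:stabilization_radius}---which additionally requires stabilization of the admissibility set $\mc A_n^+(\PP_n)$, cf.\ \eqref{eq:additional_stabilization}---still has an exponential tail, and this is precisely what Assumption~\ref{ass:B}(i) asserts. If anything, the place that needs the most attention is bookkeeping: making sure that throughout Section~\ref{sec:5} the symbol $R_n(x)$ refers consistently to the larger radius \eqref{eq:stabilization_radius}, so that the present lemma (rather than Lemma~\ref{lem:moments_of_Q}) is the one invoked.
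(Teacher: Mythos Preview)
Your proposal is correct and follows exactly the paper's approach: the paper states the lemma without a separate proof, noting that since Assumption~\ref{ass:B}(i) is of exactly the same type as Assumption~\ref{ass:A}(ii), the proof of Lemma~\ref{lem:moments_of_Q} carries over verbatim to the enlarged radius of \eqref{eq:stabilization_radius}.
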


Before proceeding, similar to the previous section, we record some useful moment bounds on the log-transformed compound scores.
Let $F_n$ denote the set from \eqref{eq:exponential_concentration} and $\widetilde F_n$ denote a measurable extension
as described in Assumption \ref{ass:B}(ii).

\begin{lemma}[Compound score bounds; Part I]
    \label{lem:log_moments_G}
    Let $m \ge 1$, $\eps > 0$. Under Assumption \ref{ass:B}, then for all $n \gg 1$,

    \begin{equation}
        \label{eq:G_bound_1}
    \sup_{\mathrm{x}\in \mc R}\E \Big[ \Big \vert \sum_{Z \in \mc A_n^+(\PP_n^\mathrm{x})}\log G(Z,\PP_n^\mathrm{x}) \Big \vert^m \Big] \le n^{m \eps} \vert  W_n \vert^{2m},
    \end{equation}
    \begin{equation}
        \label{eq:G_bound_2}
\sup_{x \in  \W_n}\E \Big[ \Big\vert \sum_{Z \in \mc A_n^+(\PP_n^x) \cap \mc A_n^+(\PP_n)} \log \Big(\frac{G(Z,\PP_n^x)}{G(Z,\PP_n)}\Big)\Big\vert^m \one_{\widetilde F_n^c} \Big] \le n^{m \eps},
    \end{equation}
        \begin{equation}
        \label{eq:G_bound_3}
\sup_{x, y \in  \W_n}\E \Big[ \Big\vert \sum_{Z \in \mc A_n^+(\PP_n^{xy}) \cap \mc A_n^+(\PP_n^y)} \log \Big(\frac{G(Z,\PP_n^{xy})}{G(Z,\PP_n^y)}\Big)\Big\vert^m \one_{\widetilde F_n^c} \Big] \le n^{m \eps},
    \end{equation}
\end{lemma}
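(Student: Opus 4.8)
The plan is to prove the three bounds in order, exploiting the symmetry of \eqref{eq:G_bound_2} and \eqref{eq:G_bound_3} (they are structurally identical, with $\PP_n$ and $\PP_n^y$ playing symmetric roles), so the real work is the first two. Throughout I would repeatedly use the elementary inequality $\log t \le t$ for $t \ge 1$, Lemma \ref{lem:reality} to split powers of sums, the Cauchy--Schwarz inequality to decouple the score magnitude $\overline f_{\sup}$ (here, $\overline g_{\sup}$, the analogue of $f_{\sup}$ for $g$) from counting terms, Lemma \ref{lem:poisson_moment_bound} for moments of Poisson counts, Lemma \ref{lem:moments_of_Q_tilde} for moments of $\PP(\mathbb{Q}(x,R_n))$, and Lemma \ref{lem:normalization} to absorb all constants into $n^{m\eps}$ at the end.

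For \eqref{eq:G_bound_1}: since $g$ is $\N_0$-valued, on the admissible set $\mc A_n^+$ we have $G(Z,\PP_n^{\mathrm x}) \ge 1$, so $\log G(Z,\PP_n^{\mathrm x}) \le G(Z,\PP_n^{\mathrm x}) = \sum_{V \in \PP_n^{\mathrm x}} g(Z,V,\PP_n^{\mathrm x})$. Summing over $Z$, the whole quantity is bounded by $\sum_{Z}\sum_{V} g(Z,V,\PP_n^{\mathrm x}) \le \overline g_{\sup}(\PP_n) \cdot \#\{(Z,V) \in (\PP_n^{\mathrm x})^{(2)}\}$. Taking $m$-th moments, applying Cauchy--Schwarz to split off $\E[\overline g_{\sup}(\PP_n)^{2m}]^{1/2} \le n^{m\eps'}$ (sub-polynomial moments, Assumption \ref{ass:A}(iii), inherited by $g$ via Assumption \ref{ass:B}) and $\E[(\#\{(Z,V)\})^{2m}]^{1/2} \le C |W_n|^{2m}$ (Lemma \ref{lem:poisson_moment_bound} on the Poisson count over $W_n$, noting adding one or two points changes the count by at most $O(1)$ relative to $|W_n|$), and invoking Lemma \ref{lem:normalization} gives the claim. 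This mirrors \eqref{eq:compound_score_bound_3} in Lemma \ref{lem:compound_score_bounds} exactly.

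The main obstacle is \eqref{eq:G_bound_2}. Here the key point is that the sum is over $Z \in \mc A_n^+(\PP_n^x) \cap \mc A_n^+(\PP_n)$ — points admissible before and after — and we are on the good event $\widetilde F_n^c$, on which $G(Z,\PP_n) \ge \beta_3 |S_n^k|$ and $G(Z,\PP_n^x) \ge \beta_3 |S_n^k|$ for all such $Z$ (from \eqref{eq:exponential_concentration}, since $F_n \subseteq \widetilde F_n$ means its complement forces the concentration lower bound). Writing $\log(G(Z,\PP_n^x)/G(Z,\PP_n)) = \log(1 + (G(Z,\PP_n^x) - G(Z,\PP_n))/G(Z,\PP_n))$, I would bound $|\log(1+t)| \le 2|t|$ provided $|t| \le 1/2$, which holds because $|G(Z,\PP_n^x) - G(Z,\PP_n)|$ is controlled by the stabilization structure: by Assumption \ref{ass:B}(i) the difference is nonzero only when $Z \in Q(x,R_n)$, and then it is at most $\overline g_{\sup}(\PP_n) \cdot \PP(\mathbb{S}_n^k(x,1+R_n))$, which on the further event $R_n \le n^\eps$ is at most $n^{\eps'}|S_n^k|$ — but we need this to be small relative to $\beta_3 |S_n^k|$, which is delicate. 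The resolution is that only $O(\PP(\mathbb{Q}(x,R_n)))$ many points $Z$ contribute at all, and for those we crudely bound each $|\log(G(Z,\PP_n^x)/G(Z,\PP_n))|$ by $\log G(Z,\PP_n^x) + \log G(Z,\PP_n) \le G(Z,\PP_n^x) + G(Z,\PP_n) \le 2\overline g_{\sup}(\PP_n)\PP(\mathbb{S}_n^k(x,1+R_n))$ on $\{R_n \le n^\eps\}$ and handle $\{R_n > n^\eps\}$ by the exponential tail of \eqref{eq:exponential_stabilization} paired with the polynomial bound from \eqref{eq:G_bound_1}. So the sum over $Z$ is bounded by $\#(\PP_n \cap \mathbb{Q}(x,R_n)) \cdot 2\overline g_{\sup}(\PP_n) \cdot \PP(\mathbb{S}_n^k(x, 1+n^\eps))$ on the dominant event; taking $m$-th moments, Cauchy--Schwarz (twice) to split $\E[\overline g_{\sup}^{4m}]^{1/4}$, $\E[\PP(\mathbb{Q}(x,R_n))^{4m}]^{1/4}$ (bounded by a constant via Lemma \ref{lem:moments_of_Q_tilde}), and the deterministic factor $\PP(\mathbb{S}_n^k(x,1+n^\eps))^m$ — wait, this factor is $|S_n^k(0,1+n^\eps)|^m = O(n^{km\eps})|S_n^k|^m$, which does not shrink. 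The correct approach, and the genuine crux, is therefore to divide by $G(Z,\PP_n) \ge \beta_3|S_n^k|$ before bounding: each summand $|\log(G(Z,\PP_n^x)/G(Z,\PP_n))| \le 2|G(Z,\PP_n^x)-G(Z,\PP_n)|/(\beta_3|S_n^k|) \le 2\overline g_{\sup}(\PP_n)\PP(\mathbb{S}_n^k(x,1+R_n))/(\beta_3|S_n^k|)$, and $\PP(\mathbb{S}_n^k(x,1+R_n))$ is a Poisson count over a slab whose cross-section has side $O(R_n)$ in $d-k$ directions, hence has all moments polynomial in $R_n$; combined with $R_n$'s exponential tails, $\E[(\PP(\mathbb{S}_n^k(x,1+R_n))/|S_n^k|)^{Cm}]$ is $O(1)$ wait — no: $\PP(\mathbb{S}_n^k(x,1+R_n))$ has expectation of order $|S_n^k| \cdot R_n^{d-k}$ roughly, so dividing by $|S_n^k|$ leaves $O(R_n^{d-k})$, which has constant-order moments by the exponential tails of $R_n$. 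Thus each summand, divided by the count of contributing $Z$'s (which is $\PP(\mathbb{Q}(x,R_n))$, constant-order moments), multiplies to give constant-order moments, and Lemma \ref{lem:normalization} upgrades the constant to $n^{m\eps}$. For \eqref{eq:G_bound_3}, the identical argument applies verbatim with $(\PP_n^x, \PP_n)$ replaced by $(\PP_n^{xy}, \PP_n^y)$, since adding the point $y$ does not interfere with the stabilization radius centered at $x$ beyond constant factors (and $\widetilde F_n$ was defined to absorb insertions of points from $\mc R$, covering the pair $\{x,y\}$).
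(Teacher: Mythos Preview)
Your treatment of \eqref{eq:G_bound_1} is correct and matches the paper exactly: $\log t\le t$ reduces the problem to the double-sum bound \eqref{eq:compound_score_bound_3}.

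For \eqref{eq:G_bound_2} there are two genuine gaps.

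\emph{First, the claim that $G(Z,\PP_n^x)-G(Z,\PP_n)\ne 0$ only for $Z\in\Q(x,R_n)$ is false.} Stabilization \eqref{eq:stabilization} only gives $g(Z,V,\PP_n^x)=g(Z,V,\PP_n)$ when \emph{both} $Z,V$ lie outside $\Q(x,R_n)$. Hence for $Z\notin\Q(x,R_n)$ one still has
\[
G(Z,\PP_n^x)-G(Z,\PP_n)=g(Z,x,\PP_n^x)+\sum_{V\in\PP_n\cap\Q(x,R_n)}\big(g(Z,V,\PP_n^x)-g(Z,V,\PP_n)\big),
\]
which is nonzero for every $Z\in\S_n^k(x,1)$ (via the first term) and for every $Z$ in the slab of some $V\in\Q(x,R_n)$ (via the second). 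So the number of contributing $Z$ is of order $\PP(\S_n^k(x,1{+}R_n))\asymp|S_n^k|$, not $\PP(\Q(x,R_n))$. Multiplying your per-summand bound $2\overline g_{\sup}\PP(\S_n^k(x,1{+}R_n))/(\beta_3|S_n^k|)$ by this count gives something of order $|S_n^k|$, not $O(1)$.

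\emph{Second, $|\log(1+t)|\le 2|t|$ requires $|t|\le 1/2$}, which you never verify and which can fail: for $Z\in\Q(x,R_n)$ the difference $|G(Z,\PP_n^x)-G(Z,\PP_n)|$ can be as large as $\overline g_{\sup}\PP(\S_n^k(Z,1))$, comparable to the denominator $\beta_3|S_n^k|$.

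The paper's fix is to split explicitly on $|a(Z)|\le 1/2$ versus $|a(Z)|>1/2$. On $\{|a(Z)|\le 1/2\}$ one uses $|\log(1+t)|\le 2|t|$ legitimately and then decomposes the \emph{sum over $Z$} of the numerators into three pieces---the $g(Z,x)$ piece over $Z\in\S_n^k(x,1)$, and two cube-times-slab pieces---each of which, after dividing by $\beta_3|S_n^k|$, has $m$th moment $O(1)$ by Lemmas~\ref{lem:poisson_moment_bound} and~\ref{lem:moments_of_Q_tilde}. On $\{|a(Z)|>1/2\}$ one splits further on $Z\in\Q(x,R_n)$ (few such $Z$, crude bound $\log(1+|a|)\le G(Z,\PP_n^x)$ suffices) versus $Z\notin\Q(x,R_n)$ (here $|G(Z,\PP_n^x)-G(Z,\PP_n)|\le\overline g_{\sup}\PP(\Q(x,R_n))$, so by Markov the event $\{|a(Z)|>1/2\}$ has probability $\le C|S_n^k|^{-N}$ for any $N$, which kills the crude bound from \eqref{eq:G_bound_1}). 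Your proposal has the right ingredients but does not organise them into this five-term decomposition, and the shortcuts you take do not close.
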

While we currently only need to prove bounds involving a single added point $x$, we also record the bound in \eqref{eq:G_bound_3} involving two added points $x,y$
as prepartion for bounding the second-order difference operators in Section \ref{sec:5.4}, since this bound can be obtained in exactly the same way as \eqref{eq:G_bound_2}, 
and hence we only prove \eqref{eq:G_bound_1} and \eqref{eq:G_bound_2} in detail. Moreover, it is possible to obtain a tighter bound in \eqref{eq:G_bound_1} as $n^{m\eps}\vert W_n \vert^m$ using more intricate steps via Poisson concentration, but the current bound suffices for our purposes.
\begin{proof}[Proof of Lemma \ref{lem:log_moments_G}]

First, applying the inequality $ \log(x) \le x $ and Lemma \ref{lem:compound_score_bounds} immediately yields \eqref{eq:G_bound_1}.
Next, to prove \eqref{eq:G_bound_2}, we let $\mc V_n^+(\PP_n^x) = \mc A_n^+(\PP_n^x) \cap \mc A_n^+(\PP_n)$ and
    $$
     D_{n,1}  = \sum_{Z \in \mc V_n^+(\PP_n^x)} \log \Big(\frac{G(Z,\PP_n^x)}{G(Z,\PP_n)}\Big).
     $$
Since $G(Z,\PP_n)\ge 1$ for all $Z \in \mc A_n^+(\PP_n)$, it follows by the Cauchy--Schwarz inequality, Lemma \ref{lem:log_moments_G} and Assumption \ref{ass:B}(ii) that
$
\E \big[ \vert D_{n,1}\vert^m \one_{F_n}\big] \le n^{m \eps} \e^{-\beta_4 \vert S_n^k \vert /2}.
$
Next, we let
$$
a(Z) = \frac{G(Z,\PP_n^x)- G(Z,\PP_n)}{G(Z,\PP_n)},
$$
and note that
\begin{align*}
\vert D_{n,1} \vert \one_{F_n^c}
\le &
\Big \vert \sum_{Z \in \mc V_n^+(\PP_n^x) } \log 
\big( 1+  a(Z)  \big)\one\{\vert a(Z) \vert \le 1/2\} \Big  \vert \one_{\widetilde F_n^c} \\
&  \qquad \qquad +  
\Big \vert \sum_{Z \in \mc V_n^+(\PP_n^x)} \log 
\big( 1+  a(Z) \big) \one\{\vert a(Z) \vert > 1/2\} \Big \vert \one_{\widetilde F_n^c}.
\end{align*}
Note that when $\vert a(Z) \vert \le 1/2$, it follows that $\log(1+  a(Z) ) \le  2 \vert a(Z) \vert$.
Thus, by an additional case distinction, it follows by definition
of the non-stabilizing cube, Assumption \ref{ass:A}(i), i.e., $k$-locality, and the fact that $G(Z,\PP_n) > \beta_3 \vert S_n^k \vert$ on $\widetilde F_n^c$ that
\begin{equation}
    \label{eq:log_moments_decomposition}
\begin{aligned}
  & \vert D_{n,1} \vert \one_{\widetilde F_n^c}
    \le  \frac{2}{\beta_3 \vert S_n^k \vert}
    \Big \vert \sum_{Z \in  \PP_n \cap \S_n^k(x,1)} 
    g(Z,x, \PP_n^x) \Big\vert  \one_{\widetilde F_n^c} \\
    & \ \ + \frac{2}{\beta_3 \vert S_n^k \vert}
   \Big \vert \sum_{Z \in  \PP_n \cap \Q(x,R_n)} \sum_{V \in \PP_n \cap \S_n^k(Z,1)} \vert g(Z,V, \PP_n^x) 
    - g(Z,V, \PP_n) \vert\Big  \vert  \one_{\widetilde F_n^c}\\
    & \ \ + \frac{2}{\beta_3 \vert S_n^k \vert}
   \Big \vert  \sum_{V \in \PP_n \cap \Q(x,R_n)}\sum_{Z \in  \PP_n \cap \S_n^k(V,1)} \vert g(Z,V, \PP_n^x) 
    - g(Z,V, \PP_n) \vert\Big  \vert  \one_{ \widetilde F_n^c}\\
    & \ \ +  
    \Big \vert \sum_{Z \in \mc V_n^+(\PP_n^x) \cap \Q(x,R_n)} \log 
    \big( 1+ a(Z) \big) \one\{\vert G(Z,\PP_n^x)- G(Z,\PP_n)\vert > \tfrac{\beta_3  \vert S_n^k \vert}{2} \vert\} \Big \vert  \one_{\widetilde F_n^c} \\
    & \ \ +
   \Big \vert \sum_{Z \in \mc V_n^+(\PP_n) \cap \Q(x,R_n)^c} \log 
    \big( 1+  a(Z) \big) \one\{\vert G(Z,\PP_n^x)- G(Z,\PP_n)\vert > \tfrac{\beta_3  \vert S_n^k \vert }{2}\} \Big \vert  \one_{\widetilde F_n^c}.
    \end{aligned}
\end{equation}

Note that again by the non-stabilization cube definition, then
if $Z \in \Q(x,R_n)^c$,
\begin{align*}
\vert G(Z, \PP_n^x)- G(Z,\PP_n)\vert 
\vert  g(Z,V, \PP_n^x) - g(Z,V, \PP_n) \vert
\le \overline g_{\sup}(\PP_n) \PP(\Q(x,R_n)),
\end{align*}
where $\overline g_{\sup}(\PP_n)$ is as in Assumption \ref{ass:A}(iii). By the Cauchy--Schwarz inequality, reinserting the definition of $a(Z)$ and using that $G(Z,\PP_n) \ge 1$ whenever $Z \in \mc A_n^+(\PP_n)$, then for $n \gg 1$,
\begin{equation}
    \label{eq:log_moments_term_5_pre}
    \begin{aligned}
\E& \Big[\Big \vert  \sum_{Z \in \mc V_n^+(\PP_n^x) \cap \Q(x,R_n)^c}  \log 
    \big( 1+  a(Z) \big) \one\{\vert G(Z,\PP_n^x)- G(Z,\PP_n)\vert > \tfrac{\beta_3  \vert S_n^k \vert}{2} \vert\} \Big \vert^m  \one_{\widetilde F_n^c} \Big] \\
&  \ \le \E\Big[\Big \vert \sum_{Z \in \PP_n^x \cap \Q(x,R_n)^c}
     G(Z,\PP_n^x) \Big\vert^{2m}  \one_{\widetilde F_n^c}  \Big]^{1/2}
     P\Big(\overline g_{\sup}(\PP_n) \PP(\Q(x,R_n)) > \tfrac{\beta_3  \vert S_n^k \vert}{2}\Big)^{1/2}.
    \end{aligned}
\end{equation}
Using that $\{ rs > t \} \su \{ r  > \sqrt{t} \} \cup \{ s > \sqrt{t} \}$ and Markov's inequality,
$$
P\big(\overline g_{\sup}(\PP_n) \PP(\Q(x,R_n)) > \tfrac{\beta_3  \vert S_n^k \vert}{2}\big) \le \Big(\frac{2}{\beta_3 \vert S_n^k \vert}\Big)^{4md}\big( \E\big[\overline g_{\sup}(\PP_n)^{4md}\big] + \E\big[\PP(\Q(x,R_n))^{4md}\big]\big).
$$
Hence, by Assumption \ref{ass:A}(iii) as well as Lemmas \ref{lem:moments_of_Q_tilde} and \ref{lem:compound_score_bounds}, then for any $n \gg 1$,
\begin{equation}
    \label{eq:log_moments_term_5}
    \begin{aligned}
\E\Big[\Big \vert & \sum_{Z \in \mc V_n^+(\PP_n^x) \cap \Q(x,R_n)^c}  \log 
    \big( 1+  a(Z) \big) \one\{\vert G(Z,\PP_n^x)- G(Z,\PP_n)\vert > \tfrac{\beta_3  \vert S_n^k \vert}{2} \vert\} \Big \vert^m  \one_{\widetilde F_n^c} \Big] \\
& \qquad \le 2 n^{m \eps }\vert  W_n \vert^{2m} \Big(\frac{2}{\beta_3 \vert S_n^k \vert}\Big)^{2md} n^{2md\eps } \le 1.
    \end{aligned}
\end{equation}
Bounding the indicator function by 1, and combinining the Cauchy--Schwarz inequality with Lemma \ref{lem:moments_of_Q_tilde} and Assumption \ref{ass:A}(iii), 
we obtain with $C_1 = C_1(m)$ for all $n \gg 1$ that
\begin{equation}
    \label{eq:log_moments_term_4}
    \begin{aligned}
\E\Big[  & \Big \vert \sum_{Z \in \mc V_n^+(\PP_n^x) \cap \Q(x,R_n)} \log 
    \big( 1+ a(Z) \big) \one\{\vert G(Z,\PP_n^x)- G(Z,\PP_n)\vert > \tfrac{\beta_3  \vert S_n^k \vert}{2} \vert\} \Big \vert^m  \one_{\widetilde F_n^c} \Big]
    \le C_1n^{m\eps}.
    \end{aligned}
\end{equation}
Hence by combining (\ref{eq:log_moments_decomposition}), (\ref{eq:log_moments_term_5}), (\ref{eq:log_moments_term_4}), Lemmas \ref{lem:compound_score_bounds} 
and \ref{lem:reality},
$$
\E[\vert D_{n,1} \vert^m \one_{\widetilde F_n^c}] \le \frac{5^{m-1}}{\beta_3^m} n^{m \eps} +\frac{5^{m-1}}{\beta_3^m} n^{m \eps} + \frac{5^{m-1}}{\beta_3^m} n^{m \eps} + 5^{m-1} + 5^{m-1} C_1 n^{m \eps},
$$
for all $n \gg 1$. Invoking Lemma \ref{lem:normalization} completes the proof.
\end{proof}

\subsection{Moment bounds on $D_x\Sigma^{\log}_n$ and proof of Lemma \ref{lem:en1_sim}(iii)}
\label{sec:5.2}
In this section, we prove the bound on the third error term $\widetilde I_{n,3}$, which only involves the first-order difference operator,
and hence we want to control the moments of $D_x\Sigma^{\log}_n(\PP_n)$. For the rest of Section \ref{sec:5}, we now omit all explicit references to \ref{lem:normalization} and \ref{lem:reality} inside proofs
make the arguments more readable.

\begin{lemma}[$3^{\text{rd}}$/$4^{\text{th}}$ moment of $D_{x}\Sigma^{\log}_n$]
    \label{lem:fourth_moment_log}
   Let $\eps > 0$ and $n \gg 1$. Under Assumption \ref{ass:B},
    \begin{align}
   \sup_{x \in  \W_n} \E[\vert D_{x}\Sigma^{\log}_n(\PP_n)\vert ^3] \le &  n^{\eps },\\
    \sup_{x \in  \W_n} \E[\vert D_{x}\Sigma^{\log}_n(\PP_n)\vert^4] \le & n^{\eps }.
    \end{align}
    \end{lemma}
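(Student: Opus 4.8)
The plan is to follow the template of the double-sum case (Lemma~\ref{lem:fourth_moment}), splitting on the event $\widetilde F_n$ from Assumption~\ref{ass:B}(ii) and using the compound-score bounds of Lemma~\ref{lem:log_moments_G}. Throughout $m\in\{3,4\}$, and I will repeatedly use that, by the form \eqref{eq:rectangle_form} of $W_n$ together with $k<d$, both $\vert W_n\vert$ and $\vert S_n^k\vert$ grow only polynomially in $n$, so any quantity of the form $n^{a\eps'}\vert W_n\vert^{b\eps'}\vert S_n^k\vert^{c\eps'}$ with bounded $a,b,c$ is $\le n^{\eps}$ once $\eps'$ is chosen small (in terms of $\eps$) and $n\gg1$; I also freely invoke Lemma~\ref{lem:normalization} to discard constants and Lemma~\ref{lem:reality} to split $m$-th powers of finite sums.

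First I would record a decomposition analogous to Lemma~\ref{lem:decomposition_simple}. Since $g$ stabilizes with respect to $\mc A_n^+$ (Assumption~\ref{ass:B}(i)), the membership relations $Z\in\mc A_n^+(\PP_n^x)$ and $Z\in\mc A_n^+(\PP_n)$, and the values $G(Z,\PP_n^x)$ and $G(Z,\PP_n)$, all agree for every $Z\notin\mathbb Q(x,R_n)$; as also $x\in\mathbb Q(x,R_n)$, this gives
\[
D_x\Sigma^{\log}_n(\PP_n)
= \one\{x\in\mc A_n^+(\PP_n^x)\}\log G(x,\PP_n^x)
+\sum_{Z\in\mc A_n^+(\PP_n^x)\cap\mc A_n^+(\PP_n)}\log\frac{G(Z,\PP_n^x)}{G(Z,\PP_n)}
+ E_x,
\]
where $E_x$ collects the $\log G(Z,\PP_n^x)$ with $Z\in\mc A_n^+(\PP_n^x)\setminus(\mc A_n^+(\PP_n)\cup\{x\})$ minus the $\log G(Z,\PP_n)$ with $Z\in\mc A_n^+(\PP_n)\setminus\mc A_n^+(\PP_n^x)$, all of whose indices lie in $\PP_n^x\cap\mathbb Q(x,R_n)$. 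Splitting $\E[\vert D_x\Sigma^{\log}_n(\PP_n)\vert^m]$ over $\widetilde F_n$ and $\widetilde F_n^c$, on $\widetilde F_n$ I would bound $\vert D_x\Sigma^{\log}_n(\PP_n)\vert\le\vert\Sigma^{\log}_n(\PP_n^x)\vert+\vert\Sigma^{\log}_n(\PP_n)\vert$, apply the Cauchy--Schwarz inequality, the crude bound \eqref{eq:G_bound_1} with $\mathrm x=\{x\}$ and $\mathrm x=\emptyset$, and $P(\widetilde F_n)\le\e^{-\beta_4\vert S_n^k\vert}$; this yields a bound of order $n^{\eps}\vert W_n\vert^2\e^{-\beta_4\vert S_n^k\vert/2}$, which is $\le1\le n^{\eps}$ for $n\gg1$ since $\vert S_n^k\vert$ is a positive power of $n$ while $\vert W_n\vert$ is polynomial.

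On $\widetilde F_n^c$ I would estimate the three pieces separately. The middle sum is controlled directly by Lemma~\ref{lem:log_moments_G}\,\eqref{eq:G_bound_2}, giving $n^{m\eps}$. For the first term and for $E_x$ I would use $\log t\le\frac1{\eps'\e}t^{\eps'}$, valid for all $t>0$: thus $\E[\vert\log G(x,\PP_n^x)\vert^m]\le(\eps'\e)^{-m}\E[G(x,\PP_n^x)^{\eps' m}]\le(\eps'\e)^{-m}\bigl(n^{\eps'}\vert S_n^k\vert\bigr)^{\eps' m}$, by Jensen's inequality with $\eps' m\le1$ and by \eqref{eq:compound_score_bound_1} applied to $g$, which is $\le n^{\eps}$ for $\eps'$ small. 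For $E_x$, on $\widetilde F_n^c$ every term has $G(Z,\cdot)\ge1$, so $0\le\log G(Z,\cdot)\le(\eps'\e)^{-1}G(Z,\cdot)^{\eps'}$, and by the power-mean inequality $\sum_{Z\in\PP_n^x\cap\mathbb Q(x,R_n)}G(Z,\PP_n^x)^{\eps'}\le\PP(\mathbb Q(x,R_n))^{1-\eps'}\bigl(\sum_{Z\in\PP_n^x}G(Z,\PP_n^x)\bigr)^{\eps'}$ (and likewise with $\PP_n$); taking the $m$-th moment, applying the Cauchy--Schwarz inequality, and using Lemma~\ref{lem:moments_of_Q_tilde} for the constant moments of $\PP(\mathbb Q(x,R_n))$ with \eqref{eq:compound_score_bound_3} applied to $g$ again leaves $n^{O(\eps')}\vert W_n\vert^{O(\eps')}\le n^{\eps}$. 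Combining the four contributions and appealing once more to Lemma~\ref{lem:normalization} gives both asserted bounds.

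The main obstacle is exactly this use of $\log t\le t^{\eps'}$ in place of $\log t\le t$: the naive bound would leave full powers $\vert S_n^k\vert^m$, which are polynomially large and would destroy the estimate, so every logarithm must be traded for an arbitrarily small power of a compound score before any moment is taken. Making this work for $E_x$, where the sum ranges over the random set $\PP_n^x\cap\mathbb Q(x,R_n)$, is what forces the power-mean split above, so that only a small power of the (polynomially large) global quantity $\sum_Z G(Z,\PP_n^x)$ appears, paired against the genuinely bounded moments of $\PP(\mathbb Q(x,R_n))$; the bookkeeping distinguishing $\mc A_n^+(\PP_n)$ from $\mc A_n^+(\PP_n^x)$, resolved via Assumption~\ref{ass:B}(i), is the other place requiring care.
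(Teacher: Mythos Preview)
Your proposal is correct and follows essentially the same approach as the paper: the same three-piece decomposition (your middle sum is the paper's $D_{n,1}$, your first term plus $E_x$ correspond to the paper's $D_{n,2}$ and $D_{n,3}$), the same split over $\widetilde F_n$ versus $\widetilde F_n^c$, and the same appeal to \eqref{eq:G_bound_2} for the ratio sum. The only visible difference is that for the boundary terms the paper tersely cites Lemmas~\ref{lem:moments_of_Q_tilde} and~\ref{lem:log_moments_G}, whereas you spell out the underlying mechanism via $\log t\le (\eps' e)^{-1}t^{\eps'}$ and the power-mean/H\"older split, which is a more explicit rendering of the same idea.
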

\begin{proof}
First, recall that for $x \in \R^d$,
$$
D_x \Sigma^{\log}_n (\PP_n) = \sum_{Z \in \mc A_n^+(\PP_n^x)} \log G(Z,\PP_n^x) - \sum_{Z \in \mc A_n^+(\PP_n)} \log G(Z,\PP_n).
$$
By definition of the stabilization radius $R_n(x)$, it follows that if $Z$ lies in $\mc A_n^+(\PP_n^x) \cap \mc A_n^+(\PP_n)^c$ or in $\mc A_n^+(\PP_n) \cap \mc A_n^+(\PP_n^x)^c$,
 then $Z \in \Q(x,R_n)$. Thus,
 $
\vert D_x \Sigma^{\log}_n (\PP_n) \vert \le \vert D_{n,1} \vert + \vert D_{n,2} \vert + \vert D_{n,3} \vert,
$
where 
\begin{align*}
    D_{n,1}  = & \sum_{Z \in \mc A_n^+(\PP_n^x) \cap \mc A_n^+(\PP_n)} \log \Big(\frac{G(Z,\PP_n^x)}{G(Z,\PP_n)}\Big),\\
    D_{n,2}  = & \sum_{Z \in \mc A_n^+(\PP_n^x)\cap \Q(x,R_n)} \log G(Z,\PP_n^x),\\
    D_{n,3}  = & \sum_{Z \in \mc A_n^+(\PP_n)\cap \Q(x,R_n)} \log G(Z,\PP_n).
\end{align*}
By Lemmas \ref{lem:moments_of_Q_tilde} and \ref{lem:log_moments_G}, it follows for $j \in \{2,3\}$ and $F_n$ as the set defined in \eqref{eq:exponential_concentration} that,
\begin{equation}
    \label{eq:log_moments_2_3}
    \E[\vert D_{n,j} \vert^m \one_{F_n^c}] \le C_1(m) n^{m \eps}.
\end{equation}
Moreover, since $G(Z,\PP_n)\ge 1$ for all $Z \in \mc A_n^+(\PP_n)$, it follows by the Cauchy--Schwarz inequality, Lemma \ref{lem:log_moments_G} and Assumption \ref{ass:B}(ii) that for any $j \in \{1,2,3 \}$ that
\begin{equation}
    \label{eq:log_moments_1}
\E[\vert D_{n,j} \vert^m \one_{F_n}] \le n^{m \eps} \e^{-\beta_4 \vert S_n^k \vert /2} \le n^{m \eps}.
\end{equation}
Thus, combining (\ref{eq:log_moments_2_3}), (\ref{eq:log_moments_1}) and Lemma \ref{lem:log_moments_G},
$$
\E \big[\vert  D_x \Sigma^{\log}_n (\PP_n)  \vert^m \big] \le 2 \cdot 6^{m-1} C_1(m) n^{m \eps} + 4 \cdot 6^{m-1} n^{m \eps},
$$
which completes the proof.

\end{proof}

\begin{proof}[Proof of Lemma \ref{lem:en1_sim}(iii)]
    We want to prove that
    $$
    \widetilde I_{n,3} = \int_{ \W_n }\E\big[|D_x\Sigma(\PP_n)|^3\big] \ \text dx  \le  n^{6 \eps }  \vert  W_n \vert.
    $$
    However, by Lemma \ref{lem:fourth_moment_log} and that $\mu(\M)=1$, this immediately completes the proof.
\end{proof}

\subsection{Additional consequences of Assumption \ref{ass:B}}
\label{sec:5.3}
In this section, we establish some further consequences of Assumption \ref{ass:B}. Momentarily, we decompose the second-order difference operator into a main term and an error term, and this subsection is thus
dedicated to provide moment bounds on components of these terms. In particular when controlling the fourth moment of this difference operator, we need to bound sums of terms on the form,
$$
\Big\vert\log \Big(\frac{G(Z,\PP_n^y)}{G(Z,\PP_n)}\Big)\Big\vert^4, \quad \text{ and } \quad \Big\vert \log \Big( \frac{G(Z,\PP_n^{xy})G(Z,\PP_n)}{G(Z,\PP_n^x)G(Z,\PP_n^y)} \Big) \Big\vert^4,
$$
where in particular, we need some strong bounds when $\Q(x,R_n) \cap \Q(y,R_n) = \emptyset$, and hence we will prove different bounds based on whether or not this condition is met. When handling
then right-most logarithm, we also utilize a Poisson concentration inequality.

To formalize the above, we begin by considering the second-order difference operator,
\begin{align*}
	D_{xy}^2 \Sigma^{\log}_n(\PP_n) =&\sum_{Z\in \mc A_n^+(\PP_n^{xy})}\log G\!\left(Z,\mathcal{P}_n^{xy}\right)
         -\sum_{Z\in \mc A_n^+(\PP_n^{y})}\log G\!\left(Z,\mathcal{P}_n^{y} \right) \\
      & \qquad \qquad \qquad \qquad  -\sum_{Z\in \mc A_n^+(\PP_n^{x})}\log G\!\left(Z,\mathcal{P}_n^{x} \right)
        +\sum_{Z\in \mc A_n^+(\PP_n)}\log G\!\left(Z,\mathcal{P}_n \right).
\end{align*}
Let $\mc U_n^+(\PP_n^{xy})=\mc A_n^+(\PP_n^{xy}) \cap \mc A_n^+(\PP_n^{x}) \cap \mc A_n^+(\PP_n^{y}) \cap \mc A_n^+(\PP_n)$. It will prove useful to then consider the following decomposition,
\begin{equation}
    \label{eq:decomposition_D2_log}
D_{xy}^2 \Sigma^{\log}_n(\PP_n)= M_n + C_n
\end{equation}
where
\begin{align*}
    M_n  = & \sum_{Z \in \mc  U_n^+(\PP_n^{xy})}
    \log \Big( \frac{G(Z,\PP_n^{xy})G(Z,\PP_n)}{G(Z,\PP_n^x)G(Z,\PP_n^y)} \Big) \\[0.5em]
    C_n  = & \sum_{Z \in \mc A_n^+(\PP_n^{xy}) \cap \mc  U_n^+(\PP_n^{xy})^c  } \log G(Z,\PP_n^{xy})
     - \sum_{Z \in \mc A_n^+(\PP_n^{x}) \cap \mc  U_n^+(\PP_n^{xy})^c  } \log G(Z,\PP_n^x) \\[0.5em]
         & \qquad \qquad \qquad    -\sum_{Z \in \mc A_n^+(\PP_n^{y}) \cap \mc  U_n^+(\PP_n^{xy})^c  } \log G(Z,\PP_n^y)
       + \sum_{Z \in \mc A_n^+(\PP_n) \cap \mc  U_n^+(\PP_n^{xy})^c  }\log G(Z,\PP_n).
\end{align*}
Here we think of $M_n$ as the main term, and $C_n$ as a cross-term. The contributions from the cross-term $C_n$ will, loosely speaking, 
be 0 in Case I, non-zero but vanishing when $n$ increases in Case II, and grow very slowly with $n$ in Case III.  First on the agenda is to establish some crude moment bounds on both of these terms.

\begin{lemma}[Crude $4^{\text{th}}$ moment bounds on $M_n$ and $C_n$]
    \label{lem:fourth_moment_main_term}
    Let $\eps > 0$ and $n \gg 1$. Under Assumption \ref{ass:B}, it holds for any event $E_n = E_n(\eps)$ that 
\begin{align}
        \E[M_n^4 \one_{E_n}] &\le  n^{4\eps} \vert  W_n \vert^{8} \sqrt{P(E_n)}, \label{eq:fourth_moment_main_term} \\
        \E[C_n^4 \one_{E_n}] & \le n^{4\eps} \sqrt{P(E_n)}. \label{eq:fourth_moment_cross_term}
\end{align}
    \end{lemma}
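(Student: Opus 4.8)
The plan is, in both cases, to first prove an \emph{unconditional} eighth-moment bound ($\E[M_n^8]\le n^{8\eps}\vert W_n\vert^{16}$ and $\E[C_n^8]\le n^{8\eps}$ for $n\gg1$) and then to extract the factor $\sqrt{P(E_n)}$ by a single Cauchy--Schwarz step, $\E[Y^4\one_{E_n}]\le\E[Y^8]^{1/2}P(E_n)^{1/2}$ with $Y\in\{M_n,C_n\}$. Since $g$ satisfies Assumption \ref{ass:A}(i)--(iii) by hypothesis, the consequences collected in Section \ref{sec:4.1}, in particular Lemma \ref{lem:compound_score_bounds}, apply verbatim with the pair $(f,F)$ replaced by $(g,G)$. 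For $M_n$ the key observation is that on $\mc U_n^+(\PP_n^{xy})$ every point is admissible for all four of $\PP_n^{xy},\PP_n^{x},\PP_n^{y},\PP_n$ — hence lies in $\PP_n$ — and $G(Z,\PP_n^\mathrm{x})\ge1$ because $g$ is $\mathbb N_0$-valued, so each $\log G(Z,\PP_n^\mathrm{x})$ is nonnegative. The triangle inequality and $\log t\le t$ then give
$$\vert M_n\vert\le\sum_{Z\in\mc U_n^+(\PP_n^{xy})}\ \sum_{\mathrm{x}\in\{xy,x,y,\es\}}G(Z,\PP_n^\mathrm{x})\le\sum_{\mathrm{x}\in\{xy,x,y,\es\}}\ \sum_{Z\in\PP_n}G(Z,\PP_n^\mathrm{x}),$$
where the index set was enlarged using nonnegativity of $G$. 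After separating the at most two added points inside each inner double sum $\sum_{Z\in\PP_n}\sum_{V\in\PP_n^\mathrm{x}}g(Z,V,\PP_n^\mathrm{x})$ and controlling those extra terms by $k$-locality of $g$, the inner sums become compound-score sums governed by Lemma \ref{lem:compound_score_bounds}, so their $m$-th moments are at most a constant times $n^{m\eps}\vert W_n\vert^{2m}$. Taking $m=8$, summing over the four processes and absorbing the constant via Lemma \ref{lem:normalization} yields $\E[M_n^8]\le n^{8\eps}\vert W_n\vert^{16}$ for $n\gg1$, and Cauchy--Schwarz gives \eqref{eq:fourth_moment_main_term}.

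For $C_n$ the decisive structural fact is that each of its four index sets is an intersection of some $\mc A_n^+(\PP_n^\mathrm{x})$ with $\mc U_n^+(\PP_n^{xy})^c$, i.e.\ it consists of points admissible for one of the four processes but not for all of them. Since those processes sit on a square whose adjacent vertices differ by a single added point ($x$ or $y$), stabilization of $\mc A_n^+$ (Assumption \ref{ass:B}(i), used exactly as for $D_{xy}^2$ in the double-sum case, cf.\ the proof of Lemma \ref{lem:fourth_moment_first_case}) forces every such point into $\Q(x,R_n)\cup\Q(y,R_n)$; moreover $x,y$ themselves lie in these cubes. Thus all four sums in $C_n$ are indexed by subsets of $\PP_n^{xy}\cap(\Q(x,R_n)\cup\Q(y,R_n))$, and on each such index set the relevant $G$ is $\ge1$. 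Applying $\log t\le\eps_0^{-1}t^{\eps_0}$ for $t\ge1$ — rather than $\log t\le t$, which would cost a fatal factor $\vert W_n\vert$ — together with $k$-locality and the crude bound $G(Z,\PP_n^\mathrm{x})\le\overline g_{\sup}(\PP_n)\,\PP_n^\mathrm{x}(\S_n^k(Z,1))$, one arrives at
$$\vert C_n\vert\le\frac{c}{\eps_0}\,\overline g_{\sup}(\PP_n)^{\eps_0}\,\PP_n^{xy}\big(\Q(x,R_n)\cup\Q(y,R_n)\big)\ \max_{w\in\{x,y\}}\big(\PP_n(\S_n^k(w,R_n+1))+2\big)^{\eps_0},$$
where the slab around a point $Z$ has been enlarged to the slab around the centre of whichever non-stable cube contains it (and $R_n$ stands for $R_n(w)$). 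The first factor has sub-polynomial moments (Assumption \ref{ass:A}(iii) for $g$), the middle factor has constant-order moments (Lemma \ref{lem:moments_of_Q_tilde}), and, conditioning on the value of $R_n$ and using its exponential tails with Lemma \ref{lem:poisson_moment_bound}, the $j$-th moment of $\PP_n(\S_n^k(w,R_n+1))$ is at most $Cn^{(d-k)j}$. Raising to the power $\eps_0$, using Hölder's inequality on the three factors, then choosing $\eps_0$ small enough relative to $d-k$ (possible since $k<d$) and to the target $\eps$, and absorbing the constant via Lemma \ref{lem:normalization}, yields $\E[C_n^8]\le n^{8\eps}$, whence \eqref{eq:fourth_moment_cross_term} by Cauchy--Schwarz.

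The $M_n$ estimate is essentially bookkeeping on top of Lemma \ref{lem:compound_score_bounds}. The main obstacle is the $\vert W_n\vert$-free bound on $C_n$: it rests both on confining the index sets of $C_n$ to the non-stable cubes — so that their cardinality contributes only a constant — and on controlling $\log G(Z,\cdot)$ for $Z$ inside those cubes, where $G$ can still be as large as a Poisson count over a slab of volume of order $R_n^k n^{d-k}$; it is precisely the use of $\log t\le\eps_0^{-1}t^{\eps_0}$ with $\eps_0$ small that converts this polynomial growth into an arbitrarily small power of $n$.
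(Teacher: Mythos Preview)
Your argument is correct. For $M_n$ you do exactly what the paper does: Cauchy--Schwarz, then $\log t\le t$ to reduce to the double-sum compound-score bound (the paper phrases this via Lemma~\ref{lem:log_moments_G}\eqref{eq:G_bound_1}, which is itself proved by $\log t\le t$ and Lemma~\ref{lem:compound_score_bounds}).

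For $C_n$ you take a genuinely different route. The paper first splits on the concentration event $\widetilde F_n$ from Assumption~\ref{ass:B}(ii): on $\widetilde F_n$ the crude bound \eqref{eq:G_bound_1} produces powers of $\vert W_n\vert$ that are annihilated by $P(\widetilde F_n)\le e^{-\beta_4\vert S_n^k\vert}$; on $\widetilde F_n^c$ it confines the index sets to $\Q(x,R_n)\cup\Q(y,R_n)$ and then invokes Lemmas~\ref{lem:moments_of_Q_tilde} and~\ref{lem:log_moments_G} (the latter implicitly reusing the machinery from the proof of \eqref{eq:log_moments_2_3}). You instead prove an \emph{unconditional} eighth-moment bound, bypassing the $\widetilde F_n$-split altogether via $\log t\le\eps_0^{-1}t^{\eps_0}$: this converts each $\log G(Z,\cdot)$ into $\overline g_{\sup}^{\eps_0}$ times a slab-count to the power $\eps_0$, whose moments grow only like $n^{c\eps_0}$, and the freedom in $\eps_0$ absorbs this into $n^{8\eps}$. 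Your approach is more self-contained for this lemma --- it never uses Assumption~\ref{ass:B}(ii) --- whereas the paper's split is consistent with the pattern used in all subsequent, finer bounds. Two small points of phrasing: ``conditioning on the value of $R_n$'' should be read as summing over $\{R_n=r\}$ and applying Cauchy--Schwarz termwise (exactly as in the proof of Lemma~\ref{lem:moments_of_Q}), since $R_n$ and $\PP_n$ are not independent; and to apply Jensen cleanly to $\E[\overline g_{\sup}^{\,p}]$ and $\E[(\text{slab count})^{p}]$ with non-integer $p$, make sure to pick $\eps_0$ small enough that the H\"older-inflated exponents stay below $1$.
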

\begin{proof}
First, by the Cauchy--Schwarz inequality,
\begin{equation}
    \label{eq:m_cauchy}
\E[M_n^4 \one_{E_n}] \le \sqrt{\E[M_n^8]} \sqrt{P(E_n)}.
\end{equation}
Since $Z \in \mc U_n^+(\PP_n^{xy})$ which in particular implies $G(Z,\PP_n^x) \ge 1$ and $G(Z,\PP_n^y) \ge 1$, 
\begin{equation}
    \label{eq:m_8_sum}
\vert M_n \vert \le \Big\vert  \sum_{Z \in \mc A_n^+(\PP_n^{xy})} \log G(Z,\PP_n^{xy})\Big\vert   +\Big\vert  \sum_{Z \in \mc A_n^+(\PP_n)} \log G(Z,\PP_n) \Big\vert.
\end{equation}
Thus, by Lemma \ref{lem:log_moments_G},
\begin{equation}
\label{eq:m_8}
\E[M_n^8] \le n^{8\eps} \vert  W_n \vert^{16}.
\end{equation}
Combining \eqref{eq:m_cauchy} and \eqref{eq:m_8} yields \eqref{eq:fourth_moment_main_term} when performing a change of variable in $\eps$. Next, by the Cauchy--Schwarz inequality again, 
$
\E[C_n^4 \one_{E_n}] \le \sqrt{\E[C_n^8]} \sqrt{P(E_n)}.
$
As usual, it follows by Assumption \ref{ass:B}(ii) that it suffices to show the bound $\sqrt{\E[C_n^8]} \le n^{\eps}$ on the event $\widetilde F_n^c$.
 By the definition of the stabilization radii $R_n(x)$ and $ R_n(y)$,
$$
\mc U_n^+(\PP_n^{xy})^c \subseteq \mathbb{Q}(x,R_n) \cup \mathbb{Q}(y,R_n).
$$
Thus, using Lemmas \ref{lem:moments_of_Q_tilde} and \ref{lem:log_moments_G} yields \eqref{eq:fourth_moment_cross_term}, and hence completes the proof.
\end{proof}

For bounding the second-order difference operator, we need to consider whether or not the two points $x$ and $y$ are close or not. 
To that end, introduce the event that the non-stable cubes around $x$ and $y$ do not intersect, i.e.,
\begin{equation}
    \label{eq:disjoint_event}
H_n = H_n(x,y) = \big\{\mathbb{Q}(x,R_n) \cap \mathbb{Q}(y,R_n) = \es \big\}.
\end{equation}
First, we establish a series of bounds on the terms inside $M_n$ on the event $\widetilde F_n^c \cap H_n$,
i.e., when points cannot simultaneously lie in the cubes around both $x$ and $y$, and when the compound score in every point is sufficiently large.
\begin{lemma}[Compound score bounds; Part II]
    \label{lem:log_ratio_G_2}
    Let $m \ge 1$, $\eps > 0$. Under $\text{Assumption \ref{ass:B}}$, it holds for all $n \gg 1$,
\begin{align}
\sup_{x,y \in  \W_n}  \E \Big[ \Big\vert \sum_{Z \in \mc A_n^+(\PP_n^y) \cap \mc A_n^+(\PP_n) \cap \Q(x,R_n)} 
\log \Big(\frac{G(Z,\PP_n^y)}{G(Z,\PP_n)}\Big)\Big\vert^m \one_{\widetilde F_n^c \cap H_n} \Big] & \le \frac{n^{\eps}}{\vert S_n^k \vert^m}, \label{eq:compound_score_2_1} \\
\sup_{x,y \in  \W_n} \E \Big[ \Big\vert \sum_{Z \in \mc A_n^+(\PP_n^{xy}) \cap \mc A_n^+(\PP_n^y) \cap \Q(x,R_n)} 
\log \Big(\frac{G(Z,\PP_n^{xy})}{G(Z,\PP_n^y)}\Big)\Big\vert^m \one_{\widetilde F_n^c \cap H_n} \Big] & \le \frac{n^{\eps}}{\vert S_n^k \vert^m}, \label{eq:compound_score_2_2}
\end{align}
\end{lemma}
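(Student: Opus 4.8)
The plan is to prove \eqref{eq:compound_score_2_1} in detail and then obtain \eqref{eq:compound_score_2_2} by the same scheme. Fix $m\ge1$, and, following the convention of this section, I would first replace the target exponent $\eps$ by a smaller $\eps'=\eps/(N_0 m(d+k+1))$ with $N_0$ a large constant to be fixed at the end, so that the clean factor $n^{\eps}$ reappears via Lemma \ref{lem:normalization}; all estimates are carried out on the event $\widetilde F_n^c\cap H_n$, where $H_n$ is the disjointness event \eqref{eq:disjoint_event}. The geometric heart of the argument is the deterministic fact that on $H_n$ every $Z\in\Q(x,R_n)$ lies outside $\Q(y,R_n)$, so the stabilisation of $g$ with respect to $\mc A_n^+$ (Assumption \ref{ass:B}(i), via \eqref{eq:stabilization}) gives $g(Z,V,\PP_n^y)=g(Z,V,\PP_n)$ for all $V\in\PP_n\setminus\Q(y,R_n)$. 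Combining this with $k$-locality (Assumption \ref{ass:A}(i)) and the bound $g\le\overline g_{\sup}(\PP_n)$ on a single score, I obtain, uniformly over the $Z$ occurring in the sum, $\vert G(Z,\PP_n^y)-G(Z,\PP_n)\vert\le\overline g_{\sup}(\PP_n)(\PP(\Q(y,R_n))+1)=:\Delta_n$, the $+1$ accounting for the single score $g(Z,y,\PP_n^y)$ involving the new point.

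Next I would linearise the logarithm. On $\widetilde F_n^c$ the definition of $F_n$ in \eqref{eq:exponential_concentration} forces $G(Z,\PP_n)>\b_3\vert S_n^k\vert$, while $Z\in\mc A_n^+(\PP_n^y)$ gives $G(Z,\PP_n^y)\ge1$; writing $a(Z)=(G(Z,\PP_n^y)-G(Z,\PP_n))/G(Z,\PP_n)$ one has $\vert a(Z)\vert\le b_n:=\Delta_n/(\b_3\vert S_n^k\vert)$, and there are at most $\PP(\Q(x,R_n))$ terms. I split the sum at $\vert a(Z)\vert\le\tfrac12$ versus $\vert a(Z)\vert>\tfrac12$. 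On the first part $\vert\log(1+a(Z))\vert\le2\vert a(Z)\vert\le2b_n$, so the partial sum is at most $\tfrac{2}{\b_3\vert S_n^k\vert}\PP(\Q(x,R_n))\overline g_{\sup}(\PP_n)(\PP(\Q(y,R_n))+1)$. Taking the $m$-th power, then the expectation, then Hölder's inequality across the three random factors, Lemma \ref{lem:moments_of_Q_tilde} bounds the cube-count moments by constants and Assumption \ref{ass:A}(iii) (inherited by $\overline g_{\sup}$ through Assumption \ref{ass:B}) bounds $\E[\overline g_{\sup}(\PP_n)^{3m}]\le n^{3m\eps'}$; hence this part contributes at most $C\,n^{m\eps'}/\vert S_n^k\vert^m$.

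The part where $\vert a(Z)\vert>\tfrac12$ is where the real work lies. On $\widetilde F_n^c$ this forces $\Delta_n>\tfrac12 G(Z,\PP_n)>\tfrac12\b_3\vert S_n^k\vert$, so the whole second part is supported on the rare event $E_n=\{\overline g_{\sup}(\PP_n)(\PP(\Q(y,R_n))+1)>\b_3\vert S_n^k\vert/2\}$. There I would bound the sum only crudely: since $G\ge1$ on both admissible sets and $\log t\le t$, $\big\vert\sum_Z\log\tfrac{G(Z,\PP_n^y)}{G(Z,\PP_n)}\big\vert\le\big\vert\sum_{Z\in\mc A_n^+(\PP_n^y)}\log G(Z,\PP_n^y)\big\vert+\big\vert\sum_{Z\in\mc A_n^+(\PP_n)}\log G(Z,\PP_n)\big\vert$, whose $2m$-th moments are at most $n^{2m\eps'}\vert W_n\vert^{4m}$ by \eqref{eq:G_bound_1}. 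Cauchy--Schwarz against $\one_{E_n}$ reduces matters to $P(E_n)$, which I would estimate via $\{rs>t\}\su\{r>\sqrt t\}\cup\{s>\sqrt t\}$ and Markov with $N$ moments: $P(E_n)\le(2/\b_3\vert S_n^k\vert)^N(\E[\overline g_{\sup}(\PP_n)^N]+\E[(\PP(\Q(y,R_n))+1)^N])\le(C/\vert S_n^k\vert)^N(n^{N\eps'}+C_1(N))$. Because $k<d$, $\vert S_n^k\vert$ grows polynomially in $n$ whereas $\vert W_n\vert$ also grows only polynomially, so taking $N=N(m)$ large enough — exactly the mechanism already used for \eqref{eq:log_moments_term_5} in the proof of Lemma \ref{lem:log_moments_G} — makes this contribution at most $\vert S_n^k\vert^{-m}\le n^{\eps}\vert S_n^k\vert^{-m}$ for $n\gg1$. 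This quantitative step, squeezing a fixed inverse power of $\vert S_n^k\vert$ out of the rare-event probability, is the one genuinely delicate point; everything else is bookkeeping with the moment bounds of Section \ref{sec:5.1}.

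Finally, adding the two contributions through Lemma \ref{lem:reality} and absorbing the constants via Lemma \ref{lem:normalization} (with $N_0$ chosen so that $m\eps'$ is small relative to $\eps$) gives \eqref{eq:compound_score_2_1}. The bound \eqref{eq:compound_score_2_2} is obtained by the identical three steps applied to the pair $(\PP_n^{xy},\PP_n^y)$ in place of $(\PP_n^y,\PP_n)$, with the roles of $x$ and $y$ interchanged as appropriate; the extra point $y$ sitting in the base configuration does not disturb the localisation in Step~1, since on $H_n$ the summation range is disjoint from the non-stable cube of the point whose insertion is being measured.
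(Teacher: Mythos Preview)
Your proof is correct and follows essentially the same approach as the paper: define $a(Z)$, split on $\vert a(Z)\vert\le\tfrac12$ versus $\vert a(Z)\vert>\tfrac12$, linearise the logarithm on the small part using that on $H_n$ the difference $G(Z,\PP_n^y)-G(Z,\PP_n)$ is localised to $\Q(y,R_n)$, and handle the large part as a rare event via Markov's inequality with a high enough power of $\vert S_n^k\vert^{-1}$. The only cosmetic difference is that on the large-$a$ part the paper keeps the restriction $Z\in\Q(x,R_n)$ and bounds $\log(1+\vert a(Z)\vert)\le G(Z,\PP_n^y)$ directly, whereas you invoke \eqref{eq:G_bound_1} over the full admissible sets; both crude bounds are absorbed by the rare-event probability, so the arguments are effectively identical.
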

\begin{proof}
    First we note that we will largely reuse and slightly modify the arguments from the proof of Lemma \ref{lem:log_moments_G}: Let
$$
D_n = \sum_{Z \in \mc A_n^+(\PP_n^y) \cap \mc A_n^+(\PP_n) \cap \Q(x,R_n)} 
\log \Big(\frac{G(Z,\PP_n^y)}{G(Z,\PP_n)}\Big)
$$
and
$$
a(Z) = \frac{G(Z,\PP_n^y)- G(Z,\PP_n)}{G(Z,\PP_n)},
$$
and note that
\begin{align*}
\vert D_{n} \vert \one_{\widetilde F_n^c \cap H_n}
\le &
\Big \vert \sum_{Z \in \mc A_n^+(\PP_n^y) \cap \mc A_n^+(\PP_n) \cap \Q(x,R_n)} \log 
\big( 1+ a(Z)  \big)\one\{\vert a(Z) \vert \le 1/2\} \Big  \vert \one_{\widetilde F_n^c \cap H_n} \\
&  \qquad +  
\Big \vert \sum_{Z \in \mc A_n^+(\PP_n^y) \cap \mc A_n^+(\PP_n) \cap \Q(x,R_n)} \log 
\big( 1+ a(Z) \big) \one\{\vert a(Z) \vert > 1/2\} \Big \vert \one_{\widetilde F_n^c \cap H_n}.
\end{align*}

Note that by definition of $H_n$ and the definition of the non-stable cube,
 then for any $Z \in \Q(x,R_n)$, it must hold that $g(Z,V,\PP_n^y) = g(Z,V,\PP_n)$ for any $V \in \PP_n \cap \Q(y,R_n)^c$.
Hence, by Assumption \ref{ass:A}(i), i.e., $k$-locality, and the fact that $G(Z,\PP_n) > \beta_3 \vert S_n^k \vert$ on $\widetilde F_n^c \cap H_n$,
\begin{equation}
    \label{eq:log_moments_decomposition_again}
\begin{aligned}
  & \vert D_{n} \vert \one_{\widetilde F_n^c \cap H_n}
    \le  \frac{1}{\beta_3 \vert S_n^k \vert}
    \Big \vert \sum_{Z \in  \PP_n \cap \Q(x,R_n)} 
    g(Z,y, \PP_n^y) \Big\vert  \one_{\widetilde F_n^c \cap H_n} \\
    & \ \ + \frac{1}{\beta_3 \vert S_n^k \vert}
   \Big \vert \sum_{Z \in  \PP_n \cap \Q(x,R_n)} \sum_{V \in \PP_n \cap \Q(y,R_n)} \vert g(Z,V, \PP_n^y) 
    - g(Z,V, \PP_n) \vert\Big  \vert  \one_{\widetilde F_n^c \cap H_n}\\
    & \ \ +  
    \Big \vert \sum_{Z \in \PP_n \cap \Q(x,R_n)} \log 
    \big( 1+ \vert a(Z) \vert \big) \one\{\vert G(Z,\PP_n^y)- G(Z,\PP_n)\vert > \tfrac{\beta_3  \vert S_n^k \vert}{2} \vert\} \Big \vert  \one_{\widetilde F_n^c \cap H_n}.
    \end{aligned}
\end{equation}
By the Cauchy--Schwarz inequality, assumption \ref{ass:A3} and Lemma \ref{lem:moments_of_Q_tilde},
then first
\begin{equation}
    \label{eq:copy_step_1}
    \begin{aligned}
    \E& [\vert D_{n} \vert^m \one_{\widetilde F_n^c \cap H_n} ] \le 3^{m-1} \Big(\frac{ C_1(m) n^{m\eps}}{\beta_3^m \vert S_n^k \vert^m} 
    + \frac{C_1(m)^2 n^{m\eps}}{\beta_3^m \vert S_n^k \vert^m} \\
    & \ +  \E\Big[ \Big \vert \sum_{Z \in \PP_n \cap \Q(x,R_n)} G(Z,\PP_n^y) \one\{\vert G(Z,\PP_n^y)- G(Z,\PP_n)\vert > \tfrac{\beta_3  \vert S_n^k \vert}{2} \vert\} \Big \vert^m  \one_{\widetilde F_n^c \cap H_n}\Big] \Big)
    \end{aligned}
\end{equation}
By definition of the non-stable cube and the event $H_n$,
$$
\vert G(Z, \PP_n^y)- G(Z,\PP_n)\vert \le \overline g_{\sup}(\PP_n) \PP(\Q(y,R_n)),
$$
By the Cauchy--Schwarz inequality (twice), Lemma \ref{lem:moments_of_Q_tilde} and Assumption \ref{ass:A}(iii),
\begin{equation}
    \label{eq:log_moments_term_5_pre_new}
    \begin{aligned}
\E\Big[& \Big \vert \sum_{Z \in \PP_n \cap \Q(x,R_n)} G(Z,\PP_n^y) \one\{\vert G(Z,\PP_n^y)- G(Z,\PP_n)\vert > \tfrac{\beta_3  \vert S_n^k \vert}{2} \vert\} \Big \vert^m  \one_{\widetilde F_n^c \cap H_n}\Big] \\
& \qquad \qquad \qquad \quad \le C_1(m) n^{m\eps} \vert  W_n \vert^{m}
     P\Big(\overline g_{\sup}(\PP_n) \PP(\Q(y,R_n)) > \tfrac{\beta_3  \vert S_n^k \vert}{2}\Big)^{1/2}.
    \end{aligned}
\end{equation}
Continuing exactly as in the proof of Lemma \ref{lem:log_moments_G}: By $\{ rs > t \} \su \{ r  > \sqrt{t} \} \cup \{ s > \sqrt{t} \}$, Markov's inequality, Assumption \ref{ass:A}(iii) and Lemma \ref{lem:moments_of_Q_tilde},
\begin{equation}
    \label{eq:prob_new}
P\big(\overline g_{\sup}(\PP_n) \PP(\Q(y,R_n)) > \tfrac{\beta_3  \vert S_n^k \vert}{2}\big)^{1/2} \le 
2 \Big(\frac{2}{\beta_3 \vert S_n^k \vert}\Big)^{2md} n^{2md\eps }.
\end{equation}
Hence, combining \eqref{eq:copy_step_1}, \eqref{eq:log_moments_term_5_pre_new} and \eqref{eq:prob_new}, then for any $n \gg 1$,
\begin{equation}
    \label{eq:copy_step_2}
    \begin{aligned}
    \E& [\vert D_{n} \vert^m \one_{\widetilde F_n^c \cap H_n} ] \le 3^{m-1} \Big( \frac{ C_1(m) n^{m\eps}}{\beta_3^m \vert S_n^k \vert^m} 
    + \frac{C_1(m)^2 n^{m\eps}}{\beta_3^m \vert S_n^k \vert^m} + \frac{2 C_1(m) n^{m\eps}}{\beta_3^{2md}\vert S_n^k \vert^m} \Big),
    \end{aligned}
\end{equation}
which yields \eqref{eq:compound_score_2_1}, when performing a change of variable in $\eps$. Note that \eqref{eq:compound_score_2_2} 
may be proven with exactly the same arguments as above and hence this completes the proof. 
\end{proof}
Finally, we also prove that the cross term $C_n$ vanishes, when $x$ and $y$ are not close to one another. This will allow
us to control its contribution in Case II in \eqref{eq:three_cases}.
\begin{lemma}[Less crude $4^{\text{th}}$ moment bound on $C_n$]
    \label{lem:C_n_zero}
For any $\eps > 0$ and $n \gg 1$, 
    $$
    \E \big[ C_n^4 \one_{\widetilde F_n^c \cap H_n}\big] \le \frac{n^{\eps}}{\vert S_n^k \vert^4}.
    $$
\end{lemma}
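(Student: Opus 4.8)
The plan is to exploit that, on $H_n$, the non-stable cubes $\Q(x,R_n)$ and $\Q(y,R_n)$ are disjoint, so that the four-fold alternating structure of $C_n$ collapses into a bounded number of sums of single logarithmic ratios, each of which has already been estimated in Lemma~\ref{lem:log_ratio_G_2}.

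\textbf{Step 1 (localization).} Recall from \eqref{eq:decomposition_D2_log} that, for a point $Z$, abbreviating $a=\one\{Z\in\mc A_n^+(\PP_n^{xy})\}$, $b=\one\{Z\in\mc A_n^+(\PP_n^{x})\}$, $c=\one\{Z\in\mc A_n^+(\PP_n^{y})\}$, $e=\one\{Z\in\mc A_n^+(\PP_n)\}$, the contribution of $Z$ to $C_n=D_{xy}^2\Sigma^{\log}_n(\PP_n)-M_n$ equals
$$
(a-abce)\log G(Z,\PP_n^{xy})-(b-abce)\log G(Z,\PP_n^{x})-(c-abce)\log G(Z,\PP_n^{y})+(e-abce)\log G(Z,\PP_n).
$$
This vanishes unless $a,b,c,e$ are not all equal, i.e.\ unless the membership of $Z$ in $\mc A_n^+(\cdot)$ changes along the chain $\PP_n\to\PP_n^x\to\PP_n^{xy}$ or $\PP_n\to\PP_n^y\to\PP_n^{xy}$; by \eqref{eq:additional_stabilization} and the definition of the stabilization radii, this forces $Z\in\Q(x,R_n)\cup\Q(y,R_n)$. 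On $H_n$ these two cubes are disjoint, so $C_n=C_n^x+C_n^y$, where $C_n^x$ (resp.\ $C_n^y$) collects the contributions of points in $\Q(x,R_n)$ (resp.\ $\Q(y,R_n)$); by the $x\leftrightarrow y$ symmetry it suffices to bound $C_n^x$.

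\textbf{Step 2 (telescoping and conclusion).} Fix $Z\in\Q(x,R_n)$; on $H_n$ one has $Z\notin\Q(y,R_n)$, so applying \eqref{eq:additional_stabilization} at the point $y$ gives $a=b$ and $c=e$, hence $abce=bc$. Substituting into the display above, the contribution of $Z$ to $C_n^x$ becomes
$$
b(1-c)\log\frac{G(Z,\PP_n^{xy})}{G(Z,\PP_n^{x})}-c(1-b)\log\frac{G(Z,\PP_n^{y})}{G(Z,\PP_n)},
$$
so $C_n^x$ is a difference of two sums over subsets of $\PP_n\cap\Q(x,R_n)$, each being — up to the $x\leftrightarrow y$ symmetry and an irrelevant relabeling of which added point the base configuration already carries — exactly of the form bounded in \eqref{eq:compound_score_2_1}--\eqref{eq:compound_score_2_2}. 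Indeed, the only features used in the proof of Lemma~\ref{lem:log_ratio_G_2} are that $Z\in\Q(x,R_n)$ (so $\PP(\Q(x,R_n))$ controls the number of summands, via Lemma~\ref{lem:moments_of_Q_tilde}), that $Z\notin\Q(y,R_n)$ on $H_n$ (so insertion of $y$ perturbs $G(Z,\cdot)$ by at most $\overline g_{\sup}(\PP_n)\,\PP(\Q(y,R_n))$ plus one score $g(Z,y,\cdot)$, bounded via $k$-locality and Assumption~\ref{ass:A}(iii)), and that $G(Z,\cdot)>\beta_3\vert S_n^k\vert$ on $\widetilde F_n^c$; all three hold here. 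Hence each of the at most four resulting sums (after treating $C_n^y$ symmetrically) has $m$-th moment $\le n^{\eps}/\vert S_n^k\vert^m$ on $\widetilde F_n^c\cap H_n$; taking $m=4$ and combining via Lemmas~\ref{lem:reality} and~\ref{lem:normalization} yields $\E[C_n^4\one_{\widetilde F_n^c\cap H_n}]\le n^{\eps}/\vert S_n^k\vert^4$.

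\textbf{Main obstacle.} The only genuinely new ingredient is the indicator bookkeeping of Step~2 — verifying that on $H_n$ every admissibility indicator attached to a point of $\Q(x,R_n)$ is governed solely by the insertion of $x$, so that the alternating four-term sum defining $C_n^x$ telescopes to the two-ratio expression above. Once this reduction is in place, the required decay is inherited directly from Lemma~\ref{lem:log_ratio_G_2}.
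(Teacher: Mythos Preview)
Your proposal is correct and follows essentially the same approach as the paper: localize the contribution of $C_n$ to $\Q(x,R_n)\cup\Q(y,R_n)$ via Assumption~\ref{ass:B}(i), use that on $H_n$ each $Z\in\Q(x,R_n)$ satisfies $a=b$ and $c=e$ (and symmetrically for $\Q(y,R_n)$), pair the four terms into two log-ratio sums per cube, and conclude via Lemma~\ref{lem:log_ratio_G_2}. Your explicit indicator calculation leading to $b(1-c)\log\tfrac{G(Z,\PP_n^{xy})}{G(Z,\PP_n^{x})}-c(1-b)\log\tfrac{G(Z,\PP_n^{y})}{G(Z,\PP_n)}$ is in fact a cleaner way to record the pairing than the paper's display (which carries some typographical slips in the ratios), and your remark that the needed variants of \eqref{eq:compound_score_2_1}--\eqref{eq:compound_score_2_2} follow from the same three structural features used in the proof of Lemma~\ref{lem:log_ratio_G_2} is exactly the justification the paper relies on when it says ``Invoking Lemma~\ref{lem:log_ratio_G_2} completes the proof.''
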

\begin{proof}
First of all, note that by Assumption \ref{ass:B}(i), it suffices to consider contributions from $Z$ inside $\Q(x,R_n) \cup \Q(y,R_n)$ in the cross-term $C_n$. 
If $Z \in \Q(x, R_n)$, then on $H_n$, it must be that $Z \notin \Q(y, R_n)$. Then, by Assumption \ref{ass:B}(i) once more, it follows that
$ Z \in \mc A_n^+(\PP_n^{xy})$ if and only if $Z \in \mc A_n^+(\PP_n^{x})$, and likewise that $Z \in \mc A_n^+(\PP_n^{y})$ if and only if $Z \in \mc A_n^+(\PP_n)$.
If instead $Z \in \Q(y, R_n)$, then by identical arguments, it holds by Assumption \ref{ass:B}(i) again that $ Z \in \mc A_n^+(\PP_n^{xy})$ if and only if $Z \in \mc A_n^+(\PP_n^{y})$, 
and likewise that $Z \in \mc A_n^+(\PP_n^{x})$ if and only if $Z \in \mc A_n^+(\PP_n)$. Thus, by pairing the terms in $C_n$ two and two according to these observations,
\begin{align*}
 \E \big[ C_n^4 \one_{\widetilde F_n^c \cap H_n}\big] & \le 4^3 \E \Big[ \Big\vert \sum_{Z \in \mc A_n^+(\PP_n^y) \cap \mc A_n^+(\PP_n) \cap \Q(x,R_n)} 
\log \Big(\frac{G(Z,\PP_n^y)}{G(Z,\PP_n)}\Big)\Big\vert^4 \one_{\widetilde F_n^c \cap H_n} \Big] \\
& \qquad + 4^3 \E \Big[ \Big\vert \sum_{Z \in \mc A_n^+(\PP_n^{xy}) \cap \mc A_n^+(\PP_n^x) \cap \Q(x,R_n)} 
\log \Big(\frac{G(Z,\PP_n^y)}{G(Z,\PP_n)}\Big)\Big\vert^4 \one_{\widetilde F_n^c \cap H_n} \Big] \\
& \qquad + 4^3 \E \Big[ \Big\vert \sum_{Z \in \mc A_n^+(\PP_n^x) \cap \mc A_n^+(\PP_n) \cap \Q(y,R_n)} 
\log \Big(\frac{G(Z,\PP_n^y)}{G(Z,\PP_n)}\Big)\Big\vert^4 \one_{\widetilde F_n^c \cap H_n} \Big] \\
& \qquad + 4^3 \E \Big[ \Big\vert \sum_{Z \in \mc A_n^+(\PP_n^{xy}) \cap \mc A_n^+(\PP_n^y) \cap \Q(y,R_n)} 
\log \Big(\frac{G(Z,\PP_n^y)}{G(Z,\PP_n)}\Big)\Big\vert^4 \one_{\widetilde F_n^c \cap H_n} \Big].
\end{align*}
Invoking Lemma \ref{lem:log_ratio_G_2} completes the proof.
\end{proof}

The final task is establishing one last moment bound that can be used inside the main term $M_n$ in Case II. It turns out, we need to control the size
of slab $S_n^k(Z,1)$ and to achieve this, we rely on the following Poisson concentration inequality \cite[Lemma 1.2]{Penrose2003_RandomGeometricGraphs}.
\begin{lemma}[Poisson concentration inequality]
    \label{lem:pois_conc}
For any Poisson random variable $X$ with mean $\ell$, then
$
P(X > 8 \ell) \le \exp\big( - \tfrac{\log(8)}{4} \ell \big).
$
\end{lemma}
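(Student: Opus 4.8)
The plan is to derive the bound from a standard Chernoff estimate, using the explicit moment generating function of a Poisson variable, and then to check that the resulting numerical constant dominates $\log(8)/4$.

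First, I would fix $\theta = \log 8 > 0$. Since $t \mapsto \e^{\theta t}$ is increasing, the event $\{X > 8\ell\}$ is contained in $\{\e^{\theta X} > \e^{8\theta\ell}\}$, so Markov's inequality gives
\[
P(X > 8\ell) \le \e^{-8\theta\ell}\,\E\!\big[\e^{\theta X}\big].
\]
Next, recall that for $X \sim \text{Poisson}(\ell)$ one has $\E[\e^{\theta X}] = \exp\big(\ell(\e^{\theta} - 1)\big)$; with $\theta = \log 8$ this equals $\exp(7\ell)$, since $\e^{\log 8} - 1 = 7$. Substituting,
\[
P(X > 8\ell) \le \exp\big(-8\ell\log 8 + 7\ell\big) = \exp\big(-(8\log 8 - 7)\,\ell\big).
\]

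It then remains to verify $8\log 8 - 7 \ge \tfrac{\log 8}{4}$, which is equivalent to $\tfrac{31}{4}\log 8 \ge 7$, i.e.\ $\log 8 \ge \tfrac{28}{31}$. Since $\log 8 = 3\log 2 > 3 \cdot 0.69 = 2.07 > \tfrac{28}{31}$, this holds, and the claim follows. In fact the exponent $8\log 8 - 7 \approx 9.64$ is far larger than $\log(8)/4 \approx 0.52$, so there is ample slack; the weaker constant in the statement is all that is required later, when the lemma is applied together with $k$-locality to control the size $|S_n^k(Z,1)|$ appearing in the main term $M_n$.

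There is essentially no obstacle here: the only point requiring care is confirming that the convenient choice $\theta = \log 8$ yields an exponent at least $\log(8)/4$, which is the elementary numerical inequality above. Alternatively, one may simply invoke \cite[Lemma~1.2]{Penrose2003_RandomGeometricGraphs}, of which the stated bound is a special case.
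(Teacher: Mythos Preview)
Your proof is correct and essentially matches the paper's approach: the paper does not give an independent argument but simply cites \cite[Lemma~1.2]{Penrose2003_RandomGeometricGraphs}, which is precisely the Chernoff bound you have written out in full (yielding the exponent $8\log 8 - 7$, of which $\log(8)/4$ is a weakening). Your self-contained derivation and the numerical check are both fine.
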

\begin{lemma}[Compound score bounds; Part III]
    \label{lem:log_ratio_G_3}
    Let $m \ge 1$, $\eps > 0$. Under $\text{Assumption \ref{ass:B}}$, it holds for all $n \gg 1$,
\begin{align}
\sup_{x,y \in  \W_n}  \E\Big[\Big\vert\sum_{Z \in \mc U_n^{\leftrightarrow}(\PP_n^{xy})}
\log \Big( \frac{G(Z,\PP_n^{xy})G(Z,\PP_n)}{G(Z,\PP_n^x)G(Z,\PP_n^y)} \Big) \Big\vert^m \one_{\widetilde F_n^c \cap H_n} \Big] & \le \frac{n^{ \eps}}{\vert S_n^k \vert^m}, \label{eq:compound_score_2_3}
\end{align}
where $\mc U_n^{\leftrightarrow}(\PP_n^{xy}) = \mc U_n^+(\PP_n^{xy}) \cap \Q(x,R_n)^c \cap \Q(y,R_n)^c$.
\end{lemma}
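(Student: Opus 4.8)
The plan is to linearise the logarithm of the double ratio by first showing that the genuinely second--order part of the compound score vanishes on $\widetilde F_n^c\cap H_n$, and then to estimate the remaining first--order expression with essentially the same tools used for Lemmas \ref{lem:log_moments_G} and \ref{lem:log_ratio_G_2}. Fix $x,y\in\W_n$ and a point $Z\in\mc U_n^{\leftrightarrow}(\PP_n^{xy})$; note that $Z\in\mc A_n^+$ for all four configurations $\PP_n,\PP_n^x,\PP_n^y,\PP_n^{xy}$, so writing $G=G(Z,\PP_n)$, $G_x=G(Z,\PP_n^x)$, $G_y=G(Z,\PP_n^y)$, $G_{xy}=G(Z,\PP_n^{xy})$ these are all integers $\ge 1$. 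Setting $a=G_x-G$, $b=G_y-G$, $c=G_{xy}-G_x-G_y+G$, one has the algebraic identity $G_{xy}G-G_xG_y=cG-ab$, hence
$$
\log\Big(\frac{G_{xy}G}{G_xG_y}\Big)=\log\Big(1+\frac{cG-ab}{G_xG_y}\Big).
$$

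The first step is to prove that $c=0$ for every such $Z$ on $\widetilde F_n^c\cap H_n$. For $V\in\PP_n$ this is exactly the case analysis of the proof of Lemma \ref{lem:fourth_moment_first_case}: the mixed difference $g(Z,V,\PP_n^{xy})-g(Z,V,\PP_n^x)-g(Z,V,\PP_n^y)+g(Z,V,\PP_n)$ can be non--zero only if $V$ lies in both $\Q(x,R_n)$ and $\Q(y,R_n)$ (using that $Z$ lies outside both non--stable cubes together with the definition of the stabilization radius in \eqref{eq:stabilization_radius}), which is impossible on $H_n$. For $V\in\{x,y\}$, say $V=x$, the only surviving term is $g(Z,x,\PP_n^{xy})-g(Z,x,\PP_n^x)$, and this vanishes because inserting $y$ cannot change that score when $Z,x\notin\Q(y,R_n)$; the latter holds on $H_n$ since $x\in\Q(x,R_n)$. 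Consequently each summand reduces to $\log(1-ab/(G_xG_y))$.

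The second step estimates this quantity. By Assumption \ref{ass:A}(i) and the definition of $R_n$, $a\neq0$ forces $\dot Z\in S_n^k(x,R_n(x)+1)$ and $|a|\le\overline g_{\sup}(\PP_n)\big(\PP(\Q(x,R_n))+1\big)$ uniformly in $Z$, and symmetrically for $b$; thus the number of $Z$ contributing to the sum in \eqref{eq:compound_score_2_3} is at most $\PP(\S_n^k(x,R_n(x)+1))$, whose moments are $O(\vert S_n^k\vert^p)$ for every $p$ (Lemma \ref{lem:poisson_moment_bound} conditionally on $R_n(x)$, together with the bounded moments of $R_n(x)$ as in Lemma \ref{lem:moments_of_Q_tilde}). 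On $\widetilde F_n^c$ we have $G_x,G_y>\beta_3\vert S_n^k\vert$, so $|ab/(G_xG_y)|\le\beta_3^{-2}\vert S_n^k\vert^{-2}\,\overline g_{\sup}(\PP_n)^2(\PP(\Q(x,R_n))+1)(\PP(\Q(y,R_n))+1)$, again uniformly in $Z$. I would then split each summand according to whether $|ab/(G_xG_y)|\le1/2$ or not. On the former part, using $|\log(1+t)|\le 2|t|$, the sum is at most a $Z$--uniform bound times the number of contributing $Z$, and taking $m$--th moments via Hölder's inequality with Assumption \ref{ass:A}(iii), Lemma \ref{lem:moments_of_Q_tilde} and the above count produces a factor $\vert S_n^k\vert^m$ against $\vert S_n^k\vert^{-2m}$, i.e.\ the claimed $n^{C\eps}/\vert S_n^k\vert^m$. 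On the latter part, $|ab/(G_xG_y)|>1/2$ forces $\overline g_{\sup}(\PP_n)^2(\PP(\Q(x,R_n))+1)(\PP(\Q(y,R_n))+1)>\beta_3^2\vert S_n^k\vert^2/2$, an event of probability $O(\vert S_n^k\vert^{-M/2})$ for any fixed $M$ (Markov applied to a high moment of $\overline g_{\sup}(\PP_n)$ and of $\PP(\Q(\cdot,R_n))$); pairing this by Cauchy--Schwarz with the crude $L^{2m}$--bound $n^{2m\eps}\vert W_n\vert^{4m}$ on that sum --- obtained as in \eqref{eq:m_8} by splitting $\log(G_{xy}G/(G_xG_y))$ into four single logarithms and applying \eqref{eq:G_bound_1}, since $\mc U_n^{\leftrightarrow}(\PP_n^{xy})\subseteq\mc A_n^+(\PP_n^{\mathrm x})$ for each relevant $\mathrm x$ --- and choosing $M=M(m)$ large enough (legitimate because $k<d$ makes $\vert S_n^k\vert$ a positive power of $n$ eventually dominating the fixed power $\vert W_n\vert$) again yields $\le n^{C\eps}/\vert S_n^k\vert^m$. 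Adding the two contributions, performing the usual change of variable in $\eps$, and invoking Lemma \ref{lem:normalization} completes the proof.

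The main obstacle is the first step, $c=0$: it is where the geometry (disjointness of the non--stable cubes on $H_n$ and the location of $Z$ outside both of them) must be combined carefully with the stabilization property applied to the nested insertions $\PP_n\subseteq\PP_n^x\subseteq\PP_n^{xy}$ and $\PP_n\subseteq\PP_n^y\subseteq\PP_n^{xy}$, exactly in the spirit of Lemma \ref{lem:fourth_moment_first_case}. Once the second--order interaction is eliminated, everything else is a rerun --- with one extra slab--counting factor of order $\vert S_n^k\vert$ --- of estimates already carried out for Lemmas \ref{lem:log_moments_G} and \ref{lem:log_ratio_G_2}.
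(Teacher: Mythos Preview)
Your proof is correct and takes a genuinely different tack from the paper. The paper factors the numerator as $G_{xy}(G-G_x)+G_x(G_{xy}-G_y)$, which leaves the large prefactors $G_{xy},G_x$ to be controlled; to do so it introduces the extra Poisson concentration event $E_n=\{\sup_Z\PP(\S_n^k(Z,1))\le 8|S_n^k|\}$ via Lemma~\ref{lem:pois_conc}, bounds $G_{xy},G_x\le 8\,\overline g_{\sup}|S_n^k|$ on $E_n$, and then cancels one factor of $|S_n^k|$ against the denominator before running the $|\widetilde a(Z)|\lessgtr 1/2$ split. Your identity $G_{xy}G-G_xG_y=cG-ab$ together with the observation $c=0$ on $H_n$ for $Z\in\mc U_n^{\leftrightarrow}$ bypasses this entirely: the numerator becomes a pure product of two first--order increments, each bounded by $\overline g_{\sup}(\PP(\Q(\cdot,R_n))+1)$, so no slab concentration is needed and the $|S_n^k|^{-2}$ from the denominator survives against a single slab--count $\PP(\S_n^k(x,R_n(x)+1))\sim|S_n^k|$. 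Your route is more elementary and makes the $|S_n^k|^{-m}$ gain structural---the second--order interaction of the compound score genuinely vanishes once the non--stable cubes are disjoint and $Z$ lies outside both---whereas the paper trades that case analysis for an off--the--shelf concentration inequality. Both proofs share the same residual machinery (the $1/2$--threshold split, Markov on high moments for the bad event, and the crude \eqref{eq:G_bound_1}/\eqref{eq:m_8}--type bound paired by Cauchy--Schwarz).
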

\begin{proof}
First, we introduce the event 
$
E_n = \big\{ \sup_{Z \in \PP_n} \PP (S_n^k(Z,1)) \le 8 \vert S_n^k \vert \big\}.
$
By the union bound and Lemma \ref{lem:pois_conc}, then for $n \gg 1$,
\begin{equation}
    \label{eq:pois_conc_bound}
P(E_n^c) \le \vert  W_n \vert \exp(- \tfrac{\log(8)}{4} \vert S_n^k \vert ) \le \exp(- \tfrac{\log(8)}{8} \vert S_n^k \vert ).
\end{equation}
Hence since $G(Z,\PP_n^{x}),G(Z,\PP_n^{y}) \ge 1$ for any $Z \in \mc U_n^{\leftrightarrow}(\PP_n^{xy})$, then by the Cauchy--Schwarz inequality (twice), 
Assumption \ref{ass:A}(iii) and \eqref{eq:pois_conc_bound},
\begin{equation}
    \label{eq:pois_conc_bound_2}
\E\Big[\Big\vert\sum_{Z \in \mc U_n^{\leftrightarrow}(\PP_n^{xy})}
\log \Big( \frac{G(Z,\PP_n^{xy})G(Z,\PP_n)}{G(Z,\PP_n^x)G(Z,\PP_n^y)} \Big) \Big\vert^m \one_{\widetilde F_n^c \cap H_n \cap E_n^c} \Big] \le \exp(- \tfrac{\log(8)}{32} \vert S_n^k \vert ).
\end{equation}
Thus, we may focus on the event $E_n$ and for convenience, let $\widetilde E_n = \widetilde F_n^c \cap H_n \cap E_n$. Rewrite
$$
\frac{G(Z,\PP_n^{xy})G(Z,\PP_n)}{G(Z,\PP_n^x)G(Z,\PP_n^y)} = 1 + \widetilde a(Z),
$$
where 
$$
\widetilde a(Z) = \frac{G(Z,\PP_n^{xy})G(Z,\PP_n) - G(Z,\PP_n^x)G(Z,\PP_n^y)}{G(Z,\PP_n^x)G(Z,\PP_n^y)}.
$$
The idea is now to roughly follow the approach in the proof Lemma \ref{lem:log_ratio_G_2}: First note that on the event $\widetilde F_n^c$, then $\vert \widetilde a(Z) \vert > 1/2$
implies that 
\begin{equation}
    \label{eq:expanded_tilde_a}
\vert G(Z,\PP_n^{xy}) \vert \vert G(Z,\PP_n^x) - G(Z,\PP_n) \vert + \vert G(Z,\PP_n^x) \vert  \vert G(Z,\PP_n^{xy}) -  G(Z,\PP_n^y) \vert > \tfrac{\beta_3 \vert S_n^k \vert^2}{2}.
\end{equation}
Using that $Z \in \mc U_n^{\leftrightarrow}(\PP_n^{xy})$ and the event $H_n$ now implies that $g(Z,V,\PP_n^x) - g(Z,V,\PP_n) \neq 0$ only for $V \in \PP_n \cap \Q(x,R_n)$
and similarly that $ g(Z,V,\PP_n^{xy}) - g(Z,V,\PP_n^{y})  \neq 0$ for only $V \in \PP_n \cap \Q(y,R_n)$. Thus, on the event $E_n$ and by \eqref{eq:expanded_tilde_a}, 
$\vert \widetilde a(Z) \vert > 1/2$ further implies that
$$
8 g_{\sup}(\PP_n)^2 \max \big\{ \PP(\Q(x,R_n)),\PP(\Q(y,R_n)) \big\}  \vert S_n^k \vert > \tfrac{\beta_3 \vert S_n^k \vert^2}{4}.
$$
By cancellation of $\vert S_n^k \vert$ on either side, it thus follows by the Cauchy--Schwarz inequality and the union bound that
\begin{align*}
\E & \Big[\Big\vert\sum_{Z \in \mc U_n^{\leftrightarrow}(\PP_n^{xy})}
\log ( 1+ \widetilde a(Z)) \one\{ \vert \widetilde a(Z) \vert > 1/2\} \Big\vert^m \one_{\widetilde E_n} \Big]  \le  \\
& \quad \E\Big[\Big\vert\sum_{Z \in \mc U_n^{\leftrightarrow}(\PP_n^{xy})} 
\log ( 1+ \widetilde a(Z)) \Big\vert^{2m} \one_{\widetilde E_n} \Big]^{1/2} \\
& \qquad  \times \Big(P\big(g_{\sup}(\PP_n)^2 \PP(\Q(x,R_n)) > \tfrac{\beta_3 \vert S_n^k \vert}{32}\big) + P\big(g_{\sup}(\PP_n)^2 \PP(\Q(y,R_n)) > \tfrac{\beta_3 \vert S_n^k \vert}{32}\big) \Big)^{1/2}.
\end{align*}
Now, verbatim to the proof of Lemma \ref{lem:log_ratio_G_2}, it follows by Markov's inequality, Assumption \ref{ass:A}(iii) and Lemma \ref{lem:moments_of_Q_tilde},
then the probabilities can be bounded any chosen negative power of the volume of the slab $\vert S_n^k \vert$, which can dominate the first factor, and hence
\begin{equation}
    \label{lem:bound_three_a_large}
\E\Big[\Big\vert\sum_{Z \in \mc U_n^{\leftrightarrow}(\PP_n^{xy})}
\log ( 1+ \widetilde a(Z)) \one\{ \vert \widetilde a(Z) \vert > 1/2\} \Big\vert^m \one_{\widetilde E_n} \Big] \le \frac{n^{\eps}}{\vert S_n^k \vert^{m}}.
\end{equation}
If instead $\vert \widetilde a(Z) \vert \le 1/2$, then we can use the inequality $\vert \log(1+x) \vert \le 2 \vert x \vert$, and applying the same bound as on
the left-hand side of \eqref{eq:expanded_tilde_a} and following the arguments above with only contributions for the difference inside $\Q(x,R_n)$ and
$\Q(y,R_n)$, it follows that similarly to the case in Lemma \ref{lem:log_ratio_G_2},
\begin{equation}
    \label{lem:bound_three_a_small}
\E\Big[\Big\vert\sum_{Z \in \mc U_n^{\leftrightarrow}(\PP_n^{xy})}
\log ( 1+ \widetilde a(Z)) \one\{ \vert \widetilde a(Z) \vert \le 1/2\} \Big\vert^m \one_{\widetilde E_n} \Big] \le \frac{n^{\eps}}{\vert S_n^k \vert^{m}}.
\end{equation}
Combining \eqref{eq:pois_conc_bound_2}, \eqref{lem:bound_three_a_large} and \eqref{lem:bound_three_a_small} completes the proof.
\end{proof}

\subsection{Moment bounds on $D_{xy}^2\Sigma^{\log}_n$ and proof of Lemma \ref{lem:en1_sim}(i)-(ii)}
\label{sec:5.4}
In this section, we prove the bounds on the first two error terms $\widetilde I_{n,1}$ and $\widetilde I_{n,2}$, and hence we need to additionally control the fourth
moment of the second-order difference operator $D_{xy}\Sigma^{\log}_n(\PP_n)$. 
We now establish fourth moment bounds on $D_{xy}^2\Sigma^{\log}_n(\PP_n)$ based on each of three cases of $y$ in relation to $x$ as described in Section 5.3.
Recall that in Case I, there are many possible $y$ values and hence we need a quickly decaying bound in $n$. Here,
it is sufficient with an exponential factor in $n^\eps$.
\begin{lemma}[$4^{\text{th}}$ moment of $D_{xy}^2$; Case I]
    \label{lem:fourth_moment_first_case_log}
    Let $\eps > 0$ and $n \gg 1$. Under Assumption \ref{ass:B},
   $$
        \sup_{x \in  \W_n} \sup_{y \in \W_n \setminus \S_n^k(x,n^\eps) }\E[(D_{xy}^2\Sigma^{\log}_n(\PP_n))^4] \le n^{4\eps} \vert  W_n\vert^8 \e^{-\beta_1(\eps) n^{\eps} / 8}.
    $$
    \end{lemma}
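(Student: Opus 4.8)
The plan is to mirror the proof of Lemma \ref{lem:fourth_moment_first_case} in the double-sum case, using the decomposition \eqref{eq:decomposition_D2_log} of $D_{xy}^2\Sigma^{\log}_n(\PP_n)$ into the main term $M_n$ and the cross-term $C_n$. First I would write $\E[(D_{xy}^2\Sigma^{\log}_n(\PP_n))^4] \le 8\,\E[M_n^4] + 8\,\E[C_n^4]$ by Lemma \ref{lem:reality}, and then split each expectation according to whether the stabilization radii are large or small. Concretely, on the event $\{R_n(x) + R_n(y) > \gamma\}$ with $\gamma = |\pi_1(x-y)| - 1$, I would apply the crude fourth-moment bounds in Lemma \ref{lem:fourth_moment_main_term}, namely $\E[M_n^4 \one_{E_n}] \le n^{4\eps}|W_n|^8 \sqrt{P(E_n)}$ and $\E[C_n^4 \one_{E_n}] \le n^{4\eps}\sqrt{P(E_n)}$, together with Assumption \ref{ass:B}(i) (exponential stabilization), which gives $P(R_n(x) + R_n(y) > \gamma) \le 2\e^{-\beta_1(\eps)\gamma}$, so that this contribution is at most of order $n^{4\eps}|W_n|^8 \e^{-\beta_1(\eps)\gamma/2}$.

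The second, more delicate step is to show that on the complementary event $\{R_n(x) + R_n(y) \le \gamma\}$, both $M_n$ and $C_n$ vanish identically when $y \in \W_n \setminus \S_n^k(x, n^\eps)$. The key observation is that $R_n(x) + R_n(y) \le |\pi_1(x-y)| - 1$ forces $\mathbb{Q}(x,R_n) \cap \mathbb{Q}(y,R_n) = \es$, i.e.\ the event $H_n$ of \eqref{eq:disjoint_event} holds, and moreover every point in $\mathbb{Q}(x,R_n)$ has first coordinate differing from every point in $\mathbb{Q}(y,R_n)$ by more than $1$. For the cross-term $C_n$: by Assumption \ref{ass:B}(i), membership of a point $Z$ in the various admissibility sets $\mc A_n^+(\PP_n^{xy})$, $\mc A_n^+(\PP_n^x)$, $\mc A_n^+(\PP_n^y)$, $\mc A_n^+(\PP_n)$ changes only if $Z$ lies in $\mathbb{Q}(x,R_n) \cup \mathbb{Q}(y,R_n)$, and since these cubes are disjoint, the four-term alternating sum in $C_n$ cancels exactly as in the proof of Lemma \ref{lem:C_n_zero} — but now with no residual terms, because disjointness of the cubes plus the $k$-locality cut-off means the compound scores $G(Z, \cdot)$ appearing for $Z \in \mathbb{Q}(x,R_n)$ are genuinely unaffected by adding the far-away point $y$ (and symmetrically). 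For the main term $M_n$: for any $Z \in \mc U_n^+(\PP_n^{xy})$, the $k$-locality of $g$ forces $g(Z,V,\cdot) = 0$ unless $|\pi_1(Z) - \pi_1(V)| \le 1$, and $Z$ cannot simultaneously be within distance $1$ (in the first coordinate) of both $\mathbb{Q}(x,R_n)$ and $\mathbb{Q}(y,R_n)$; hence each ratio $G(Z,\PP_n^{xy})G(Z,\PP_n) / (G(Z,\PP_n^x)G(Z,\PP_n^y))$ equals $1$, so $M_n = 0$.

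Combining the two steps: $\E[(D_{xy}^2\Sigma^{\log}_n(\PP_n))^4] \le 16\,n^{4\eps}|W_n|^8\,\e^{-\beta_1(\eps)(|\pi_1(x-y)|-1)/2}$, and then using $y \notin \S_n^k(x,n^\eps)$, i.e.\ $|\pi_1(x-y)| > n^\eps$, together with Lemma \ref{lem:normalization} to absorb the constant $16$ and adjust the exponent to $\beta_1(\eps) n^\eps / 8$, gives the claimed bound $n^{4\eps}|W_n|^8 \e^{-\beta_1(\eps) n^\eps/8}$. The main obstacle I anticipate is Step 2, specifically the bookkeeping needed to verify that $M_n = 0$ on the good event: unlike the double-sum case where one directly tracks score changes $f(Z,V,\cdot)$, here one must argue at the level of the logarithmic ratio and carefully use that for each relevant $Z$ the compound score $G(Z,\cdot)$ is simultaneously insensitive to both insertions because the relevant $V$'s (those with $g(Z,V,\cdot) \ne 0$) all lie in the column $\S_n^k(Z,1)$, which — by $k$-locality and the separation in the first coordinate — cannot meet both non-stable cubes at once. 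One should also double-check that the admissibility indicator $\one\{Z \in \mc A_n^+(\cdot)\}$ inside the score does not introduce a discrepancy, but this is exactly what Assumption \ref{ass:B}(i) is designed to rule out for $Z$ outside $\mathbb{Q}(x,R_n) \cup \mathbb{Q}(y,R_n)$.
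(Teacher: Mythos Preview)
Your proposal is correct and follows essentially the same route as the paper: split according to $\{R_n(x)+R_n(y) > \gamma\}$, invoke the crude bounds of Lemma~\ref{lem:fourth_moment_main_term} on the bad event, and show $M_n = C_n = 0$ on the good event via $k$-locality and stabilization. One small correction: your choice $\gamma = |\pi_1(x-y)| - 1$ only yields a first-coordinate gap of $1$ between the two non-stable cubes, which is not quite enough to guarantee that no $Z$ lies within distance~$1$ of \emph{both} cubes; the paper takes $\gamma = |\pi_1(x-y)| - 3$ (so the gap exceeds~$2$), and with that adjustment your argument goes through verbatim, the extra constant being absorbed by Lemma~\ref{lem:normalization}.
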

    \begin{proof}
    First, we look at the main term $M_n$ in \eqref{eq:decomposition_D2_log}. Define the event,
$$
E_n = E_n(x,y) = \big\{R_n(x) + R_n(y)  > \vert \pi_1(x-y) \vert - 3 \big\}.
$$
We then follow the same overall approach as in the proof of Lemma \ref{lem:fourth_moment_first_case}: We claim that on the event $E_n^c$,
\begin{equation}
    \label{eq:fraction_one}
\frac{G(Z,\PP_n^{xy})G(Z,\PP_n)}{G(Z,\PP_n^x)G(Z,\PP_n^y)} = 1,
\end{equation}
for any $Z \in \mc U_n^+(\PP_n^{xy})$ and hence $M_n \one_{E_n^c} = 0$. Indeed, to prove this, suppose first that $\vert \pi_1(Z-x) \vert \le \vert \pi_1(Z-y) \vert$,
i.e., that $Z$ is closer to $x$ than to $y$ in the first coordinate, then on $E_n^c$, $\vert \pi_1(V - Z)\vert > 1$ for any $V \in \Q(y,R_n)$.
Hence, by definition of the non-stable cube and Assumption \ref{ass:A}(i), i.e., $k$-locality,
\begin{align*}
G(Z,\PP_n^{xy}) &= \sum_{V \in \PP_n^{xy} \cap \Q(y, R_n) } g(Z,V,\PP_n^{xy}) + \sum_{V \in \PP_n^{xy} \cap \Q(y, R_n)^c } g(Z,V,\PP_n^{xy}) \\
&= \sum_{V \in \PP_n^{x} \cap \Q(y, R_n) } g(Z,V,\PP_n^{x}) + \sum_{V \in \PP_n^{x} \cap \Q(y, R_n)^c } g(Z,V,\PP_n^{x}) = G(Z,\PP_n^{x}),
\end{align*}
and an analogous computation also shows that $G(Z,\PP_n^{y}) =  G(Z,\PP_n)$, which yields \eqref{eq:fraction_one}.
If instead $\vert \pi_1(Z-x) \vert > \vert \pi_1(Z-y) \vert$,
i.e., that $Z$ is closer to $y$ than to $x$ in the first coordinate, then on $E_n^c$, $\vert \pi_1(V - Z)\vert > 1$ for any $V \in \Q(x,R_n)$.
Thus, in this case, then $G(Z,\PP_n^{xy}) = G(Z,\PP_n^x)$ and $G(Z,\PP_n^{x}) = G(Z,\PP_n)$, which once more yields \eqref{eq:fraction_one}.
Next, by Assumption \ref{ass:A}(ii) and since $y \in \W_n \setminus \S_n^k(x,n^\eps)$, and hence that $\vert \pi_1( x-y) \vert  > n^\eps$,
\begin{equation}
    \label{eq:prob_e}
P(E_n) \le \e^{-\beta_1(\eps) n^\eps / 4}.
\end{equation}
Thus, combining this with Lemma \ref{lem:fourth_moment_main_term},
\begin{equation}
    \label{eq:a_e}
\E[M_n^4 \one_{E_n}] \le  n^{4 \eps} \vert  W_n \vert^{8}\e^{-\beta_1(\eps) n^\eps / 8}.
\end{equation}
Next we look at the cross-term $C_n$ in \eqref{eq:decomposition_D2_log}. Analogously, we claim that $C_n \one_{E_n^c} = 0$. Indeed, suppose first that
$Z \in \mc A_n^+(\PP_n^{xy})$ and that $Z \in \mc A_n^+(\PP_n^x)^c$. Then, by Assumption \ref{ass:B}(i), it follows that $Z \in \Q(x,R_n)$.
Hence,
$$
G(Z,\PP_n^{xy}) = \sum_{V \in \PP_n^{xy} \cap \Q(y, R_n) } g(Z,V,\PP_n^{xy}) + \sum_{V \in \PP_n^{xy} \cap \Q(y, R_n)^c } g(Z,V,\PP_n^{x})  = 0,
$$
since in the first sum, $\vert \pi_1(Z-W) \vert > 1$ on $E_n^c$, and in the second sum we use that $G(Z,\PP_n^x) = 0$ due to $Z \in \mc A_n^+(\PP_n^x)^c$.
If instead $Z \in \mc A_n^+(\PP_n^y)^c$, then we argue the same way with $\Q(y,R_n)$ instead of $\Q(x,R_n)$ to conclude $G(Z,\PP_n^{xy}) = 0$.
Finally, if $Z \in \mc A_n^+(\PP_n)^c \cap \mc A_n^+(\PP_n^{x}) \cap \mc A_n^+(\PP_n^{y})$, then it follows that $Z$ must lie in both $\Q(x,R_n)$ and $\Q(y,R_n)$,
but this is impossible, since these boxes are disjoint on $E_n^c$. Thus, we conclude that
$$
\mc A_n^+(\PP_n^{xy}) \cap \mc  U_n^+(\PP_n^{xy})^c = \emptyset.
$$
By similar reasoning, we thus conclude that this is also true for the other three sums in $C_n$. Hence $C_n \one_{E_n^c} = 0$.
Combining equations (\ref{eq:a_e}) and (\ref{eq:prob_e}) as well as using Lemma \ref{lem:fourth_moment_main_term} completes the proof.
\end{proof}
Next, in Case II, the number of $y$ values are approximately the order of size of the vertical slab, and hence the moment bound on $D_{xy}^2$ need to cancel this
by including $\vert S_n^k \vert$ in the denominator of the bound.
\begin{lemma}[$4^{\text{th}}$ moment of $D_{xy}^2$; Case II]
    \label{lem:fourth_moment_second_case_two_log}
    Let $\eps > 0$, $n \gg 1$. Under Assumption \ref{ass:B},
   $$
        \sup_{x \in  \W_n} \sup_{y \in \S_n^k(x,n^\eps) \setminus \Q(x,n^\eps)}\E[(D_{xy}^2\Sigma^{\log}_n(\PP_n))^4] \le \frac{n^{4\eps}}{\vert S_n^k \vert^4}.
    $$
\end{lemma}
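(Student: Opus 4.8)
The plan is to mimic the proof of Lemma~\ref{lem:fourth_moment_second_case_two} (the double-sum Case~II bound), but working with the decomposition $D_{xy}^2\Sigma^{\log}_n(\PP_n) = M_n + C_n$ from \eqref{eq:decomposition_D2_log} and carrying one extra factor $1/\vert S_n^k\vert$ throughout. First I would split the sample space into three parts: the bad concentration event $\widetilde F_n$; the event $\widetilde F_n^c \cap E_n$ with $E_n = \{R_n(x)+R_n(y) > n^\eps\}$; and the good event $\widetilde F_n^c \cap E_n^c$. On $\widetilde F_n$, the crude bounds \eqref{eq:fourth_moment_main_term}--\eqref{eq:fourth_moment_cross_term} of Lemma~\ref{lem:fourth_moment_main_term} applied with that lemma's generic event taken to be $\widetilde F_n$, together with Assumption~\ref{ass:B}(ii), give $\E[M_n^4\one_{\widetilde F_n}] + \E[C_n^4\one_{\widetilde F_n}] \le n^{4\eps}\vert W_n\vert^8\e^{-\beta_4\vert S_n^k\vert/2}$. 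On $\widetilde F_n^c\cap E_n$, the same crude bounds with $\sqrt{P(E_n)} \le \sqrt{2}\,\e^{-\beta_1(\eps)n^\eps/2}$ (Assumption~\ref{ass:B}(i), using $\{R_n(x)+R_n(y)>n^\eps\}\subseteq\{R_n(x)>n^\eps/2\}\cup\{R_n(y)>n^\eps/2\}$) give a bound of order $n^{4\eps}\vert W_n\vert^8\e^{-\beta_1(\eps)n^\eps/8}$. Both contributions are eventually dominated by $n^{4\eps}/\vert S_n^k\vert^4$.

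The core of the estimate is the good event $\widetilde F_n^c\cap E_n^c$, and the key point is the geometric inclusion $E_n^c\subseteq H_n$ with $H_n$ as in \eqref{eq:disjoint_event}: since $y\notin \mathbb{Q}(x,n^\eps)$ we have $\Vert \dot x - \dot y\Vert_\infty > n^\eps \ge R_n(x)+R_n(y)$ on $E_n^c$, so the non-stable cubes $\mathbb{Q}(x,R_n)$ and $\mathbb{Q}(y,R_n)$ are disjoint. Hence it suffices to bound $\E[M_n^4\one_{\widetilde F_n^c\cap H_n}]$ and $\E[C_n^4\one_{\widetilde F_n^c\cap H_n}]$, which is exactly the regime of Lemmas~\ref{lem:log_ratio_G_2}, \ref{lem:log_ratio_G_3}, and \ref{lem:C_n_zero}. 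For the cross-term, Lemma~\ref{lem:C_n_zero} gives $\E[C_n^4\one_{\widetilde F_n^c\cap H_n}] \le n^\eps/\vert S_n^k\vert^4$ directly. For the main term I would write $M_n = M_n^{\leftrightarrow} + M_n^{\mathrm{cube}}$, splitting (on $H_n$) the index set $\mc U_n^+(\PP_n^{xy})$ into $\mc U_n^{\leftrightarrow}(\PP_n^{xy}) = \mc U_n^+(\PP_n^{xy}) \cap \mathbb{Q}(x,R_n)^c \cap \mathbb{Q}(y,R_n)^c$ and the two disjoint pieces lying in $\mathbb{Q}(x,R_n)$ and in $\mathbb{Q}(y,R_n)$. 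Lemma~\ref{lem:log_ratio_G_3} bounds $\E[(M_n^{\leftrightarrow})^4\one_{\widetilde F_n^c\cap H_n}]$ by $n^\eps/\vert S_n^k\vert^4$. For $M_n^{\mathrm{cube}}$ I would rewrite each summand as $\log\frac{G(Z,\PP_n^{xy})}{G(Z,\PP_n^y)} - \log\frac{G(Z,\PP_n^x)}{G(Z,\PP_n)}$ and, exactly as in the pairing argument inside the proof of Lemma~\ref{lem:C_n_zero}, reduce the sums over $\mathbb{Q}(x,R_n)$ and over $\mathbb{Q}(y,R_n)$ to quantities controlled by \eqref{eq:compound_score_2_1}--\eqref{eq:compound_score_2_2} of Lemma~\ref{lem:log_ratio_G_2} (and their images under $x\leftrightarrow y$), each contributing at most $n^\eps/\vert S_n^k\vert^4$.

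Finally I would assemble the pieces using $(M_n+C_n)^4 \le 8(M_n^4 + C_n^4)$ and $M_n^4 \le 8((M_n^{\leftrightarrow})^4 + (M_n^{\mathrm{cube}})^4)$ from Lemma~\ref{lem:reality}, sum the contributions of the three parts of the sample space, and invoke Lemma~\ref{lem:normalization} to absorb the finitely many constants and to note that $\vert W_n\vert^8\e^{-\beta_1(\eps)n^\eps/8}$ and $\vert W_n\vert^8\e^{-\beta_4\vert S_n^k\vert/2}$ are $\le n^{4\eps}/\vert S_n^k\vert^4$ for $n\gg 1$. The hard part will be the bookkeeping for $M_n^{\mathrm{cube}}$: one must check that, after pairing the $\log$-ratios, the residual sums over a single non-stable cube genuinely have the structure required by Lemma~\ref{lem:log_ratio_G_2} — in particular that on $H_n$ the admissibility sets $\mc A_n^+(\cdot)$ match up so that the telescoping works and the only surviving perturbation in each such sum is the (small, order $1/\vert S_n^k\vert$) effect of the far-away added point. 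This is precisely where the disjointness of the non-stable cubes, i.e.\ the event $H_n$, is used in an essential way.
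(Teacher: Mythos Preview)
Your proposal is correct and follows essentially the same route as the paper: the three-way split $\widetilde F_n$, $\widetilde F_n^c\cap E_n$, $\widetilde F_n^c\cap E_n^c$, the observation $E_n^c\subseteq H_n$, the cross-term via Lemma~\ref{lem:C_n_zero}, and the decomposition of $M_n$ into the far-from-both-cubes piece (handled by Lemma~\ref{lem:log_ratio_G_3}) and the inside-a-cube pieces (handled by Lemma~\ref{lem:log_ratio_G_2}) are exactly what the paper does. Your closing worry about the bookkeeping is well placed but harmless: since $\mc U_n^+(\PP_n^{xy})\subseteq \mc A_n^+(\PP_n^y)\cap\mc A_n^+(\PP_n)$ and the proof of Lemma~\ref{lem:log_ratio_G_2} only uses the concentration lower bound on $\widetilde F_n^c$ together with the summation domain lying in a single non-stable cube, the same argument applies verbatim to the smaller index set.
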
   
\begin{proof}
    First, we note that on the event $F_n$ as defined in \eqref{eq:exponential_concentration} and its measurable extension $\widetilde F_n$, then by Assumption \ref{ass:B}(ii) as well as the Cauchy--Schwarz inequality,
\begin{equation}
    \label{eq:case2_fn}
    \E[(D_{xy}^2\Sigma^{\log}_n(\PP_n))^4 \one_{\widetilde F_n}] \le \e^{-\beta_3(\eps) \vert S_n^k \vert / 4}.
\end{equation}
Thus, it suffices to show the bound on $\widetilde F_n^c$. Additionally, consider the event
$$
E_n = E_n(x,y) = \big\{R_n(x) + R_n(y) > n^\eps \big\}.
$$  
First, we look at the cross-term $C_n$ in \eqref{eq:decomposition_D2_log}. 
Note that on $E_n^c$, it follows that $\Q(x,R_n) \cap \Q(y,R_n) = \emptyset$ since $y \in \S_n^k(x,n^\eps) \setminus \Q(x,n^\eps)$, and hence by Lemma \ref{lem:C_n_zero}, 
then
$
\E[C_n^4 \one_{\widetilde F_n^c \cap E_n^c}] \le \frac{n^{4\eps}}{\vert S_n^k \vert^4}.
$ 
Combining this with Lemma \ref{lem:fourth_moment_main_term} and Assumption \ref{ass:B}(i),
\begin{equation}
    \label{eq:case2_cn}
\E[C_n^4] \le \frac{ n^{4\eps}}{\vert S_n^k \vert^4} + n^{4\eps} \sqrt{P(E_n)} \le \frac{2 n^{4\eps}}{\vert S_n^k \vert^4}
\end{equation}
for every $\eps > 0$ and $n \gg 1$.

Next consider the main term $M_n$ in \eqref{eq:decomposition_D2_log}. First, by Lemma \ref{lem:fourth_moment_main_term}
and Assumption \ref{ass:B}(i),
\begin{equation}
    \label{eq:case2_mn}
\E[M_n^4 \one_{E_n}] \le  n^{4 \eps} \vert  W_n \vert^{8} \e^{-\beta_2(\eps) n^\eps / 4} \le \e^{-\beta_2(\eps) n^\eps / 8}
\end{equation}
for every $\eps > 0$ and $n \gg 1$.
Next, we split $M_n$ into three sums based on whether $Z$ is
close to $x$, close to $y$, or far from both $x$ and $y$, i.e.,
\begin{align*}
M_n  = & \sum_{Z \in \mc U_n^+(\PP_n^{xy}) \cap \Q(x,R_n) } 
    \log \Big( \frac{G(Z,\PP_n^{xy})}{G(Z,\PP_n^x)} \Big) + \log \Big( \frac{G(Z,\PP_n)}{G(Z,\PP_n^y)}  \Big)\\
    & \qquad \qquad  + \sum_{Z \in \mc U_n^+(\PP_n^{xy})\cap \Q(y,R_n) } 
    \log \Big( \frac{G(Z,\PP_n^{xy})}{G(Z,\PP_n^y)} \Big) + \log \Big( \frac{G(Z,\PP_n)}{G(Z,\PP_n^x)}  \Big)\\
    &  \qquad \qquad  + \sum_{Z \in \mc U_n^+(\PP_n^{xy}) \cap \Q(x,R_n)^c \cap \Q(y,R_n)^c}\log \Big( \frac{G(Z,\PP_n^{xy})G(Z,\PP_n)}{G(Z,\PP_n^x)G(Z,\PP_n^y)} \Big).
\end{align*}
Letting $\widetilde J_n = \widetilde F_n^c \cap E_n^c$,
\begin{align*}
\E[ \vert M_n \vert^4 \one_{\widetilde  J_n}]  \le & 5^3 \E\Big[ \Big\vert \sum_{Z \in \mc U_n^+(\PP_n^{xy}) \cap \Q(x,R_n) } \log \Big( \frac{G(Z,\PP_n^{xy})}{G(Z,\PP_n^x)} \Big)\Big\vert^4 \one_{\widetilde J_n} \Big] \\  
    & \quad + 5^3 \E\Big[ \Big\vert  \sum_{Z \in \mc U_n^+(\PP_n^{xy}) \cap \Q(x,R_n) }  \log \Big( \frac{G(Z,\PP_n)}{G(Z,\PP_n^y)}  \Big) \Big\vert^4 \one_{\widetilde  J_n} \Big] \\
    & \quad  +  5^3 \E\Big[ \Big\vert \sum_{Z \in \mc U_n^+(\PP_n^{xy})\cap \Q(y,R_n) }  \log \Big( \frac{G(Z,\PP_n^{xy})}{G(Z,\PP_n^y)} \Big) \Big\vert^4 \one_{\widetilde J_n} \Big] \\
    & \quad + 5^3 \E\Big[ \Big\vert \sum_{Z \in \mc U_n^+(\PP_n^{xy})\cap \Q(y,R_n) }  \log \Big( \frac{G(Z,\PP_n)}{G(Z,\PP_n^x)}  \Big) \Big\vert^4 \one_{\widetilde  J_n}  \Big] \\
    & \quad +  5^3 \E\Big[ \Big\vert \sum_{Z \in \mc U_n^+(\PP_n^{xy}) \cap \Q(x,R_n)^c \cap \Q(y,R_n)^c}\log \Big( \frac{G(Z,\PP_n^{xy})G(Z,\PP_n)}{G(Z,\PP_n^x)G(Z,\PP_n^y)} \Big) \Big\vert^4 \one_{\widetilde  J_n} \Big] .
\end{align*}
Once again, since the event $E_n^c$ implies that $\Q(x,R_n) \cap \Q(y,R_n) \neq \emptyset$, then by Lemma \ref{lem:log_ratio_G_2},
\begin{equation}
    \label{eq:case2_mn_final}
\E[ \vert M_n \vert^4 \one_{\widetilde  J_n}]  \le  5^4 \frac{n^{4\eps}}{\vert S_n^k \vert^4}.
\end{equation}
Combining equations (\ref{eq:case2_fn}), (\ref{eq:case2_cn}), (\ref{eq:case2_mn}) and (\ref{eq:case2_mn_final}) completes the proof.
\end{proof}

Finally, in Case III, the remaining $y$-values are only of order $n^\eps$ and hence the steps involved can 
be cruder compared to Cases I and II. In particular, it will be sufficient to obtain a bound on the fourth moment $D_{xy}^2$ also of order $n^\eps$.
\begin{lemma}[$4^{\text{th}}$ moment of $D_{xy}^2$; Case III]
    \label{lem:fourth_moment_second_case_three_log}
    Let $\eps > 0$, $n \gg 1$. Under Assumption \ref{ass:B},
   $$
        \sup_{x \in  \W_n} \sup_{y \in \Q(x,n^\eps)}\E[(D_{xy}^2\Sigma(\PP_n))^4]
         \le n^{4\eps}.
    $$
\end{lemma}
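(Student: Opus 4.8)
The plan is to adapt the argument of Lemma~\ref{lem:fourth_moment_second_case_two_log} but with the crudest possible estimates, since here $y$ ranges only over the cube $\Q(x,n^\eps)$, so a bound of order $n^{4\eps}$ suffices; throughout I write $\eps'$ for a rescaled copy of $\eps$ and invoke Lemma~\ref{lem:normalization} at the end to absorb all constants and logarithmic and polynomial-in-$n^{\eps'}$ losses. Starting from the decomposition $D_{xy}^2\Sigma^{\log}_n(\PP_n)=M_n+C_n$ of \eqref{eq:decomposition_D2_log}, I would first dispose of two bad events. On $\widetilde F_n$, Assumption~\ref{ass:B}(ii), the Cauchy--Schwarz inequality and the crude bounds of Lemma~\ref{lem:fourth_moment_main_term} give a contribution bounded by $\e^{-\beta_4\vert S_n^k\vert/4}$; on $E_n=\{R_n(x)+R_n(y)>n^\eps\}$, Lemma~\ref{lem:fourth_moment_main_term} together with Assumption~\ref{ass:B}(i) gives $\E[M_n^4\one_{E_n}]\le n^{4\eps}\vert W_n\vert^8\e^{-\beta_2 n^\eps/8}$ and, likewise, an exponentially small bound for $\E[C_n^4\one_{E_n}]$. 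It remains to bound the fourth moment on $\widetilde F_n^c\cap E_n^c$; there $R_n(x),R_n(y)\le n^\eps$, so, since $y\in\Q(x,n^\eps)$, we have $\Q(x,R_n)\cup\Q(y,R_n)\subseteq\Q(x,2n^\eps)$, a cube whose Poisson content has all moments of order $O(n^{d\eps})$ by Lemma~\ref{lem:poisson_moment_bound}, and on $\widetilde F_n^c$ every admissible compound score satisfies $\beta_3\vert S_n^k\vert\le G(Z,\cdot)\le\overline g_{\sup}(\PP_n)(\PP(\S_n^k(Z,1))+2)$.

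Next I would treat the ``localized'' contributions: the cross term $C_n$ and the part $M_n'$ of $M_n$ coming from $Z\in\Q(x,2n^\eps)$. By Assumption~\ref{ass:B}(i) (as in the proof of Lemma~\ref{lem:fourth_moment_main_term}), $\mc U_n^+(\PP_n^{xy})^c\subseteq\Q(x,R_n)\cup\Q(y,R_n)\subseteq\Q(x,2n^\eps)$, so both $C_n$ and $M_n'$ are sums of $O(\PP(\Q(x,2n^\eps)))$ terms of the form $\pm\log G(Z,\PP_n^\bullet)$. On $\widetilde F_n^c$ each such term is at most $\log\big(\overline g_{\sup}(\PP_n)(\PP(\S_n^k(Z,1))+2)\big)$, and replacing $\PP(\S_n^k(Z,1))$ by $8\vert S_n^k\vert$ up to a set of probability $\le\e^{-c\vert S_n^k\vert}$ via the Poisson concentration Lemma~\ref{lem:pois_conc}, then using $\log t\le t^{\eps'}$, bounds it by $C\log n+\overline g_{\sup}(\PP_n)^{\eps'}$. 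H\"older's inequality, the $O(n^{d\eps})$ moment bounds for $\PP(\Q(x,2n^\eps))$ and Assumption~\ref{ass:A}(iii) then give $\E[C_n^4\one_{\widetilde F_n^c\cap E_n^c}]+\E[(M_n')^4\one_{\widetilde F_n^c\cap E_n^c}]\le n^{4\eps}$ after rescaling $\eps$.

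For the remaining part $M_n''$ of $M_n$, over $Z\notin\Q(x,2n^\eps)$, every $Z$ lies outside $\Q(x,R_n)\cup\Q(y,R_n)$, so each difference $G(Z,\PP_n^\bullet)-G(Z,\PP_n)$ is carried by the single score against the added point plus the scores against the at most $\PP(\Q(x,R_n))+\PP(\Q(y,R_n))$ points of $\PP_n$ that lie inside the non-stable cubes and within $\S_n^k(Z,1)$; hence each difference is at most $\overline g_{\sup}(\PP_n)\big(1+\PP(\Q(x,R_n))+\PP(\Q(y,R_n))\big)$, and is nonzero only for $Z\in\S_n^k(x,n^\eps+1)\cup\S_n^k(y,n^\eps+1)$, a region containing $O(n^{k\eps}\vert S_n^k\vert)$ Poisson points in expectation. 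On the rare event that the relative deviation $(G(Z,\PP_n^\bullet)-G(Z,\PP_n))/(\beta_3\vert S_n^k\vert)$ exceeds $1/2$ I would bound $M_n''$ crudely, exactly as in the proof of Lemma~\ref{lem:log_moments_G}, via Markov's inequality, Assumption~\ref{ass:A}(iii) and Lemma~\ref{lem:moments_of_Q_tilde}; otherwise $\vert\log(1+t)\vert\le2\vert t\vert$ and summing over the relevant $Z$ gives, up to constants, $\vert M_n''\vert\le n^{k\eps}\overline g_{\sup}(\PP_n)\big(1+\PP(\Q(x,R_n))+\PP(\Q(y,R_n))\big)$. Raising to the fourth power and applying H\"older, Assumption~\ref{ass:A}(iii) and Lemma~\ref{lem:moments_of_Q_tilde} yields $\E[(M_n'')^4\one_{\widetilde F_n^c\cap E_n^c}]\le n^{4k\eps+4\eps'}\le n^{4\eps}$ after rescaling. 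Combining all of the above through $(M_n+C_n)^4\le8(M_n^4+C_n^4)$ and Lemma~\ref{lem:normalization} completes the proof; in fact this is essentially the $\log$-analogue of the proof of Lemma~\ref{lem:fourth_moment_second_case_three}.

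The hard part will be the bookkeeping in the last step. Because $x$ and $y$ are within $\Q(x,n^\eps)$ of one another, one of them may lie inside the other's non-stable cube, so the disjointness device of Case~II --- the event $H_n$ of \eqref{eq:disjoint_event} and Lemma~\ref{lem:log_ratio_G_2} --- is unavailable, and one must instead verify directly that for $Z$ outside $\Q(x,2n^\eps)$ the only changes to $G(Z,\cdot)$ under the insertion of $x$ and $y$ come from the direct scores $g(Z,x,\cdot)$ and $g(Z,y,\cdot)$ and from scores against Poisson points inside the two non-stable cubes. This requires only $k$-locality (Assumption~\ref{ass:A}(i)) and the definitions of the stabilization radii, but it is the step where care is needed.
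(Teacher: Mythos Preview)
Your approach is correct but considerably more elaborate than the paper's. The paper's proof is extremely short: for $C_n$, it simply invokes Lemma~\ref{lem:fourth_moment_main_term} with $E_n=\Omega$, which immediately yields $\E[C_n^4]\le n^{4\eps}$; for $M_n$, it observes that on $\widetilde F_n^c$ one may write
\[
\log\frac{G(Z,\PP_n^{xy})G(Z,\PP_n)}{G(Z,\PP_n^x)G(Z,\PP_n^y)}
=\log\frac{G(Z,\PP_n^{xy})}{G(Z,\PP_n^x)}+\log\frac{G(Z,\PP_n)}{G(Z,\PP_n^y)},
\]
and then applies Lemma~\ref{lem:log_moments_G} (bounds \eqref{eq:G_bound_2} and \eqref{eq:G_bound_3}) to each piece separately, giving $\E[M_n^4\one_{\widetilde F_n^c}]\le n^{4\eps}$ at once. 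No event $E_n=\{R_n(x)+R_n(y)>n^\eps\}$ is introduced, no splitting of $M_n$ into $M_n'$ and $M_n''$, no Poisson concentration, and no direct $|\log(1+t)|\le 2|t|$ argument is needed in this lemma --- all of that machinery is already packaged inside Lemma~\ref{lem:log_moments_G}. In effect you are re-deriving the content of Lemma~\ref{lem:log_moments_G} from scratch inside Case~III, which works but is redundant. The paper's route buys brevity and modularity; your route is self-contained and makes the mechanism transparent, but at the cost of duplicating work already done.
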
 
\begin{proof}
First, we consider the cross term $C_n$ in \eqref{eq:decomposition_D2_log}. By Lemma \ref{lem:fourth_moment_main_term} with $E_n = \Omega$, i.e., $E_n$ as the entire event space,
$
\E[C_n^4] \le n^{4\eps}.
$
Thus, we can focus on the main term $M_n$ in \eqref{eq:decomposition_D2_log}. Consider again $F_n$ as defined in \eqref{eq:exponential_concentration} 
and its measurable extension $\widetilde F_n$. Note as in the proof of Lemma \ref{lem:fourth_moment_second_case_three_log},
it is sufficient to consider contributions of $M_n$ on the event $\widetilde F_n^c$.
Hence,
\begin{equation}
    \label{eq:case3_mn}
\begin{aligned}
\E[ \vert M_n \vert^4 \one_{\widetilde  F_n^c}]  \le & 2^3 \E\Big[ \Big\vert \sum_{Z \in \mc U_n^+(\PP_n^{xy}) } \log \Big( \frac{G(Z,\PP_n^{xy})}{G(Z,\PP_n^x)} \Big)\Big\vert^4 \one_{\widetilde F_n^c} \Big] \\  
    & \qquad + 2^3 \E\Big[ \Big\vert  \sum_{Z \in \mc U_n^+(\PP_n^{xy}) }  \log \Big( \frac{G(Z,\PP_n)}{G(Z,\PP_n^y)}  \Big) \Big\vert^4 \one_{\widetilde  F_n^c} \Big].
\end{aligned}
\end{equation}
Combining \eqref{eq:case3_mn} with Lemma \ref{lem:log_moments_G} completes the proof.

\end{proof}
We can now prove the bounds on the first two error terms $\widetilde I_{n,1}$ and $\widetilde I_{n,2}$.
\begin{proof}[Proof of Lemma \ref{lem:en1_sim}(i)]
Recall we want to prove that
\begin{align*}
    \widetilde I_{n,1}  &= \int_{\W_n^3}
     \E\big[D_x \Sigma^{\log}_n(\PP_n)^2
    D_y \Sigma^{\log}_n(\PP_n)^2\big]^{1/2} \\
    & \qquad \qquad \qquad \qquad \times \E\big[D_{x,z}^2 \Sigma^{\log}_n(\PP_n)^2 
    D_{y,z}^2 \Sigma^{\log}_n(\PP_n)^2\big]^{1/2} \ \text d(x,y,z)  \le n^{\eps} \vert  W_n \vert.
\end{align*}
First, by applying the Cauchy--Schwarz inequality to both expectations,
\begin{align*}
    \widetilde I_{n,1} \le &  \int_{ \W_n}\Big(\int_{ \W_n}
    \sqrt[4]{\E\big[D_y \Sigma^{\log}_n(\PP_n)^4\big]}\sqrt[4]{\E\big[D_{xy}^2 \Sigma^{\log}_n(\PP_n)^4\big]}\text dy\Big)^2 \text dx.
\end{align*}
Next, by decomposing the inner integral into Cases I - III as defined in \eqref{eq:three_cases},
\begin{align*}
	\widetilde I_{n,1}  = &  \int_{ \W_n}\Big(\int_{\W_n \setminus \S_n^k(x,n^\eps) }\sqrt[4]{\E\big[D_y \Sigma^{\log}_n(\PP_n)^4\big]}\sqrt[4]{\E\big[D_{xy}^2 \Sigma^{\log}_n(\PP_n)^4\big]}\text dy\Big)^2 \text dx\\
    &  +    \int_{ \W_n}\Big(\int_{\S_n^k(x,n^\eps) \setminus \Q(x,n^\eps)}\sqrt[4]{\E\big[D_y \Sigma^{\log}_n(\PP_n)^4\big]}\sqrt[4]{\E\big[D_{xy}^2 \Sigma^{\log}_n(\PP_n)^4\big]}\text dy\Big)^2 \text dx\\
    &   +   \int_{ \W_n}\Big(\int_{\Q(x,n^\eps)}\sqrt[4]{\E\big[D_y \Sigma^{\log}_n(\PP_n)^4\big]}\sqrt[4]{\E\big[D_{xy}^2 \Sigma^{\log}_n(\PP_n)^4\big]}\text dy\Big)^2 \text dx,
\end{align*}
then by Lemmas \ref{lem:fourth_moment_log} and \ref{lem:fourth_moment_first_case_log} - \ref{lem:fourth_moment_second_case_three_log},
\begin{align*}
	\widetilde I_{n,1} \le & \int_{ \W_n}\Big(\int_{\W_n \setminus \S_n^k(x,n^\eps) } n^{2\eps} \vert  W_n\vert^2 \e^{-\beta_1(\eps) n^{\eps} / 32} \text dy\Big)^2 \text dx\\
    &  \qquad +  \int_{ \W_n}\Big(\int_{\S_n^k(x,n^\eps) \setminus \Q(x,n^\eps)} \frac{n^{2\eps}}{\vert S_n^k \vert} \text dy\Big)^2 \text dx + \int_{ \W_n}\Big(\int_{\Q(x,n^\eps)}  n^{2\eps} \text dy\Big)^2 \text dx.
\end{align*}
As the first integral goes to zero at exponential speed and using $\vert S_n^k(0,n^\eps) \vert = n^{k \eps} \vert S_n^k \vert$,
then for all $n \gg 1$,
$$
	\widetilde I_{n,1} \le 2 \vert  W_n \vert  n^{(4+2k) \eps} + 2 \vert  W_n \vert n^{(4+2d)\eps},
$$
so a change of variables $\eps' = (4+2d)\eps$ and invoking Lemma \ref{lem:normalization} completes the proof. 
\end{proof}

\begin{proof}[Proof of Lemma \ref{lem:en1_sim}(ii)]
Recall we want to prove that
    $$
    \widetilde I_{n,2} = \int_{\W_n^3} \E\big[D_{x,z}^2 \Sigma(\PP_n)^2
    D_{y,z}^2 \Sigma(\PP_n)^2\big] \ \text d(x,y,z)
     \le n^{ \eps } \vert  W_n \vert.
    $$
Following the steps in the proof of Lemma \ref{lem:en1_sim}(i): First, by the Cauchy--Schwarz inequality,
$$
\widetilde I_{n,2} \le \int_{ W_n}\Big(\int_{ W_n}\sqrt{\E\big[D_{xy}^2 \Sigma(\PP_n)^4\big]} \text dy\Big)^2 \text dx.
$$
By the decomposition in \eqref{eq:three_cases} as well as Lemmas \ref{lem:fourth_moment_log} and \ref{lem:fourth_moment_first_case_log} - \ref{lem:fourth_moment_second_case_three_log}, 
\begin{align*}
	\widetilde I_{n,2} \le & \int_{ \W_n}\Big(\int_{\W_n \setminus \S_n^k(x,n^\eps) } n^{2\eps} \vert  W_n\vert^4 \e^{-\beta_1(\eps) n^{\eps} / 16} \text dy\Big)^2 \text dx\\
    &  \qquad +  \int_{ \W_n}\Big(\int_{\S_n^k(x,n^\eps) \setminus \Q(x,n^\eps)} \frac{n^{2\eps}}{\vert S_n^k \vert^2} \text dy\Big)^2 \text dx + \int_{ \W_n}\Big(\int_{\Q(x,n^\eps)}  n^{2\eps} \text dy\Big)^2 \text dx.
\end{align*}
Once more, as the first integral goes to zero at exponential speed,
then for all $n \gg 1$,
$$
\widetilde I_{n,2} \le 2 \frac{\vert  W_n \vert  n^{(4+2k) \eps}}{\vert S_n^k \vert} + 2 \vert  W_n \vert n^{(4+2d)\eps},
$$
so a change of variables $\eps' = (4+2d)\eps$ and invoking Lemma \ref{lem:normalization} completes the proof. 
\end{proof}

\section*{Acknowledgments}
The authors would like to thank Lianne de Jonge for helpful discussions regarding the crossing numbers. Also, C.
Hirsch was supported by a research grant (VIL69126) from Villum Fonden and H. Döring was supported by the Deutsche Forschungsgemeinschaft (DFG, German Research Foundation) under Project-ID 531542011.
Additionally, part of the research was carried during N. Lundbye's stay at Osnabrück University, 
and he would like to extend his gratitude to its Department of Mathematics for their hospitality.

Generative AI tools were used to assist in drafting and LaTeX preparation of parts of the manuscript. All mathematical arguments were developed and verified manually.

\pagebreak

\addcontentsline{toc}{section}{References}
\bibliographystyle{abbrv}
\bibliography{./lit}

\begin{thebibliography}{10}

\bibitem{MR2292589}
F.~Baccelli and C.~Bordenave.
\newblock The radial spanning tree of a {P}oisson point process.
\newblock {\em Ann. Appl. Probab.}, 17(1):305--359, 2007.

\bibitem{MR2035772}
A.~G. Bhatt and R.~Roy.
\newblock On a random directed spanning tree.
\newblock {\em Adv. in Appl. Probab.}, 36(1):19--42, 2004.

\bibitem{chinmoy}
C.~Bhattacharjee and I.~Molchanov.
\newblock Gaussian approximation for sums of region-stabilizing scores.
\newblock {\em Electron. J. Probab.}, 27:Paper No. 111, 27, 2022.

\bibitem{BoissonnatChazalYvinec2018_GeometricTopologicalInference}
J.-D. Boissonnat, F.~Chazal, and M.~Yvinec.
\newblock {\em Geometric and Topological Inference}, volume~57 of {\em
  Cambridge Texts in Applied Mathematics}.
\newblock Cambridge University Press, 2018.

\bibitem{bouquet2024barcode}
A.~Bouquet and A.~R. Vindas-Mel{\'e}ndez.
\newblock Combinatorial results on barcode lattices.
\newblock {\em Order}, 42:193--209, 2025.

\bibitem{BruckGarin2023_StratifyingSpaceOfBarcodes}
B.~Br{\"u}ck and A.~Garin.
\newblock Stratifying the space of barcodes using {C}oxeter complexes.
\newblock {\em J. Appl. Comput. Topol.}, 7:369--395, 2023.

\bibitem{chatterjee2017cltnew}
S.~Chatterjee and P.~Diaconis.
\newblock A central limit theorem for a new statistic on permutations.
\newblock {\em Indian J. Pure Appl. Math.}, 48(7):561--573, 2017.

\bibitem{chimani2018crossing}
M.~Chimani, H.~D{\"o}ring, and M.~Reitzner.
\newblock Crossing numbers and stress of random graphs.
\newblock In T.~Biedl and A.~Kerren, editors, {\em Graph Drawing and Network
  Visualization (GD 2018)}, volume 11282 of {\em Lecture Notes in Computer
  Science}, pages 255--268. Springer, 2018.

\bibitem{curry2024trees2}
J.~Curry, J.~DeSha, A.~Garin, K.~Hess, L.~Kanari, and B.~Mallery.
\newblock From trees to barcodes and back again {II}: Combinatorial and
  probabilistic aspects of a topological inverse problem.
\newblock {\em Comput. Geom.}, 116:102031, 2024.

\bibitem{brochette}
H.~Duminil-Copin, M.~R. Hil{\'a}rio, G.~Kozma, and V.~Sidoravicius.
\newblock Brochette percolation.
\newblock {\em Israel J. Math.}, 225(1):479--501, 2018.

\bibitem{doring}
H.~D{\"u}ring and L.~de~Jonge.
\newblock Limit theorems for the number of crossings and stress in projections
  of the random geometric graph.
\newblock {\em arXiv preprint arXiv:2408.03218}, 2024.

\bibitem{Esseen1942_LiapunoffError}
C.-G. Esseen.
\newblock On the {L}iapunoff limit of error in the theory of probability.
\newblock {\em Arkiv f{\"o}r Mat. Astron. Fys. Ser. A}, 28(9):1--19, 1942.

\bibitem{FerrariLandimThorisson2004}
P.~A. Ferrari, C.~Landim, and H.~Thorisson.
\newblock {P}oisson trees, succession lines and coalescing random walks.
\newblock {\em Ann. Inst. Henri Poincar{\'e} Probab. Stat.}, 40(2):141--152,
  2004.

\bibitem{fulman2004stein}
J.~Fulman.
\newblock Stein's method and non-reversible {M}arkov chains.
\newblock In {\em Stein's Method: Expository Lectures and Applications},
  volume~5 of {\em Lecture Notes Series}, pages 69--77. World Scientific, 2004.

\bibitem{GareyJohnson1983_CrossingNumberNPComplete}
M.~R. Garey and D.~S. Johnson.
\newblock Crossing number is {NP}-complete.
\newblock {\em SIAM J. Algebraic Discrete Methods}, 4(3):312--316, 1983.

\bibitem{GracarEtAl2021_percolation}
P.~Gracar, M.~Heydenreich, C.~M{\"o}nch, and P.~M{\"o}rters.
\newblock Recurrence versus transience for weight-dependent random connection
  models.
\newblock {\em Electron. J. Probab.}, 27:1--31, 2022.

\bibitem{hilario}
M.~R. Hil{\'a}rio, M.~S{\'a}, and R.~Sanchis.
\newblock Strict inequality for bond percolation on a dilute lattice with
  columnar disorder.
\newblock {\em Stochastic Process. Appl.}, 149:60--74, 2022.

\bibitem{hoffmann}
C.~Hoffman.
\newblock Phase transition in dependent percolation.
\newblock {\em Comm. Math. Phys.}, 254(1):1--22, 2005.

\bibitem{hug}
D.~Hug, G.~Last, and M.~Schulte.
\newblock Boolean models in hyperbolic space.
\newblock {\em arXiv preprint arXiv:2408.03890}, 2024.

\bibitem{AD}
B.~Jahnel, S.~K. Jhawar, and A.~D. Vu.
\newblock Continuum percolation in a nonstabilizing environment.
\newblock {\em Electron. J. Probab.}, 28:1--38, 2023.

\bibitem{jaramillo2023combinatorial}
E.~Jaramillo-Rodriguez.
\newblock Combinatorial methods for barcode analysis, 2023.

\bibitem{adelie}
L.~Kanari, A.~Garin, and K.~Hess.
\newblock From trees to barcodes and back again: theoretical and statistical
  perspectives.
\newblock {\em Algorithms (Basel)}, 13(12):Paper No. 335, 27, 2020.

\bibitem{vares}
H.~Kesten, V.~Sidoravicius, and M.~E. Vares.
\newblock Oriented percolation in a random environment.
\newblock {\em Electron. J. Probab.}, 27:Paper No. 82, 49, 2022.

\bibitem{mal_stab}
R.~Lachi\`eze-Rey, M.~Schulte, and J.~E. Yukich.
\newblock Normal approximation for stabilizing functionals.
\newblock {\em Ann. Appl. Probab.}, 29(2):931--993, 2019.

\bibitem{nestmann}
G.~Last, F.~Nestmann, and M.~Schulte.
\newblock The random connection model and functions of edge-marked {P}oisson
  processes: second order properties and normal approximation.
\newblock {\em Ann. Appl. Probab.}, 31(1):128--168, 2021.

\bibitem{mehler}
G.~Last, G.~Peccati, and M.~Schulte.
\newblock Normal approximation on {P}oisson spaces: {M}ehler's formula, second
  order {P}oincar{\'e} inequalities and stabilization.
\newblock {\em Probab. Theory Related Fields}, 165:667--723, 2016.

\bibitem{Penrose2003_RandomGeometricGraphs}
M.~Penrose.
\newblock {\em Random Geometric Graphs}, volume~5 of {\em Oxford Studies in
  Probability}.
\newblock Oxford University Press, 2003.

\bibitem{yukCLT}
M.~D. Penrose and J.~E. Yukich.
\newblock Central limit theorems for some graphs in computational geometry.
\newblock {\em Ann. Appl. Probab.}, 11(4):1005--1041, 2001.

\bibitem{Pinsky2017_ConnectionsBetweenPermutationCyclesAndTouchardPolynomials}
R.~G. Pinsky.
\newblock Some connections between permutation cycles and {T}ouchard
  polynomials and between permutations that fix a set and covers of multisets.
\newblock {\em Electron. Commun. Probab.}, 22, 2017.

\bibitem{Ross2011_FundamentalsOfSteinsMethod}
N.~Ross.
\newblock Fundamentals of {S}tein's method.
\newblock {\em Probab. Surv.}, 8:210--293, 2011.

\bibitem{trauth3}
H.~Sambale, C.~Th{\"a}le, and T.~Trauthwein.
\newblock Central limit theorems for the nearest neighbour embracing graph in
  {E}uclidean and hyperbolic space.
\newblock {\em Stochastic Process. Appl.}, 188, 2025.

\bibitem{y3}
M.~Schulte and J.~E. Yukich.
\newblock Rates of multivariate normal approximation for statistics in
  geometric probability.
\newblock {\em Ann. Appl. Probab.}, 33(1):507--548, 2023.

\bibitem{stanley1997enumerative}
R.~P. Stanley.
\newblock {\em Enumerative Combinatorics, Vol. {I}}.
\newblock Cambridge University Press, 1997.

\bibitem{trauth2}
T.~Trauthwein.
\newblock Multivariate second-order $p$-{P}oincar{\'e} inequalities.
\newblock {\em arXiv preprint arXiv:2409.02843}, 2024.

\bibitem{trauth}
T.~Trauthwein.
\newblock Quantitative {CLT}s on the {P}oisson space via {S}korohod estimates
  and $p$-{P}oincar{\'e} inequalities.
\newblock {\em Ann. Appl. Probab.}, 2025.
\newblock To appear.

\bibitem{Villani2003_TopicsInOptimalTransportation}
C.~Villani.
\newblock {\em Topics in Optimal Transportation}, volume~58 of {\em Graduate
  Studies in Mathematics}.
\newblock American Mathematical Society, 2003.

\bibitem{Wasserman2004_AllOfStatistics}
L.~Wasserman.
\newblock {\em All of Statistics: A Concise Course in Statistical Inference}.
\newblock Springer Texts in Statistics. Springer, 2004.

\bibitem{gibbsCLT}
A.~Xia and J.~E. Yukich.
\newblock Normal approximation for statistics of {G}ibbsian input in geometric
  probability.
\newblock {\em Adv. in Appl. Probab.}, 47(4):934--972, 2015.

\end{thebibliography}

\pagebreak 

\appendix
\section{Appendix: Remaining proofs from Section \ref{sec:3}}
\label{sec:A}
This appendix is dedicated to proving the technical, but basic lemmas that were stated and applied in Section 3. 
First, we prove the Poisson ball probability bound in Lemma \ref{lem:poisson_ball_bound} implies that $f$ has sub-polynomial moments, i.e., satisfies Assumption \ref{ass:A}(iii).
\begin{proof}[Proof of Lemma \ref{lem:poisson_ball_bound}]
    Recall, we want to prove that
    $
\E[\overline f_{\sup}(\PP_n)^m] \le n^{m \eps},$
    First, by Markov's inequality and the bound on $f$ in (\ref{eq:f_bound}), we have that
    \begin{equation}
        \label{eq:markov_f_bound}
    P(\overline f_{\sup}(\PP_n) > s) 
    \le  s^{-2} M_n,
    \end{equation}
    where
    $$
    M_n =  \E\Big[\Big(\sup_{\mathrm{x}\in \mc R } 
    \sup_{Z,V \in \PP_n^\mathrm{x}} \PP^\mathrm{x}(B(\dot Z, \ell R_Z)) \cdot \PP^\mathrm{x}(B(\dot V, \ell R_V))\Big)^2\Big].
    $$
    By Tonelli's theorem, 
    $$
    M_n = \sum_{Z,V = 1}^\infty \E\Big[\Big( \sup_{\mathrm{x}\in \mc R } 
    \sup_{Z,V \in \PP_n^\mathrm{x}} \PP^\mathrm{x}(B(\dot Z, \ell z)) \cdot \PP^\mathrm{x}(B(\dot V, \ell w)) \Big)^2  \Big]
    P\big(R_Z = z, R_V = v \big).
    $$
    Define for any $r \in \N$, the fixed points $x_{r,1}, \ldots, x_{r,N_{n,r}} \in  W_n$
    such that
    $
     W_n \subseteq \bigcup_{i=1}^{N_{n,r}} B(x_{r,i}, r),
    $
    and assume without loss of generality that there exists $C>0$ such that
    $
    N_{n,r} \le C \frac{\vert W_n \vert}{\pi r^d}.
    $
    By the triangle inequality, it follows that 
    $$
    \PP^\mathrm{x}(B(\dot Z,z)) \le \PP^\mathrm{x}(B(x_{z,i},2 \ell z)) \quad \text{ and } \quad \PP^\mathrm{x}(B(\dot W,w)) \le \PP^\mathrm{x}(B(x_{w,j},2\lambda w))
    $$
    for some $i \in \{1,\ldots,N_{n,z}\}$ and $j \in \{1,\ldots,N_{n,w}\}$ . Hence it follows that
    \begin{align*}
    M_n \le
    \sum_{Z,V = 1}^\infty  \E\Big[\Big( \max_{i = 1,\ldots, N_{n,z} }\max_{j = 1,\ldots, N_{n,w} }
    \big (\PP(B(x_{z,i},2 \ell z)) + 2 \big) \cdot & \big( \PP(B(x_{w,j},2 \ell w)) +2 \big) \Big)^2 \Big] \\
    & \qquad \times P\big(R_Z = z, R_W = w \big).
    \end{align*}
    By the union bound, the Cauchy--Schwarz inequality (twice) and stationarity of $\PP$,
    \begin{align*}
    M_n 
    \le 
	    \sum_{Z,V = 1}^\infty  &N_{n,z} N_{n,w} \sqrt{\E\big[(\PP(B(0,2 \ell z))+2)^4 \big]}
     \sqrt{\E\big[ (\PP(B(0,2 \ell w))+2 )^4 \big]} \\
    &\times \sqrt{P(R_Z = z)}\sqrt{P(R_W = w)}.
    \end{align*}
    By Lemma \ref{lem:poisson_moment_bound}, it follows that
    $$
    M_n 
    \le 
    \sum_{Z,V = 1}^\infty  C' \vert W_n \vert^2
    \sqrt{P(R_Z = z)}\sqrt{P(R_W = w)}
    $$
    for some $C' >0$. By the assumption on the tail of $R_Z$ and $R_W$ and the convergence radius of a geometric series, it follows that
    $
    M_n 
    \le 
    C'' \vert W_n \vert^2 \e^{-\gamma(\eps) n^\eps}.
    $
    for some $C'' >0$. Rewrite
    $$
        \E\big[\overline f_{\sup}(\PP_n)^{m}\big]
         = \int_0^{n^{m\eps}} P\big(\overline f_{\sup}(\PP_n)^{m} > r\big) \ \text dr + \int_{n^{m\eps}}^{\infty} P\big(\overline f_{\sup}(\PP_n)^{m} > r\big) \ \text dr.
    $$
    Together with \eqref{eq:markov_f_bound}  and Lemma \ref{lem:normalization}, we obtain that
        \begin{equation}
            \label{eq:sup_f_bound}
            \E\big[\overline f_{\sup}(\PP_n)^{m}\big]
            \le n^{m \eps} + C'' \vert W_n \vert^2 \e^{-\gamma(\eps) n^\eps}  \int_{n^{m\eps}}^{\infty} \frac{1}{r^2} \ \text dr
            \le n^{m \eps},
            \end{equation}
        for all $n \gg 1$, which completes the proof.
    \end{proof}

       Next, we turn to proving the variance bound in Lemma \ref{lem:var_bound}. To that end, we first prove a more general bound
    that we then can apply in our setting. Here
a modified version of the lower-bound found in \cite[Lemma 2.3]{gibbsCLT}.

    \begin{lemma}[General variance bound]
    \label{lem:var_lower_bound}
    Let $U_1$ and $U_2$ denote independent random variables with values in a measurable space $(\Lambda, \mathcal{E})$.
    For fixed $\mathcal{E}$-measurable sets $A$, $B$, $\widetilde B \su \Lambda$, let $h : \Lambda \times \Lambda \rightarrow [0,\infty)$
    denote a Borel measurable function
    and define
    \[
        \Delta = \inf_{(x, y, \widetilde y) \in A \times B \times \widetilde B} |h(x, y) - h(x, \widetilde y)|.
    \]
    Then,
    $
    \V[h(U_1, U_2) | \sigma(U_1)] \ge \frac{\Delta^2}{4} \Big(P(U_2 \in B) \land P(U_2 \in \widetilde B)\Big).
    $
\end{lemma}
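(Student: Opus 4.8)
The plan is to reduce to a fixed value of the conditioning variable and then combine a Chebyshev-type estimate with a short argument by contradiction. Since $U_1$ and $U_2$ are independent, the conditional variance $\V[h(U_1,U_2)\mid\sigma(U_1)]$ is, $P$-almost surely, equal to the function $x\mapsto\V[h(x,U_2)]$ evaluated at $x=U_1$; hence it suffices to prove that $\V[h(x,U_2)]\ge\tfrac{\Delta^2}{4}\big(P(U_2\in B)\wedge P(U_2\in\widetilde B)\big)$ for every fixed $x\in A$ (and if this variance is infinite there is nothing to show). So I would fix $x\in A$, set $Y=h(x,U_2)$ and $m=\E[Y]$, and start from the trivial bound
\[
\V[Y]=\E\big[(Y-m)^2\big]\ \ge\ \max\Big\{\E\big[(Y-m)^2\one\{U_2\in B\}\big],\ \E\big[(Y-m)^2\one\{U_2\in\widetilde B\}\big]\Big\},
\]
which is valid since $B$ and $\widetilde B$ are measurable and the integrand is non-negative.

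Next I would show that at least one of the two truncated second moments above is bounded below by $\tfrac{\Delta^2}{4}$ times the corresponding probability. Assume toward a contradiction that both are strictly smaller, i.e.\ $\E[(Y-m)^2\one\{U_2\in B\}]<\tfrac{\Delta^2}{4}P(U_2\in B)$ and the analogue for $\widetilde B$. Using $(Y-m)^2\ge\tfrac{\Delta^2}{4}$ on $\{|Y-m|\ge\Delta/2\}$, the first inequality gives $\tfrac{\Delta^2}{4}P\big(U_2\in B,\ |Y-m|\ge\Delta/2\big)<\tfrac{\Delta^2}{4}P(U_2\in B)$, hence $P\big(U_2\in B,\ |Y-m|<\Delta/2\big)>0$; in particular the set $\{y\in B:\ |h(x,y)-m|<\Delta/2\}$ is non-empty (this uses only that $y\mapsto h(x,y)$ is measurable, which holds since $h$ is Borel), so there is $y_0\in B$ with $|h(x,y_0)-m|<\Delta/2$. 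The same reasoning with $\widetilde B$ yields $\widetilde y_0\in\widetilde B$ with $|h(x,\widetilde y_0)-m|<\Delta/2$. But then $|h(x,y_0)-h(x,\widetilde y_0)|\le|h(x,y_0)-m|+|m-h(x,\widetilde y_0)|<\Delta$, whereas $(x,y_0,\widetilde y_0)\in A\times B\times\widetilde B$ forces $|h(x,y_0)-h(x,\widetilde y_0)|\ge\Delta$ by definition of $\Delta$ --- a contradiction.

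Hence at least one of $\E[(Y-m)^2\one\{U_2\in B\}]\ge\tfrac{\Delta^2}{4}P(U_2\in B)$ or $\E[(Y-m)^2\one\{U_2\in\widetilde B\}]\ge\tfrac{\Delta^2}{4}P(U_2\in\widetilde B)$ holds; combined with the displayed $\max$-bound, in either case $\V[Y]\ge\tfrac{\Delta^2}{4}\big(P(U_2\in B)\wedge P(U_2\in\widetilde B)\big)$, and undoing the reduction gives the claim for the conditional variance on $\{U_1\in A\}$. Most of this is routine; the point I expect to require care is precisely why a contradiction is available --- one cannot in general compare the ranges $\{h(x,y):y\in B\}$ and $\{h(x,\widetilde y):\widetilde y\in\widetilde B\}$ as separated intervals (they may interleave while all cross-pair distances still exceed $\Delta$), so the argument is deliberately routed through the single reference point $m=\E[Y]$, which is also what makes the constant $\tfrac14$ (rather than $\tfrac12$) appear.
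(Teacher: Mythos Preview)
Your proof is correct. The core idea---that for any reference point $m$, the triangle inequality forces at least one of the ranges $\{h(x,y):y\in B\}$ or $\{h(x,\widetilde y):\widetilde y\in\widetilde B\}$ to lie entirely outside $(m-\Delta/2,m+\Delta/2)$---is exactly what drives the paper's argument as well, but your execution is a bit different and arguably cleaner. The paper first conditions on the event $E=\{U_2\in B\}\cup\{U_2\in\widetilde B\}$, applies the law of total variance to get $\V[Y]\ge\V[Y\mid E]\,P(E)$, and then uses the conditional mean $m_E=\E[Y\mid E]$ as the reference point for its (tersely stated) ``case-splitting''. You bypass the conditioning step entirely by taking $m=\E[Y]$ and bounding $\V[Y]$ directly by the restricted second moment over $B$ or $\widetilde B$; this is more elementary and makes the case-split fully explicit via the contradiction argument. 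Your remark that the argument only delivers the bound on $\{U_1\in A\}$ is a fair caveat that the paper's statement suppresses; it is harmless in the application, since there $A$ is taken to be the entire configuration space $\mathfrak N_{n,j}^-$.
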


    \begin{proof}[Proof of Lemma \ref{lem:var_lower_bound}]
        Let $E = \{U_2 \in B\} \cup \{U_2 \in \widetilde B\}$. Then, under the conditional measure $P(\cdot | U_1)$ and the law of total variance,
        \begin{align*}
        \V(h(U_1, U_2))  = & \mathbb{E}[\V(h(U_1, U_2)|E)] + \V(\mathbb{E}[h(U_1, U_2)|E]) \ge  \V(h(U_1, U_2)|E ) \cdot P(E).
        \end{align*}
        Hence by the definition of variance and the assumed independence,
        \begin{equation}
            \label{eq:def_var_input}
        \begin{aligned}
        \V(h(U_1, U_2) | U_1) \ge \mathbb{E}\big[(h(U_1, U_2) - \mathbb{E}\big [h(U_1, U_2)| U_1, E])^2 | U_1, E\big] \big(P(U_2 \in B) \wedge P(U_2 \in \widetilde B)\big).
        \end{aligned}
        \end{equation}
        On the event $E$, it follows by case-splitting, that
       $$
       \vert h(U_1, U_2)  - E[h(U_1, U_2) | U_1, E]\vert  \ge \frac{\Delta^2}{2}.
       $$
    Plugging this into (\ref{eq:def_var_input}) completes the proof.
\end{proof}

For bounding the variance of $\Sigma(\PP_n)$ and $\Sigma^{\log}_n(\PP_n)$, it will be useful to consider a martingale decomposition of these functionals in terms of the information contained in each of these boxes.
Recall that for a Borel set $A \subseteq \R^d$, let $\mathfrak{N}(A)$ denote the locally finite counting measures on $A \times \M$, and define for $1 \le j \le \alpha_r n^d$ the quantities
\begin{align*}
\mathfrak N_{n,j}^-  = \mathfrak{N}\Big(\bigcup_{i=1}^{j-1} Q_{n,i,r}\Big), \quad \text{ and } \quad
\mathfrak N_{n,j}^+  = \mathfrak{N}\Big(\bigcup_{i=j+1}^{\alpha_r n^d} Q_{n,i,r}\Big).
\end{align*}

\begin{proof}[Proof of Lemma \ref{lem:var_bound}]
First, we show that for some $C > 0$, $\V[\Sigma(\PP_n)] \ge C n^{3d-2k}$ for any $n \gg 1$.
Let $\Q_{n,i,r} = Q_{n,i,r} \times \M$ and define the filtration $\mathfrak{G} = (\mc G_{n,j})_{0\le j \le \alpha_r n^d} $ as $\mc G_{n,0} = \{\emptyset, \Omega\}$ and for $1 \le j \le \alpha_r n^d$,
$
\mc G_{n,j} = \sigma\big(\PP \cap \bigcup_{i=1}^j \Q_{n,i,r} \big).
$
Rewriting $\Sigma(\PP_n)$ as as a telescoping series of (orthogonal) square integrable martingales with respect to $\mathfrak{G}$, then
$$
\V[\Sigma(\PP_n)] = \sum_{j = 1}^{\alpha_r n^d} \V\Big[\mathbb{E}[\Sigma(\PP_n) | \mc G_{n,j}] - \mathbb{E}[\Sigma(\PP_n) | \mc G_{n,j-1}]\Big],
$$
and subsequently using the conditional law of total variance,
\begin{equation}
    \label{eq:var_decomposition}
\V[\Sigma(\PP_n)] = \sum_{j = 1}^{n^d} \E\Big[\V \Big[\E[\Sigma(\PP_n) \vert \mc G_{n,j}]\Big\vert \mc G_{n,j-1}\Big]\Big].
\end{equation}
Now define the function $h:\mathfrak N_{n,j}^- \times \mathfrak N(Q_{n,j,r}) \to [0,\infty)$ as
\begin{equation}
    \label{eq:h_function_def}
h(\omega_1, \omega_2) = \int_{\mathfrak N_{n,j}^+} \Sigma(\omega_1, \omega_2, \omega_3) P_{\PP(\cdot \cap \mathfrak N_{n,j}^+)}(\text{d}\omega_3),
\end{equation}
and let
$$
\Delta_n = \inf_{(\omega_1, \omega_2,\widetilde \omega_2) 
\in \mathfrak N_{n,j}^- \times \mc B_{n,j}^{(1)} \times \mc B_{n,j}^{(2)} }
\big\vert h(\omega_1, \omega_2) - h(\omega_1, \widetilde \omega_2) \big\vert.
$$
Thus, applying Lemma \ref{lem:var_lower_bound} with $h$ in \eqref{eq:h_function_def} and
\begin{align*}
U_{n,1} = \PP \cap \bigcup_{i=1}^{j-1} \Q_{n,i,r}, \quad
 U_{n,2} = \PP \cap \Q_{n,j,r}, \quad 
 U_{n,3} = \PP \cap \bigcup_{i=j+1}^{\alpha_r n^d} \Q_{n,i,r},
\end{align*}
it follows from (\ref{eq:var_decomposition}) that
$$
\V[\Sigma(\PP_n)] \ge \sum_{j=1}^{\alpha_r n^d} \frac{\E[\Delta_n^2]}{4}  \Big(P\big(\PP \cap \Q_{n,j,r} \in \mc B_{n,j}^{(1)}\big) \land P\big(\PP \cap \Q_{n,j,r} \in \mc B_{n,j}^{(2)}\big)\Big).
$$
By condition (V2) and Jensen's inequality, 
$$
\E[\Delta_n^2] \ge \E\Big[\# \big\{ i \in I_{n,j}^k \colon \PP \cap \Q_{n,i,r}\in \mc B_{n,i}^{(3)} \big\}\Big]^2
$$
Hence, by the Mecke formula and condition (V1), then there exists $\widetilde C > 0$ such that
$
\V[\Sigma(\PP_n)] \ge \widetilde C\alpha_r n^d  \vert I_{n,j}^k \vert^2.
$
Thus, using that $\vert I_{n,j}^k \vert$ is of order $n^{d-k}$,
we conclude there exists some $C > 0$ such that $\V[\Sigma(\PP_n)] \ge C n^{3d-2k}$
for any $n \gg 1$.

Next, we prove that for some $C > 0$,  $\V[\Sigma_n^{\log}(\PP_n)] \ge C n^d $ for any $n \gg 1$.
Consider the same filtration
$\mathfrak G$  and the same
variance decomposition as in \eqref{eq:var_decomposition},
\[
\V[\Sigma_n^{\log}(\PP_n)]
= \sum_{j = 1}^{\alpha_r n^d}
\E\Big[\V \Big[\E[\Sigma_n^{\log}(\PP_n) \vert \mc G_{n,j}]
\Big\vert \mc G_{n,j-1}\Big]\Big],
\]
and note that once more all summands are nonnegative.
Define $h^{\log}$ exactly as in \eqref{eq:h_function_def}, but with
the double-sum functional $\Sigma$ replaced by sum-log-sum functional $\Sigma_n^{\log}$, and define $\Delta_n^{\log}$ by
\[
\Delta_n^{\log}
= \inf_{(\omega_1, \omega_2,\widetilde \omega_2) 
\in \mathfrak N_{n,j}^- \times \mc B_{n,j}^{(1)} \times \mc B_{n,j}^{(2)} }
\big\vert h^{\log}(\omega_1, \omega_2)
      - h^{\log}(\omega_1, \widetilde \omega_2) \big\vert.
\]
Applying Lemma~\ref{lem:var_lower_bound} with $h^{\log}$ and the same
$(U_{n,1},U_{n,2},U_{n,3})$ as above yields
\begin{equation}
    \label{eq:var_log_decomposition}
\V[\Sigma_n^{\log}(\PP_n)]
\ge \sum_{j=1}^{\alpha_r n^d}
\frac{\E[(\Delta_n^{\log})^2]}{4}
\Big(
P(\PP \cap \Q_{n,j,r} \in \mc B_{n,j}^{(1)})
\land
P(\PP \cap \Q_{n,j,r} \in \mc B_{n,j}^{(2)})
\Big).
\end{equation}
On the event $\PP\cap\Q_{n,j,r}\in \mc B_{n,j}^{(1)}$, the quantity inside the
logarithm in $\Sigma_n^{\log}$ is the number of inversions created in the local
configuration. Now, this number is bounded below by
\[
 \#\big\{ i \in I_{n,j}^k \colon
\PP \cap \Q_{n,i,r}\in \mc B_{n,i}^{(3)} \big\}.
\]
On $\PP\cap\Q_{n,j,r}\in \mc B_{n,j}^{(2)}$, the same quantity inside the logarithm yields a
smaller number of inversions,
and hence
$$
\Delta_n^{\log}
\ge \log\bigg(1 + \#\big\{ i \in I_{n,j}^k \colon
\PP \cap \Q_{n,i,r}\in \mc B_{n,i}^{(3)} \big\}\bigg).
$$
Now, since the number of cubes configured as in $B_{n,\cdot }^{(3)}$ is stochastically dominated by a
binomial random variable with number trials $|I_{n,j}^k|$ and succes probability $q>0$ independent of
$n$ and $j$, then by Lemma \ref{lem:binom_conc},
$$
\P\bigg(\#\big\{ i \in I_{n,j}^k \colon
\PP \cap \Q_{n,i,r}\in \mc B_{n,i}^{(3)} \big\}\ge \tfrac{q|I_{n,j}^k|}{2}\bigg) \ge 1 - \exp\!\Big( - q |I_{n,j}^k|\Big(\tfrac{1}{2}+\tfrac{1}{2}\log(\tfrac{1}{2})\Big)\Big) > C_0
$$
for some $C_0>0$ and 
any $n \gg 1$. Thus by Jensen's inequality,
\begin{equation}
    \label{eq:var_log_lower_bound}
\E[(\Delta_n^{\log})^2]
\ge \E\bigg[\log \bigg(1+\#\big\{ i \in I_{n,j}^k \colon
\PP \cap \Q_{n,i,r}\in \mc B_{n,i}^{(3)} \big\}\bigg)\bigg]^2
\ge C_0 \log\Bigg(1+\frac{q |I_{n,j}^k|}{2} \Bigg)^2.
\end{equation}
Since $|I_{n,j}^k|$ is of order $n^{d-k}$, then combining \eqref{eq:var_log_decomposition}
and \eqref{eq:var_log_lower_bound} completes the proof.
\end{proof}

\bep[Proof of Lemma \ref{lem:shield}]
Recall we want to construct a set $\mc S_{n,j} \subseteq \mathfrak{N}(Q_{n,j,4} \setminus Q_{n,j,4}(1/2))$ such that 
if $\PP \cap(Q_{n,j,4} \setminus Q_{n,j,4}(1/2))\in \mc S_{n,j}$, then
$$
f(Z,V,\PP_n^x) = f(Z,V,\PP_n)
$$
for any $Z,V \in \PP \cap Q_{n,j,4}^c$ and $x \in Q_{n,j,4}(1/2)$. To that end, let $F_{n,j}^-$ denote the subset of $Q_{n,j,4}$, where when
$Q_{n,j,4}$ is centered at the origin, the first coordinate is in $[-4,-7/2]$, and similarly let $F_{n,j}^+$ denote the subset of $Q_{n,j,4}$,
where the first coordinate is in $[7/2,4]$. Moreover, $F_{n,j}^{\pm \uparrow}$ denote the subset of $F_{n,j}^\pm$, where (when $Q_{n,j,4}$ is shifted to the origin) the
$d$'th coordinate is in $[7/2,4]$. Then, we define the shield configuration set as
\begin{align*}
\mc S_{n,j} = \Big\{D \in \mathfrak{N}(Q_{n,j,4} \setminus Q_{n,j,4}(1/2)) \colon & D \cap (F_{n,j}^- \cup F_{n,j}^+)^c = \emptyset, \ \bigcup_{p \in D} B(p,1/4) \subseteq F_{n,j}^- \cup F_{n,j}^+, \\
& \ p_1^+ - p_1 \leq 1/2 \text{ for every } p \in D \cap (F_{n,j}^- \setminus F_{n,j}^{- \uparrow}), \\
& \ p_1 - p_1^- \leq 1/2 \text{ for every } p \in D \cap (F_{n,j}^+ \setminus F_{n,j}^{+ \uparrow}) \Big\},
\end{align*}
and let
\begin{equation}
    \label{eq:B0_def}
\mc B_{n,j}^{(0)} = \{ D \in \mathfrak{N}(Q_{n,j,4}) : D \cap(Q_{n,j,4} \setminus Q_{n,j,4}(1/2))\in \mc S_{n,j}\},
\end{equation}
see Figure \ref{fig:shield}. In other words, the shield $\mc S_{n,j}$ pads the left and right of $Q_{n,j,4}$ in the time direction densely with points
and leaves the rest of $Q_{n,j,4} \setminus Q_{n,j,4}(1/2)$ empty. Since neither the Poisson void probability of $(F_{n,j}^- \cup F_{n,j}^+)^c$ nor the volume of the balls $ B(0,1/4) $ depend on $n$ that
$
\inf_{n \ge 1} P(\PP \cap \Q_{n,j,4} \in \mc B_{n,j}^{(0)}) > 0.
$
Thus, it only remains to argue that the shield configurations indeed satisfies that insertions inside $Q_{n,j,4}(1/2)$ do not affect brances outside $Q_{n,j,4}$.
First the void condtion in the definition of $\mc S_{n,j}$ ensures that no branches can cross from outside $Q_{n,j,4}$ from the top or bottom (in spatial sense) 
into $Q_{n,j,4}(1/2)$ due to spatial constraint of 1 in the Poisson tree model.
From the condition that the shield pads the left and right of $Q_{n,j,4}$ densely with points, it follows that any branch that starts
to the left of $Q_{n,j,4}$ or inside $F_{n,j}^{-}$ (due to sucessor bound $p_1^+ - p_1 \leq 1/2$) cannot have a succesor inside $Q_{n,j,4}(1/2)$.
Similarly, any branch that starts to the right of $Q_{n,j,4}$ or inside $F_{n,j}^{+}$ (due to predecessor bound $p_1 - p_1^- \leq 1/2$) cannot have a ancestor inside $Q_{n,j,4}(1/2)$.
Thus, the only branches that can be affected by insertions inside $Q_{n,j,4}(1/2)$ are those that both start and end inside $Q_{n,j,4}$. However any changes to such branches
cannot affect the inversion from a branch starting outside $Q_{n,j,4}$ to a branch to another branch starting outside $Q_{n,j,4}$,
or in other words, $f(Z,V,\PP_n^x) = f(Z,V,\PP_n)$ for any $Z,V \in \PP \cap Q_{n,j,4}^c$ and $x \in Q_{n,j,4}(1/2)$ as claimed, which completes the proof.
\end{proof}

\begin{figure}[h!]
    \centering
    \includegraphics[width=0.5\textwidth]{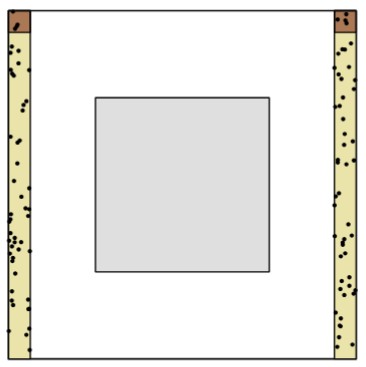}
    \caption{Illustration of the shield configuration in \eqref{eq:B0_def} when $d=2$.
    The yellow strip to the left to and right of the square represents the regions $F_{n,j}^-$ and $F_{n,j}^+$ padded with Poisson points,
    while the brown areas at the top of these strips represent the regions $F_{n,j}^{- \uparrow}$ and $F_{n,j}^{+ \uparrow}$.
    The white region does not contain any Poisson points and represents the void region in the shield configuration.
    The grey square in the center represents the inner cube $Q_{n,j,4}(1/2)$ where insertions do not affect branches outside the outer square $Q_{n,j,4}$.}
    \label{fig:shield}

\end{figure}

\end{spacing}
\end{document}